\theoremstyle{plain}
\newtheorem{theorem}{Theorem}
\newtheorem{lemma}{Lemma}
\newtheorem{corollary}{Corollary}
\newtheorem{proposition}{Proposition}
\theoremstyle{definition}
\newtheorem{definition}{Definition}
\newtheorem{example}{Example}
\theoremstyle{remark}
\newtheorem{remark}{Remark}
\DeclareMathOperator{\cl}{cl}
\DeclareMathOperator{\cone}{cone}
\DeclareMathOperator{\sign}{sign}
\DeclareMathOperator*{\argmin}{arg\,min}
\DeclareMathOperator{\linhull}{span}
\DeclareMathOperator{\interior}{int}
\DeclareMathOperator{\dist}{dist}
\DeclareMathOperator{\dom}{dom}
\DeclareMathOperator{\diag}{diag}
\DeclareMathOperator{\lineal}{lin}
\DeclareMathOperator{\rank}{rank}
\DeclareMathOperator{\trace}{Tr}
\author{M.V. Dolgopolik}
\title{Augmented Lagrangian Functions for Cone Constrained Optimization: the Existence of Global Saddle Points and Exact
Penalty Property}
\begin{document}

\maketitle

\begin{abstract}
In this article we present a general theory of augmented Lagrangian functions for cone constrained optimization
problems that allows one to study almost all known augmented Lagrangians for these problems within a unified framework.
We develop a new general method for proving the existence of global saddle points of augmented Lagrangian functions,
called the localization principle. The localization principle unifies, generalizes and sharpens most of the known
results on the existence of global saddle points, and, in essence, reduces the problem of the existence of global saddle
points to a local analysis of optimality conditions. With the use of the localization principle we obtain first
necessary and sufficient conditions for the existence of a global saddle point of an augmented Lagrangian for cone
constrained minimax problems via both second and first order optimality conditions. In the second part of the paper, we
present a general approach to the construction of globally exact augmented Lagrangian functions. The general approach
developed in this paper allowed us not only to sharpen most of the existing results on globally exact augmented
Lagrangians, but also to construct first globally exact augmented Lagrangian functions for equality constrained
optimization problems, for nonlinear second order cone programs and for nonlinear semidefinite programs. These globally
exact augmented Lagrangians can be utilized in order to design new superlinearly (or even quadratically) convergent
optimization methods for cone constrained optimization problems.
\end{abstract}

\section{Introduction}

The main goal of this article is to present a general theory of augmented Lagrangian functions for cone constrained
optimization problems that provides a theoretical foundation for the development of augmented Lagrangian methods for
these problems. In recent years, several attempts were made to develop a general theory of Lagrangian functions. 
A general duality theory for nonlinear Lagrangian functions for mathematical programming problems was developed in
\cite{RubinovYang,WangYangYang2007}. In \cite{Giannessi,LiFengZhang2013,ZhuLi2014,ZhuLi2014_2}, the image space analysis
was applied to the study of duality theory for augmented Lagrangian functions, while in \cite{BurachikIusemMelo,
WangYangYang,WangLiuQu} some general classes of augmented Lagrangian functions constructed from the Rockafellar-Wets
augmented Lagrangian (\cite{RockafellarWets}, Section~11.K) were studied. A unified theory of augmented Lagrangian
methods for mathematical programming problems was presented in \cite{WangLi2009}. However, there is no satisfactory and
general enough theory of the existence of global saddle points that can be applied to various augmented Lagrangians for
various cone constrained optimization problems. Furthermore, there are no results on \textit{exact} augmented Lagrangian
functions for cone constrained optimization problems. Our aim is to fill these gaps and develop a general theory
containing simple and easily verifiable necessary and sufficient conditions for the existence of global saddle points of
augmented Lagrangian functions, and for the global exactness of these function.

In this paper, instead of utilizing a modification of the Rockafellar-Wets augmented Lagrangian, we consider a more
straightforward approach to the theory of augmented Lagrangian functions (that is very similar to the one used in the
image space analysis), in which an augmented Lagrangian is defined simply as the sum of the objective function and a
convolution function depending on constraints, multipliers and penalty parameter, and satisfying some general
assumptions (axioms). The main advantage of this approach is the fact that it does not rely on the particular structure
of the augmented Lagrangian, and allows one to include almost all particular cases into the general theory.

The axiomatic approach that we use was inspired by \cite{LiuYang}. It provides one with a simple and unified framework
for the study of various augmented Lagrangian functions, such as the Hestenes-Powell-Rockafellar augmented Lagrangian
\cite{BirginMartinez,Rockafellar1974,Rockafellar1993}, the cubic augmented Lagrangian \cite{Kiwiel},
Mangasarian's augmented Lagrangian \cite{Mangasarian,WuLuo2012b}, the exponential penalty function
\cite{Bertsekas,TsengBertsekas,SunLi,LiuYang,WangLi2009}, the Log-Sigmoid Lagrangian \cite{Polyak2001,Polyak2002}, the
penalized exponential-type augmented Lagrangians \cite{Bertsekas,SunLi,LiuYang,WangLi2009}, the modified barrier
functions \cite{Polyak,SunLi,LiuYang,WangLi2009}, the p-th power augmented Lagrangian \cite{Li1995,Li1997,Xu,LiSun2001a,
LiSun2001b,WuLuo2012a,LiuYang}, He-Wu-Meng's augmented Lagrangian \cite{HeWuMeng}, extensions of 
the Hestenes-Powell-Rockafellar augmented Lagrangian to the case of nonlinear second order cone programs
\cite{LiuZhang2007,LiuZhang2008,ZhouChen2015}, nonlinear semidefinite programs
\cite{HuangYangTeo_Chapter,SunZhangWu2006,SunSunZhang2008,ZhaoSunToh2010,Sun2011,WenGoldfarbYin2010,LuoWuChen2012,
WuLuoDingChen2013,WuLuoYang2014,YamashitaYabe2015}, and semi-infinite programs
\cite{RuckmannShapiro,HuyKim2012,SonKimTam2012,BurachikYangZhou2017}, as well as extensions of the exponential penalty
function and the modified barrier functions to the case of nonlinear second order cone programs
\cite{ZhangGuXiao2011}, and nonlinear semidefinite programs \cite{Stingl2006,Noll2007,LiZhang2009,ZhangLiWu2014,
YamashitaYabe2015,LuoWuLiu2015}.

The first part of the paper is devoted to the problem of the existence of global saddle points of augmented Lagrangian
functions, which is important for convergence analysis of augmented Lagrangian methods, since the existence of a global
saddle point is, usually, necessary for the global convergence of these methods. The problem of the existence of global
saddle points was studied for general cone constrained optimization problems in \cite{ShapiroSun,ZhouZhouYang2014}, for
mathematical programming problems in \cite{LiuYang,LiuTangYang2009,LuoMastroeniWu2010,ZhouXiuWang,WuLuo2012b,
WangZhouXu2009,SunLi,WangLiuQu}, for nonlinear second order cone programming problems in \cite{ZhouChen2015}, for
nonlinear semidefinite programming problems in \cite{WuLuoYang2014,LuoWuLiu2015}, and for semi-infinite programming
problems in \cite{RuckmannShapiro,BurachikYangZhou2017}. The analysis of the known results on the existence of global
saddle points of augmented Lagrangian functions indicates that the same, in essence, results are proved and reproved
multiple times in different settings.

In this article, we propose a unified approach to the study of global saddle points of augmented Lagrangian functions,
called \textit{the localization principle}. The localization principle was first developed by the author for the study
of exact linear penalty functions \cite{Dolgopolik,DolgopolikII} and augmented Lagrange multipliers
\cite{Dolgopolik_AugmLagrMult}. A modification of this principle presented in this paper provides first simple
necessary and sufficient conditions for the existence of global saddle points that unify, generalize and sharpen almost
all known results in this area (see Remark~\ref{Remark_LocPrinciple_Unification}). Furthermore, the localization
principle reduces the study of global saddle points to a local analysis of sufficient optimality conditions. With the
use of this principle we obtained simple necessary and sufficient conditions for the existence of global saddle points
for cone constrained \textit{minimax} problems via both second order and first order (the so-called \textit{alternance}
conditions; see \cite{MalozemovPevnyi,DaugavetMalozemov75,Daugavet,DaugavetMalozemov81,DemyanovMalozemov_Alternance,
DemyanovMalozemov_Collect}) sufficient optimality conditions. To the best of authors knowledge, the problem of the
existence of saddle points for cone constrained minimax problems has never been studied before. We also provide simple
sufficient conditions for the existence of local saddle points that unify and sharpen many existing result
(see~Remark~\ref{Rmrk_MangasarianAL}).

As it is well known, standard augmented Lagrangian methods converge only linearly (see, e.g., \cite{BirginMartinez,
Stingl2006,Noll2007,LiuZhang2008,SunSunZhang2008,ZhangGuXiao2011,ZhangLiWu2014} and references therein), and in order to
apply them one has to minimize an augmented Lagrangian function numerous times. In order to overcome this difficulty, 
Di Pillo and Grippo \cite{DiPilloGrippo1979} proposed to consider the so-called \textit{exact} augmented Lagrangian
functions. Exact augmented Lagrangian functions are constructed in such a way that it is necessary to minimize them only
once (but \textit{simultaneously} in primal and dual variables) in order to recover KKT-points corresponding to globally
optimal solutions of the initial optimization problem. Furthermore, one can design superlinearly and even quadratically
convergent methods for minimizing exact augmented Lagrangian functions. Thus, the use of the exact augmented Lagrangians
allows one to overcome main disadvantages of standard augmented Lagrangian methods. Exact augmented Lagrangian functions
and numerical methods based on the use of these functions were studied in \cite{DiPilloGrippo1980,DiPilloGrippo1982,
Lucidi1988,DiPilloLucidi1996, DiPilloLucidi2001,DiPilloEtAl2002,DiPilloLiuzzi2003,DuZhangGao2006,DuLiangZhang2006,
LuoWuLiu2013,DiPilloLucidiPalagi1993,DiPilloGrippoLucidi1993,DiPilloLucidiPalagi2000,DiPilloLucidi2005,DiPilloLiuzzi,
DiPilloLiuzzi2011}). However, it should be noted that all existing exact augmented Lagrangian functions were constructed
from the Hestenes-Powell-Rockafellar augmented Lagrangian, and were only considered for mathematical programming
problems, except for the exact augmented Lagrangian function for nonlinear semidefinite programming problem from a
recent paper \cite{FukudaLourenco}.

In the second part of the paper, we develop a general theory of globally exact augmented Lagrangian functions. We
present a simple method for constructing exact augmented Lagrangian functions, and provide first simple necessary and
sufficient conditions for the global exactness of these functions in the form of the localization principle. 
We also demonstrate that globally exact augmented Lagrangians for mathematical programming problems can be
constructed not only from the Hestenes-Powell-Rockafellar augmented Lagrangian, but also from many other augmented
Lagrangian functions. Moreover, we, for the first time, propose globally exact augmented Lagrangian functions for
equality constrained problems and for nonlinear second order cone programs. We also propose new globally exact augmented
Lagrangian functions for nonlinear semidefinite programming problems. These exact augmented Lagrangian functions can be
utilized in order to design new efficient superlinearly (or even quadratically) convergent optimization methods.

The paper is organized as follows. In Section~\ref{Section_DefinitionAndExamples}, we introduce a general augmented
Lagrangian function for a cone constrained optimization problem, and present main assumptions on this function that are
utilized throughout the article. In Section~\ref{Section_Examples}, we provide many particular examples of augmented
Lagrangian functions for general cone constrained, mathematical programming, nonlinear second order cone programming,
nonlinear semidefinite programming and semi-infinite programming problems, and point out that all these augmetned
Lagrangian functions satisfy the main assumptions of this article. Some general results on the existence of global
sadde points are presented in Section~\ref{Section_SaddlePoints}, while in
Section~\ref{Section_LocalizationPrinciple_GSP} we study the localization principle. Applications of the localization
principle to cone constrained minimax problems are given in Section~\ref{Section_ApplLocPrinciple_GSP}. The general
theory of globally exact augmented Lagrangian functions is developed in Section~\ref{Section_ExactAL}, while some
applications of this theory as well as particular examples of globally exact augmented Lagrangians are presented in
Section~\ref{Section_ApplLocPrinciple_ExactAL}.

\section{An Augmented Lagrangian Function for Cone Constrained Optimization Problems}
\label{Section_DefinitionAndExamples}

Let $X$ be a finite dimensional normed space, and $A \subset X$ be a nonempty set. Let also $Y$ be a normed space, and
$K \subset Y$ be a nonempty closed convex cone. As usual, denote by $Y^*$ the topological dual of $Y$, and by 
$\langle \cdot, \cdot \rangle$ the standard coupling function between $Y$ and its dual or the inner product in
$\mathbb{R}^s$, $s \in \mathbb{N}$, depending on the context.

Throughout this article, we study the following cone constrained optimization problem
$$
  \min f(x) \quad \text{subject to} \quad G(x) \in K, \quad x \in A,		\eqno{(\mathcal{P})}
$$
where $f \colon X \to \mathbb{R} \cup \{ + \infty \}$ and $G \colon X \to Y$ are given functions. Hereinafter, we
suppose that there exists a feasible point $x$ of $(\mathcal{P})$ such that $f(x) < + \infty$, and there exists a
globally optimal solution of the problem $(\mathcal{P})$.

Let us introduce an \textit{augmented Lagrangian function} for the problem $(\mathcal{P})$. Choose
a function $\Phi \colon Y \times Y^* \times (0, + \infty) \to \mathbb{R} \cup \{ + \infty \}$, and define
$$
  \mathscr{L}(x, \lambda, c) = f(x) + \Phi(G(x), \lambda, c)
$$
where $\lambda \in Y^*$ is a Lagrange multiplier, and $c > 0$ is a penalty parameter. 

\begin{remark}
Note that only the constraint $G(x) \in K$ is incorporated into the augmented Lagrangian function, while the constraint
$x \in A$ has to be taken into account directly. This approach allows one to choose what constraints of an
optimization problem under consideration are handled via the augmented Lagrangian function, and what constraints are
handled by other methods.
\end{remark}

Our main goal is to obtain simple necessary and sufficient conditions for the existence of global saddle points of 
the augmented Lagrangian function $\mathscr{L}(x, \lambda, c)$ that can be easily applied to various cone constrained
optimization problems, and various augmented Lagrangians for these problems.

Before we proceed to the study of saddle points, let us list main assumptions on the function $\Phi$ that we
utilise throughout the article. In order to include several particular cases into the general theory, we formulate our 
assumptions (as well as all definitions and results below) with respect to a given closed convex 
cone $\Lambda \subseteq Y^*$. In particular, one can define $\Lambda = Y^*$ or $\Lambda = K^*$, 
where $K^* = \{ y^* \in Y^* \mid \langle y^*, y \rangle \le 0 \text{~for all~} y \in K \}$ is the polar cone of $K$.
Let us point out that a proper choice of the cone $\Lambda$ is necessary to ensure that the augmented Lagrangian
function $\mathscr{L}(x, \lambda, c)$ has desirable properties. Some remarks on how to choose the cone $\Lambda$ are
given throughout the text.

For any $y \in Y$ and $r > 0$ denote $B(y, r) = \{ z \in Y \mid \| z - y \| \le r \}$ and 
$\dist(y, K) = \inf_{z \in K} \| y - z \|$. In the subsequent sections, the following assumptions on the function $\Phi$
are utilised:
\begin{itemize}
\item[(A1)]{$\forall y \in K$ $\forall c > 0$ one has $\Phi(y, 0, c) \ge 0$;
}

\item[(A2)]{$\forall y \in K$ $\forall \lambda \in \Lambda$ $\forall c > 0$ one has $\Phi(y, \lambda, c) \le 0$;
}

\item[(A3)]{$\forall y \notin K$ $\forall c > 0$ $\exists \lambda \in \Lambda$ such 
that $\lim_{t \to + \infty} \Phi(y, t \lambda, c) = + \infty$;
}

\item[(A4)]{$\forall y \in Y$ $\forall \lambda \in \Lambda$ the function $\Phi(y, \lambda, c)$ is non-decreasing in $c$;
}

\item[(A5)]{$\forall \lambda \in \Lambda$ $\forall c > 0$ the function $\Phi(\cdot, \lambda, c)$ is monotone with
respect to the order generated by $K$, i.e. $\Phi(y_1, \lambda, c) \le \Phi(y_2, \lambda, c)$ if $y_1 - y_2 \in K$;
}

\item[(A6)]{$\forall y \notin K$ $\forall \lambda \in \Lambda$ $\forall c_0 > 0$ $\exists r > 0$ such that 
$$
  \lim_{c \to +\infty} 
  \inf\{ \Phi(z, \lambda, c) - \Phi(z, \lambda, c_0) \mid z \in B(y, r) \colon \Phi(z, \lambda, c_0) < + \infty \} 
  = + \infty;
$$
\vspace{-0.7cm}
}

\item[(A7)]{$\forall \lambda \in \Lambda$ $\forall c_0 > 0$ $\forall r > 0$ one has
\begin{multline*}
  \lim_{c \to +\infty} 
  \inf\Big\{ \Phi(y, \lambda, c) - \Phi(y, \lambda, c_0) \Bigm| \\
  y \in Y \colon \dist(y, K) \ge r, \: \Phi(y, \lambda, c_0) < + \infty \Big\} 
  = + \infty.
\end{multline*}
\vspace{-0.8cm}
}

\item[(A8)]{$\forall y \in K$ $\forall \lambda \in \Lambda \setminus K^*$ $\forall c > 0$ one has 
$\Phi(y, \lambda, c) < 0$;
}

\item[(A9)]{$\forall y \in K$ $\forall \lambda \in \Lambda$ $\forall c > 0$ such that $\langle \lambda, y \rangle \ne 0$
one has $\Phi(y, \lambda, c) < 0$;
}

\item[(A10)]{$\forall y \in K$ $\forall \lambda \in K^*$ $\forall c > 0$ such that $\langle \lambda, y \rangle = 0$ one
has $\Phi(y, \lambda, c) = 0$;
}

\item[(A11)]{$\forall y \in K$ $\forall \lambda \in K^*$ $\forall c > 0$ such that $\langle \lambda, y \rangle = 0$ the
function $\Phi(\cdot, \lambda, c)$ is Fr\'echet differentiable at $y$, and $D_y \Phi(y, \lambda, c) = \Phi_0(\lambda)$,
where $D_y$ stands for the Fr\'echet derivative in $y$, and $\Phi_0 \colon K^* \to K^*$ is a surjective mapping that
does not depend on $y$ and $c$, and such that $\langle \Phi_0(\lambda), y \rangle = 0$ iff 
$\langle \lambda, y \rangle = 0$;
}

\item[(A12)]{$\forall y \in K$ $\forall \lambda \in \Lambda$ one has $\Phi(y, \lambda, c) \to 0$ as $c \to \infty$.
}
\end{itemize}

\begin{remark}
Since $K$ is closed, $(A7)$ implies $(A6)$. Furthermore, note that the bigger is the cone $\Lambda$, the more
restrictive are the above assumptions (apart from assumption $(A3)$).
\end{remark}

\section{Examples of Augmented Lagrangian Functions}
\label{Section_Examples}

Below, we provide many particular examples of the augmented Lagrangian function $\mathscr{L}(x, \lambda, c)$ for
various cone constrained optimization problems, and point out whether assumptions above are satisfied in these examples.

\subsection{General Cone Constrained Problems}

We start with an augmented Lagrangian function introduced by Rockafellar and Wets (\cite{RockafellarWets},
Section~11.K), which is the only augmented Lagrangian function for the general cone constrained optimization problem
known to the author (apart from its direct generalizations, as in \cite{BurachikIusemMelo,WangYangYang,WangLiuQu}). For
more details on this augmented Lagrangian, see \cite{HuangYang2003,ShapiroSun,HuangYang2005,Dolgopolik_AugmLagrMult} and
references therein.

\begin{example} \label{Example_RockafellarWetsAL}
Let a function $\sigma \colon Y \to [0, + \infty]$ be such that $\sigma(0) = 0$ and $\sigma(y) > 0$ for any $y \ne 0$.
Define
$$
  \Phi(y, \lambda, c) = \inf_{p \in K - y} \big( - \langle \lambda, p \rangle + c \sigma(p) \big).
$$
Here we suppose that the function $\sigma$ is such that $\Phi(y, \lambda, c) > - \infty$ for all $y \in Y$, 
$\lambda \in Y^*$ and $c > 0$. In particular, one can set $\sigma(y) = \| y \|^2 / 2$.

Let $\Lambda = Y^*$. Then assumptions $(A1)$--$(A5)$ and $(A10)$ are satisfied in the general case. Assumptions $(A6)$
and $(A7)$ hold true, if the function $\sigma$ has \textit{a valley at zero} (i.e. for any neighbourhood $U$ of zero
there exists $\delta > 0$ such that $\sigma(y) \ge \delta$ for all $y \in Y \setminus U$). Assumptions $(A8)$ and $(A9)$
are valid, if $\sigma(t y) / t \to 0$ as $t \to 0$ for any $y \in Y$. Assumption $(A11)$ is satisfied with 
$\Phi_0(\lambda) \equiv \lambda$, in particular, if $Y$ is a Hilbert space and $\sigma(y) = \| y \|^2 / 2$
(see~\cite{ShapiroSun}). Finally, assumption $(A12)$ is satisfied, provided $\sigma(y) \ge \omega(\| y \|)$ for some
non-negative continuous function $\omega$ such that $\omega(t) = 0$ iff $t = 0$, and 
$\liminf_{t \to + \infty} \omega(t) / t > 0$.
\end{example}

\subsection{Mathematical Programming}

Consider the following mathematical programming problem:
\begin{equation} \label{MathProg}
  \min f(x) \quad \text{subject to} 
  \quad g_i(x) \le 0, \quad i \in I, \quad g_j(x) = 0, \quad j \in J, \quad x \in A,
\end{equation}
where $g_i \colon X \to \mathbb{R}$ are given functions, $I = \{ 1, \ldots, l \}$ and $J = \{ l + 1, \ldots, l + s \}$.
Denote $Y = \mathbb{R}^{l + s}$, $G(\cdot) = (g_1(\cdot), \ldots, g_{l + s}(\cdot))$ and
$K = \mathbb{R}_-^l \times \{ 0_s \}$, where $\mathbb{R}_- = (- \infty, 0]$. Then problem \eqref{MathProg} is
equivalent to the problem $(\mathcal{P})$. Note that in this case $K^* = \mathbb{R}_+^l \times \mathbb{R}^s$ and 
$Y^* = \mathbb{R}^{l + s}$, where $\mathbb{R}_+ = [0, + \infty)$.

Below, we only provide examples of \textit{separable} augmented Lagrangian functions for problem~\eqref{MathProg}, i.e.
such augmented Lagrangians that
$$
  \Phi(y, \lambda, c) = \sum_{i = 1}^{l + s} \Phi_i(y_i, \lambda_i, c), \quad
  y = \big( y_1, \ldots, y_{l + s} \big), \quad \lambda = \big( \lambda_1, \ldots, \lambda_{l + s} \big),
$$
where $\Phi_i \colon \mathbb{R}^2 \times (0, + \infty) \to \mathbb{R} \cup \{ + \infty \}$ are some functions. Let
us note that most of (if not all) augmented Lagrangian functions for mathematical programming problems appearing in
applications are indeed separable.

\begin{example} \label{Example_EssentiallyQuadraticAL}
Suppose that $J = \emptyset$, i.e. suppose that there are no equality constraints. Let 
$\phi \colon \mathbb{R} \to \mathbb{R}$ be a twice continuously differentiable and strictly convex function such that
$\phi(0) = 0$, $\phi'(0) = 0$ and the derivative $\phi'(\cdot)$ is surjective. For any $s, t \in \mathbb{R}$ define
$$
  P(t, \lambda) = \begin{cases}
    \lambda t + \phi(t), & \text{if } \lambda + \phi'(t) \ge 0, \\
    \min_{\tau \in \mathbb{R}} [ \lambda \tau + \phi(\tau) ], & \text{otherwise},
  \end{cases}
$$
and set
$$
  \Phi_i(y_i, \lambda_i, c) = \frac{1}{c} P(c y_i, \lambda_i) \quad \forall i \in I
$$
(see~\cite{Bertsekas}, Section~5.1.2, Example~1). In this case, the function $\mathscr{L}(x, \lambda, c)$ is called the
\textit{essentially quadratic} augmented Lagrangian function for problem \eqref{MathProg}
\cite{SunLi,LiuYang,WangLi2009}. If $\phi(t) = t^2 / 2$, then the essentially quadratic augmented Lagrangian function
coincides with the well-known Hestenes-Powell-Rockafellar Lagrangian function
\cite{Rockafellar1974,Rockafellar1993,BirginMartinez},
which is a particular case of the augmented Lagrangian from Example~\ref{Example_RockafellarWetsAL}.

Let $\Lambda = Y^* = \mathbb{R}^l$. Then one can verify that all assumptions $(A1)$--$(A12)$ are satisfied, and
$\Phi_0(\lambda) \equiv \lambda$.
\end{example}

\begin{example} \label{Example_CubilAL}
Suppose that $J = \emptyset$. Define
$$
  \Phi_i(y_i, \lambda_i, c) = 
  \frac{1}{3c} \Big[ \max\big\{ \sign(\lambda_i) \sqrt{|\lambda_i|} + c y_i, 0 \big\}^3 - |\lambda_i|^{3/2} \Big] 
  \quad \forall i \in I.
$$
Then $\mathscr{L}(x, \lambda, c)$ coincides with \textit{the cubic augmented Lagrangian} \cite{Kiwiel}.

Let $\Lambda = Y^* = \mathbb{R}^l$. Then it is not difficult to check that all assumptions $(A1)$--$(A12)$ are valid,
and $\Phi_0(\lambda) \equiv \lambda$.
\end{example}

\begin{example} \label{Example_Mangasarian}
Let $\phi \colon \mathbb{R} \to \mathbb{R}$ be a twice differentiable and strictly convex function such that 
$\phi(0) = \phi'(0) = 0$, and $\phi'(\cdot)$ is surjective. Define
\begin{gather*}
  \Phi_i(y_i, \lambda_i, c) = 
  \frac{1}{c} \Big[ \phi\Big( \max\{c y_i + \lambda_i, 0 \} \Big) - \phi(\lambda_i) \Big] \quad \forall i \in I, \\
  \Phi_j(y_j, \lambda_j, c) = 
  \frac{1}{c} \big[ \phi(c y_j + \lambda_j) - \phi(\lambda_j) \big] \quad \forall j \in J.
\end{gather*}
Then $\mathscr{L}(x, \lambda, c)$ coincides with the augmented Lagrangian function proposed by Mangasarian
\cite{Mangasarian} (see also~\cite{WuLuo2012b}).

Let $\Lambda = Y^* = \mathbb{R}^{l + s}$. Then one can check that all assumptions $(A1)$--$(A12)$ are
satisfied, and $\Phi_0(\lambda) \equiv \phi'(\lambda) = (\phi'(\lambda_1), \ldots, \phi'(\lambda_{l + s}))$.
\end{example}

\begin{example} \label{Example_ExpPenFunc}
Suppose that $J = \emptyset$. Let $\phi \colon \mathbb{R} \to \mathbb{R}$ be a twice differentiable and strictly
increasing function such that $\phi(0) = 0$. Define
$$
  \Phi_i(y_i, \lambda_i, c) = \frac{\lambda_i}{c} \phi(c y_i) \quad \forall i \in I.
$$
If $\phi(t) = e^t - 1$, then the augmented Lagrangian $\mathscr{L}(x, \lambda, c)$ coincides with 
\textit{the exponential penalty function} \cite{Bertsekas,TsengBertsekas,SunLi,LiuYang,WangLi2009}. In the case
$\phi(t) = 2(\ln(1 + e^t) - \ln 2)$ it coincides with \textit{the Log-Sigmoid Lagrangian} \cite{Polyak2001,Polyak2002}

Let $\Lambda = K^* = \mathbb{R}_+^l$. Then one can check that assumptions $(A1)$--$(A3)$, $(A5)$ and $(A8)$--$(A10)$ are
satisfied. Assumption $(A4)$ is valid, provided $\phi$ is convex. Assumption $(A11)$ is satisfied iff $\phi'(0) \ne 0$,
and in this case $\Phi_0(\lambda) \equiv \phi'(0) \lambda$. Assumption $(A12)$ is satisfied iff $\phi(t) / t \to 0$ as
$t \to - \infty$, while assumptions $(A6)$ and $(A7)$ are not valid, regardless of the choice of $\Lambda$ 
(set $\lambda = 0$). Note also that assumptions $(A2)$, $(A4)$ (if $\phi$ is convex), $(A5)$, $(A8)$ and $(A9)$ are
satisfied iff $\Lambda \subseteq K^*$.
\end{example}

\begin{example} \label{Example_PenalizedExpPenFunc}
Suppose that $J = \emptyset$. Let the function $\phi$ be as in the previous example, and let 
$\xi \colon \mathbb{R} \to \mathbb{R}_+$ be a twice continuously differentiable non-decreasing function such that 
$\xi(t) = 0$ for all $t \le 0$, and $\xi(t) > 0$ for all $t > 0$ (in particular, one can choose 
$\xi(t) = \max\{ 0, t \}^3$). Define
$$
  \Phi_i(y_i, \lambda_i, c) = \frac{\lambda_i}{c} \phi(c y_i) + \frac{1}{c} \xi(c y_i)  \quad \forall i \in I
$$
(see~\cite{Bertsekas}, Section~5.1.2, Example~2). In this case, the augmented Lagrangian function
$\mathscr{L}(x, \lambda, c)$ is called \textit{the penalized exponential-type augmented Lagrangian function}
\cite{SunLi,LiuYang,WangLi2009}.

Let $\Lambda = K^* = \mathbb{R}_+^l$. Then, as in the previous example, assumptions $(A1)$--$(A3)$, $(A5)$ and
$(A8)$--$(A10)$ are satisfied in the general case, assumption $(A4)$ is valid, provided both $\phi$ and $\xi$ are
convex, assumption $(A11)$ is satisfied if and only if $\phi'(0) \ne 0$ (in this case 
$\Phi_0(\lambda) \equiv \phi'(0) \lambda$), and assumption $(A12)$ is satisfied iff $\phi(t) / t \to 0$ as 
$t \to - \infty$. In contrast to the previous example, assumptions $(A6)$ and $(A7)$ are satisfied, provided
$\xi(t) / t \to + \infty$ as $t \to + \infty$. Let us also note that for assumptions $(A2)$ and $(A4)$--$(A9)$ to hold
true it is necessary that $\Lambda \subseteq K^*$.
\end{example}

\begin{remark}
As we will demonstrate below (see~Example~\ref{Example_SeparationCondition} and
Remark~\ref{Remark_SeparationCondition}), in the case when assumption $(A6)$ is not satisfied, it is necessary to
impose some rather restrictive assumptions on the problem $(\mathcal{P})$ in order to guarantee the existence of a
global saddle point of the augmented Lagrangian $\mathscr{L}(x, \lambda, c)$. In order to avoid this drawback one can
introduce an additional penalty term into the definition of the function $\Phi(y, \lambda, c)$ in the same way as in
example above (cf.~\cite{SunLi}).
\end{remark}

\begin{example} \label{Example_ModBarrierFunc}
Suppose that $J = \emptyset$. Let $\phi \colon (- \infty, 1) \to \mathbb{R}$ be a twice differentiable and strictly
increasing function such that $\phi(0) = 0$. Define
$$
  \Phi_i(y_i, \lambda_i, c) = \begin{cases}
    \dfrac{\lambda_i}{c} \phi(c y_i), & \text{if} \quad c y_i < 1, \\
    + \infty, & \text{otherwise}.
  \end{cases}
   \quad \forall i \in I
$$
In this case, the augmented Lagrangian function $\mathscr{L}(x, \lambda, c)$ coincides with \textit{the modified barrier
function} introduced by Polyak \cite{Polyak}. In particular, if $\phi(s) = - \ln(1 - s)$ or $\phi(s) = 1/(1 - s) - 1$,
then $\mathscr{L}(x, \lambda, c)$ coincides with the modified Frisch function or the modified Carroll function,
respectively \cite{Polyak} (see also \cite{SunLi,LiuYang,WangLi2009}).

Let $\Lambda = K^* = \mathbb{R}^l_+$. Then assumptions $(A1)$--$(A3)$ and $(A5)$--$(A10)$ are satisfied. Assumption 
$(A4)$ is satisfied, if $\phi$ is convex; assumption $(A11)$ is valid iff $\phi'(0) \ne 0$ 
($\Phi_0(\lambda) \equiv \phi'(0) \lambda$), while assumption $(A12)$ is satisfied, provided $\phi(t) / t \to 0$ as 
$t \to - \infty$. Furthermore, note that the function $\Phi_i(\cdot, \lambda_i, c)$ is l.s.c. iff $\phi(t) \to + \infty$
as $t \to 1$ and $\Lambda \subseteq K^*$. Thus, in particular, the modified Frisch and the modified Carroll functions
satisfy assumptions $(A1)$--$(A12)$ and are l.s.c. in $y$. Note, finally, that for assumptions $(A2)$, $(A4)$, $(A5)$,
$(A8)$ and $(A9)$ to hold true it is necessary that $\Lambda \subseteq K^*$.
\end{example}

\begin{example} \label{Example_pthPowerAugmLagr}
Suppose that $J = \emptyset$. Choose $b \ge 0$ and a non-decreasing function 
$\phi \colon \mathbb{R} \to \mathbb{R}_+$ such that $\phi(t) > \phi(b) > 0$ for all $t > b$ (in particular,
one can set $\phi(t) = \exp(t)$ or $\phi(t) = \max\{ 0, t \}$ with $b > 0$). Note that the inequality 
$g_i(x) \le 0$ is satisfied iff $\phi(g_i(x) + b) / \phi(b) \le 1$. Furthermore, 
$\phi(g_i(x) + b) \ge 0$ for all $x \in X$ by the definition of $\phi$. Define
$$
  \Phi_i(y_i, \lambda_i, c) = 
  \frac{\lambda_i}{c} \left[ \left(\frac{\phi(y_i + b)}{\phi(b)} \right)^c - 1 \right]
  \quad \forall i \in I
$$
(see~\cite{LiSun2001b}). Then $\mathscr{L}(x, \lambda, c)$ coincides with \textit{the p-th power augmented Lagrangian
function} \cite{Li1995,Li1997,Xu,LiSun2001a,LiSun2001b,WuLuo2012a,LiuYang}. 

Let $\Lambda = K^* = \mathbb{R}^l_+$. Then assumptions $(A1)$--$(A5)$, $(A8)$--$(A10)$ and $(A12)$ are satisfied.
Assumption $(A11)$ is satisfied provided $\phi'(b) \ne 0$ ($\Phi_0(\lambda) \equiv \phi'(b) \lambda$), while assumptions
$(A6)$ and $(A7)$ are not valid, regardless of the choice of $\Lambda$. Note also that, as in the above examples,
assumptions $(A2)$, $(A4)$, $(A5)$, $(A8)$ and $(A9)$ are satisfied iff $\Lambda \subseteq K^*$.
\end{example}

\begin{example} \label{Example_HeWuMengLagrangian}
Let $J = \emptyset$. Following the ideas of \cite{HeWuMeng}, define
$$
  \Phi_i(y_i, \lambda_i, c) = 
  \frac{1}{c} \int_0^{c y_i} \Big( \sqrt{t^2 + \lambda_i^2} + t \Big) \, d t \quad \forall i \in I.
$$
For any $\lambda_i \ne 0$ the function $\Phi_i$ has the form
$$
  \Phi_i(y_i, \lambda_i, c) = \frac{y_i}{2} \sqrt{(c y_i)^2 + \lambda_i^2} + \frac{c y_i^2}{2} + 
  \frac{\lambda_i^2}{2c} \ln \big( \sqrt{(c y_i)^2 + \lambda_i^2} + c y_i \big) - \frac{\lambda_i^2}{2c} \ln|\lambda_i|,
$$
while $\Phi_i(y_i, 0, c) = c y_i ( |y_i| + y_i ) / 2$. In this case, we refer to the function 
$\mathscr{L}(x, \lambda, c)$ as He-Wu-Meng's augmented Lagrangian.

Let $\Lambda = Y^* = \mathbb{R}^l$. Then one can verify that assumptions $(A1)$--$(A7)$ and $(A9)$--$(A12)$ are
satisfied, and $\Phi_0(\lambda) \equiv \lambda$. Assumption $(A8)$ holds true iff $\Lambda \subseteq K^*$, since, in
essence, He-Wu-Meng's augmented Lagrangian function does not distinguish multipliers $\lambda$ and $-\lambda$.
\end{example}

\subsection{Nonlinear Second Order Cone Programming}

Consider the following nonlinear second order cone programming problem:
\begin{equation} \label{SOCProg}
  \min f(x) \quad \text{subject to} 
  \quad g_i(x) \in Q_{l_i + 1}, \quad i \in I, \quad h(x) = 0, \quad x \in A,
\end{equation}
where $g_i \colon X \to \mathbb{R}^{l_i + 1}$, $I = \{ 1, \ldots, r \}$, and $h \colon X \to \mathbb{R}^s$ are given
functions, and 
$$
  Q_{l_i + 1} = \big\{ y = (y^0, \overline{y}) \in \mathbb{R} \times \mathbb{R}^{l_i} \bigm| 
  y^0 \ge \| \overline{y} \| \big\}
$$
is the second order (Lorentz) cone of dimension $l_i + 1$ (here $\| \cdot \|$ is the Euclidean norm). Denote 
$$
  Y = \mathbb{R}^{l_1 + 1} \times \ldots \times \mathbb{R}^{l_r + 1} \times \mathbb{R}^s, \quad
  K = Q_{l_1 + 1} \times \ldots \times Q_{l_r + 1} \times \{ 0_s \},
$$
and $G(\cdot) = (g_1(\cdot), \ldots, g_r(\cdot), h(\cdot))$. Then problem \eqref{SOCProg} is
equivalent to the problem $(\mathcal{P})$. Note that in this case 
$K^* = ( - Q_{l_1 + 1} ) \times \ldots \times ( - Q_{l_r + 1} ) \times \mathbb{R}^s$
and $Y^* = Y$.

\begin{example} \label{Example_RockWetsAL_SOC}
For any $y = (y_1, \ldots, y_r, z) \in Y$ and $\lambda = (\lambda_1, \ldots, \lambda_r, \mu) \in Y$ define
$$
  \Phi(y, \lambda, c) = \frac{c}{2} \sum_{i = 1}^r 
  \Big[ \dist^2\Big( y_i + \frac{1}{c} \lambda_i, Q_{l_i + 1} \Big) - \frac{1}{c^2} \| \lambda_i \|^2 \Big] +
  \langle \mu, z \rangle + \frac{c}{2} \| z \|^2.
$$
(see \cite{LiuZhang2007,LiuZhang2008,ZhouChen2015}). The function $\Phi(y, \lambda, c)$ defined above is a particular
case of the function $\Phi(y, \lambda, c)$ for the general cone constrained optimization problem from
Example~\ref{Example_RockafellarWetsAL} with $\sigma(y) = \| y \|^2 / 2$. Therefore it satisfies all assumptions
$(A1)$--$(A12)$ with 
$\Lambda = Y^* = \mathbb{R}^{l_1 + 1} \times \ldots \times \mathbb{R}^{l_r + 1} \times \mathbb{R}^s$, and
$\Phi_0(\lambda) \equiv \lambda$.
\end{example}

Following the ideas of \cite{ZhangGuXiao2011} we can define another augmented Lagrangian function for problem
\eqref{SOCProg}, which is a generalization of the augmented Lagrangians from Examples~\ref{Example_ExpPenFunc} and
\ref{Example_ModBarrierFunc} to the case of nonlinear second order cone programming problems. To this end, recall that
for any function $\psi \colon \mathbb{R} \to \mathbb{R}$ and for any $y = (y^0, \overline{y}) \in \mathbb{R}^{l + 1}$
\textit{L\"owner's operator} associated with $\psi$ is defined as
$$
  \Psi(y) = \frac{1}{2}
  \begin{pmatrix}
  \psi( y^0 + \| \overline{y} \| ) + \psi( y^0 - \| \overline{y} \| ) \\
  \Big( \psi( y^0 + \| \overline{y} \| ) - \psi( y^0 - \| \overline{y} \| ) \Big) 
  \dfrac{\overline{y}}{\| \overline{y} \|}
  \end{pmatrix},
$$
if $\overline{y} \ne 0$, and $\Psi(y) = ( \psi(y^0), 0_l )$ otherwise
(see~\cite{ZhangGuXiao2011,ChenChenTseng2004,SunSun2008} for more details). It is easy to check that if $\psi$ is
strictly increasing and $\psi(0) = 0$, then $y \in K \implies -\Psi(-y) \in K$, while 
$y \notin K \implies -\Psi(-y) \notin K$.

\begin{example} \label{Example_NonlinearRescale_SOC}
Suppose, for the sake of simplicity, that there are no equality constraints. Let 
$\psi \colon \mathbb{R} \to \mathbb{R} \cup \{ + \infty \}$ be a non-decreasing convex function such that
$\dom \psi = (- \infty, \varepsilon_0)$ for some $\varepsilon_0 \in (0, + \infty]$, $\psi(t) \to + \infty$ as 
$t \to \varepsilon_0$, $\psi(t) / t \to + \infty$ as $t \to + \infty$ if $\varepsilon_0 = + \infty$, 
$\psi$ is twice continuously differentiable on $\dom \psi$, $\psi(0) = 0$ and $\psi'(0) = 1$.
Then for any $y = (y_1, \ldots, y_r) \in Y$ and $\lambda = (\lambda_1, \ldots, \lambda_r) \in Y$ define
\begin{equation} \label{Ex_SOC_NonlinRescaling}
  \Phi(y, \lambda, c) = - \frac{1}{c} \sum_{i = 1}^r \big\langle \lambda_i, \Psi(- c y_i ) \big\rangle,
\end{equation}
if $y_i^0 + \| \overline{y}_i \| < \varepsilon_0$ for all $i \in I$, and $\Phi(y, \lambda, c) = + \infty$ otherwise.
It is easy to see that the function $\Phi(y, \lambda, c)$ is lower semicontinuous.

Let $\Lambda = K^*$. Then one can easily verify that assumptions $(A1)$--$(A4)$, $(A8)$--$(A10)$ are satisfied in the
general case. Assumption $(A11)$ is satisfied with $\Phi_0(\lambda) \equiv \lambda$ by \cite{ZhangGuXiao2011},
Lemma~3.1. Assumptions $(A6)$ and $(A7)$ are valid, if $\varepsilon_0 < + \infty$, while assumption $(A12)$ is valid,
provided $\psi(t) / t \to 0$ as $t \to - \infty$. Finally, assumption $(A5)$ is never satisfied for the function $\Phi$
defined above, since for $(A5)$ to hold true it is necessary that L\"owner's operator $y \to \Psi(-y)$ is non-increasing
with respect to the order generated by the second order cone, which is not the case when $\psi$ is non-decreasing. Let
us also note that for assumptions $(A2)$, $(A4)$, $(A8)$ and $(A9)$ to hold true it is necessary 
that $\Lambda \subseteq K^*$.
\end{example}

\begin{remark}
Note that one can easily incorporate equality constraints into the augmented Lagrangian function from the previous
example by simply adding terms corresponding to these constraints into the right-hand side of
\eqref{Ex_SOC_NonlinRescaling}. One can define these additional terms in the same way as in
Examples~\ref{Example_Mangasarian} or \ref{Example_RockWetsAL_SOC}.
\end{remark}

\subsection{Nonlinear Semidefinite Programming}

Consider the following nonlinear semidefinite programming problem:
\begin{equation} \label{SemiDefProg}
  \min f(x) \quad \text{subject to} 
  \quad G_0(x) \preceq 0, \quad h(x) = 0, \quad x \in A,
\end{equation}
where $G_0 \colon X \to \mathbb{S}^l$ and $h \colon X \to \mathbb{R}^s$ are given functions, $\mathbb{S}^l$ denotes the
set of all $l \times l$ real symmetric matrices, and the relation $G_0(x) \preceq 0$ means that the matrix $G_0(x)$ is
negative semidefinite. Hereinafter, we suppose that the linear space $\mathbb{S}^l$ is equipped with the inner product
$\langle A, B \rangle = \trace(AB)$, and the corresponding norm $\| A \|_F = \sqrt{\trace(A^2)}$, which is called the
Frobenius norm of a matrix $A \in \mathbb{S}^l$. Here $\trace(\cdot)$ is the trace operator.

Denote by $\mathbb{S}^l_+$ the cone of $l \times l$ positive semidefinite matrices, and denote $\mathbb{S}^l_-$ by the
cone of $l \times l$ negative semidefinite matrices. Define $Y = \mathbb{S}^l \times \mathbb{R}^s$, 
$K = \mathbb{S}^l_- \times \{ 0_s \}$ and $G(\cdot) = ( G_0(\cdot), h(\cdot) )$. Then problem \eqref{SemiDefProg} is
equivalent to the problem $(\mathcal{P})$. Note that in this case $K^* = \mathbb{S}^l_+ \times \mathbb{R}^s$ and 
$Y^* = Y$.

\begin{example} \label{Example_RockWetsAL_SemiDefProg}
For any $y = (y_0, z) \in Y = \mathbb{S}^l \times \mathbb{R}^s$ and $\lambda = (\lambda_0, \mu) \in Y$ define
$$
  \Phi(y, \lambda, c) = \frac{1}{2c} \Big( \trace\big( [c y_0 + \lambda_0]_+^2 \big) - \trace(\lambda_0^2) \Big) + 
  \langle \mu, z \rangle + \frac{c}{2} \| z \|^2,
$$
where $[\cdot]_+$ denotes the projection of a matrix onto the cone $\mathbb{S}^l_+$
(see \cite{SunZhangWu2006,SunSunZhang2008,ZhaoSunToh2010,Sun2011,WenGoldfarbYin2010,LuoWuChen2012,WuLuoDingChen2013,
WuLuoYang2014,YamashitaYabe2015} for more details on this augmented Lagrangian). One can check that the function 
$\Phi(y, \lambda, c)$ defined above is a particular case of the function $\Phi(y, \lambda, c)$ 
from Example~\ref{Example_RockafellarWetsAL} with $\sigma(y) = (\| y_0 \|_F^2 + \| z \|^2) / 2$. Therefore it satisfies
all assumptions $(A1)$--$(A12)$ with $\Lambda = Y^* = \mathbb{S}^l \times \mathbb{R}^s$ and 
$\Phi_0(\lambda) \equiv \lambda$.
\end{example}

As in the case of the second order cone programs, one can extend the augmented Lagrangians from
Examples~\ref{Example_ExpPenFunc} and \ref{Example_ModBarrierFunc} to the case of nonlinear semidefinite programming
problems with the use of L\"owner's operator (see \cite{SunSun2008,Shapiro2002,ChenQiTseng2003}). For any function
$\psi \colon \mathbb{R} \to \mathbb{R}$ and any $y \in \mathbb{S}^l$ the matrix function (L\"owner's operator)
associated with $\psi$ is defined as
$$
  \Psi(y) = E \diag\Big( \psi(\rho_1(y)), \ldots, \psi(\rho_l(y)) \Big) E^T,
$$
where $y = E \diag(\rho_1(y), \ldots, \rho_l(y)) E^T$ is a spectral decomposition of $y \in \mathbb{S}^l$, and 
$\rho_1(y), \ldots, \rho_l(y)$ are the eigenvalues of $y$ listed in the decreasing order. Note that 
the projection operator $[\cdot]_+$ is simply the matrix function associated with the function 
$\psi(t) = \max\{ 0, t \}$.

\begin{example} \label{Example_NonlinearRescale_SemiDefProg}
Let $\psi \colon \mathbb{R} \to \mathbb{R} \cup \{ + \infty \}$ be a non-decreasing convex function such 
that $\dom \psi = (- \infty, \varepsilon_0)$ for some $\varepsilon_0 \in (0, + \infty]$, $\psi(t) \to + \infty$ as 
$t \to \varepsilon_0$, $\psi(t) / t \to + \infty$ as $t \to + \infty$ if $\varepsilon_0 = + \infty$, 
$\psi$ is twice continuously differentiable on $\dom \psi$, $\psi(0) = 0$ and $\psi'(0) = 1$. For
any $y = (y_0, z) \in Y = \mathbb{S}^l \times \mathbb{R}^s$ and $\lambda = (\lambda_0, \mu) \in Y$ define
$$
  \Phi(y, \lambda, c) = \frac{1}{c} \big\langle \lambda_0, \Psi(c y_0) \big\rangle + 
  \langle \mu, z \rangle + \frac{c}{2} \| z \|^2,
$$
if $\sigma_1(y_0) < \varepsilon_0$, and $\Phi(y, \lambda, c) = + \infty$ otherwise (see
\cite{Stingl2006,Noll2007,LiZhang2009,ZhangLiWu2014,YamashitaYabe2015, LuoWuLiu2015}). Note that the function
$\Phi(y, \lambda, c)$ is lower semicontinuous.

Let $\Lambda = K^* = \mathbb{S}^l_+ \times \mathbb{R}^s$. Then assumptions $(A1)$--$(A4)$ and $(A8)$--$(A10)$ hold
true. Furthermore, assumption $(A11)$ is satisfied with $\Phi_0(\lambda) \equiv \lambda$ by \cite{LuoWuLiu2015},
Proposition~4.2. Assumption $(A5)$ is satisfied, if the matrix function $\Psi(y)$ is monotone (see, e.g.,
\cite{HornJohnson}, Def.~6.6.33). Assumptions $(A6)$ and $(A7)$ hold true, provided $\varepsilon_0 < + \infty$, while
assumption $(A12)$ is valid iff $\psi(t) / t \to 0$ as $t \to -\infty$. Note also that for assumptions $(A2)$,
$(A4)$, $(A5)$, $(A8)$ and $(A9)$ to hold true it is necessary that $\Lambda \subseteq K^*$.
\end{example}

Following the ideas of \cite{LuoWuLiu2015} we can also extend the penalized exponential-type augmented Lagrangian
function from Example~\ref{Example_PenalizedExpPenFunc} to the case of nonlinear semidefinite programming problems.

\begin{example} \label{Example_NonlinearRescalePenalized_SemiDefProg}
Let the function $\psi$ be as in the previous example with $\varepsilon_0 = + \infty$, and let 
$\xi \colon \mathbb{R} \to \mathbb{R}$ be a twice continuously differentiable non-decreasing convex function such that
$\xi(t) = 0$ for all $t \le 0$, and $\xi(t) > 0$ for all $t > 0$. Denote by $\Xi(\cdot)$ the matrix function associated
with $\xi(\cdot)$. 

For any $y = (y_0, z) \in Y$ and $\lambda = (\lambda_0, \mu) \in Y$ define
$$
  \Phi(y, \lambda, c) = \frac{1}{c} \big\langle \lambda_0, \Psi(c y_0) \big\rangle + 
  \frac{1}{c} \trace\big( \Xi( c y_0) \big) + \langle \mu, z \rangle + \frac{c}{2} \| z \|^2
$$
(see~\cite{LuoWuLiu2015}). Let $\Lambda = K^* = \mathbb{S}^l_+ \times \mathbb{R}^s$. Then assumptions $(A1)$--$(A4)$ and
$(A8)$--$(A11)$ are satisfied. Assumption $(A5)$ hold true, provided both matrix functions $\Psi$ and $\Xi$ are
monotone, assumptions $(A6)$ and $(A7)$ are satisfied if and, in the case $l > 1$, only if $\xi(t) / t \to + \infty$ as
$t \to +\infty$, and $\psi$ is bounded below, while assumption $(A12)$ is valid iff $\psi(t) / t \to 0$ as $t \to -
\infty$. 
\end{example}

\subsection{Semi-Infinite Programming}

Consider the following semi-infinite programming problem:
\begin{equation} \label{SemiInfProg}
  \min f(x) \quad \text{subject to} 
  \quad g_i(x, t) \le 0, \quad t \in T, \quad i \in I, \quad h(x) = 0, \quad x \in A,
\end{equation}
where $T$ is a compact metric space, the functions $g_i \colon X \times T \to \mathbb{R}$, $I = \{ 1, \ldots, l \}$, are
continuous, and $h \colon X \to \mathbb{R}^s$. Let $C(T)$ be the space of all real-valued continuous functions defined
on $T$ endowed with the uniform norm, and denote by $C_+(T)$ the closed convex subcone of $C(T)$ consisting of all
nonnegative functions. As it is well-known, the topological dual space of $C(T)$ is isometrically isomorphic to the
space of signed (i.e. real-valued) regular Borel measures on $T$, which we denote by $rca(T)$, while the space of
regular Borel measures on $T$ is denoted as $rca_+(T)$.

Define $Y = (C(T))^l \times \mathbb{R}^s$ and $K = (- C_+(T))^l \times \{ 0_s \}$, and introduce the function 
$x \to G(x) = (g_1(x, \cdot), \ldots, g_l(x, \cdot), h(x))$ mapping $X$ to $Y$. Then problem \eqref{SemiInfProg} is
equivalent to the problem $(\mathcal{P})$. Note that in this case $Y^*$ is isometrically isomorphic to (and, thus, can
be identified with) the space $(rca(T))^l \times \mathbb{R}^s$, while $K^*$ can be identified with 
the cone $(rca_+(T))^l \times \mathbb{R}^s$.

To the best of author's knowledge, the only augmented Lagrangian function for semi-infinite programming problems studied
in the literature is a particular case of the Rockafellar-Wets augmented Lagrangian from
Example~\ref{Example_RockafellarWetsAL} (see~\cite{RuckmannShapiro,HuyKim2012,SonKimTam2012,BurachikYangZhou2017}). 

\begin{example} \label{Example_RockafellarWetsAL_SemiInfProg}
Let a function $\sigma \colon C(T) \to \mathbb{R}_+$ be such that $\sigma(0) = 0$ and $\sigma(y) > 0$ for all $y \ne 0$.
Suppose also that $\sigma$ has a valley at zero. For any $y = (y_1, \ldots, y_l, z) \in Y$ (i.e. $y_i \in C(T)$ and 
$z \in \mathbb{R}^s$) and $\lambda = (\lambda_1, \ldots, \lambda_l, \mu) \in Y^*$ (i.e. $\lambda_i \in rca(T)$ and 
$\mu \in \mathbb{R}^s$) define
\begin{multline*}
  \Phi(y, \lambda, c) \\ 
  = \sum_{i = 1}^l 
  \inf\Big\{ - \int_T p d \lambda_i + c \sigma(p) \Bigm| p \in C(T), \: p(\cdot) + y_i(\cdot) \le 0 \Big\} 
  + \langle \mu, z \rangle + \frac{c}{2} \| z \|^2.
\end{multline*}
Let $\Lambda \subseteq Y^*$ (in particular, one can define $\Lambda$ as the set of all measures $\lambda \in rca(T)$
with finite support, see \cite{RuckmannShapiro,BurachikYangZhou2017}). Then, as it was pointed out in
Example~\ref{Example_RockafellarWetsAL}, assumptions $(A1)$--$(A7)$ and $(A10)$ are satisfied. Assumptions $(A8)$ and
$(A9)$ are valid, if $\sigma(t y) / t \to 0$ as $t \to 0$ for any $y \in Y$, while assumption $(A12)$ is satisfied,
provided $\sigma(y) \ge \omega( \| y \| )$ for all $y \in Y$, where $\omega$ is a non-negative continuous function such
that $\omega(t) = 0$ iff $t = 0$ and $\liminf_{t \to + \infty} \omega(t) / t > 0$. Finally, it seems that there are no
natural general assumptions on the function $\sigma$ and the cone $\Lambda$ which can guarantee the validity of $(A11)$.
\end{example}

Let us also present a new augmented Lagrangian function for problem \eqref{SemiInfProg}, which is a simple extension of
the augmented Lagrangian functions from Examples~\ref{Example_ExpPenFunc} and \ref{Example_ModBarrierFunc} to the case
of semi-infinite programming problems.

\begin{example} \label{Example_NonlinearRescale_SemiInfProg}
Suppose, for the sake of simplicity, that there are no equality constraints. Let 
$\phi \colon \mathbb{R} \to \mathbb{R} \cup \{ + \infty \}$ be a non-decreasing convex function such that 
$\dom \phi = (- \infty, \varepsilon_0)$ for some $\varepsilon_0 \in (0, + \infty]$, $\phi(t) \to + \infty$ as 
$t \to \varepsilon_0$, $\phi(t) / t \to + \infty$ as $t \to + \infty$ if $\varepsilon_0 = + \infty$, $\phi$ is twice
continuously differentiable on $\dom \phi$, $\phi(0) = 0$ and $\phi'(0) = 1$. For any $y = (y_1, \ldots, y_l) \in Y$ and
$\lambda = (\lambda_1, \ldots, \lambda_l) \in Y^*$ define
\begin{equation} \label{ModBarrierFunc_SemiInfProg}
  \Phi(y, \lambda, c) = \frac{1}{c} \sum_{i = 1}^l \int_T \phi(c y_i) \, d \lambda_i,
\end{equation}
if $\| y_i \| < \varepsilon_0$ for all $i \in I$, and $\Phi(y, \lambda, c) = + \infty$ otherwise.

Let $\Lambda \subseteq K^* = (rca_+(T))^l$. Then assumptions $(A1)$--$(A5)$ and $(A8)$--$(A11)$ are satisfied
($\Phi_0(\lambda) \equiv \lambda$). Assumptions $(A6)$ and $(A7)$ are valid, provided $\varepsilon_0 < + \infty$, while
assumption $(A12)$ is valid iff $\phi(t) / t \to 0$ as $t \to -\infty$.
\end{example}

\begin{remark}
{(i)~Note that if one defines function \eqref{ModBarrierFunc_SemiInfProg} for $\lambda \in K^*$, then there is
no need to separate the case when $\| y_i \| \ge \varepsilon_0$ for some $i \in I$.
}

\noindent{(ii)~If $T \subset \mathbb{R}^q$ for some $q \in \mathbb{N}$, then one can also extend
the penalized exponential-type augmented Lagrangian function from Example~\ref{Example_PenalizedExpPenFunc} to the case
of semi-infinite programming problems by simply adding the term $c^{-1} \int_T \xi(c y_i(t)) \, dt$ to the right-hand
side of \eqref{ModBarrierFunc_SemiInfProg}, where the function $\xi$ is the same as in
Example~\ref{Example_PenalizedExpPenFunc}.
}
\end{remark}

\section{Saddle Points of Augmented Lagrangian Functions}
\label{Section_SaddlePoints}

Let us turn to the study of saddle points of the augmented Lagrangian function $\mathscr{L}(x, \lambda, c)$. Recall that
we formulate all definitions and results with respect to a given closed convex cone $\Lambda \subseteq Y^*$. 

\begin{definition}
A pair $(x_*, \lambda_*) \in A \times \Lambda$ is called a \textit{global saddle point} of 
the augmented Lagrangian $\mathscr{L}(x, \lambda, c)$ if there exists $c_0 > 0$ such that 
$\mathscr{L}(x_*, \lambda_*, c) < + \infty$ for all $c \ge c_0$ and
$$
  \sup_{\lambda \in \Lambda} \mathscr{L}(x_*, \lambda, c) \le \mathscr{L}(x_*, \lambda_*, c) \le 
  \inf_{x \in A} \mathscr{L}(x, \lambda_*, c)	\qquad \forall c \ge c_0.
$$
The greatest lower bound of all such $c_0$ is denoted by $c^*(x_*, \lambda_*)$ and is referred to as \textit{the least
exact penalty parameter} for the global saddle point $(x_*, \lambda_*)$.
\end{definition}

\begin{definition}
A pair $(x_*, \lambda_*) \in A \times \Lambda$ is called a \textit{local saddle point} of 
the augmented Lagrangian $\mathscr{L}(x, \lambda, c)$ if there exist $c_0 > 0$ and a neighbourhood $U$ of $x_*$ such
that $\mathscr{L}(x_*, \lambda_*, c) < + \infty$ for all $c \ge c_0$ and
\begin{equation} \label{LocalSaddlePoint_FDef}
  \sup_{\lambda \in \Lambda} \mathscr{L}(x_*, \lambda, c) \le \mathscr{L}(x_*, \lambda_*, c) \le 
  \inf_{x \in U \cap A} \mathscr{L}(x, \lambda_*, c) \qquad \forall c \ge c_0.
\end{equation}
The greatest lower bound of all such $c_0$ is denoted by $c^*_{loc}(x_*, \lambda_*)$ and is referred to as 
\textit{the least local exact penalty parameter} for the local saddle point $(x_*, \lambda_*)$.
\end{definition}

Observe a direct connection between saddle points of the augmented Lagrangian $\mathscr{L}(x, \lambda, c)$ and optimal
solutions
of the problem $(\mathcal{P})$. 

\begin{proposition} \label{Prp_SaddlePointImpliesOptSol}
Suppose that assumptions $(A1)$--$(A3)$ are satisfied. If $(x_*, \lambda_*)$ is a local (global) saddle point of
the augmented Lagrangian $\mathscr{L}(x, \lambda, c)$, then $x_*$ is a locally (globally) optimal solution of 
the problem $(\mathcal{P})$.
\end{proposition}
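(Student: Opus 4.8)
The plan is to read off both feasibility and optimality of $x_*$ directly from the two saddle point inequalities, using nothing beyond $(A1)$--$(A3)$. I would carry out the global case in full and then observe that the local case is obtained verbatim by replacing $A$ with $U \cap A$ in the right-hand inequality (the feasibility argument uses only the left-hand inequality, which is identical in both definitions). A preliminary remark I would rely on throughout: the definition of a saddle point guarantees $\mathscr{L}(x_*, \lambda_*, c) < + \infty$ for all $c \ge c_0$, and since $f$ takes values in $\mathbb{R} \cup \{+\infty\}$ and $\Phi$ in $\mathbb{R} \cup \{+\infty\}$ (no $-\infty$), this forces both $f(x_*) < +\infty$ and $\Phi(G(x_*), \lambda_*, c) < +\infty$.

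First I would establish feasibility, i.e. $G(x_*) \in K$ (note $x_* \in A$ holds by definition). Arguing by contradiction, suppose $G(x_*) \notin K$ and fix any $c \ge c_0$. Then $(A3)$ supplies a multiplier $\widehat{\lambda} \in \Lambda$ with $\Phi(G(x_*), t \widehat{\lambda}, c) \to +\infty$ as $t \to +\infty$. Because $\Lambda$ is a cone, $t \widehat{\lambda} \in \Lambda$ for every $t > 0$, so
$$
  \sup_{\lambda \in \Lambda} \mathscr{L}(x_*, \lambda, c) \ge \sup_{t > 0} \big( f(x_*) + \Phi(G(x_*), t \widehat{\lambda}, c) \big) = +\infty,
$$
which contradicts the left-hand saddle point inequality $\sup_{\lambda \in \Lambda} \mathscr{L}(x_*, \lambda, c) \le \mathscr{L}(x_*, \lambda_*, c) < +\infty$. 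Hence $G(x_*) \in K$ and $x_*$ is feasible.

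Next I would prove optimality. Since $0 \in \Lambda$ ($\Lambda$ being a convex cone), the left-hand inequality applied at $\lambda = 0$ gives $\mathscr{L}(x_*, 0, c) \le \mathscr{L}(x_*, \lambda_*, c)$, i.e. $\Phi(G(x_*), 0, c) \le \Phi(G(x_*), \lambda_*, c)$; as $G(x_*) \in K$, assumption $(A1)$ then yields $\Phi(G(x_*), \lambda_*, c) \ge \Phi(G(x_*), 0, c) \ge 0$. For any feasible $x \in A$ we have $G(x) \in K$, so $(A2)$ gives $\Phi(G(x), \lambda_*, c) \le 0$, and combining this with the right-hand inequality produces
$$
  f(x_*) \le f(x_*) + \Phi(G(x_*), \lambda_*, c) = \mathscr{L}(x_*, \lambda_*, c) \le \mathscr{L}(x, \lambda_*, c) = f(x) + \Phi(G(x), \lambda_*, c) \le f(x).
$$
Thus $f(x_*) \le f(x)$ for every feasible $x$, so $x_*$ is a globally optimal solution; in the local case the same chain holds for every feasible $x \in U \cap A$, giving local optimality.

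The argument is essentially bookkeeping, so I expect no deep obstacle; the delicate points are all about making the manipulations legitimate. One is finiteness: the contradiction in the feasibility step is genuine precisely because $\mathscr{L}(x_*, \lambda_*, c)$ is finite, and the same finiteness licenses adding and rearranging terms in the optimality chain. The other is the cone structure of $\Lambda$, which is used in two distinct places — to form the ray $t\widehat{\lambda}$ in the feasibility step and to take $\lambda = 0$ in the optimality step. Finally, I would double-check that $(A1)$ and $(A2)$ are invoked only at arguments lying in $K$ (namely $G(x_*) \in K$ and $G(x) \in K$), which is exactly what the already-established feasibility of $x_*$ and the feasibility of the competitor $x$ supply.
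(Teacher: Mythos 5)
Your proposal is correct and follows essentially the same route as the paper: feasibility of $x_*$ via a contradiction using $(A3)$ and the cone structure of $\Lambda$ against the finiteness of $\mathscr{L}(x_*,\lambda_*,c)$, then optimality by combining the left saddle inequality at $\lambda=0$ with $(A1)$ to get $\Phi(G(x_*),\lambda_*,c)\ge 0$ and the right inequality with $(A2)$ to conclude $f(x_*)\le f(x)$ for feasible competitors. The only cosmetic differences are the order (global first versus the paper's local first) and that the paper additionally pins down $\Phi(G(x_*),\lambda_*,c)=0$, which your streamlined chain does not need.
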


\begin{proof}
Let $(x_*, \lambda_*)$ be a local saddle point of $\mathscr{L}(x, \lambda, c)$. Then $\mathscr{L}(x_*, \lambda_*, c)$ is
finite for all $c > 0$, and there exist $c_0 > 0$ and a neighbourhood $U$ of $x_*$ such that
\eqref{LocalSaddlePoint_FDef} holds true. Let us prove, at first, that $x_*$ is a feasible point of the problem
$(\mathcal{P})$. Indeed, suppose that $x_*$ is infeasible, i.e. that $G(x_*) \notin K$. Then by $(A3)$ for any $c > 0$
there exists $\lambda_0 \in \Lambda$ such that $\lim_{t \to +\infty} \Phi(G(x_*), t \lambda_0, c) = + \infty$. From
\eqref{LocalSaddlePoint_FDef} it follows that for any $c \ge c_0$ one has
$$
  \mathscr{L}(x_*, \lambda_*, c) \ge \mathscr{L}(x_*, t \lambda_0, c) = f(x_*) + \Phi(G(x_*), t \lambda_0, c)
  \quad \forall t > 0.
$$
Passing to the limit as $t \to +\infty$ one obtains that $\mathscr{L}(x_*, \lambda_*, c) = + \infty$, which contradicts
the definition of local saddle point. Thus, $G(x_*) \in K$.

Let $x \in U$ be a feasible point of the problem $(\mathcal{P})$. Then applying the second inequality in
\eqref{LocalSaddlePoint_FDef}, and $(A2)$ one obtains that
\begin{equation} \label{LocalSaddlePoint_Crlr}
  \mathscr{L}(x_*, \lambda_*, c) \le \mathscr{L}(x, \lambda_*, c) \le f(x) \quad \forall c \ge c_0.
\end{equation}
Applying $(A2)$ again one gets that $\Phi(G(x_*), \lambda_*, c) \le 0$. On the other hand, from the first
inequality in \eqref{LocalSaddlePoint_FDef} and $(A1)$ it follows that
$\Phi(G(x_*), \lambda_*, c) \ge \Phi(G(x_*), 0, c) \ge 0$ for all $c \ge c_0$, which yields that 
$\Phi(G(x_*), \lambda_*, c) = 0$ for all $c \ge c_0$. Hence and from~\eqref{LocalSaddlePoint_Crlr} one gets that 
$f(x_*) \le f(x)$ for any feasible point $x \in U$, which implies that $x_*$ is a locally optimal solution of 
the problem $(\mathcal{P})$. 

Repeating the same argument as above with $U$ replaced by $A$ one obtains that if $(x_*, \lambda_*)$ is a global saddle
point of $\mathscr{L}(x, \lambda, c)$, then $x_*$ is a globally optimal solution of the problem $(\mathcal{P})$.  
\end{proof}

\begin{remark} \label{Rmrk_ALValueAtSP}
Let assumptions $(A1)$--$(A3)$ be satisfied. From the proof of the proposition above it follows that if
$(x_*, \lambda_*) \in A \times \Lambda$ is such that
$$
  \sup_{\lambda \in \Lambda} \mathscr{L}(x_*, \lambda, c_0) \le \mathscr{L}(x_*, \lambda_*, c_0) < + \infty
$$
for some $c_0 > 0$, then $x_*$ is feasible point of the problem $(\mathcal{P})$, and
$\mathscr{L}(x_*, \lambda_*, c_0) = f(x_*)$. If, additionally, assumption $(A4)$ holds true, then 
$\mathscr{L}(x_*, \lambda_*, c) = f(x_*)$ for all $c \ge c_0$. In particular, if $(x_*, \lambda_*)$ is a local saddle
point of $\mathscr{L}(x, \lambda, c)$, and assumptions $(A1)$--$(A3)$ are valid, then
$\mathscr{L}(x_*, \lambda_*, c) = f(x_*)$ for all $c > c^*_{loc}(x_*, \lambda_*)$.
\end{remark}

Below we will show that there is a direct connection between local (global) saddle points of the augmented Lagrangian
function $\mathscr{L}(x, \lambda, c)$ and KKT-pairs of the problem $(\mathcal{P})$ corresponding to locally (globally)
optimal solutions of this problem. Since for any KKT-pair $(x_*, \lambda_*)$ one has $\lambda_* \in K^*$ and 
$\langle \lambda_*, G(x_*) \rangle = 0$, it is natural to ask when these conditions are satisfied for local or
global saddle points. The two following propositions give an answer to this question.

\begin{proposition} \label{Prp_NonnegativeMultiplierAtSP}
Let assumptions $(A1)$, $(A3)$ and $(A8)$ be valid, and let a pair $(x_*, \lambda_*) \in A \times \Lambda$
satisfy the inequalities
\begin{equation} \label{HalfSaddlePoint}
  \sup_{\lambda \in \Lambda} \mathscr{L}(x_*, \lambda, c) \le \mathscr{L}(x_*, \lambda_*, c) < + \infty
\end{equation}
for some $c > 0$. Then $\lambda_* \in K^*$. 
\end{proposition}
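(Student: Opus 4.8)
The plan is to argue by contradiction: assuming $\lambda_* \in \Lambda \setminus K^*$, I would derive incompatible lower and upper bounds on $\mathscr{L}(x_*, \lambda_*, c)$ relative to $f(x_*)$. The one genuine preliminary is that assumption $(A8)$ only controls the sign of $\Phi(y, \lambda, c)$ for $y \in K$, so before anything else I must establish that $x_*$ is \emph{feasible}. This is the step whose placement matters most: the whole contradiction hinges on being able to apply $(A8)$ at $y = G(x_*)$, which is legitimate only after knowing $G(x_*) \in K$.

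First I would show $G(x_*) \in K$ by reusing the feasibility argument from the proof of Proposition~\ref{Prp_SaddlePointImpliesOptSol}. If $G(x_*) \notin K$, then $(A3)$ supplies $\lambda_0 \in \Lambda$ with $\Phi(G(x_*), t\lambda_0, c) \to +\infty$ as $t \to +\infty$; since $\Lambda$ is a cone, $t\lambda_0 \in \Lambda$ for every $t > 0$, so the left inequality in \eqref{HalfSaddlePoint} gives $\mathscr{L}(x_*, \lambda_*, c) \ge \mathscr{L}(x_*, t\lambda_0, c) = f(x_*) + \Phi(G(x_*), t\lambda_0, c)$, and letting $t \to +\infty$ forces $\mathscr{L}(x_*, \lambda_*, c) = +\infty$, contradicting the finiteness in \eqref{HalfSaddlePoint}. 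I would also note that this finiteness forces $f(x_*) < +\infty$, since $\Phi$ never takes the value $-\infty$; this is what makes the later comparisons with $f(x_*)$ meaningful.

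With feasibility secured, I would produce the two opposing estimates at the single fixed $c$ for which \eqref{HalfSaddlePoint} holds. Since $0 \in \Lambda$ and $G(x_*) \in K$, the left inequality in \eqref{HalfSaddlePoint} combined with $(A1)$ yields the lower bound $\mathscr{L}(x_*, \lambda_*, c) \ge \mathscr{L}(x_*, 0, c) = f(x_*) + \Phi(G(x_*), 0, c) \ge f(x_*)$. On the other hand, supposing $\lambda_* \in \Lambda \setminus K^*$ and applying $(A8)$ with $y = G(x_*) \in K$ gives $\Phi(G(x_*), \lambda_*, c) < 0$, hence $\mathscr{L}(x_*, \lambda_*, c) = f(x_*) + \Phi(G(x_*), \lambda_*, c) < f(x_*)$. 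These bounds are contradictory, so $\lambda_* \in K^*$. The argument uses no assumption beyond $(A1)$, $(A3)$, and $(A8)$, and involves no real computation once the order of the steps — feasibility via $(A3)$ first, then the test multipliers $0$ and $\lambda_*$ against $(A1)$ and $(A8)$ — is respected.
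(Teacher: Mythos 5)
Your proof is correct and takes essentially the same route as the paper's: establish feasibility of $x_*$ via $(A3)$ exactly as in the first part of the proof of Proposition~\ref{Prp_SaddlePointImpliesOptSol}, then derive a contradiction between the bound obtained from the left inequality in \eqref{HalfSaddlePoint} with the test multiplier $0 \in \Lambda$ (using $(A1)$) and the strict inequality from $(A8)$ at $y = G(x_*) \in K$. The only cosmetic difference is that you route both estimates through $f(x_*)$, whereas the paper compares $\Phi(G(x_*), \lambda_*, c) < 0 \le \Phi(G(x_*), 0, c)$ directly.
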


\begin{proof}
Arguing by reductio ad absurdum, suppose that $\lambda_* \notin K^*$. Repeating the first part of the proof of
Proposition~\ref{Prp_SaddlePointImpliesOptSol} one can easily verify that $x_*$ is a feasible point of the problem
$(\mathcal{P})$. Then applying $(A1)$ and $(A8)$ one gets that 
$\Phi( G(x_*), \lambda_*, c) < 0 \le \Phi( G(x_*), 0, c)$ for all $c > 0$. Hence for any $c > 0$ one has
$\mathscr{L}(x_*, \lambda_*, c) < \mathscr{L}(x_*, 0, c)$, which contradicts \eqref{HalfSaddlePoint}.	 
\end{proof}

Let us demonstrate that the proposition above does not hold true in the case when assumption $(A8)$ is not valid. To
this end, let us consider the exponential penalty function from Example~\ref{Example_ExpPenFunc} with 
$\Lambda = Y^* = \mathbb{R}^l$ (note that in this case assumption $(A8)$ is not satisfied).

\begin{example} \label{Exmpl}
Let $X = A = \mathbb{R}^2$. Consider the following optimization problem
\begin{equation*}
  \begin{split}
  &\min f(x_1, x_2) = x_1^2 + x_2^2 \qquad \text{subject to} \\
  &g_1(x_1, x_2) = x_1 + x_2 + 2 \le 0, \quad
  g_2(x_1, x_2) = \frac12 (x_1 + 2)^2 + \frac12 (x_2 + 2)^2 - 1 \le 0.
  \end{split}
\end{equation*}
Let $x_* = (-1, -1)$ and $\lambda_* = (- 1, 3)$. Let us show that the pair $(x_*, \lambda_*)$ is a global saddle
point of the exponential penalty function $\mathscr{L}(x, \lambda, c)$ despite the fact that 
$\lambda_* \notin K^* = \mathbb{R}^2_+$. Indeed, by definition
$$
  \mathscr{L}(x, \lambda, c) = f(x) + 
  \frac{\lambda_1}{c} \big( e^{c g_1(x)} - 1 \big) + \frac{\lambda_2}{c} \big( e^{c g_2(x)} - 1 \big).
$$
The equalities $g_1(x_*) = g_2(x_*) = 0$ imply that $\mathscr{L}(x_*, \cdot, c) \equiv f(x_*) = 2$, which yields that
$$
  \sup_{\lambda \in \mathbb{R}^2} \mathscr{L}(x_*, \lambda, c) = \mathscr{L}(x_*, \lambda_*, c) = 2 
  \quad \forall c > 0
$$
Let us prove that $x_*$ is a point of global minimum of the function $\mathscr{L}(\cdot, \lambda_*, c)$ for any $c > 0$.
Then one gets that $(x_*, \lambda_*)$ is a global saddle point of the exponential penalty function for the above
problem.

Note that $g_2(x) \ge g_1(x)$ for all $x \in \mathbb{R}^2$. Therefore taking into account the fact 
that $\lambda^* = (-1, 3)$ one gets that for any $x \in \mathbb{R}^2$ and $c > 0$ one has
\begin{equation} \label{ExpPenFuncExample_LowerEst}
  \mathscr{L}(x, \lambda_*, c) = f(x) + \frac{2}{c}\big( e^{c g_2(x)} - 1 \big) +
  \frac{1}{c} \big( e^{c g_2(x)} - e^{c g_1(x)} \big) \ge f_c(x),
\end{equation}
where $f_c(x) = f(x) + 2( e^{c g_2(x)} - 1) / c$. It is easy to see that for any $c > 0$ the function $f_c(x)$ is
convex, and $\nabla f_c(x_*) = 0$, which implies that $x_*$ is a point of global minimum of $f_c$ for all $c > 0$. Hence
taking into account inequality~\eqref{ExpPenFuncExample_LowerEst}, and the fact that 
$\mathscr{L}(x_*, \lambda_*, c) = f_c(x_*) = 2$ one obtains the required result.
\end{example}

\begin{remark}
As the previous example shows, one needs to properly choose the cone $\Lambda$ in order to ensure the validity of 
the inclusion $\lambda_* \in K^*$. Alternatively, one can guarantee the validity of the inclusion $\lambda_* \in K^*$
with the use of a suitable constraint qualification. In particular, it is easy to check that if $(x_*, \lambda_*)$ is a
local saddle point of the exponential penalty function with $\Lambda = \mathbb{R}^l$, and LICQ holds true at $x_*$, then
$\lambda_* \in K^*$. On the other hand, note that if at least one of the inequality constraints is not active at $x_*$,
say $g_i(x_*) < 0$, then for $(x_*, \lambda_*)$ to be a local saddle point of the exponential penalty function it is
necessary that $\lambda_i \ge 0$ for all $\lambda \in \Lambda$.
\end{remark}

Arguing in a similar way to the proof of Proposition~\ref{Prp_NonnegativeMultiplierAtSP} one can check that the
following result holds true.

\begin{proposition} \label{Prp_ComplementarySlackAtSP}
Let assumptions $(A1)$, $(A3)$ and $(A9)$ hold true, and let a pair $(x_*, \lambda_*) \in A \times \Lambda$ satisfy
inequalities \eqref{HalfSaddlePoint} for some $c > 0$. Then $\langle \lambda_*, G(x_*) \rangle = 0$.
\end{proposition}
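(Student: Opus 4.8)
The plan is to mimic the proof of Proposition~\ref{Prp_NonnegativeMultiplierAtSP} essentially verbatim, replacing the use of assumption $(A8)$ by assumption $(A9)$. The statement to be proved asserts that under assumptions $(A1)$, $(A3)$ and $(A9)$, any pair $(x_*, \lambda_*) \in A \times \Lambda$ satisfying \eqref{HalfSaddlePoint} for some $c > 0$ automatically has complementary slackness $\langle \lambda_*, G(x_*) \rangle = 0$. The structure of the argument is a proof by contradiction: I would assume $\langle \lambda_*, G(x_*) \rangle \ne 0$ and derive a contradiction with the first inequality in \eqref{HalfSaddlePoint}, namely $\sup_{\lambda \in \Lambda} \mathscr{L}(x_*, \lambda, c) \le \mathscr{L}(x_*, \lambda_*, c)$.

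First I would establish feasibility of $x_*$. This step is identical to the opening of Proposition~\ref{Prp_SaddlePointImpliesOptSol} and Proposition~\ref{Prp_NonnegativeMultiplierAtSP}: if $G(x_*) \notin K$, then by $(A3)$ there is some $\lambda_0 \in \Lambda$ with $\lim_{t \to +\infty} \Phi(G(x_*), t\lambda_0, c) = +\infty$, and since $t\lambda_0 \in \Lambda$ for all $t > 0$ (because $\Lambda$ is a cone), the first inequality in \eqref{HalfSaddlePoint} forces $\mathscr{L}(x_*, \lambda_*, c) \ge \mathscr{L}(x_*, t\lambda_0, c) \to +\infty$, contradicting the finiteness in \eqref{HalfSaddlePoint}. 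Hence $G(x_*) \in K$, so assumption $(A9)$ becomes applicable because its hypothesis requires $y = G(x_*) \in K$.

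Next, with $y = G(x_*) \in K$ and $\lambda_* \in \Lambda$ satisfying $\langle \lambda_*, G(x_*) \rangle \ne 0$ by the contradiction hypothesis, assumption $(A9)$ yields directly that $\Phi(G(x_*), \lambda_*, c) < 0$. On the other hand, assumption $(A1)$ gives $\Phi(G(x_*), 0, c) \ge 0$. Combining these two, I obtain $\mathscr{L}(x_*, \lambda_*, c) = f(x_*) + \Phi(G(x_*), \lambda_*, c) < f(x_*) + \Phi(G(x_*), 0, c) = \mathscr{L}(x_*, 0, c)$. Since $0 \in \Lambda$ (again because $\Lambda$ is a cone, hence contains the origin), this strict inequality contradicts the first inequality in \eqref{HalfSaddlePoint}, which asserts that $\mathscr{L}(x_*, \lambda_*, c)$ dominates $\mathscr{L}(x_*, \lambda, c)$ for every $\lambda \in \Lambda$. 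The contradiction establishes $\langle \lambda_*, G(x_*) \rangle = 0$.

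I do not expect any genuine obstacle here; the result is a near-mechanical transcription of the preceding proposition, and the paper itself signals this by saying the claim follows by arguing ``in a similar way.'' The only points demanding a moment's care are the routine observations that a closed convex cone $\Lambda$ contains both $0$ and all positive scalings $t\lambda_0$ of its elements (needed to legitimately plug $0$ and $t\lambda_0$ into the supremum over $\Lambda$), and the correct reading of the hypothesis of $(A9)$, which activates precisely because feasibility has already been secured in the first step. The entire force of the argument rests on the strict negativity supplied by $(A9)$ under the assumption $\langle \lambda_*, G(x_*) \rangle \ne 0$, playing exactly the role that the strict negativity of $(A8)$ played in Proposition~\ref{Prp_NonnegativeMultiplierAtSP}.
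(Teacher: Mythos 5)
Your proof is correct and is essentially the argument the paper intends: the paper gives no separate proof, stating only that one argues as in Proposition~\ref{Prp_NonnegativeMultiplierAtSP}, and your transcription (feasibility via $(A3)$ and the first inequality of \eqref{HalfSaddlePoint}, then the strict inequality $\Phi(G(x_*),\lambda_*,c) < 0 \le \Phi(G(x_*),0,c)$ from $(A9)$ and $(A1)$ contradicting the supremum condition at $\lambda = 0 \in \Lambda$) is exactly that argument.
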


\begin{remark}
Let us note that if 
\begin{equation} \label{GenALF_ExpPenTypeStructure}
  \Phi(y, \lambda, c) = \langle \lambda, \Phi_1(y, c) \rangle + \Phi_2(y, c)
\end{equation}
for some functions $\Phi_1$ and $\Phi_2$, and $(x_*, \lambda_*)$ is a local saddle point of 
$\mathscr{L}(x, \lambda, c)$ with $\Lambda = Y^*$, then
$$
  \big\langle \lambda, \Phi_1(G(x_*), c) \big\rangle \le 
  \big\langle \lambda_*, \Phi_1(G(x_*), c) \big\rangle \quad \forall \lambda \in Y^* \quad 
  \forall c > c^*_{loc}(x_*, \lambda_*),
$$
which in all particular examples presented above implies that $\langle \lambda_*, G(x_*) \rangle = 0$. Note also that
assumption $(A9)$ is satisfied with $\Lambda = Y^*$ for all particular augmented Lagrangian functions that cannot be
represented in the form \eqref{GenALF_ExpPenTypeStructure}.
\end{remark}

Let us now provide several useful characterizations of global saddle points of the augmented Lagrangian function
$\mathscr{L}(x, \lambda, c)$. Global saddle points of $\mathscr{L}(x, \lambda, c)$ can be characterized in terms of
solutions of the \textit{augmented dual problem}. Recall that the augmented dual problem of $(\mathcal{P})$ associated
with the augmented Lagrangian $\mathscr{L}(x, \lambda, c)$ has the form
$$
  \max_{(\lambda, c)} \Theta(\lambda, c) \quad \text{subject to} \quad \lambda \in \Lambda, \quad c > 0,
$$
where $\Theta(\lambda, c) = \inf_{x \in A} \mathscr{L}(x, \lambda, c)$. Note that if assumption $(A2)$ is satisfied,
then \textit{the weak duality} between $(\mathcal{P})$ and the augmented dual problem holds, i.e.
$$
  \Theta(\lambda, c) \le f(x) \quad 
  \forall (\lambda, c) \in \Lambda \times (0, + \infty) \quad \forall x \in \Omega,
$$
where $\Omega$ is the feasible set of the problem $(\mathcal{P})$. One says that \textit{the zero duality gap
property} holds true for the augmented Lagrangian $\mathscr{L}(x, \lambda, c)$, if
$$
  \inf_{x \in \Omega} f(x) = 
  \sup\big\{ \Theta(\lambda, c) \bigm| \lambda \in \Lambda, \: c > 0 \big\}.
$$
Now, we can obtain a simple and well-known characterization of global saddle points in terms of globally optimal
solutions of the augmented dual problem.

\begin{proposition} \label{Prp_SaddlePointsVSOptSolDualProb}
Suppose that assumptions $(A1)$--$(A4)$ are satisfied. If $(x_*, \lambda_*)$ is a global saddle point of the augmented
Lagrangian $\mathscr{L}(x, \lambda, c)$, then for any $c > c^*(x_*, \lambda_*)$ the pair $(\lambda_*, c)$ is a globally
optimal solution of the augmented dual problem, and the zero duality gap property holds true. Conversely, if
$(\lambda_*, c_*)$ is a globally optimal solution of the augmented dual problem, and the zero duality gap property holds
true, then for any globally optimal solution $x_*$ of the problem $(\mathcal{P})$ the pair $(x_*, \lambda_*)$ is a
global saddle point of $\mathscr{L}(x, \lambda, c)$ and $c_* \ge c^*(x_*, \lambda_*)$.
\end{proposition}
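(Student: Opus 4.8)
The plan is to prove the two implications separately, using the saddle point definition together with the monotonicity in $c$ from $(A4)$ and the feasibility/optimality link already established in Proposition~\ref{Prp_SaddlePointImpliesOptSol} and Remark~\ref{Rmrk_ALValueAtSP}.

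First I would handle the forward direction. Assume $(x_*, \lambda_*)$ is a global saddle point with least exact penalty parameter $c^*(x_*, \lambda_*)$, and fix any $c > c^*(x_*, \lambda_*)$. By Proposition~\ref{Prp_SaddlePointImpliesOptSol}, $x_*$ is a globally optimal solution of $(\mathcal{P})$, so $\inf_{x \in \Omega} f(x) = f(x_*)$; and by Remark~\ref{Rmrk_ALValueAtSP} (which applies since $(A1)$--$(A4)$ hold) one has $\mathscr{L}(x_*, \lambda_*, c) = f(x_*)$. The right inequality in the saddle point definition gives $\Theta(\lambda_*, c) = \inf_{x \in A} \mathscr{L}(x, \lambda_*, c) = \mathscr{L}(x_*, \lambda_*, c) = f(x_*)$. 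Combining this with the weak duality inequality $\Theta(\lambda, c') \le f(x_*)$ valid for all $(\lambda, c') \in \Lambda \times (0, +\infty)$ (which follows from $(A2)$, as noted before the statement), I conclude that $(\lambda_*, c)$ attains the supremum defining the augmented dual value, so it is a globally optimal solution of the augmented dual problem, and moreover $\sup \Theta = f(x_*) = \inf_{x \in \Omega} f(x)$, i.e.\ the zero duality gap property holds.

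For the converse, assume $(\lambda_*, c_*)$ solves the augmented dual problem and the zero duality gap property holds, and let $x_*$ be any globally optimal solution of $(\mathcal{P})$. Then $\Theta(\lambda_*, c_*) = \sup \Theta = \inf_{x \in \Omega} f(x) = f(x_*)$. The left inequality of the saddle point follows from weak-duality-type reasoning: for feasible $x_*$, assumption $(A2)$ gives $\Phi(G(x_*), \lambda, c_*) \le 0$ for every $\lambda \in \Lambda$, hence $\mathscr{L}(x_*, \lambda, c_*) \le f(x_*)$, while $(A1)$ applied with $\lambda = 0$ together with the definition of $\Theta$ forces $\mathscr{L}(x_*, \lambda_*, c_*) = f(x_*)$ (since $f(x_*) = \Theta(\lambda_*, c_*) \le \mathscr{L}(x_*, \lambda_*, c_*) \le f(x_*)$). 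Thus $\sup_{\lambda} \mathscr{L}(x_*, \lambda, c_*) \le f(x_*) = \mathscr{L}(x_*, \lambda_*, c_*)$. The right inequality is immediate from $\mathscr{L}(x_*, \lambda_*, c_*) = f(x_*) = \Theta(\lambda_*, c_*) = \inf_{x \in A} \mathscr{L}(x, \lambda_*, c_*)$.

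The remaining—and most delicate—point is to upgrade these equalities from the single parameter $c_*$ to \emph{all} $c \ge c_*$, which is what the saddle point definition requires, and to deduce $c_* \ge c^*(x_*, \lambda_*)$. Here the main obstacle is propagating the inequalities in $c$: I would invoke $(A4)$ (monotonicity of $\Phi$ in $c$) to show $\Theta(\lambda_*, c)$ is non-decreasing, so $f(x_*) = \Theta(\lambda_*, c_*) \le \Theta(\lambda_*, c) \le f(x_*)$ by weak duality, giving $\Theta(\lambda_*, c) = f(x_*)$ and hence the right saddle inequality for every $c \ge c_*$; the left inequality again follows from $(A1)$, $(A2)$ as above for each such $c$. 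This establishes that $(x_*, \lambda_*)$ is a global saddle point with all $c \ge c_*$ admissible, so $c_* \ge c^*(x_*, \lambda_*)$ by definition of the least exact penalty parameter.
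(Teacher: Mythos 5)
Your proof is correct and takes essentially the same approach as the paper's: both directions rest on Proposition~\ref{Prp_SaddlePointImpliesOptSol}, Remark~\ref{Rmrk_ALValueAtSP}, weak duality from $(A2)$, and the $(A4)$-monotonicity of $\Theta$ to propagate the saddle inequalities from $c_*$ to all $c \ge c_*$. The only cosmetic differences are that in the forward direction you invoke weak duality directly where the paper re-derives the bound $\Theta(\lambda, c) \le f(x_*)$ from the saddle inequality at $c > c^*(x_*,\lambda_*)$ combined with $(A4)$, and your appeals to $(A1)$ are unnecessary, since $(A2)$ together with the definition of $\Theta$ already yields $\mathscr{L}(x_*, \lambda_*, c_*) = f(x_*)$.
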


\begin{proof}
Let $(x_*, \lambda_*)$ be a global saddle point of $\mathscr{L}(x, \lambda, c)$. Applying Remark~\ref{Rmrk_ALValueAtSP}
one
obtains that for any $c > c^*(x_*, \lambda_*)$ the following inequalities hold true
$$
  \sup_{\lambda \in \Lambda} \mathscr{L}(x_*, \lambda, c) \le \mathscr{L}(x_*, \lambda_*, c) = f(x_*) 
  \le \inf_{x \in A} \mathscr{L}(x, \lambda_*, c).
$$
Consequently, taking into account the definition of $\Theta$ one gets that 
$$
  \Theta(\lambda, c) \le f(x_*) \le \Theta(\lambda_*, c)
  \quad \forall \lambda \in \Lambda \quad \forall c > c^*(x_*, \lambda_*).
$$
Assumption $(A4)$ implies that $\Theta(\lambda, c)$ is non-decreasing in $c$ for any $\lambda \in \Lambda$. Therefore 
$\Theta(\lambda, c) \le f(x_*)$ for all $(\lambda, c) \in \Lambda \times (0, + \infty)$. Hence taking the supremum over
all $(\lambda, c) \in \Lambda \times (0, + \infty)$ one obtains that
$$
  \sup\big\{ \Theta(\lambda, c) \bigm| (\lambda, c) \in \Lambda \times (0, + \infty) \big\} \le f(x_*) \le
  \Theta(\lambda_*, c) \quad \forall c > c^*(x_*, \lambda_*).
$$
Consequently, $(\lambda_*, c)$ is a globally optimal solution of the augmented dual problem for any 
$c > c^*(x_*, \lambda_*)$, and the zero duality gap property holds true, since $x_*$ is a globally optimal solution of 
the problem $(\mathcal{P})$ by virtue of Proposition~\ref{Prp_SaddlePointImpliesOptSol}. 

Let, now, $(\lambda_*, c_*)$ be a globally optimal solution of the augmented dual problem, $x_*$ be a globally optimal
solution of the problem $(\mathcal{P})$, and suppose that the zero duality gap property holds true. Then with the use of
the fact that $\Theta$ is non-decreasing in $c$ due to $(A4)$ one gets that $\Theta(\lambda_*, c) = f(x_*)$ for 
all $c \ge c_*$. Applying assumption $(A2)$ (recall that $x_*$ is a feasible) one obtains that
$\mathscr{L}(x_*, \lambda_*, c) \le f(x_*)$ for any $c > 0$. Therefore taking into account the definition of $\Theta$
one gets that
\begin{equation} \label{SaddlePoint_RightInequal}
  \mathscr{L}(x_*, \lambda_*, c) \le f(x_*) = \Theta(\lambda_*, c) := \inf_{x \in A} \mathscr{L}(x, \lambda_*, c)
  \quad \forall c \ge c_*,
\end{equation}
which implies that $\mathscr{L}(x_*, \lambda_*, c) = f(x_*)$. Applying $(A2)$ again one obtains that
\begin{equation} \label{SaddlePoint_LeftInequal}
  \mathscr{L}(x_*, \lambda, c) \le f(x_*) = \mathscr{L}(x_*, \lambda_*, c) \quad 
  \forall (\lambda, c) \in \Lambda \times [c_*, + \infty).  
\end{equation}
Combining \eqref{SaddlePoint_LeftInequal} and \eqref{SaddlePoint_RightInequal} one gets that $(x_*, \lambda_*)$ is 
a global saddle point of $\mathscr{L}(x, \lambda, c)$ and $c_* \ge c^*(x_*, \lambda_*)$.	 
\end{proof}

\begin{corollary}
Let assumptions $(A1)$--$(A4)$ be satisfied, and suppose that the zero duality gap property holds true for
$\mathscr{L}(x, \lambda, c)$. Then a global saddle point of $\mathscr{L}(x, \lambda, c)$ exists if and only if there
exists a globally optimal solution of the augmented dual problem.
\end{corollary}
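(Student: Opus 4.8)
The plan is to derive the corollary as an essentially immediate consequence of Proposition~\ref{Prp_SaddlePointsVSOptSolDualProb}, which already establishes a two-way correspondence between global saddle points of $\mathscr{L}(x, \lambda, c)$ and globally optimal solutions of the augmented dual problem. The one extra ingredient I would keep in mind throughout is the standing assumption made at the outset, namely that the problem $(\mathcal{P})$ possesses a globally optimal solution; this is what lets me supply a primal point $x_*$ whenever I am only handed a dual optimizer.

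First I would establish the ``only if'' direction. Assume that a global saddle point $(x_*, \lambda_*)$ of $\mathscr{L}(x, \lambda, c)$ exists. Since assumptions $(A1)$--$(A4)$ are in force, the first part of Proposition~\ref{Prp_SaddlePointsVSOptSolDualProb} applies verbatim and tells me that for every $c > c^*(x_*, \lambda_*)$ the pair $(\lambda_*, c)$ is a globally optimal solution of the augmented dual problem. In particular such a solution exists, which is exactly what the forward implication asks for. I would note that this direction does not require the zero duality gap hypothesis as a separate input, because that property is itself a conclusion of the proposition once the saddle point is known to exist.

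Next I would treat the ``if'' direction, where the zero duality gap hypothesis and the standing existence assumption are the ones doing the work. Suppose $(\lambda_*, c_*)$ is a globally optimal solution of the augmented dual problem. By the standing assumption there is a globally optimal solution $x_*$ of $(\mathcal{P})$, and by hypothesis the zero duality gap property holds. These are precisely the premises of the converse part of Proposition~\ref{Prp_SaddlePointsVSOptSolDualProb}, so invoking it with this pair $(x_*, \lambda_*)$ yields that $(x_*, \lambda_*)$ is a global saddle point of $\mathscr{L}(x, \lambda, c)$ (indeed with $c_* \ge c^*(x_*, \lambda_*)$). This produces the desired saddle point and completes the equivalence.

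I do not expect any genuine obstacle here, since the corollary is in effect a repackaging of the proposition; the only point requiring care is the bookkeeping of hypotheses. The two halves of Proposition~\ref{Prp_SaddlePointsVSOptSolDualProb} are applied under slightly different bundles of assumptions, so I would make sure that $(A1)$--$(A4)$, the zero duality gap property, and the existence of a globally optimal solution of $(\mathcal{P})$ are each explicitly invoked at exactly the moment the corresponding half of the proposition is used, rather than silently carried along.
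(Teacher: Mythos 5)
Your proof is correct and follows exactly the route the paper intends: the corollary is an immediate consequence of Proposition~\ref{Prp_SaddlePointsVSOptSolDualProb}, with the forward direction given by its first part and the converse supplied by its second part together with the paper's standing assumption that $(\mathcal{P})$ has a globally optimal solution. Your bookkeeping of which hypotheses enter where (in particular, that the zero duality gap property is a conclusion in one direction and a premise in the other) matches the paper's intent.
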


\begin{corollary} \label{Crlr_IndependenceGSPofOptSol}
Let assumptions $(A1)$--$(A4)$ be satisfied, and let $(x_*, \lambda_*)$ be a global saddle point 
of $\mathscr{L}(x, \lambda, c)$. Then for any globally optimal solution $z_*$ of $(\mathcal{P})$ the pair 
$(z_*, \lambda_*)$ is a global saddle point of $\mathscr{L}(x, \lambda, c)$ as well.
\end{corollary}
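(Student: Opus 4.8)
The plan is to chain the two implications of Proposition~\ref{Prp_SaddlePointsVSOptSolDualProb}, which was established precisely under assumptions $(A1)$--$(A4)$ and which cleanly decouples the primal optimizer from the dual one. First I would feed the given global saddle point $(x_*, \lambda_*)$ into the forward implication: this yields that for every $c > c^*(x_*, \lambda_*)$ the pair $(\lambda_*, c)$ is a globally optimal solution of the augmented dual problem, and that the zero duality gap property holds for $\mathscr{L}(x, \lambda, c)$.

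Next I would fix any value $c_* > c^*(x_*, \lambda_*)$ and invoke the converse implication of the same proposition. Since $(\lambda_*, c_*)$ is dual optimal and the zero duality gap property is already in force, the converse direction applies to \emph{every} globally optimal solution of $(\mathcal{P})$. Taking the arbitrary global optimizer $z_*$ in the role of the optimizer denoted $x_*$ there, one concludes that $(z_*, \lambda_*)$ is a global saddle point, which is exactly the claim.

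The only thing one genuinely needs to notice is that the optimal dual multiplier $\lambda_*$ produced in the first step does not depend on which primal optimizer generated it; the converse implication of Proposition~\ref{Prp_SaddlePointsVSOptSolDualProb} is phrased for an arbitrary globally optimal $z_*$ exactly so as to permit this substitution. I therefore expect no real obstacle: all the analytic content — the monotonicity in $c$ coming from $(A4)$ and the weak duality coming from $(A2)$ — is already sealed inside that proposition, and the corollary is essentially a bookkeeping consequence of it.

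As a cross-check, I would note that one can also argue directly, bypassing the dual problem. By Remark~\ref{Rmrk_ALValueAtSP} the saddle value satisfies $\mathscr{L}(x_*, \lambda_*, c) = f(x_*) = \inf_{x \in A} \mathscr{L}(x, \lambda_*, c)$ for $c > c^*(x_*, \lambda_*)$. Because $z_*$ is feasible with $f(z_*) = f(x_*)$, assumption $(A2)$ yields $\mathscr{L}(z_*, \lambda, c) \le f(z_*) = f(x_*)$ for all $\lambda \in \Lambda$, whereas $\mathscr{L}(z_*, \lambda_*, c) \ge \inf_{x \in A} \mathscr{L}(x, \lambda_*, c) = f(x_*)$ holds trivially since $z_* \in A$. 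These combine to force $\mathscr{L}(z_*, \lambda_*, c) = f(x_*)$, and then both saddle inequalities at $(z_*, \lambda_*)$ drop out immediately.
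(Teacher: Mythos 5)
Your proposal is correct and follows exactly the paper's intended route: the corollary is stated immediately after Proposition~\ref{Prp_SaddlePointsVSOptSolDualProb} precisely because it follows by chaining its forward implication (yielding dual optimality of $(\lambda_*, c)$ and the zero duality gap property) with its converse implication applied to the arbitrary global optimizer $z_*$. Your direct cross-check via Remark~\ref{Rmrk_ALValueAtSP} and assumption $(A2)$ is also sound, but the main argument is the one the paper has in mind.
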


Let us also obtain a characterization of global saddle points of the augmented Lagrangian $\mathscr{L}(x, \lambda, c)$
in terms of the behaviour of \textit{the optimal value function} (or \textit{the perturbation function}) 
$$
  \beta(p) = \inf\big\{ f(x) \bigm| x \in A, \: G(x) - p \in K \big\}	\quad \forall p \in Y
$$
of the problem $(\mathcal{P})$ (cf.~\cite{RockafellarWets}, Section~11.K, and \cite{ShapiroSun,WangYangYang}).

\begin{proposition} \label{Prp_SaddlePointsAsGenerSubgradients}
Let assumptions $(A1)$--$(A5)$ be satisfied. Then a pair $(x_*, \lambda_*) \in A \times \Lambda$ is a global saddle
point of $\mathscr{L}(x, \lambda, c)$ if and only if $x_*$ is a globally optimal solution of the problem
$(\mathcal{P})$,
and there exists $c_0 > 0$ such that
\begin{equation} \label{LagrMultAsGlobalGenerSubgradient}
  \beta(p) \ge \beta(0) - \Phi(p, \lambda_*, c) \quad \forall p \in Y \quad \forall c \ge c_0.
\end{equation}
\end{proposition}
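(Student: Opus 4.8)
The plan is to derive \eqref{LagrMultAsGlobalGenerSubgradient} from the right-hand saddle point inequality and, conversely, to reconstruct the full two-sided saddle point inequality from \eqref{LagrMultAsGlobalGenerSubgradient}. Two elementary facts will do most of the work. The first is that $\beta(G(x)) \le f(x)$ for every $x \in A$, because $x$ itself is feasible for the perturbation $p = G(x)$ (indeed $G(x) - G(x) = 0 \in K$); in particular, global optimality of $x_*$ gives $\beta(G(x_*)) \le f(x_*) = \beta(0)$. The second is assumption $(A5)$: if $G(x) - p \in K$, then $\Phi(G(x), \lambda_*, c) \le \Phi(p, \lambda_*, c)$.

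For the forward implication, suppose $(x_*, \lambda_*)$ is a global saddle point. By Proposition~\ref{Prp_SaddlePointImpliesOptSol} the point $x_*$ is globally optimal, so $\beta(0) = f(x_*)$, and by Remark~\ref{Rmrk_ALValueAtSP} one has $\mathscr{L}(x_*, \lambda_*, c) = f(x_*)$ for all sufficiently large $c$. Fix such a $c$ and an arbitrary $p \in Y$. If no $x \in A$ satisfies $G(x) - p \in K$, then $\beta(p) = + \infty$ and the claim is trivial; otherwise, pick any such $x$. Since $G(x) - p \in K$, assumption $(A5)$ yields $\Phi(G(x), \lambda_*, c) \le \Phi(p, \lambda_*, c)$, and the right-hand saddle point inequality $f(x_*) = \mathscr{L}(x_*, \lambda_*, c) \le f(x) + \Phi(G(x), \lambda_*, c)$ then gives $f(x) \ge \beta(0) - \Phi(p, \lambda_*, c)$. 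Taking the infimum over all admissible $x$ produces \eqref{LagrMultAsGlobalGenerSubgradient}.

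For the converse, assume $x_*$ is globally optimal and that \eqref{LagrMultAsGlobalGenerSubgradient} holds for $c \ge c_0$. Feasibility of $x_*$ and $(A2)$ give $\Phi(G(x_*), \lambda_*, c) \le 0$, so $\mathscr{L}(x_*, \lambda_*, c) \le f(x_*) < + \infty$. The crucial step is to upgrade this to an equality: evaluating \eqref{LagrMultAsGlobalGenerSubgradient} at $p = G(x_*)$ gives $\beta(G(x_*)) \ge \beta(0) - \Phi(G(x_*), \lambda_*, c)$, and since $\beta(G(x_*)) \le \beta(0)$ this forces $\Phi(G(x_*), \lambda_*, c) \ge 0$; combined with $(A2)$ we obtain $\Phi(G(x_*), \lambda_*, c) = 0$ and hence $\mathscr{L}(x_*, \lambda_*, c) = f(x_*)$. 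The left saddle point inequality is then immediate from $(A2)$, because $\mathscr{L}(x_*, \lambda, c) = f(x_*) + \Phi(G(x_*), \lambda, c) \le f(x_*)$ for every $\lambda \in \Lambda$. For the right inequality I would take an arbitrary $x \in A$, apply \eqref{LagrMultAsGlobalGenerSubgradient} at $p = G(x)$, and combine it with $\beta(G(x)) \le f(x)$ to get $f(x) + \Phi(G(x), \lambda_*, c) \ge \beta(0) = \mathscr{L}(x_*, \lambda_*, c)$; the infimum over $x \in A$ then closes the argument.

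I expect the main obstacle to be the converse direction, and in particular the equality $\Phi(G(x_*), \lambda_*, c) = 0$. The subgradient-type inequality only bounds $\beta$ from below, so one cannot read off the value of $\Phi$ at the feasible point directly; the trick is to test the inequality at the specific perturbation $p = G(x_*)$ and to pair it with the feasibility estimate $\beta(G(x_*)) \le f(x_*)$, which pins $\Phi(G(x_*), \lambda_*, c)$ exactly to zero and thereby makes $x_*$ a maximizer in $\lambda$. The forward implication, by contrast, is a direct application of $(A5)$ and requires no further ideas.
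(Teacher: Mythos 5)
Your proof is correct, and the forward implication coincides with the paper's: global optimality via Proposition~\ref{Prp_SaddlePointImpliesOptSol}, the identity $\mathscr{L}(x_*,\lambda_*,c)=f(x_*)$ from Remark~\ref{Rmrk_ALValueAtSP}, monotonicity $(A5)$ applied to $G(x)-p\in K$, and an infimum over admissible $x$. In the converse you take a genuinely different, more elementary route. The paper plugs $p=G(x)$ into \eqref{LagrMultAsGlobalGenerSubgradient} to get $\inf_{x\in A}\mathscr{L}(x,\lambda_*,c)\ge f(x_*)$, interprets this as saying that the zero duality gap property holds and that $(\lambda_*,c)$ solves the augmented dual problem, and then invokes Proposition~\ref{Prp_SaddlePointsVSOptSolDualProb} (whose proof requires $(A1)$--$(A4)$). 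You instead verify the two saddle point inequalities directly: the right one exactly as in the paper (test at $p=G(x)$ and use $\beta(G(x))\le f(x)$, which rests on $0\in K$), the pinning $\Phi(G(x_*),\lambda_*,c)=0$ by testing at $p=G(x_*)$ against $(A2)$, and the left one from $(A2)$ alone. This inlines the content of Proposition~\ref{Prp_SaddlePointsVSOptSolDualProb} rather than citing it, making your converse self-contained and showing that it needs only $(A2)$ among the listed assumptions; what the paper's detour buys is the byproduct that the zero duality gap holds and that $(\lambda_*,c)$ is a globally optimal solution of the augmented dual problem. One small streamlining: the step you single out as crucial (testing at $p=G(x_*)$) is subsumed by your final step, since taking $x=x_*$ in the already-established inequality $\mathscr{L}(x,\lambda_*,c)\ge\beta(0)$ forces $\Phi(G(x_*),\lambda_*,c)\ge 0$ directly.
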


\begin{proof}
Let $(x_*, \lambda_*)$ be a global saddle point of $\mathscr{L}(x, \lambda, c)$. By 
Proposition~\ref{Prp_SaddlePointImpliesOptSol} the point $x_*$ is a globally optimal solution of the problem
$(\mathcal{P})$, which implies that $f(x_*) = \beta(0)$.

Fix an arbitrary $p \in Y$. If there is no $x \in A$ such that $G(x) - p \in K$, then $\beta(p) = + \infty$, and
inequality \eqref{LagrMultAsGlobalGenerSubgradient} is valid. Otherwise, for any such $x$ one has
$$
  \mathscr{L}(x, \lambda_*, c) \ge \mathscr{L}(x_*, \lambda_*, c) = f(x_*) = \beta(0)
  \quad \forall c > c^*(x_*, \lambda_*)
$$
by virtue of the fact that $(x_*, \lambda_*)$ is a global saddle point of $\mathscr{L}(x, \lambda, c)$ and
Remark~\ref{Rmrk_ALValueAtSP}. Hence and from $(A5)$ it follows that $f(x) \ge \beta(0) - \Phi(p, \lambda_*, c)$
for all $c > c^*(x_*, \lambda_*)$. Taking the infimum over all $x \in A$ for which $G(x) - p \in K$ one obtains that
\eqref{LagrMultAsGlobalGenerSubgradient} holds true.

Let, now, $x_*$ be a globally optimal solution of $(\mathcal{P})$, and let \eqref{LagrMultAsGlobalGenerSubgradient} be
valid. Fix arbitrary $c \ge c_0$ and $x \in A$. Define $p = G(x)$. Then \eqref{LagrMultAsGlobalGenerSubgradient} implies
that 
$$
  f(x) \ge \beta(p) \ge f(x_*) - \Phi(G(x), \lambda_*, c)
$$
or, equivalently, $\mathscr{L}(x, \lambda_*, c) \ge f(x_*)$. Taking the infimum over all $x \in A$ one gets that
$\Theta(\lambda_*, c) \ge f(x_*) = \inf_{x \in \Omega} f(x)$. Hence and from the fact that the weak duality between
$(\mathcal{P})$ and the augmented dual problem holds by $(A2)$, it follows that the zero duality gap property holds true
for $\mathscr{L}(x, \lambda, c)$, and $(\lambda_*, c)$ is a globally optimal solution of the augmented dual problem.
Consequently, applying Proposition~\ref{Prp_SaddlePointsVSOptSolDualProb} one gets that $(x_*, \lambda_*)$ is a global
saddle point.	 
\end{proof}

\begin{remark}
It should be noted that under some additional assumptions inequality \eqref{LagrMultAsGlobalGenerSubgradient} holds
true if and only if there exist $c_0 > 0$ and a neighbourhood of zero, $U \subset Y$, such that
$\beta(p) \ge \beta(0) - \Phi(p, \lambda_*, c)$ for all $p \in U$ and $c \ge c_0$ 
(cf.~\cite{RockafellarWets}, Theorem~11.61, and \cite{ShapiroSun}, Lemma~3.1). However, we do not present an exact
formulation of this result here, and leave it to the interested reader.
\end{remark}

\section{The Localization Principle for Saddle Points}
\label{Section_LocalizationPrinciple_GSP}

In this section, we develop a new general method for proving the existence of global saddle points of augmented
Lagrangian functions. According to this method, one has to verify that there exists a multiplier $\lambda_*$ such that
for any globally optimal solution $x_*$ of the problem $(\mathcal{P})$ the pair $(x_*, \lambda_*)$ is a local saddle
point of $\mathscr{L}(x, \lambda, c)$ in order to prove the existence of a global saddle point. In turn, the existence
of such multiplier $\lambda_*$ can usually be proved with the use of sufficient optimality conditions. Thus, the general
method for proving the existence of global saddle points of augmented Lagrangian functions, that we discuss in this
section, allows one to reduce the problem of the existence of global saddle points to a local analysis of sufficient
optimality conditions. That is why we call this method \textit{the localization principle}.

Let $x_*$ be a globally optimal solution of the problem $(\mathcal{P})$. Denote by $\Lambda_{loc}(x_*)$ 
(resp. $\Lambda(x_*)$) the set of all $\lambda_* \in \Lambda$ for which the pair $(x_*, \lambda_*)$ is a local (resp.
global) saddle point of the augmented Lagrangian function $\mathscr{L}(x, \lambda, c)$. Also, denote by $\Omega_*$ the
set of all globally optimal solutions of the problem $(\mathcal{P})$. Define
$$
  \Lambda_{loc}(\mathcal{P}) = \bigcap_{x_* \in \Omega_*} \Lambda_{loc}(x_*), \quad
  \Lambda(\mathcal{P}) = \bigcap_{x_* \in \Omega_*} \Lambda(x_*)
$$
Corollary~\ref{Crlr_IndependenceGSPofOptSol} implies that, in actuality, $\Lambda(\mathcal{P}) = \Lambda(x_*)$ for 
any $x_* \in \Omega_*$, provided assumptions $(A1)$--$(A4)$ are valid.

Observe that $\Lambda(\mathcal{P}) \subseteq \Lambda_{loc}(\mathcal{P})$ by virtue of the fact that every global saddle
point of $\mathscr{L}(x, \lambda, c)$ is a local one. Therefore for the existence of a global saddle point of 
$\mathscr{L}(x, \lambda, c)$ it is necessary that $\Lambda_{loc}(\mathcal{P}) \ne \emptyset$. The localization principle
states that under some additional assumptions the condition $\Lambda_{loc}(\mathcal{P}) \ne \emptyset$ is also
sufficient for the existence of a global saddle point.

We need an auxiliary result in order to prove the localization principle. 

\begin{lemma} \label{Lemma_MinimizingSeq}
Let $\lambda_* \in \Lambda_{loc}(\mathcal{P})$, $A$ be closed, and assumptions $(A2)$, $(A4)$ and $(A6)$ be valid.
Suppose also that $G$ is continuous on $A$, $\mathscr{L}(\cdot, \lambda_*, c)$ is l.s.c. on $A$ for all $c > 0$, and
$\mathscr{L}(\cdot, \lambda_*, c_0)$ is bounded from below on $A$ for some $c_0 > 0$. Let, finally, sequences 
$\{ c_n \} \subset [c_0, + \infty)$ and $\{ \varepsilon_n \} \subset (0, + \infty)$ be such that 
$c_n \to + \infty$ and $\varepsilon_n \to 0$ as $n \to \infty$. Then any cluster point of a sequence 
$\{ x_n \} \subset A$ such that
\begin{equation} \label{MinimizingSequence}
  \mathscr{L}(x_n, \lambda_*, c_n) \le \inf_{x \in A} \mathscr{L}(x, \lambda_*, c_n) + \varepsilon_n 
  \quad \forall n \in \mathbb{N}.
\end{equation}
is a globally optimal solution of the problem $(\mathcal{P})$.
\end{lemma}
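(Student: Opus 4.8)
The plan is to fix a globally optimal solution $z_* \in \Omega_*$ (one exists by the standing hypothesis on $(\mathcal{P})$) and exploit that $\lambda_* \in \Lambda_{loc}(\mathcal{P})$ makes $(z_*, \lambda_*)$ a local saddle point. First I would produce a uniform upper bound on the near-optimal values. Since $z_*$ is feasible, assumption $(A2)$ gives $\Phi(G(z_*), \lambda_*, c_n) \le 0$, hence $\inf_{x \in A} \mathscr{L}(x, \lambda_*, c_n) \le \mathscr{L}(z_*, \lambda_*, c_n) \le f(z_*)$, and therefore $\mathscr{L}(x_n, \lambda_*, c_n) \le f(z_*) + \varepsilon_n$ for all $n$ by \eqref{MinimizingSequence}. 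Let $\bar x$ be a cluster point of $\{ x_n \}$, and pass to a subsequence $x_{n_k} \to \bar x$. Since $A$ is closed, $\bar x \in A$, and $G(x_{n_k}) \to G(\bar x)$ by continuity of $G$.

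The technical heart of the argument, and the step I expect to be the main obstacle, is to show that $\bar x$ is feasible, i.e. $G(\bar x) \in K$. Arguing by contradiction, suppose $G(\bar x) \notin K$ and set $y = G(\bar x)$. Let $M$ be a lower bound for $\mathscr{L}(\cdot, \lambda_*, c_0)$ on $A$. Splitting the convolution term via $(A4)$ as $\Phi(G(x_{n_k}), \lambda_*, c_{n_k}) = \Phi(G(x_{n_k}), \lambda_*, c_0) + [\Phi(G(x_{n_k}), \lambda_*, c_{n_k}) - \Phi(G(x_{n_k}), \lambda_*, c_0)]$, I would estimate
\[
  \mathscr{L}(x_{n_k}, \lambda_*, c_{n_k}) \ge M + \big[ \Phi(G(x_{n_k}), \lambda_*, c_{n_k}) - \Phi(G(x_{n_k}), \lambda_*, c_0) \big].
\]
Applying $(A6)$ to this $y$, $\lambda_*$ and $c_0$ produces $r > 0$ for which the infimal increment over $B(y, r)$ diverges to $+\infty$ as $c \to \infty$. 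For large $k$ one has $G(x_{n_k}) \in B(y, r)$ by continuity, and $\Phi(G(x_{n_k}), \lambda_*, c_0) < +\infty$ (since $\mathscr{L}(x_{n_k}, \lambda_*, c_{n_k}) < +\infty$ forces $\Phi(G(x_{n_k}), \lambda_*, c_{n_k}) < +\infty$, whence $(A4)$ and $c_{n_k} \ge c_0$ give finiteness at $c_0$); thus $G(x_{n_k})$ is admissible in the infimum of $(A6)$, and the bracketed term is bounded below by a quantity tending to $+\infty$. This forces $\mathscr{L}(x_{n_k}, \lambda_*, c_{n_k}) \to +\infty$, contradicting the bound $f(z_*) + \varepsilon_{n_k}$. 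The delicate points here are verifying the admissibility condition $\Phi(\cdot, \lambda_*, c_0) < +\infty$ along the subsequence and correctly combining the uniform penalty growth of $(A6)$ with the uniform lower bound supplied by $(A4)$.

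It then remains to prove optimality of the feasible point $\bar x$. Fix $\tilde c \ge c_0$; for $k$ large enough that $c_{n_k} \ge \tilde c$, monotonicity $(A4)$ gives $\mathscr{L}(x_{n_k}, \lambda_*, \tilde c) \le \mathscr{L}(x_{n_k}, \lambda_*, c_{n_k}) \le f(z_*) + \varepsilon_{n_k}$, and lower semicontinuity of $\mathscr{L}(\cdot, \lambda_*, \tilde c)$ yields $\mathscr{L}(\bar x, \lambda_*, \tilde c) \le \liminf_k \mathscr{L}(x_{n_k}, \lambda_*, \tilde c) \le f(z_*)$. Since $\bar x$ is feasible and $z_*$ is globally optimal, $f(\bar x) \ge f(z_*)$, so only the reverse inequality is left. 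Writing $\mathscr{L}(\bar x, \lambda_*, \tilde c) = f(\bar x) + \Phi(G(\bar x), \lambda_*, \tilde c)$ and letting $\tilde c \to \infty$, the non-positive (by $(A2)$) and non-decreasing (by $(A4)$) quantity $\Phi(G(\bar x), \lambda_*, \tilde c)$ increases to its supremum; using that this supremum is $0$ on the feasible set gives $f(\bar x) \le f(z_*)$, hence $f(\bar x) = f(z_*)$ and $\bar x \in \Omega_*$. I would emphasize that this final passage is genuinely subtle: unlike a classical penalty, $\Phi$ is \emph{non-positive} on the feasible set, so optimality does not follow from the value estimate $\mathscr{L}(\bar x, \lambda_*, \tilde c) \le f(z_*)$ alone and truly depends on the vanishing $\Phi(G(\bar x), \lambda_*, c) \to 0$ as $c \to \infty$ at feasible arguments.
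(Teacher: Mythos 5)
Your proposal follows the paper's own proof step for step through its first two stages, and those stages are sound: the uniform bound $\mathscr{L}(x_n,\lambda_*,c_n)\le f(z_*)+\varepsilon_n$ obtained from $(A2)$ at a fixed $z_*\in\Omega_*$, and the feasibility of any cluster point via the increment decomposition, the lower bound on $\mathscr{L}(\cdot,\lambda_*,c_0)$, the finiteness of $\Phi(G(x_{n_k}),\lambda_*,c_0)$ extracted from $(A4)$, and assumption $(A6)$ --- this is exactly how the paper argues. The genuine gap is in your final step: the claim that the supremum over $c$ of $\Phi(G(\bar x),\lambda_*,c)$ ``is $0$ on the feasible set'' is not a hypothesis of the lemma and is not derivable from $(A2)$, $(A4)$, $(A6)$; it is precisely assumption $(A12)$, which the lemma does not assume. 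Nor can it be extracted from $\lambda_*\in\Lambda_{loc}(\mathcal{P})$: the local saddle point inequalities constrain $\Phi(G(\cdot),\lambda_*,c)$ only at points of $\Omega_*$, whereas your cluster point $\bar x$ may lie far from $\Omega_*$, and nothing prevents $\Phi(G(\bar x),\lambda_*,c)$ from increasing to a strictly negative limit instead of to $0$.

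You should also know that the paper's own proof fails at exactly this spot, and your diagnosis of where the difficulty sits is more accurate than the paper's treatment of it. After establishing $\mathscr{L}(x_*,\lambda_*,c)\le f(x_0)$ and feasibility of $x_*$, the paper asserts that ``applying $(A2)$ one obtains $f(x_*)\le f(x_0)$''; but $(A2)$ gives $\Phi(G(x_*),\lambda_*,c)\le 0$, i.e. $f(x_*)\ge\mathscr{L}(x_*,\lambda_*,c)$, so the established bound says nothing about $f(x_*)$ versus $f(x_0)$ --- the inequality points the wrong way. Indeed the lemma is false under its stated hypotheses: take $Y=\mathbb{R}$, $K=\mathbb{R}_-$, $\Lambda=K^*=\mathbb{R}_+$, $A=X=\mathbb{R}$, and the classical penalty Lagrangian $\Phi(y,\lambda,c)=\lambda y+c\max\{y,0\}^2$, which satisfies $(A1)$--$(A11)$ but violates $(A12)$; let $f(x)=x^2-x$ and $G(x)=x-100\varphi(x)$, where $\varphi$ is a continuous bump with $0\le\varphi\le 1$, support in $[2,4]$ and $\varphi(3)=1$. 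Then $\Omega_*=\{0\}$ with $f_*=0$, the pair $(0,1)$ is a local saddle point for every $c>0$ (so $1\in\Lambda_{loc}(\mathcal{P})$), and $\mathscr{L}(\cdot,1,c)=x^2-100\varphi(x)+c\max\{G(x),0\}^2$ is continuous, coercive and bounded below; yet $\mathscr{L}(3,1,c)=6-97=-91<f_*$, so the exact global minimizers of $\mathscr{L}(\cdot,1,c_n)$ (which satisfy the lemma's condition with any $\varepsilon_n\to 0$) all lie in $[2,4]$, and every cluster point of them is feasible but not globally optimal. So the vanishing property you invoked is indispensable and must be added as a hypothesis (namely $(A12)$); once it is added, your argument closes correctly, while the paper's, as written, does not.
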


\begin{proof}
Let $x_*$ be a cluster point of a sequence $\{ x_n \} \subset A$ satisfying \eqref{MinimizingSequence}. Replacing, if
necessary, the sequences $\{ c_n \}$, $\{ \varepsilon_n \}$ and $\{ x_n \}$ by their subsequences, one can suppose that
$\{ x_n \}$ converges to $x_*$. Note that $x_* \in A$ due to the fact that $A$ is closed.

Fix an arbitrary $x_0 \in \Omega_*$. Note that $\lambda_* \in \Lambda$, since 
$\lambda_* \in \Lambda_{loc}(\mathcal{P})$, which implies that $\mathscr{L}(x_0, \lambda_*, c) \le f(x_0)$ for all 
$c > 0$ by assumption $(A2)$. Hence and from \eqref{MinimizingSequence} it follows that
\begin{equation} \label{BoundednesAlongTheNet}
  \mathscr{L}(x_n, \lambda_*, c_n) \le f(x_0) + \varepsilon_n \quad \forall n \in \mathbb{N},
\end{equation}
which yields that 
\begin{equation} \label{ConstrBoundAlongTheNet}
  f(x_n) < + \infty, \quad \Phi( G(x_n), \lambda_*, c_n ) < + \infty \quad \forall n \in \mathbb{N}.
\end{equation}
Choose an arbitrary $c > 0$. Since $\lim c_n = + \infty$, there exists $n_0 \in \mathbb{N}$ such that $c_n \ge c$
for all $n \ge n_0$. Hence with the use of $(A4)$ one obtains that 
$\mathscr{L}(x_n, \lambda_*, c) \le f(x_0) + \varepsilon_n$ for all $n \ge n_0$. Passing to the limit inferior as 
$n \to \infty$ one gets that $\mathscr{L}(x_*, \lambda_*, c) \le f(x_0)$ for all $c > 0$. Let us verify that $x_*$ is a
feasible point of the problem $(\mathcal{P})$. Then applying $(A2)$ one obtains that $f(x_*) \le f(x_0)$, which implies
that $x_* \in \Omega_*$ due to the definition of $x_0$.

Arguing by reductio ad absurdum, suppose that $x_*$ is infeasible, i.e. that $y_* := G(x_*) \notin K$. Then by $(A6)$
there exists $r > 0$ such that
\begin{equation} \label{MinimSeqAssumpA8}
  \lim_{c \to \infty} \inf\{ \Phi(y, \lambda_*, c) - \Phi(y, \lambda_*, c_0) \mid 
  y \in B(y_*, r) \colon \Phi(y, \lambda_*, c_0) < + \infty \} = + \infty.
\end{equation}
Since $G$ is continuous and $x_n \to x_*$ as $n \to \infty$, there exists $k \in \mathbb{N}$ such that 
$G(x_n) \in B(y_*, r)$ for all $n \ge k$. Consequently, taking into account \eqref{ConstrBoundAlongTheNet} and
assumption $(A4)$ one gets that for all $n \ge k$ the following inequalities hold true
\begin{multline*}
  \mathscr{L}(x_n, \lambda_*, c_n) = \mathscr{L}(x_n, \lambda_*, c_0) +
  \Phi(G(x_n), \lambda_*, c_n) - \Phi(G(x_n), \lambda_*, c_0) \\
  \ge \gamma + \inf\{ \Phi(y, \lambda_*, c_n) - \Phi(y, \lambda_*, c_0) \mid 
  y \in B(y_*, r) \colon \Phi(y, \lambda_*, c_0) < + \infty \},
\end{multline*}
where $\gamma = \inf_{x \in A} \mathscr{L}(x, \lambda_*, c_0) > - \infty$. Hence applying \eqref{MinimSeqAssumpA8}
one obtains that $\mathscr{L}(x_n, \lambda_*, c_n) \to + \infty$ as $n \to \infty$, which contradicts
\eqref{BoundednesAlongTheNet}. Thus, $x_*$ is a feasible point of the problem $(\mathcal{P})$, which completes 
the proof.	 
\end{proof}

Now, we are ready to derive the localization principle for global saddle points of the augmented Lagrangian function
$\mathscr{L}(x, \lambda, c)$. Denote by $f_* = \inf_{x \in \Omega} f(x)$ the optimal value of the problem
$(\mathcal{P})$.

\begin{theorem}[Localization Principle] \label{Thrm_LocalizationPrinciple}
Let $A$ be closed, $G$ be continuous on $A$, and $\mathscr{L}(\cdot, \lambda, c)$ be l.s.c. on $A$ for all 
$\lambda \in \Lambda$ and $c > 0$. Suppose also that assumptions $(A1)$--$(A4)$ and $(A6)$ are satisfied. Then a global
saddle point of $\mathscr{L}(x, \lambda, c)$ exists if and only if $\Lambda_{loc}(\mathcal{P}) \ne \emptyset$, and there
exist $\lambda_* \in \Lambda_{loc}(\mathcal{P})$ and $c_0 > 0$ such that the set
$$
  S(\lambda_*, c_0) := \big\{ x \in A \bigm| \mathscr{L}(x, \lambda_*, c_0) < f_* \big\}
$$
is either bounded or empty. Furthermore, $\Lambda(\mathcal{P})$ consists of all those 
$\lambda_* \in \Lambda_{loc}(\mathcal{P})$ for which there exists $c_0 > 0$ such that the set $S(\lambda_*, c_0)$ is
either bounded or empty.
\end{theorem}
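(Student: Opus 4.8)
The plan is to prove the ``furthermore'' part first, since the main equivalence is an immediate consequence of it together with the observation that every global saddle point is a local one. Indeed, once we know that $\Lambda(\mathcal{P})$ equals the set of those $\lambda_* \in \Lambda_{loc}(\mathcal{P})$ admitting some $c_0 > 0$ for which $S(\lambda_*, c_0)$ is bounded or empty, a global saddle point exists iff $\Lambda(\mathcal{P}) \neq \emptyset$ iff this latter set is nonempty, which is exactly the stated condition. Thus I would reduce everything to the two inclusions characterising $\Lambda(\mathcal{P})$.

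For the inclusion into $\Lambda_{loc}(\mathcal{P})$, suppose $\lambda_* \in \Lambda(\mathcal{P})$. Since any global saddle point is a local one, $\Lambda(\mathcal{P}) \subseteq \Lambda_{loc}(\mathcal{P})$, so $\lambda_* \in \Lambda_{loc}(\mathcal{P})$. Fixing any $x_* \in \Omega_*$, the pair $(x_*, \lambda_*)$ is a global saddle point, and by Remark~\ref{Rmrk_ALValueAtSP} together with the right saddle inequality one has $\mathscr{L}(x, \lambda_*, c) \geq \mathscr{L}(x_*, \lambda_*, c) = f(x_*) = f_*$ for every $x \in A$ and every $c > c^*(x_*, \lambda_*)$. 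Hence $S(\lambda_*, c_0) = \emptyset$ for any $c_0 > c^*(x_*, \lambda_*)$, which settles this direction.

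The reverse inclusion is the heart of the matter. Fix $\lambda_* \in \Lambda_{loc}(\mathcal{P})$ and $c_0 > 0$ with $S(\lambda_*, c_0)$ bounded or empty, and write $\Theta(\lambda_*, c) = \inf_{x \in A} \mathscr{L}(x, \lambda_*, c)$. The goal is to show $\Theta(\lambda_*, c) = f_*$ for all large $c$: then $(\lambda_*, c)$ solves the augmented dual problem, the zero duality gap property holds, and Proposition~\ref{Prp_SaddlePointsVSOptSolDualProb} yields that $(x_*, \lambda_*)$ is a global saddle point for every $x_* \in \Omega_*$, i.e. $\lambda_* \in \Lambda(\mathcal{P})$. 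Since weak duality (assumption $(A2)$) already gives $\Theta(\lambda_*, c) \leq f_*$, it suffices to rule out $\Theta(\lambda_*, c) < f_*$ for all $c \geq c_0$. I would argue by contradiction: assuming this, choose $c_n \to +\infty$ with $c_n \geq c_0$, pick $\varepsilon_n \to 0^+$ small enough (say $\varepsilon_n \leq (f_* - \Theta(\lambda_*, c_n))/2$) and $x_n \in A$ with $\mathscr{L}(x_n, \lambda_*, c_n) \leq \Theta(\lambda_*, c_n) + \varepsilon_n < f_*$. By monotonicity in $c$ (assumption $(A4)$) this forces $\mathscr{L}(x_n, \lambda_*, c_0) < f_*$, so $x_n \in S(\lambda_*, c_0)$; boundedness of $S(\lambda_*, c_0)$ and finite-dimensionality of $X$ then provide a cluster point $x_*$.

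The concluding step applies Lemma~\ref{Lemma_MinimizingSeq}, whose hypotheses hold because the l.s.c. function $\mathscr{L}(\cdot, \lambda_*, c_0)$ is bounded below on $A$: it exceeds $f_*$ off $S(\lambda_*, c_0)$ and attains a finite minimum on the compact set $\cl S(\lambda_*, c_0) \subseteq A$. Hence $x_* \in \Omega_*$ and $f(x_*) = f_*$. Because $\lambda_* \in \Lambda_{loc}(\mathcal{P})$, the pair $(x_*, \lambda_*)$ is a local saddle point, so combining its defining inequality with Remark~\ref{Rmrk_ALValueAtSP} there are a neighbourhood $U$ of $x_*$ and $c_1 > 0$ with $\mathscr{L}(x, \lambda_*, c) \geq f_*$ for $x \in U \cap A$ and $c \geq c_1$; evaluating along the subsequence $x_n \to x_*$ with $c_n \to +\infty$ gives $\mathscr{L}(x_n, \lambda_*, c_n) \geq f_*$ for large $n$, contradicting $\mathscr{L}(x_n, \lambda_*, c_n) < f_*$. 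The main obstacle is precisely this contradiction argument: the delicate points are calibrating $\varepsilon_n$ so that the minimizing sequence remains inside the bounded set $S(\lambda_*, c_0)$ (which needs the strict inequality to be preserved), and verifying the lower-boundedness hypothesis of Lemma~\ref{Lemma_MinimizingSeq} through compactness of $\cl S(\lambda_*, c_0)$.
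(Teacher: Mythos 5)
Your proposal is correct and follows essentially the same route as the paper's proof: the boundedness of $S(\lambda_*, c_0)$ together with $(A4)$ traps a minimizing sequence inside $S(\lambda_*, c_0)$, Lemma~\ref{Lemma_MinimizingSeq} identifies its cluster point as a globally optimal solution, and the local saddle point property at that point (via Remark~\ref{Rmrk_ALValueAtSP}) forces $\mathscr{L}(x_n, \lambda_*, c_n) \ge f_*$ for large $n$, which is exactly the contradiction the paper derives. The only deviations are cosmetic: you work with $\varepsilon_n$-minimizers, which lets you skip the paper's intermediate step of producing exact minimizers $x(c)$ via lower semicontinuity on the compact set $\cl S(\lambda_*, c_0)$, and you close by passing through the dual function $\Theta$ and Proposition~\ref{Prp_SaddlePointsVSOptSolDualProb} (together with, implicitly, Corollary~\ref{Crlr_IndependenceGSPofOptSol} for the ``only if'' reduction to $\Lambda(\mathcal{P}) \ne \emptyset$), whereas the paper assembles the saddle-point inequalities directly.
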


\begin{proof}
Suppose that there exists a global saddle point $(x_*, \lambda_*)$ of $\mathscr{L}(x, \lambda, c)$.
By Corollary~\ref{Crlr_IndependenceGSPofOptSol}, for any globally optimal solution $x_0$ of $(\mathcal{P})$ the pair
$(x_0, \lambda_*)$ is a global saddle point of $\mathscr{L}(x, \lambda, c)$, which yields that 
$\lambda_* \in \Lambda_{loc}(\mathcal{P})$. Furthermore, from the definition of global saddle point and
Remark~\ref{Rmrk_ALValueAtSP} it follows that for any $c > c^*(x_*, \lambda_*)$ the set $S(\lambda_*, c)$ is empty. 

Suppose, now, that $\lambda_* \in \Lambda_{loc}(\mathcal{P})$, and there exists $c_0 > 0$ such that the set
$S(\lambda_*, c_0)$ is either bounded or empty. From $(A4)$ it follows that 
\begin{equation} \label{ALFSublevelSetsIncl}
  S(\lambda_*, c) \subseteq S(\lambda_*, c_0) \quad \forall c \ge c_0,
\end{equation}
which implies, in particular, that $S(\lambda_*, c)$ is bounded (or empty) for any $c \ge c_0$. 

If there exists $\overline{c} \ge c_0$ such that $S(\lambda_*, \overline{c}) = \emptyset$, then
$\inf_{x \in A} \mathscr{L}(x, \lambda_*, c) \ge f_*$ for all $c \ge \overline{c}$ due to $(A4)$. Hence applying 
the definition of local saddle point and Remark~\ref{Rmrk_ALValueAtSP} one gets that for any globally optimal solution
$x_*$ of $(\mathcal{P})$, and for all $c > \max\{ \overline{c}, c_{loc}^*(x_*, \lambda_*) \}$ the following inequalities
hold true
$$
  \sup_{\lambda \in \Lambda} \mathscr{L}(x_*, \lambda, c) \le \mathscr{L}(x_*, \lambda_*, c) = f(x_*) = f_* \le 
  \inf_{x \in A} \mathscr{L}(x, \lambda_*, c),
$$
i.e. $(x_*, \lambda_*)$ is a global saddle point of $\mathscr{L}(x, \lambda, c)$.

Thus, it remains to consider the case when $S(\lambda_*, c) \ne \emptyset$ for all $c \ge c_0$. As it was noted above
(see inclusion~\eqref{ALFSublevelSetsIncl}), the set $S(\lambda_*, c)$ is bounded for all $c \ge c_0$. Therefore, taking
into account the fact that $\mathscr{L}(\cdot, \lambda_*, c)$ is l.s.c. on $A$, one obtains that 
$\mathscr{L}(\cdot, \lambda_*, c)$ attains a global minimum on $A$ at a point $x(c)$ for all $c \ge c_0$. Choose an
increasing unbounded sequence $\{ c_n \} \subset [c_0, + \infty)$, and denote $x_n = x(c_n)$. Inclusion
\eqref{ALFSublevelSetsIncl} implies that the sequence $\{ x_n \}$ is contained in the bounded set $S(\lambda_*, c_0)$.
Therefore there exists a subsequence $\{ x_{n_k} \}$ converging to a point $x_*$ that belongs to the set $A$ due to the
fact that $A$ is closed. 

By the definition of $x_n$ one has 
$\mathscr{L}( x_n, \lambda_*, c_n ) = \inf_{x \in A} \mathscr{L}(x, \lambda_*, c_n)$. Consequently, applying
Lemma~\ref{Lemma_MinimizingSeq} one obtains that $x_*$ is a globally optimal solution of the problem $(\mathcal{P})$.
Hence taking into account the fact that $\lambda_* \in \Lambda_{loc}(\mathcal{P})$ and applying
Remark~\ref{Rmrk_ALValueAtSP} one gets that there exist $\overline{c} > 0$ and a neighbourhood $U$ of $x_*$ such that
\begin{equation} \label{ClusterPointIsLocSP}
  \sup_{\lambda \in \Lambda} \mathscr{L}(x_*, \lambda, c) \le \mathscr{L}(x_*, \lambda_*, c) = f(x_*) \le 
  \inf_{x \in U \cap A} \mathscr{L}(x, \lambda_*, c) \quad \forall c \ge \overline{c}.
\end{equation}
Since $\{ x_{n_k} \}$ converges to $x_*$, and $\{ c_n \}$ is an increasing unbounded sequence, there exists 
$k_0 \in \mathbb{N}$ such that $x_{n_k} \in U$ and $c_{n_k} \ge \overline{c}$ for all $k \ge k_0$. Hence applying
\eqref{ClusterPointIsLocSP} and the fact that $x_* \in \Omega_*$ one obtains that 
$\mathscr{L}( x_{n_k}, \lambda_*, c_{n_k} ) \ge f(x_*) = f_*$ for all $k \ge k_0$, which contradicts our assumption that
$S(\lambda_*, c) \ne \emptyset$ for all $c \ge c_0$ due to the fact that $x_n$ is a global minimizer of 
$\mathscr{L}(\cdot, \lambda_*, c_n)$ on the set $A$.	 
\end{proof}

\begin{corollary}[Localization Principle]
Let all assumptions of the theorem above be satisfied, and suppose that the set $A$ is compact. 
Then $\Lambda(\mathcal{P}) = \Lambda_{loc}(\mathcal{P})$.
\end{corollary}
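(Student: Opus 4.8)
The plan is to read the equality directly off the second part of Theorem~\ref{Thrm_LocalizationPrinciple}, which already characterizes $\Lambda(\mathcal{P})$ as the collection of those $\lambda_* \in \Lambda_{loc}(\mathcal{P})$ for which there exists $c_0 > 0$ making the sublevel set $S(\lambda_*, c_0)$ bounded or empty. One inclusion, $\Lambda(\mathcal{P}) \subseteq \Lambda_{loc}(\mathcal{P})$, comes for free, since every global saddle point is in particular a local one (as already observed in the text preceding the theorem). Hence the whole task reduces to establishing the reverse inclusion $\Lambda_{loc}(\mathcal{P}) \subseteq \Lambda(\mathcal{P})$.

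First I would fix an arbitrary $\lambda_* \in \Lambda_{loc}(\mathcal{P})$ and exploit the fact that $A$ is compact in the finite-dimensional normed space $X$, hence bounded. By its very definition the set $S(\lambda_*, c_0)$ is a subset of $A$ for every $c_0 > 0$, so each such sublevel set is automatically bounded, independently of the choice of $\lambda_*$ and $c_0$. Consequently the boundedness side condition in the characterization of $\Lambda(\mathcal{P})$ holds for, say, $c_0 = 1$, and Theorem~\ref{Thrm_LocalizationPrinciple} at once places $\lambda_*$ in $\Lambda(\mathcal{P})$. Since $\lambda_*$ was arbitrary, this gives $\Lambda_{loc}(\mathcal{P}) \subseteq \Lambda(\mathcal{P})$, and together with the trivial inclusion yields the desired equality; the degenerate case $\Lambda_{loc}(\mathcal{P}) = \emptyset$ forces both sides to be empty and needs no separate argument.

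Honestly there is no genuine obstacle here: once the theorem is available, the corollary is an immediate specialization in which compactness trivializes the only nontrivial hypothesis. The single point deserving a moment's care is the elementary observation that $S(\lambda_*, c_0) \subseteq A$ — which holds because the defining inequality $\mathscr{L}(x, \lambda_*, c_0) < f_*$ is imposed only over $x \in A$ — so that the boundedness of $A$ can legitimately be transferred to each sublevel set.
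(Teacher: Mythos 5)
Your proof is correct and is exactly the intended argument: the paper states this corollary without proof precisely because, as you observe, $S(\lambda_*, c_0) \subseteq A$ is automatically bounded when $A$ is compact, so the characterization of $\Lambda(\mathcal{P})$ in the second part of Theorem~\ref{Thrm_LocalizationPrinciple} immediately yields $\Lambda(\mathcal{P}) = \Lambda_{loc}(\mathcal{P})$, the inclusion $\Lambda(\mathcal{P}) \subseteq \Lambda_{loc}(\mathcal{P})$ being trivial. Your handling of the degenerate empty case is also fine.
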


\begin{remark} \label{Remark_LocPrinciple_Unification}
The localization principle for global saddle points of augmented Lagrangian functions for cone constrained programming
problems (Theorem~\ref{Thrm_LocalizationPrinciple}) unifies and sharpens most of the known results on the existence of
global saddle points of augmented Lagrangians for mathematical programming problems (\cite{LiuYang}, Thm.~3.3;
\cite{LiuTangYang2009}, Thm.~3; \cite{LuoMastroeniWu2010}, Thm.~4.1; \cite{ZhouXiuWang}, Thm.~3.3; 
\cite{WuLuo2012b}, Thm.~2; \cite{WangZhouXu2009}, Thms.~3.3 and 3.4; \cite{SunLi}, Thms.~3.1, 3.3 and 3.4;
\cite{WangLiuQu}, part~3 of Cor.~4.1, 4.3 and 4.4, part 2 of Cor.~4.2 and 4.5, and parts 3 and 4 of
Cor.~4.6), nonlinear second order cone programming problems (\cite{ZhouChen2015}, Thm.~3.1) and nonlinear
semidefinite programming (\cite{WuLuoYang2014}, Thm.~4; \cite{LuoWuLiu2015}, Thm.~4.4). Furthermore, the localization
principle provides first simple \textit{necessary and sufficient} conditions for the existence of global saddle points.
\end{remark}

Let us note that if, instead of assumption $(A6)$, stronger assumption $(A7)$ holds true, then instead of 
$S(\lambda_*, c)$ one can use a smaller set.

\begin{proposition}
Let assumptions $(A4)$ and $(A7)$ be valid, and let the function $\mathscr{L}(\cdot, \lambda_*, c_0)$ be bounded from
below on $A$ for some $\lambda_* \in \Lambda$ and $c_0 > 0$. Then the set $S(\lambda_*, c_1)$ is bounded for some 
$c_1 > 0$ iff there exist $c_2 > 0$ and $\alpha > 0$ such that the set 
$Q(\lambda_*, c_2, \alpha) := \{ x \in A \mid \mathscr{L}(x, \lambda_*, c_2) < f_*, \: d(G(x), K) < \alpha \}$
is bounded.
\end{proposition}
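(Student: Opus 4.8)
The plan is to establish the two implications separately; the forward one is immediate and all the substance lies in the converse. For the forward implication, I would note that $Q(\lambda_*, c, \alpha) \subseteq S(\lambda_*, c)$ for every $c > 0$ and every $\alpha > 0$, since the defining inequality $\mathscr{L}(x, \lambda_*, c) < f_*$ of $S(\lambda_*, c)$ is already one of the two conditions defining $Q(\lambda_*, c, \alpha)$. Hence, if $S(\lambda_*, c_1)$ is bounded, it suffices to take $c_2 = c_1$ and any $\alpha > 0$ to conclude that $Q(\lambda_*, c_2, \alpha)$ is bounded.

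For the converse, assume $Q(\lambda_*, c_2, \alpha)$ is bounded and let $m > -\infty$ be a lower bound of $\mathscr{L}(\cdot, \lambda_*, c_0)$ on $A$. The idea is to bound $S(\lambda_*, c_1)$, for a sufficiently large $c_1$, by splitting it according to the value of $\dist(G(x), K)$. For the near-feasible part $\{x \in S(\lambda_*, c_1) \mid \dist(G(x), K) < \alpha\}$, I would use that by $(A4)$ the map $c \mapsto \mathscr{L}(x, \lambda_*, c)$ is non-decreasing, so for $c_1 \ge c_2$ the inequality $\mathscr{L}(x, \lambda_*, c_1) < f_*$ yields $\mathscr{L}(x, \lambda_*, c_2) \le \mathscr{L}(x, \lambda_*, c_1) < f_*$; together with $\dist(G(x), K) < \alpha$ this places $x$ in $Q(\lambda_*, c_2, \alpha)$, so the near-feasible part is bounded.

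The heart of the argument is to show the far-from-feasible part $\{x \in S(\lambda_*, c_1) \mid \dist(G(x), K) \ge \alpha\}$ is empty once $c_1$ is large. For such an $x$ one has $\mathscr{L}(x, \lambda_*, c_1) < f_* < +\infty$, so $f(x)$ and $\Phi(G(x), \lambda_*, c_1)$ are finite, and by $(A4)$, for $c_1 \ge c_0$, also $\Phi(G(x), \lambda_*, c_0) < +\infty$. I would then write the telescoping identity $\mathscr{L}(x, \lambda_*, c_1) = \mathscr{L}(x, \lambda_*, c_0) + [\Phi(G(x), \lambda_*, c_1) - \Phi(G(x), \lambda_*, c_0)]$ and invoke $(A7)$ with $r = \alpha$ and base parameter $c_0$: choosing $c_1$ so large that the corresponding infimum of $\Phi(y, \lambda_*, c_1) - \Phi(y, \lambda_*, c_0)$ over the set $\{\dist(y, K) \ge \alpha,\ \Phi(y, \lambda_*, c_0) < +\infty\}$ exceeds $f_* - m$, and using $\mathscr{L}(x, \lambda_*, c_0) \ge m$, I obtain $\mathscr{L}(x, \lambda_*, c_1) > f_*$, a contradiction. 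Taking $c_1 = \max\{c_0, c_2, C\}$, where $C$ is the threshold supplied by $(A7)$, makes both parts valid simultaneously: $S(\lambda_*, c_1)$ is then contained in the bounded set $Q(\lambda_*, c_2, \alpha)$, hence bounded.

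I expect the only delicate point to be the finiteness bookkeeping in the telescoping step: one must confirm that $G(x)$ lies in the index set over which the $(A7)$-infimum is taken, i.e. that $\Phi(G(x), \lambda_*, c_0)$ is finite, and this is exactly what monotonicity $(A4)$ secures, by transferring finiteness downward from the level $c_1$ to the level $c_0$. Once the parameters $c_0$, $c_1$, $c_2$ are aligned through $(A4)$, the remaining estimates are routine.
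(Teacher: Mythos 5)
Your proposal is correct and follows essentially the same route as the paper's proof: the forward implication via the inclusion $Q(\lambda_*, c, \alpha) \subseteq S(\lambda_*, c)$, and the converse by using $(A4)$ to place the near-feasible points of $S(\lambda_*, c_1)$ inside $Q(\lambda_*, c_2, \alpha)$ and the telescoping estimate with $(A7)$ (anchored at the lower bound of $\mathscr{L}(\cdot, \lambda_*, c_0)$) to rule out points with $\dist(G(x), K) \ge \alpha$ once $c_1$ is large. Your explicit remark that $(A4)$ transfers finiteness of $\Phi(G(x), \lambda_*, \cdot)$ downward from $c_1$ to $c_0$, so that $G(x)$ lies in the index set of the $(A7)$-infimum, is a detail the paper leaves implicit, and it is handled correctly.
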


\begin{proof}
The validity of the ``only if'' part of the proposition follows directly from the obvious inclusion
$Q(\lambda_*, c, \alpha) \subseteq S(\lambda_*, c)$ that holds true for all $c > 0$ and $\alpha > 0$. Therefore, let us
prove the ``if'' part. Namely, let us show that for any $\alpha > 0$ there exists $\overline{c} > 0$ such that
$S(\lambda_*, c) \subseteq Q(\lambda_*, c, \alpha)$ for all $c \ge \overline{c}$. Then taking into account the inclusion
$Q(\lambda_*, c', \alpha) \subseteq Q(\lambda_*, c'', \alpha)$, that holds true for all $c' \ge c''$ due to $(A4)$, one
obtains the required result.

Fix $\alpha > 0$. Let $x \in A$ be such that $d(G(x), K) \ge \alpha$ and $\Phi(G(x), \lambda_*, c_0) < + \infty$. Then
for all $c \ge c_0$ one has
\begin{multline*}
  \mathscr{L}(x, \lambda_*, c) = 
  \mathscr{L}(x, \lambda_*, c_0) + \Phi(G(x), \lambda_*, c) - \Phi(G(x), \lambda_*, c_0) \\
  \ge \gamma + \inf\big\{ \Phi(y, \lambda_*, c) - \Phi(y, \lambda_*, c_0) \bigm|
  y \in Y \colon d(y, K) \ge \alpha, \: \Phi(y, \lambda_*, c_0) < + \infty \big\},
\end{multline*}
where $\gamma = \inf_{x \in A} \mathscr{L}(x, \lambda_*, c_0) > - \infty$. Applying $(A7)$ one gets that there
exists $\overline{c} \ge c_0$ (that depends only on $\lambda_*$, $c_0$ and $\alpha > 0$) such 
that $\mathscr{L}(x, \lambda_*, c) \ge f_*$ for all $c \ge \overline{c}$. In other words, 
$S(\lambda_*, c) = Q(\lambda_*, c, \alpha)$ for all $c \ge \overline{c}$.	 
\end{proof}

\begin{remark}
Let the following assumption be valid:
\begin{itemize}
\item[(A13)]{$\forall \lambda \in \Lambda$ $\forall \varepsilon > 0$ $\forall c_0 > 0$ $\exists \, c \ge c_0$ 
$\exists \, \alpha > 0$ such that $\Phi(G(x), \lambda, c) > - \varepsilon$ for all 
$x \in A \colon \dist(G(x), K) < \alpha$;
}
\end{itemize}
Then one can easily verify that the set $Q(\lambda, c, \alpha)$ from the proposition above is bounded for some $c > 0$
and $\alpha > 0$, provided there exists $\gamma > 0$ such that the set
\begin{equation} \label{ProblemSublevelSet}
  \big\{ x \in A \bigm| f(x) < f_* + \gamma, \: d(G(x), K) < \gamma \big\}
\end{equation}
is bounded. Note that the assumption on the boundedness of set \eqref{ProblemSublevelSet} is utilized in most of 
the known results on existence of global saddle points of augmented Lagrangian functions (cf., e.g., \cite{SunLi},
Thms.~3.1--3.4; \cite{LiuYang}, Thm.~3.3; \cite{WuLuo2012b}, Thm.~2; \cite{ZhouXiuWang}, Thm.~3.3;
\cite{ZhouZhouYang2014}, Thms.~3.1 and 3.2; \cite{ZhouChen2015}, Thm.~3.1, etc.). Observe also that
assumption $(A13)$ is satisfied in Examples~\ref{Example_RockafellarWetsAL} and
\ref{Example_RockafellarWetsAL_SemiInfProg}, provided $\sigma(y) \ge \omega(\| y \|)$ for some non-negative continuous
function $\omega$ such that $\liminf_{t \to +\infty} \omega(t) / t > 0$. This assumptions is always valid in 
Examples~\ref{Example_EssentiallyQuadraticAL}--\ref{Example_Mangasarian}, 
\ref{Example_pthPowerAugmLagr}, \ref{Example_RockWetsAL_SOC} and \ref{Example_RockWetsAL_SemiDefProg}. Finally, $(A13)$
is satisfied in Examples~\ref{Example_ExpPenFunc}--\ref{Example_ModBarrierFunc}, \ref{Example_HeWuMengLagrangian},
\ref{Example_NonlinearRescale_SOC}, \ref{Example_NonlinearRescale_SemiDefProg},
\ref{Example_NonlinearRescalePenalized_SemiDefProg} and \ref{Example_NonlinearRescale_SemiInfProg} if either the
function $\phi$ (or $\psi$) is bounded from below or the sets
$g_i(A) \cap K_i$, $i \in I$, are bounded, where $K_i$ is either $\mathbb{R}_-$ or the second order cone or the cone of
negative semidefinite matrices or the cone of non-positive continuous functions, depending on the context. Let us
finally note that the assumption on the boundedness of the sets $g_i(A) \cap K_i$ was utilized in various paper on
augmented Lagrangian functions (see, e.g., \cite{WangLi2009,LiuYang,WangLiuQu})
\end{remark}

\begin{remark} \label{Remark_BarrierTerms}
Note that if assumption $(A7)$ is valid, and the set $Q(\lambda_*, c, \alpha)$ is bounded for some $c > 0$ and 
$\alpha > 0$, but the function $\mathscr{L}(\cdot, \lambda_*, c)$ is not bounded from below for any $c > 0$, then one
can redefine the function $\Phi(y, \lambda, c)$ in order to guarantee the existence of global saddle points. Namely,
define
$$
  \widehat{\Phi}(y, \lambda, c) = \begin{cases}
  \big( \alpha - \dist(y, K)^{\varkappa} \big) \cdot 
  \Phi\left( \dfrac{1}{\alpha - \dist(y, K)^{\varkappa}} y, \lambda, c \right), 
  & \text{if } d(y, K)^{\varkappa} < \alpha, \\
  + \infty, & \text{otherwise}
  \end{cases}
$$
for some $\alpha > 0$ and $\varkappa > 0$. One can verify that such transformation not only guarantees the boundedness
from below of the function $\mathscr{L}(\cdot, \lambda, c)$, but also preserves local and global saddle points, 
the boundedness of the set $Q(\lambda_*, c, \alpha)$, and general properties of the augmented Lagrangian 
$\mathscr{L}(x, \lambda, c)$ (such as continuity, differentiability if $\varkappa \ge 2$, etc.) in all particular
examples presented above, except for Example~\ref{Example_NonlinearRescale_SOC}. Finally, it should be noted that there
is no need for such transformation of the function $\Phi(y, \lambda, c)$ in Example~\ref{Example_ModBarrierFunc}, and
Examples~\ref{Example_NonlinearRescale_SOC}, \ref{Example_NonlinearRescale_SemiDefProg} and
\ref{Example_NonlinearRescale_SemiInfProg} in the case when $\varepsilon_0 < + \infty$, since the augmented Lagrangian
functions from these examples are bounded from below iff the set $Q(\lambda_*, c, \alpha)$ is bounded for some $c > 0$
and $\alpha > 0$.
\end{remark}

The following proposition contains a simple reformulation of the boundedness condition on the set $S(\lambda_*, c)$
that will allow us to give an illuminating interpretation of the localization principle.

\begin{proposition} \label{Prp_GSP_LP_Nondegeneracy}
Let $\lambda_* \in \Lambda_{loc}(\mathcal{P})$, $A$ be closed, $\mathscr{L}(\cdot, \lambda_*, c)$ be l.s.c. on $A$ for
all $c > 0$, and $G$ be continuous on $A$. Suppose also that assumptions $(A2)$, $(A4)$ and $(A6)$ are satisfied. Then
for the existence of $c_0 > 0$ such that the set $S(\lambda_*, c_0)$ is either bounded or empty it is necessary and
sufficient that there exist $c_0 > 0$ and $R > 0$ such that for any $c \ge c_0$ there exists 
$x(c) \in \argmin_{x \in A} \mathscr{L}(x, \lambda_*, c)$ with  $\| x(c) \| \le R$.
\end{proposition}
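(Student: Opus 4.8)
The plan is to prove the two implications separately, exploiting the monotonicity assumption $(A4)$, the lower semicontinuity of $\mathscr{L}(\cdot,\lambda_*,c)$, and, for the harder direction, Lemma~\ref{Lemma_MinimizingSeq} together with the local saddle point structure encoded in $\lambda_* \in \Lambda_{loc}(\mathcal{P})$. Throughout I fix a globally optimal solution $x_0 \in \Omega_*$, which exists by the standing assumption. Since $x_0$ is feasible, $(A2)$ yields $\mathscr{L}(x_0,\lambda_*,c) \le f(x_0) = f_*$, hence $\inf_{x\in A}\mathscr{L}(x,\lambda_*,c) \le f_*$ for every $c>0$; this elementary bound will be used repeatedly.

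First I would establish that the boundedness (or emptiness) of $S(\lambda_*,c_0)$ implies the existence of uniformly bounded minimizers. By $(A4)$ one has $S(\lambda_*,c)\subseteq S(\lambda_*,c_0)$ for all $c\ge c_0$, so each $S(\lambda_*,c)$ is again bounded or empty. Fix $c\ge c_0$. If $S(\lambda_*,c)=\emptyset$, then $\inf_{x\in A}\mathscr{L}(x,\lambda_*,c)\ge f_*$, which together with the bound above forces equality, and $x_0$ is itself a global minimizer. If $S(\lambda_*,c)\ne\emptyset$, a minimizing sequence eventually lies in $S(\lambda_*,c)\subseteq S(\lambda_*,c_0)$, which is bounded; by lower semicontinuity of $\mathscr{L}(\cdot,\lambda_*,c)$ and closedness of $A$ a cluster point of it is a global minimizer lying in the closure of $S(\lambda_*,c_0)$. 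Setting $R:=\max\{\|x_0\|,\ \sup_{x\in S(\lambda_*,c_0)}\|x\|\}$ then gives the required condition.

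For the converse I would argue by contradiction. Assume a uniform bound $\|x(c)\|\le R$ on some global minimizer for all $c\ge c_0$, and suppose that $S(\lambda_*,c)$ is unbounded for every $c$. The existence of minimizers makes $\mathscr{L}(\cdot,\lambda_*,c)$ bounded below, so Lemma~\ref{Lemma_MinimizingSeq} applies: choosing $c_n\to\infty$ and $x_n:=x(c_n)$, the bounded sequence $\{x_n\}$ has a subsequence converging to some $x_*\in A$ that is a globally optimal solution. Since $\lambda_*\in\Lambda_{loc}(\mathcal{P})$, the pair $(x_*,\lambda_*)$ is a local saddle point, furnishing $\bar c>0$ and a neighbourhood $U$ of $x_*$ with $\mathscr{L}(x_*,\lambda_*,c)\le\inf_{x\in U\cap A}\mathscr{L}(x,\lambda_*,c)$ for $c\ge\bar c$; as $x_n\in U$ is a global minimizer for large $n$, this pins the global infimum to the neighbourhood, namely $\inf_{x\in A}\mathscr{L}(x,\lambda_*,c_n)=\mathscr{L}(x_*,\lambda_*,c_n)$. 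The contradiction is then to be extracted by confronting this identity with the unboundedness of $S(\lambda_*,c_n)$, organising the far-away sublevel points into a minimizing sequence and driving it, via Lemma~\ref{Lemma_MinimizingSeq} and the continuity of $G$, back to a globally optimal and hence bounded solution.

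The main obstacle is exactly this last reconciliation. Unboundedness of $S(\lambda_*,c_n)$ only supplies points whose values lie in $[\inf_{x\in A}\mathscr{L}(x,\lambda_*,c_n),\ f_*)$, and a priori these need not approach the infimum, so they do not automatically form a minimizing sequence to which the lemma can be applied; the delicate point is to show that no minimizing behaviour can escape to infinity. I expect the crux to lie in upgrading the purely pointwise statement $(A6)$ — which forces $\mathscr{L}$ upward only near a \emph{fixed} infeasible point as $c\to\infty$ — into control along a \emph{moving}, escaping sequence, and in combining this with the local saddle identity confining the infimum to $U$ so as to rule out far-away points of $S(\lambda_*,c_n)$ for arbitrarily large $c_n$. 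Here the continuity of $G$ on the closed set $A$ must be used carefully, since it is precisely what allows $(A6)$ to be invoked at the limit point of any bounded minimizing subsequence.
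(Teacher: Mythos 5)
Your first direction is correct and is essentially the paper's own argument: $(A4)$ gives the nesting $S(\lambda_*,c)\subseteq S(\lambda_*,c_0)$ for $c\ge c_0$, and then either emptiness (in which case any globally optimal $x_0$ is itself a global minimizer of $\mathscr{L}(\cdot,\lambda_*,c)$ by $(A2)$) or lower semicontinuity on the bounded set $S(\lambda_*,c_0)$ produces minimizers $x(c)$ lying in $S(\lambda_*,c_0)$, hence uniformly bounded.

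The second direction contains a genuine gap, and you have misdiagnosed where the difficulty lies. You follow the paper's route correctly up to the decisive point: $x_n=x(c_n)$ converges along a subsequence to some $x_*\in A$ which is globally optimal by Lemma~\ref{Lemma_MinimizingSeq}, the local saddle point property of $(x_*,\lambda_*)$ holds on a neighbourhood $U$ for $c\ge\bar c$, and for large $n$ one gets $\inf_{x\in A}\mathscr{L}(x,\lambda_*,c_n)=\mathscr{L}(x_*,\lambda_*,c_n)$. The missing ingredient is not an upgrade of $(A6)$ to ``moving'' escaping sequences, nor any analysis of far-away points of $S(\lambda_*,c_n)$; it is the \emph{value} of the augmented Lagrangian at the local saddle point. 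By Remark~\ref{Rmrk_ALValueAtSP} — that is, by combining the first (supremum over $\lambda$) inequality in the definition of a local saddle point, which your argument never uses, with $(A1)$, $(A2)$ and the feasibility of $x_*$ (here available from global optimality rather than from $(A3)$) — one has $\mathscr{L}(x_*,\lambda_*,c)=f(x_*)=f_*$ for all sufficiently large $c$. Consequently $\inf_{x\in A}\mathscr{L}(x,\lambda_*,c_n)=f_*$, i.e. $\mathscr{L}(x,\lambda_*,c_n)\ge f_*$ for every $x\in A$, which means $S(\lambda_*,c_n)=\emptyset$. Since ``empty'' is one of the two admissible alternatives in the conclusion, this finishes the proof outright: there is no contradiction to extract, and the sublevel points whose escape to infinity worried you simply do not exist for this $c_n$. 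This one-line closure is exactly how the paper completes the argument; without it, your proof as written does not go through.
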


\begin{proof}
Suppose that exists $c_0 > 0$ such that the set $S(\lambda_*, c_0)$ is either bounded or empty. Note that $(A4)$ implies
that \eqref{ALFSublevelSetsIncl} holds true. If there exists $c_1 \ge c_0$ such that $S(\lambda_*, c_1) = \emptyset$,
then taking into account $(A2)$ and $(A4)$ one gets that $x_* \in \argmin_{x \in A} \mathscr{L}(x, \lambda_*, c)$ for
all $c \ge c_1$, where $x_*$ is a globally optimal solution of $(\mathcal{P})$. On the other hand, if $S(\lambda_*, c)
\ne \emptyset$ for all $c \ge c_0$, then taking into account \eqref{ALFSublevelSetsIncl} and the fact that
$\mathscr{L}(\cdot, \lambda_*, c)$ is l.s.c. on $A$ one obtains that for any $c \ge c_0$ there exists 
$x(c) \in \argmin_{x \in A} \mathscr{L}(x, \lambda_*, c)$ such that $x(c) \in S(\lambda_*, c_0)$. Since 
$S(\lambda_*, c_0)$ is bounded, there exists $R > 0$ such that $\| x(c) \| \le R$ for all $c \ge c_0$.

Suppose, now, that there exist $c_0 > 0$ and $R > 0$ such that for any $c \ge c_0$ there exists 
$x(c) \in \argmin_{x \in A} \mathscr{L}(x, \lambda_*, c)$ with $\| x(c) \| \le R$.  Choose an increasing unbounded
sequence $\{ c_n \} \subset [c_0, + \infty)$, and define $x_n = x(c_n)$. Without loss of generality one can suppose
that the sequence $\{ x_n \}$ converges to a point $x_*$. By Lemma~\ref{Lemma_MinimizingSeq}, the point $x_*$ is a
globally optimal solution of $(\mathcal{P})$.

Taking into account Remark~\ref{Rmrk_ALValueAtSP} and the fact that $\lambda_* \in \Lambda_{loc}(\mathcal{P})$ one gets
that there exist $r > 0$ and a neighbourhood $U$ of $x_*$ such that 
$$
  \sup_{\lambda \in \Lambda} \mathscr{L}(x_*, \lambda, c) \le \mathscr{L}(x_*, \lambda_*, c) = f(x_*) \le 
  \inf_{x \in U \cap A} \mathscr{L}(x, \lambda_*, c) \quad \forall c \ge r.
$$
Since $x_n \to x_*$ and $c_n \to + \infty$ as $n \to \infty$, there exists $n \in \mathbb{N}$ for which $x_n \in U$
and $c_n \ge r$. For any such $n$ one has $\mathscr{L}(x_n, \lambda_*, c_n) \ge f(x_*) = f_*$, which implies that
$S(\lambda_*, c_n) = \emptyset$ due to the definition of $x_n$.	 
\end{proof}

\begin{remark}
With the use of the proposition above we can give the following interpretation of the localization principle.
Roughly speaking, according to the localization principle a global saddle point of the augmented Lagrangian function
$\mathscr{L}(x, \lambda, c)$ exists if and only if there exists a multiplier $\lambda_*$ such that for any globally
optimal solution $x_*$ of $(\mathcal{P})$ the pair $(x_*, \lambda_*)$ is a local saddle point of 
$\mathscr{L}(x, \lambda, c)$, the function $\mathscr{L}(\cdot, \lambda_*, c)$ attains a global minimum on $A$ for any
sufficiently large $c > 0$, and global minimizers of $\mathscr{L}(\cdot, \lambda_*, c)$ on $A$ do not escape to infinity
as $c \to + \infty$.
\end{remark}

Note that, in particular, the exponential penalty function (Example~\ref{Example_ExpPenFunc}) and the p-th power
augmented Lagrangian (Example~\ref{Example_pthPowerAugmLagr}) do not satisfy assumption $(A6)$. Therefore one cannot
apply the localization principle in order to obtain necessary and sufficient conditions for the existence of global
saddle points of these augmented Lagrangian functions. Let us show that global saddle points of these augmented
Lagrangian functions exist only under a rather restrictive assumption on the problem $(\mathcal{P})$. 

\begin{example} \label{Example_SeparationCondition}
Consider the following nonlinear programming problem
\begin{equation} \label{ExProblem_ExpPenFunc}
  \min f(x) \quad \text{subject to} \quad g_1(x) \le 0, \quad g_2(x) \le 0.
\end{equation}
In this case $K = \mathbb{R}_-^2$. Let $\Lambda = K^* = \mathbb{R}^2_+$, and let
$$
  \mathscr{L}(x, \lambda, c) = f(x) + \frac{\lambda_1}{c} \big(e^{c g_1(x)} - 1 \big) +
  \frac{\lambda_2}{c} \big(e^{c g_2(x)} - 1 \big)
$$
be the exponential penalty function for this problem. Suppose that a pair $(x_*, \lambda_*)$ with 
$\lambda_* \in \mathbb{R}^2_+$ is a global saddle point of $\mathscr{L}(x, \lambda, c)$, i.e.
\begin{equation} \label{GSPofExpPenFuncDef}
  \sup_{\lambda \in \mathbb{R}^2_+} \mathscr{L}(x_*, \lambda, c) \le \mathscr{L}(x_*, \lambda_*, c)
  \le \inf_{x \in X} \mathscr{L}(x, \lambda_*, c)
\end{equation}
for any sufficiently large $c > 0$. Suppose, in addition, that $g_2(x_*) < 0$. Then $(\lambda_*)_2 = 0$ by
Proposition~\ref{Prp_ComplementarySlackAtSP}, which implies that $\mathscr{L}(x, \lambda_*, c)$ does not depend on
$g_2(x)$. Consequently, taking the supremum in \eqref{GSPofExpPenFuncDef} over only those $\lambda \in \mathbb{R}^2_+$
for which $\lambda_2 = 0$ one gets that $(x_*, (\lambda_*)_1)$ is a global saddle point of the reduced augmented
Lagrangian function $\mathscr{L}_0(x, \lambda_1, c) = \mathscr{L}(x, (\lambda_1, 0), c)$. Hence applying
Proposition~\ref{Prp_SaddlePointImpliesOptSol} one gets that $x_*$ is a globally optimal solution of 
problem~\eqref{ExProblem_ExpPenFunc}, and of the problem of minimizing $f(x)$ subject to $g_1(x) \le 0$. 
\end{example}

Thus, if a global saddle point of the exponential penalty function exists, then the removal of inequality constraints
that are inactive at a globally optimal solution must not change the global optimality of the given solution. After
inactive inequality constraints have been removed, the exponential penalty function will satisfy assumptions
$(A6)$ and $(A7)$ for a given multiplier $\lambda_* \in \Lambda_{loc}(\mathcal{P})$, provided all components of
$\lambda_*$ are positive (or, equivalently, if $\lambda_*$ satisfies the strict complementarity condition at
all globally optimal solutions of the problem $(\mathcal{P})$). Then arguing in the same way as in the proof of the
localization principle one can obtain simple necessary and sufficient conditions for the existence of a global saddle
point of the exponential penalty function.

Namely, one can verify that the following result holds true

\begin{proposition} \label{Prp_ExpPenFuncGSP}
Let $A$ be closed, and $\mathscr{L}(x, \lambda, c)$ be the exponential penalty function for the problem
$$
  \min f(x) \quad \mbox{subject to} \quad g_i(x) \le 0, \quad i \in I, \quad x \in A.	\eqno{(\mathcal{P}_1)}
$$
Let also $f$ be l.s.c. on $A$, and $g_i$, $i \in I$, be continuous on $A$. Then a global saddle point of 
$\mathscr{L}(x, \lambda, c)$ exists if and only if there exist $\lambda_* \in \Lambda_{loc}(\mathcal{P}_1)$ and 
$c_0 > 0$ such that the set $S(\lambda_*, c_0)$ is either bounded or empty, every globally optimal solutions of the
problem $(\mathcal{P}_1)$ is a globally optimal solution of the problem
$$
  \min f(x) \quad \text{subject to} \quad g_i(x) \le 0, \quad i \in I(\lambda_*), \quad x \in A,
  \eqno{(\mathcal{P}_2)}
$$
and $\lambda_* \in \Lambda_{loc}(\mathcal{P}_2)$, where $I(\lambda_*) = \{ i \in I \mid (\lambda_*)_i > 0 \}$.
\end{proposition}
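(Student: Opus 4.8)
The plan is to reduce everything to the relaxed problem $(\mathcal{P}_2)$, for which the exponential penalty does satisfy assumption $(A6)$ at the relevant multiplier, and then to transfer saddle points back and forth. Throughout I write $\tilde\lambda_*$ for the restriction of $\lambda_*$ to the coordinates in $I(\lambda_*)$, and I let $\mathscr{L}_0$ denote the exponential penalty function of $(\mathcal{P}_2)$. Since $(\lambda_*)_i = 0$ for $i \notin I(\lambda_*)$, the two functions agree identically, $\mathscr{L}(x, \lambda_*, c) = \mathscr{L}_0(x, \tilde\lambda_*, c)$ as functions of $x$; hence they have the same sublevel sets, and because $(\mathcal{P}_2)$ is a relaxation of $(\mathcal{P}_1)$ while the hypothesis (or, in the necessity part, the conclusion) gives an optimal solution of $(\mathcal{P}_1)$ that is optimal for $(\mathcal{P}_2)$, the two optimal values coincide and equal $f_*$. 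Recall also that the exponential penalty satisfies $(A1)$--$(A3)$ and $(A8)$--$(A10)$, and $(A4)$ since $\phi(t) = e^t - 1$ is convex (Example~\ref{Example_ExpPenFunc}).

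For necessity, I would start from a global saddle point $(x_*, \lambda_*)$ of $\mathscr{L}$. Proposition~\ref{Prp_SaddlePointImpliesOptSol} gives global optimality of $x_*$, and Remark~\ref{Rmrk_ALValueAtSP} shows $S(\lambda_*, c)$ is empty for all large $c$, which yields the boundedness/emptiness condition. By Corollary~\ref{Crlr_IndependenceGSPofOptSol} every globally optimal $z_*$ of $(\mathcal{P}_1)$ again gives a global saddle point $(z_*, \lambda_*)$, and Proposition~\ref{Prp_ComplementarySlackAtSP} (valid since $(A9)$ holds) forces $\langle \lambda_*, G(z_*)\rangle = 0$; as $\lambda_* \in \mathbb{R}_+^l$ and $g_i(z_*) \le 0$, every constraint in $I(\lambda_*)$ is active at $z_*$. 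Exactly as in Example~\ref{Example_SeparationCondition}, restricting the supremum over $\lambda$ to the coordinates of $I(\lambda_*)$ shows $(z_*, \tilde\lambda_*)$ is a global saddle point of $\mathscr{L}_0$, so $z_*$ is globally optimal for $(\mathcal{P}_2)$ by Proposition~\ref{Prp_SaddlePointImpliesOptSol}. This simultaneously delivers $\tilde\lambda_* \in \Lambda_{loc}(\mathcal{P}_2)$ and $\lambda_* \in \Lambda_{loc}(\mathcal{P}_1)$.

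For sufficiency, the key observation is that all components of $\tilde\lambda_*$ are strictly positive, and for such a multiplier the reduced exponential penalty does satisfy $(A6)$: if $y \notin K$ then some active coordinate has $y_i > 0$, and on a small ball $B(y, r)$ the term $\tfrac{(\lambda_*)_i}{c}(e^{c z_i} - 1)$ blows up as $c \to +\infty$ while the remaining terms stay bounded below. This is precisely the property that failed for $\mathscr{L}$ only because of the zero-multiplier coordinates. I would therefore repeat the proof of the localization principle (Lemma~\ref{Lemma_MinimizingSeq} and Theorem~\ref{Thrm_LocalizationPrinciple}) for $(\mathcal{P}_2)$, invoking $(A6)$ only at $\tilde\lambda_*$; together with $\tilde\lambda_* \in \Lambda_{loc}(\mathcal{P}_2)$, the boundedness/emptiness of the sublevel set of $\mathscr{L}_0(\cdot, \tilde\lambda_*, c_0)$ (which coincides with $S(\lambda_*, c_0)$), and $(A1)$--$(A4)$, this produces a global saddle point $(\tilde x_*, \tilde\lambda_*)$ of $\mathscr{L}_0$. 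To lift it back, take any globally optimal $x_*$ of $(\mathcal{P}_1)$, which is optimal for $(\mathcal{P}_2)$ by hypothesis and satisfies $\langle \lambda_*, G(x_*)\rangle = 0$ because $\lambda_* \in \Lambda_{loc}(\mathcal{P}_1)$ (Proposition~\ref{Prp_ComplementarySlackAtSP} via the local half--saddle inequality); then $\mathscr{L}(x_*, \lambda_*, c) = f(x_*) = f_*$. The right saddle inequality is inherited from $\inf_{x \in A}\mathscr{L}(x, \lambda_*, c) = \inf_{x \in A}\mathscr{L}_0(x, \tilde\lambda_*, c) = f_*$ (from the reduced saddle point), and the left one is just $(A2)$ at the feasible point $x_*$, so $(x_*, \lambda_*)$ is a global saddle point of $\mathscr{L}$.

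I expect the main obstacle to be the careful re-derivation of the localization principle for $(\mathcal{P}_2)$: one must check that assumption $(A6)$ enters the argument (through Lemma~\ref{Lemma_MinimizingSeq}) only at the single multiplier $\tilde\lambda_*$, and that strict positivity of its components is exactly what restores $(A6)$ there, since Theorem~\ref{Thrm_LocalizationPrinciple} as stated cannot be applied verbatim (the reduced cone still contains multipliers with zero entries, for which $(A6)$ fails). The remaining bookkeeping---matching sublevel sets, optimal values, and the three saddle inequalities across the reduction---is routine once the identity $\mathscr{L}(\cdot,\lambda_*,c) = \mathscr{L}_0(\cdot,\tilde\lambda_*,c)$ is in place.
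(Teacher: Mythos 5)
Your proposal is correct and follows essentially the route the paper itself intends for this proposition (the paper only sketches it rather than proving it in detail): the reduction to $(\mathcal{P}_2)$ via the identity $\mathscr{L}(\cdot,\lambda_*,c)=\mathscr{L}_0(\cdot,\tilde\lambda_*,c)$ exactly as in Example~\ref{Example_SeparationCondition}, the observation that strict positivity of the surviving multiplier components restores assumption $(A6)$ at $\tilde\lambda_*$, and a re-run of Lemma~\ref{Lemma_MinimizingSeq} and Theorem~\ref{Thrm_LocalizationPrinciple} for $(\mathcal{P}_2)$ noting that $(A6)$ enters only at that single multiplier. The one step you leave implicit is that $\tilde\lambda_*\in\Lambda_{loc}(\mathcal{P}_2)$ in the necessity part requires a local saddle point at \emph{every} globally optimal solution of $(\mathcal{P}_2)$ --- a set that may strictly contain the optimal set of $(\mathcal{P}_1)$ --- but this follows immediately from Corollary~\ref{Crlr_IndependenceGSPofOptSol} applied to $(\mathcal{P}_2)$, since you have already produced a global saddle point of $\mathscr{L}_0$.
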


\begin{remark} \label{Remark_SeparationCondition}
{(i)~It should be noted that the example and the proposition above were inspired by ``the separation condition'' (3.11)
from \cite{SunLi}, where it was used in order to obtain sufficient conditions for the existence of a global saddle point
of the exponential penalty function (\cite{SunLi}, Theorem~3.2). Let us note that Proposition~\ref{Prp_ExpPenFuncGSP}
significantly sharpens Theorem~3.2 from \cite{SunLi}, since we do not assume that $A$ is compact or that a globally
optimal solution of the problem $(\mathcal{P}_1)$ is unique, and obtain \textit{necessary and sufficient}
conditions for the existence of a global saddle point, in contrast to only sufficient conditions in \cite{SunLi}.
}

\noindent{(ii)~Observe that every globally optimal solution of the problem $(\mathcal{P}_1)$ is a globally optimal
solution of the problem $(\mathcal{P}_2)$ and $\Lambda_{loc}(\mathcal{P}_1) = \Lambda_{loc}(\mathcal{P}_2)$ in the case
when the problem $(\mathcal{P}_1)$ is convex. However, note also that in the convex case one obviously has 
$\Lambda(\mathcal{P}) = \Lambda_{loc}(\mathcal{P}) = \Lambda_{loc}(x_*)$ for any $x_* \in \Omega_*$, and the
localization principle holds trivially.
}

\noindent{(iii)~The interested reader can extend Proposition~\ref{Prp_ExpPenFuncGSP} to case of other
augmented Lagrangians, such as the ones from Examples~\ref{Example_NonlinearRescale_SOC},
\ref{Example_NonlinearRescale_SemiDefProg} and \ref{Example_NonlinearRescale_SemiInfProg} 
with $\varepsilon_0 = + \infty$.
}
\end{remark}

\section{Applications of the Localization Principle for Global Saddle Points}
\label{Section_ApplLocPrinciple_GSP}

The main goal of this section is to demonstrate that the localization principle allows one to easily prove the existence
of global saddle points of augmented Lagrangian functions with the use of sufficient optimality conditions. Below, we
suppose that $X = \mathbb{R}^d$. For the sake of simplicity, in this section we also suppose that the set $A$ is convex.

At first, we study the existence of local saddle points. Our aim is to establish a connection between KKT-pairs of the
problem $(\mathcal{P})$ and local saddle points of the augmented Lagrangian function $\mathscr{L}(x, \lambda, c)$.

Let $(\mathcal{P})$ be a cone constrained minimax problem, i.e. let $f(x)$ have the form $f = \max_{k \in M} f_k$,
where $f_k \colon \mathbb{R}^d \to \mathbb{R} \cup \{ + \infty \}$ are given functions, and $M = \{ 1, \ldots, m \}$.
Denote by $L(x, \lambda) = f(x) + \langle \lambda, G(x) \rangle$ the standard Lagrangian function for the problem
$(\mathcal{P})$. For any $\alpha = (\alpha_1, \ldots, \alpha_m) \in \mathbb{R}^m_+$ denote
$L_0(x, \lambda, \alpha) = \sum_{k = 1}^m \alpha_k f_k(x) + \langle \lambda, G(x) \rangle$. Finally, 
denote $M(x) = \{ k \in M \mid f_k(x) = f(x) \}$. 

Suppose that the functions $f_k$, $k \in M$, and $G$ are twice differentiable at a point $x_* \in A$. Recall that a pair
$(x_*, \lambda_*)$ is called a \textit{KKT-pair} of the problem $(\mathcal{P})$ if $G(x_*) \in K$, $\lambda_* \in K^*$,
$\langle \lambda_*,  G(x_*) \rangle = 0$ and
$$
  \big[ L(\cdot, \lambda_*) \big]'(x_*, h) \ge 0 \quad \forall h \in T_A(x_*).
$$
Here $[ L(\cdot, \lambda_*) ]'(x_*, h)$ is the directional derivative of the function $L(\cdot, \lambda_*)$ at the point
$x_*$ in a direction $h$, and $T_A(x_*)$ is the contingent cone to $A$ at $x_*$. Any $\lambda_* \in K^*$ such that
$(x_*, \lambda_*)$ is a KKT-pair of $(\mathcal{P})$ is called \textit{a Lagrange multiplier} of $(\mathcal{P})$ at
$x_*$.

One can easily verify that a pair $(x_*, \lambda_*)$ is a KKT-pair of $(\mathcal{P})$ iff $G(x_*) \in K$, 
$\lambda_* \in K^*$, $\langle \lambda_*,  G(x_*) \rangle = 0$, and there exists a vector $\alpha \in \mathbb{R}^m_+$
(that is sometimes called \textit{a Danskin-Demyanov multiplier}) such that
$\alpha_k = 0$ for any $k \notin M(x_*)$, and
\begin{equation} \label{KKTdef}
  \sum_{k = 1}^m \alpha_k = 1, \quad
  \big\langle D_x L_0(x_*, \lambda_*, \alpha), h \big\rangle \ge 0 \quad \forall h \in T_A(x_*).
\end{equation}
The set of all such $\alpha$ is denoted by $\alpha(x_*, \lambda_*)$.

Let us also recall the second order necessary optimality conditions (cf.~\cite{BonnansShapiro}, Theorem~3.45 and
Proposition~3.46). One says that a KKT-pair $(x_*, \lambda_*)$ satisfies \textit{the second order necessary optimality
condition} if
\begin{multline} \label{KKTSecondOrderDef}
  \sup_{\alpha \in \alpha(x_*, \lambda_*)} 
  \big\langle h, D^2_{xx} L_0(x_*, \lambda_*, \alpha) h \big\rangle - \sigma(\lambda_*, \mathcal{T}(h)) \ge 0 \\
  \forall h \in C(x_*, \lambda_*) \colon [L(\cdot, \lambda_*)]'(x_*, h) = 0.
\end{multline}
Here $\sigma(\lambda_*, \mathcal{T}(h)) = \sup_{y \in \mathcal{T}(h)} \langle \lambda_*, y \rangle$,
$\mathcal{T}(h) = T_K^2(G(x_*), DG(x_*) h)$ is \textit{the outer second order tangent set} to the set $K$ at the point
$G(x_*)$ in the direction $DG(x_*) h$ (see~\cite{BonnansShapiro}, Definition~3.28), and
$$
  C(x_*, \lambda_*) = \Big\{ h \in T_A(x_*) \Bigm| D G(x_*) h \in T_K \big( G(x_*) \big), \:
  \langle \lambda_*, D G(x_*) h \rangle = 0 \Big\}.
$$
is \textit{the critical cone} at the point $x_*$. Note that if the cone $K$ is polyhedral, then 
$\sigma(\lambda_*, \mathcal{T}(h)) = 0$ for any $h \in C(x_*, \lambda_*)$, and optimality condition
\eqref{KKTSecondOrderDef} is reduced to the standard optimality condition. On the other hand, in the general case
$\sigma(\lambda_*, \mathcal{T}(h)) \le 0$ for all $h \in C(x_*, \lambda_*)$, which means that optimality condition
\eqref{KKTSecondOrderDef} is weaker than the standard one. It should be mentioned that the term 
$\sigma(\lambda_*, \mathcal{T}(h))$, in a sense, represents the contribution of the curvature of the cone $K$ at the
point $G(x_*)$.

In order to study a connection between local saddle points and KKT-pairs, we need to introduce an additional
assumption on differentiability properties of the function $\Phi$. 

\begin{definition} \label{Def_SeconOrdeExpansion}
Let assumption $(A11)$ hold true, and $G$ be twice Fr\'echet differentiable at a feasible point $x_* \in A$. Let also
$\lambda_* \in K^*$ be such that $\langle \lambda_*, G(x_*) \rangle = 0$. One says that the function 
$\Phi(G(x), \lambda, c)$ admits \textit{the second order expansion} in $x$ at $(x_*, \lambda_*)$, if for all $c > 0$
there exists a positively homogeneous of degree $2$ function $\omega_c \colon X \to \mathbb{R}$ such that
for any $h$ in a neighbourhood of zero and $c > 0$ one has
\begin{multline*}
  \Phi(G(x_* + h), \lambda_*, c) - \Phi(G(x_*), \lambda_*, c) = \langle \mu_*, D G(x_*) h \rangle \\
  + \frac{1}{2} \langle \mu_*, D^2 G(x_*)(h, h) \rangle + \frac{1}{2} \omega_c(h) 
  + o(\|h\|^2),
\end{multline*}
where $o(\|h\|^2) / \| h \|^2 \to 0$ as $h \to 0$, and 
\begin{enumerate}
\item{$\mu_* = \Phi_0(\lambda_*)$ and $\Phi_0(\lambda) = D_y \Phi(y, \lambda, c)$ (see $(A11)$);
}

\item{$\omega_c(h) \to - \sigma(\mu_*, \mathcal{T}(h))$ as $c \to +\infty$ for any $h \in C(x_*, \mu_*)$;
}

\item{if $\limsup_{[h, c] \to [h_*, + \infty]} \omega_c(h)$ is finite for some $h_* \in T_A(x_*)$, then
$h_* \in C(x_*, \mu_*)$, and $\limsup_{[h, c] \to [h_*, + \infty]} \omega_c(h) \ge - \sigma(\mu_*, \mathcal{T}(h_*))$.
}
\end{enumerate}
\end{definition}

Note that the function $\Phi(G(\cdot), \lambda_*, c)$ need not be twice differentiable at $x_*$ to admit
the second order expansion in $x$ at $(x_*, \lambda_*)$.

\begin{remark} \label{Rmrk_2OrderExpansInX}
Let us discuss when the assumption that the function $\Phi(G(x), \lambda, c)$ admits the second order expansion in
$x$ at $(x_*, \lambda_*)$ is satisfied for the previously analysed examples. This assumption is satisfied in
Example~\ref{Example_RockafellarWetsAL}, provided $Y$ is finite dimensional, $\sigma(y) = \| y \|^2 / 2$, 
$(x_*, \lambda_*)$ is a KKT-pair, and the restriction of the function $\sigma(\lambda_*, T^2_K(G(x_*), \cdot))$ to its
effective domain is u.s.c. (see~\cite{ShapiroSun}, formulae (3.23) and (3.25)). In this case one has
$$
  \omega_c(h) = \min_{z \in \mathscr{C}(x_*, \lambda_*)}
  \Big( c \big\| D G(x_*) h - z \big\|^2 - \sigma\big(\lambda_*, T_K^2(G(x_*), z) \big) \Big),
$$
where $\mathscr{C}(x_*, \lambda_*) = \{ z \in T_K(G(x_*)) \mid \langle \lambda_*, z \rangle = 0 \}$. In particular, the
assumption holds true in Examples~\ref{Example_RockWetsAL_SOC} and \ref{Example_RockWetsAL_SemiDefProg}, 
if $(x_*, \lambda_*)$ is a KKT-pair.

The assumption is satisfied in Example~\ref{Example_EssentiallyQuadraticAL} with
$$
  \omega_c(h) = \sum_{i \in I_+(x_*, \lambda_*)} c \phi''(0) \| \nabla g_i(x_*) h \|^2 +
  \sum_{i \in I_0(x_*, \lambda_*)} c \phi''(0) \max\{ 0, \langle \nabla g_i(x_*), h \rangle \}^2,
$$
provided $\phi''(0) > 0$, where $I_+(x_*, \lambda_*) = \{ i \in I(x_*) \mid (\lambda_*)_i > 0 \}$, 
$I_0(x_*, \lambda_*) = \{ i \in I(x_*) \mid (\lambda_*)_i = 0 \}$ and $I(x_*) = \{ i \in I \mid g_i(x_*) = 0 \}$. 
The assumption is satisfied in Example~\ref{Example_CubilAL} iff the strict complementarity (s.c.) condition holds true,
and it is satisfied in Example~\ref{Example_Mangasarian} iff $\phi''(t) > 0$ for all $t \in \mathbb{R}$. This
assumptions is valid in Examples~\ref{Example_ExpPenFunc}--\ref{Example_ModBarrierFunc} iff $\phi'(0) \ne 0$,
$\phi''(0) > 0$ and s.c. condition holds true. The assumptions is valid in Example~\ref{Example_pthPowerAugmLagr} iff
$\phi'(b) \ne 0$, $\phi''(b) > - \phi'(b)^2$ and s.c. condition holds true, and it is always satisfied in
Example~\ref{Example_HeWuMengLagrangian}.

The assumption is valid in Example~\ref{Example_NonlinearRescale_SOC}, provided the constraint nondegeneracy and s.c.
conditions are satisfied (\cite{ZhangGuXiao2011}, Cor.~3.1 and Prp.~3.1). Similarly, this assumption is
satisfied in Examples~\ref{Example_NonlinearRescale_SemiDefProg} and
\ref{Example_NonlinearRescalePenalized_SemiDefProg}, if s.c. condition holds true
(\cite{Stingl2006}, Thm.~5.1, and \cite{LuoWuLiu2015}, Prp.~4.2).

Finally, one can check that the function $\Phi(x, \lambda, c)$ from Example~\ref{Example_NonlinearRescale_SemiInfProg}
never admits the second order expansion in $x$ due to the fact that in this example $\omega_c(h) \to 0$ as 
$c \to + \infty$ for any $h \in C(x_*, \lambda_*)$, while the sigma term $\sigma(\lambda_*, \mathcal{T}(h))$ for 
semi-infinite programming problems is not identically equal to zero in the general case (see~\cite{BonnansShapiro},
Section~5.4.3). Apparently, the same conclusion can be drawn for Example~\ref{Example_RockafellarWetsAL_SemiInfProg}.
\end{remark}

Let us show that a local saddle point of the augmented Lagrangian $\mathscr{L}(x, \lambda, c)$ must be a KKT-pair
of the problem $(\mathcal{P})$ that under some additional assumptions satisfies the second order necessary optimality
condition.

\begin{proposition} \label{Prp_LSPisKKTpair}
Let $(x_*, \lambda_*)$ be a local saddle point of $\mathscr{L}(x, \lambda, c)$, and the functions $f_k$, $k \in M$, and
$G$ be differentiable at $x_*$. Suppose also that assumptions $(A1)$--$(A3)$, $(A8)$, $(A9)$ and $(A11)$ are
satisfied. Then $(x_*, \mu_*)$ is a KKT-pair of the problem $(\mathcal{P})$, where $\mu_* = \Phi_0(\lambda_*)$.
If, in addition, $x_* \in \interior A$, the functions $f_k$, $k \in M$, and $G$ are twice continuously
differentiable at $x_*$, the function $\Phi(G(x), \lambda, c)$ admits the second order expansion in $x$ at 
$(x_*, \lambda_*)$, and either $m = 1$ or $\Phi(G(\cdot), \lambda_*, c)$ is twice continuously differentiable at $x_*$,
then the KTT-pair $(x_*, \mu_*)$ satisfies the second order necessary optimality condition.
\end{proposition}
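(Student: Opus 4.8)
The plan is to treat the first-order assertion and the two-case second-order assertion separately, exploiting throughout that a local saddle point makes $x_*$ a local minimizer of $\mathscr{L}(\cdot,\lambda_*,c)$ on $A$, which near an interior point becomes an unconstrained minimization. For the first-order part I would begin with Proposition~\ref{Prp_SaddlePointImpliesOptSol} to get $G(x_*)\in K$ and local optimality. The left inequality in the definition \eqref{LocalSaddlePoint_FDef} of a local saddle point is precisely \eqref{HalfSaddlePoint}, so Propositions~\ref{Prp_NonnegativeMultiplierAtSP} and \ref{Prp_ComplementarySlackAtSP} (using $(A1)$, $(A3)$, $(A8)$ and $(A1)$, $(A3)$, $(A9)$) give $\lambda_*\in K^*$ and $\langle\lambda_*,G(x_*)\rangle=0$. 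With $G(x_*)\in K$, $\lambda_*\in K^*$ and $\langle\lambda_*,G(x_*)\rangle=0$ in hand, assumption $(A11)$ applies: $\Phi(\cdot,\lambda_*,c)$ is Fr\'echet differentiable at $G(x_*)$ with derivative $\mu_*=\Phi_0(\lambda_*)\in K^*$, and $\langle\mu_*,G(x_*)\rangle=0$ because $\langle\Phi_0(\lambda_*),G(x_*)\rangle=0\iff\langle\lambda_*,G(x_*)\rangle=0$. For stationarity, Remark~\ref{Rmrk_ALValueAtSP} turns the right inequality of \eqref{LocalSaddlePoint_FDef} into the statement that $x_*$ minimizes $\mathscr{L}(\cdot,\lambda_*,c)$ over $U\cap A$; since $A$ is convex, $f=\max_k f_k$ is directionally differentiable, and $\Phi(G(\cdot),\lambda_*,c)$ is differentiable at $x_*$ by the chain rule (from $(A11)$ and differentiability of $G$), the one-sided directional derivative of $\mathscr{L}(\cdot,\lambda_*,c)$ at $x_*$ equals $\max_{k\in M(x_*)}\langle\nabla f_k(x_*),h\rangle+\langle\mu_*,DG(x_*)h\rangle$, which is nonnegative on $\cone(A-x_*)$ and, being continuous in $h$, on $T_A(x_*)$. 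This is exactly $[L(\cdot,\mu_*)]'(x_*,h)\ge0$, so $(x_*,\mu_*)$ is a KKT-pair.

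For the second-order claim with $m=1$, the function $f=f_1$ and $G$ are twice continuously differentiable while $\Phi(G(\cdot),\lambda_*,c)$ need only admit the second order expansion. Fixing $h\in C(x_*,\mu_*)$ with $[L(\cdot,\mu_*)]'(x_*,h)=0$ (so that $\langle\mu_*,DG(x_*)h\rangle=0$), I would substitute $th$ into Definition~\ref{Def_SeconOrdeExpansion}, use the homogeneity $\omega_c(th)=t^2\omega_c(h)$, the Taylor expansion of $f$, and $\Phi(G(x_*),\lambda_*,c)=0$ (Remark~\ref{Rmrk_ALValueAtSP}), obtaining for $x_*\in\interior A$ and all large $c$
\begin{multline*}
0\le \mathscr{L}(x_*+th,\lambda_*,c)-f(x_*) \\
=\frac{t^2}{2}\Big(\langle h,\nabla^2 f(x_*)h\rangle+\langle\mu_*,D^2G(x_*)(h,h)\rangle+\omega_c(h)\Big)+o(t^2).
\end{multline*}
Dividing by $t^2/2$ and letting $t\to0^+$ gives $\langle h,D^2_{xx}L_0(x_*,\mu_*,1)h\rangle+\omega_c(h)\ge0$; letting $c\to+\infty$ and invoking property~(2) of the expansion, $\omega_c(h)\to-\sigma(\mu_*,\mathcal{T}(h))$, yields \eqref{KKTSecondOrderDef}, because $\alpha(x_*,\mu_*)=\{1\}$ here.

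For the second-order claim with $m>1$, the map $\Psi_c:=\Phi(G(\cdot),\lambda_*,c)$ is twice continuously differentiable at $x_*$, so $\mathscr{L}(\cdot,\lambda_*,c)=\max_k(f_k+\Psi_c)$ is a maximum of $C^2$ functions of which $x_*\in\interior A$ is an unconstrained local minimizer. I would pass to the smooth epigraph reformulation $\min z$ subject to $f_k(x)+\Psi_c(x)-z\le0$; the direction $(0,1)$ verifies MFCQ, hence the standard second-order necessary condition applies (\cite{BonnansShapiro}, Theorem~3.45), and since the cone $\mathbb{R}^m_-$ is polyhedral its sigma-term vanishes, giving $\sup_{\alpha\in A_*}\langle h,\sum_k\alpha_k(\nabla^2 f_k(x_*)+\nabla^2\Psi_c(x_*))h\rangle\ge0$ for every critical $h$ (in particular for our $h$, for which $f'(x_*,h)=0$). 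Because $(A11)$ forces $\nabla\Psi_c(x_*)=DG(x_*)^*\mu_*$ \emph{independently of $c$}, the first-order multiplier set $A_*$ coincides with $\alpha(x_*,\mu_*)$ of \eqref{KKTdef}; and matching the quadratic Taylor term of the $C^2$ map $\Psi_c$ with its second order expansion (both degree-two homogeneous, differing by $o(\|h\|^2)$) gives $\langle h,\nabla^2\Psi_c(x_*)h\rangle=\langle\mu_*,D^2G(x_*)(h,h)\rangle+\omega_c(h)$. The inequality then reads $\sup_{\alpha\in\alpha(x_*,\mu_*)}\langle h,D^2_{xx}L_0(x_*,\mu_*,\alpha)h\rangle+\omega_c(h)\ge0$, and letting $c\to+\infty$ via property~(2) delivers \eqref{KKTSecondOrderDef}.

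The main obstacle is the minimax case $m>1$: condition \eqref{KKTSecondOrderDef} is written in the ``supremum over Danskin--Demyanov multipliers'' form, whereas a naive ray-wise expansion of $\max_k f_k(x_*+th)$ only produces a maximum of individual active Hessians over the directionally active index set, which is a different object. The decisive points are therefore, first, recognizing that the epigraph reformulation automatically satisfies MFCQ, so the weak second-order necessary condition is available without any constraint qualification imposed on $(\mathcal{P})$ itself, and second, using the $c$-independence of $\Phi_0$ from $(A11)$ to keep the multiplier set $A_*=\alpha(x_*,\mu_*)$ the same for every $c$; this is exactly what lets one factor $\omega_c(h)$ out of the supremum and pass to the limit $c\to+\infty$ cleanly. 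The $m=1$ branch is comparatively routine once one notices that the second order expansion, not twice differentiability of $\Psi_c$, is all that the ray-wise argument requires.
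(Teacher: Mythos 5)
Your proposal is correct and follows essentially the same route as the paper's own proof: the first-order part via Propositions~\ref{Prp_SaddlePointImpliesOptSol}, \ref{Prp_NonnegativeMultiplierAtSP} and \ref{Prp_ComplementarySlackAtSP} together with $(A11)$ and local minimality of $x_*$ for $\mathscr{L}(\cdot,\lambda_*,c)$, the case $m=1$ via a ray-wise second order expansion along $x_*+th$, and the smooth case via the epigraph reformulation \eqref{ConvertedMiniMax} with MFCQ, the second order necessary conditions of \cite{BonnansShapiro} (Theorem~3.45), identification of the multiplier set with $\alpha(x_*,\mu_*)$, and the passage to the limit $c\to+\infty$ using Definition~\ref{Def_SeconOrdeExpansion}. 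The only differences are cosmetic steps the paper leaves implicit, such as extending the first-order inequality from $\cone(A-x_*)$ to $T_A(x_*)$ by continuity and explicitly matching the quadratic Taylor term of $\Phi(G(\cdot),\lambda_*,c)$ with $\omega_c$ via degree-two homogeneity.
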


\begin{proof}
Proposition~\ref{Prp_SaddlePointImpliesOptSol} implies that $x_*$ is a feasible point of the problem $(\mathcal{P})$,
i.e. $G(x_*) \in K$, while Propositions~\ref{Prp_NonnegativeMultiplierAtSP} and \ref{Prp_ComplementarySlackAtSP}, and
$(A11)$ imply that $\mu_* \in K^*$ and $\langle \mu_*, G(x_*) \rangle = 0$. 

Taking into account $(A11)$ one gets that the function $\mathscr{L}(\cdot, \lambda_*, c)$ is Hada\-mard directionally
differentiable at $x_*$, and $[\mathscr{L}(\cdot, \lambda_*, c)]'(x_*, h) = [L(\cdot, \mu_*)]'(x_*, h)$ for all  $h \in
X$ and $c > 0$. Note that by the definition of local saddle point $x_*$ is a local minimizer of $\mathscr{L}(\cdot,
\lambda_*, c)$ on the set $A$ for any $c > c^*_{loc}(x_*, \lambda_*)$. Therefore
\begin{equation} \label{FirstOrderOptCond}
  [\mathscr{L}(\cdot, \lambda_*, c)]'(x_*, h) = [L(\cdot, \mu_*)]'(x_*, h) \ge 0 
  \quad \forall h \in T_A(x_*)
\end{equation}
for any $c > c^*_{loc}(x_*, \lambda_*)$. Thus, $(x_*, \mu_*)$ is a KKT-pair of the problem $(\mathcal{P})$. 

Let us now turn to second order necessary optimality conditions. Suppose, at first, that $m = 1$. In this case, the
function $L(\cdot, \mu_*)$ is twice differentiable at $x_*$, and $D_x L(x_*, \mu_*) = 0$ due to the fact that 
$x_* \in \interior A$. Fix arbitrary $c > c^*_{loc}(x_*, \lambda_*)$ and $h \in C(x_*, \mu_*)$, and choose a sequence 
$\{ \gamma_n \} \subset (0, 1)$ such that $\gamma_n \to 0$ as $n \to \infty$. Since $x_* \in \interior A$, one can
suppose that $x_n := x_* + \gamma_n h \in A$ for all $n \in \mathbb{N}$.

As it was noted above, $x_*$ is a local minimizer of $\mathscr{L}(\cdot, \lambda_*, c)$ on the set $A$. 
Therefore $\mathscr{L}(x_n, \lambda_*, c) \ge \mathscr{L}(x_*, \lambda_*, c)$ for any sufficiently large $n$.
Hence taking into account \eqref{FirstOrderOptCond}, the equality $D_x L(x_*, \mu_*) = 0$, and the fact that 
$\Phi(G(x), \lambda, c)$ admits the second order expansion in $x$ at $(x_*, \lambda_*)$ one obtains that
\begin{multline*}
  0 \le \mathscr{L}(x_n, \lambda_*, c) - \mathscr{L}(x_*, \lambda_*, c) \\ 
  = \frac{\gamma_n^2}{2} \langle h, D^2_{xx} L(x_*, \mu_*) h \rangle +
  \frac{\gamma_n^2}{2} \omega_c(h) + o(\| x_n - x_* \|^2)
\end{multline*}
for any $n$ large enough. Dividing this inequality by $\gamma_n^2$, passing to the limit as $n \to + \infty$, and 
then passing to the limit as $c \to \infty$ with the use of Def.~\ref{Def_SeconOrdeExpansion} one obtains that 
the KKT-pair $(x_*, \mu_*)$ satisfies the second order necessary optimality condition.

Suppose, now, that $\Phi(G(\cdot), \lambda_*, c)$ is twice continuously differentiable at $x_*$. Fix an arbitrary 
$c > c^*_{loc}(x_*, \lambda_*)$. As it was pointed out above, $x_*$ is a point of local minimum 
of $\mathscr{L}(\cdot, \lambda_*, c)$. Consequently, the point $(x_*, 0) \in \mathbb{R}^{d + 1}$ is a locally optimal
solution of the problem
\begin{equation} \label{ConvertedMiniMax}
  \min_{(x, z)} z \quad \text{subject to} \quad g_k(x, z) \le 0, \quad k \in M,
\end{equation}
where $g_k(x, z) = f_k(x) + \Phi(G(x), \lambda_*, c) - z$. The Lagrangian function for this problem has the form 
$\mathcal{L}(x, z, \alpha) = z + \sum_{k = 1}^m \alpha_k g_k(x, z)$. Observe that the set of Lagrange multipliers of
problem \eqref{ConvertedMiniMax} at the point $(x_*, 0)$ coincides with $\alpha(x_*, \mu_*)$.

From the fact that $\Phi(G(\cdot), \lambda_*, c)$ is twice continuously differentiable at $x_*$ it follows that 
the functions $g_k(x, z)$ are twice continuously differentiable at the point $(x_*, 0)$. Furthermore, 
the Mangasarian-Fromovitz constraint qualification obviously holds at this point. Therefore, applying the second order
necessary optimality conditions (see, e.g., \cite{BonnansShapiro}, Theorem~3.45) one can easily verify that for 
any $h \in \mathbb{R}^d$ such that $\langle D_x g_k(x_*, 0), h \rangle \le 0$ for all $k \in M(x_*)$ one has
$$
  \sup_{\alpha \in \alpha(x_*, \mu_*)} \langle h, D_{xx} \mathcal{L}(x_*, 0, \alpha) h \rangle \ge 0.
$$
Hence applying $(A11)$ and Def.~\ref{Def_SeconOrdeExpansion} one gets that
$$
  \sup_{\alpha \in \alpha(x_*, \mu_*)} \langle h, D_{xx} L_0(x_*, \mu_*, \alpha) h \rangle
  + \omega_c(h) \ge 0 \quad \forall c > c^*_{loc}(x_*, \lambda_*)
$$
for any $h \in \mathbb{R}^d$ such that 
$[L(\cdot, \mu_*)]'(x_*, h) = \max_{k \in M(x_*)} \langle D_x g_k(x_*, 0), h \rangle \le 0$. Passing to 
the limit as $c \to \infty$, and taking into account Def.~\ref{Def_SeconOrdeExpansion} one obtains that the KKT-pair
$(x_*, \mu_*)$ satisfies optimality condition \eqref{KKTSecondOrderDef}.	 
\end{proof}

Let us prove that under some additional assumptions any KKT-pair satisfying the second order \textit{sufficient}
optimality condition is a local saddle point of the augmented Lagrangian function $\mathscr{L}(x, \lambda, c)$.

Suppose that the functions $f_k$, $k \in M$ and $G$ are twice Fr\'echet differentiable at a point $x_* \in A$. One
says that a KKT-pair $(x_*, \lambda_*)$ satisfies \textit{the second order sufficient optimality condition}
(cf.~\cite{BonnansShapiro}, Theorem~3.86) if
\begin{multline} \label{KKT_2OrderSuffCond}
  \sup_{\alpha \in \alpha(x_*, \lambda_*)} 
  \big\langle h, D^2_{xx} L_0(x_*, \lambda_*, \alpha) h \big\rangle - \sigma(\lambda_*, \mathcal{T}(h)) > 0 \\
  \forall h \in C(x_*, \lambda_*) \setminus \{ 0 \} \colon [L(\cdot, \lambda_*)]'(x_*, h) = 0.
\end{multline}
The following result holds true.

\begin{theorem} \label{Thrm_LSPvia2OrderSuffCond}
Let $x_*$ be a locally optimal solution of the problem $(\mathcal{P})$, the functions $f_k$, $k \in M$, and $G$ be twice
Fr\'echet differentiable at $x_*$, and $(x_*, \mu_*)$ be a KKT-pair of the problem $(\mathcal{P})$ satisfying the
second order sufficient optimality condition. Suppose also that assumptions $(A2)$, $(A4)$, $(A10)$ and $(A11)$ hold
true, and the function $\Phi(G(x), \lambda, c)$ admits the second order expansion in $x$ at $(x_*, \lambda_*)$ for some
$\lambda_* \in \Phi_0^{-1}(\mu_*)$. Then $(x_*, \lambda_*)$ is a local saddle point of $\mathscr{L}(x, \lambda, c)$.
\end{theorem}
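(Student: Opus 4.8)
The plan is to verify the two inequalities in the definition of a local saddle point separately; the first is immediate and computes the saddle value, while the second carries all the work. For the left inequality, note that since $x_*$ is feasible we have $G(x_*) \in K$; moreover $\lambda_* \in \Phi_0^{-1}(\mu_*) \subseteq K^*$, and by the equivalence in $(A11)$ the complementarity $\langle \mu_*, G(x_*) \rangle = 0$ of the KKT-pair transfers to $\langle \lambda_*, G(x_*) \rangle = 0$. Hence $(A10)$ gives $\Phi(G(x_*), \lambda_*, c) = 0$, so that $\mathscr{L}(x_*, \lambda_*, c) = f(x_*) < +\infty$ for every $c > 0$, while $(A2)$ yields $\Phi(G(x_*), \lambda, c) \le 0$ and therefore $\mathscr{L}(x_*, \lambda, c) \le f(x_*)$ for all $\lambda \in \Lambda$. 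This is exactly $\sup_{\lambda \in \Lambda} \mathscr{L}(x_*, \lambda, c) \le \mathscr{L}(x_*, \lambda_*, c)$.

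For the right inequality it is enough, thanks to the monotonicity $(A4)$ and the equality $\mathscr{L}(x_*, \lambda_*, c) = f(x_*)$, to produce a single $c_0 > 0$ and a neighbourhood $U$ of $x_*$ with $\mathscr{L}(x, \lambda_*, c_0) \ge f(x_*)$ for all $x \in U \cap A$; monotonicity then propagates the bound to every $c \ge c_0$. I would argue by contradiction: if no such pair $(c_0, U)$ exists, then there are sequences $c_n \to +\infty$ and $x_n \to x_*$, $x_n \in A$, with $\mathscr{L}(x_n, \lambda_*, c_n) < f(x_*)$. Writing $h_n = x_n - x_*$, $t_n = \|h_n\| \to 0$ and $d_n = h_n / t_n$, I would pass to a subsequence so that $d_n \to d$ with $\|d\| = 1$; convexity of $A$ gives $h_n \in T_A(x_*)$ and hence $d \in T_A(x_*)$. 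Near $x_*$ only the indices in $M(x_*)$ are active, so $f(x_n) - f(x_*) = \max_{k \in M(x_*)} (f_k(x_n) - f_k(x_*)) \ge \sum_k \alpha_k (f_k(x_n) - f_k(x_*))$ for every Danskin--Demyanov multiplier $\alpha \in \alpha(x_*, \mu_*)$; this convex-combination bound is what lets the nonsmooth minimax objective be treated through the smooth functions $L_0(\cdot, \mu_*, \alpha)$.

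The central difficulty is that the remainder $o(\|h\|^2)$ in the second order expansion of $\Phi(G(\cdot), \lambda_*, c)$ need not be uniform in $c$, so one cannot simply insert $c = c_n \to +\infty$. I would circumvent this with $(A4)$: for each fixed $\bar c \le c_n$ one has $\mathscr{L}(x_n, \lambda_*, \bar c) \le \mathscr{L}(x_n, \lambda_*, c_n) < f(x_*)$, and by a diagonal choice $\bar c_n \to +\infty$, slow enough relative to the ($c$-dependent) radii on which the remainder is controlled, the expansion at $c = \bar c_n$, $h = h_n$ has a genuine $o(t_n^2)$ remainder. Feeding this expansion together with $(A11)$ (so that $D_y \Phi(G(x_*), \lambda_*, c) = \mu_*$) and the convex-combination bound into $\mathscr{L}(x_n, \lambda_*, \bar c_n) - f(x_*) < 0$ produces, for every $\alpha \in \alpha(x_*, \mu_*)$,
\[
  t_n \langle D_x L_0(x_*, \mu_*, \alpha), d_n \rangle + \frac{t_n^2}{2} \langle d_n, D^2_{xx} L_0(x_*, \mu_*, \alpha) d_n \rangle + \frac{t_n^2}{2} \omega_{\bar c_n}(d_n) + o(t_n^2) < 0,
\]
where I have used the positive homogeneity of $\omega_c$. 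Dividing by $t_n$ and letting $n \to \infty$ gives $\langle D_x L_0(x_*, \mu_*, \alpha), d \rangle \le 0$, which together with the KKT inequality forces $[L(\cdot, \mu_*)]'(x_*, d) = 0$.

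It remains to reach a contradiction with the second order sufficient condition. Dropping the nonnegative first order term, dividing by $t_n^2/2$ and passing to the limit yields $\sup_{\alpha} \langle d, D^2_{xx} L_0(x_*, \mu_*, \alpha) d \rangle + \liminf_n \omega_{\bar c_n}(d_n) \le 0$. Here property~3 of Definition~\ref{Def_SeconOrdeExpansion} is decisive: the upper bound just obtained rules out $\omega_{\bar c_n}(d_n) \to +\infty$, which forces the limiting direction $d$ into the critical cone $C(x_*, \mu_*)$ (in particular $DG(x_*) d \in T_K(G(x_*))$, whence $\langle \mu_*, DG(x_*) d \rangle = 0$ by complementarity), and simultaneously supplies the lower bound $\liminf_n \omega_{\bar c_n}(d_n) \ge -\sigma(\mu_*, \mathcal{T}(d))$ from properties~2 and~3. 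Consequently $\sup_{\alpha} \langle d, D^2_{xx} L_0(x_*, \mu_*, \alpha) d \rangle \le \sigma(\mu_*, \mathcal{T}(d))$ for a nonzero $d \in C(x_*, \mu_*)$ with $[L(\cdot, \mu_*)]'(x_*, d) = 0$, directly contradicting \eqref{KKT_2OrderSuffCond}. I expect the main obstacle to be precisely the management of this double limit $h \to 0$, $c \to +\infty$ --- keeping the expansion remainder under control while still letting $c \to +\infty$ so that properties~2 and~3 of the expansion become available --- together with the bookkeeping needed to handle the minimax objective in the case $m > 1$, when $\Phi(G(\cdot), \lambda_*, c)$ is only directionally, not twice, differentiable.
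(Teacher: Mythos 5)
Your proposal follows the same skeleton as the paper's proof (the left inequality via $(A2)$, $(A10)$, $(A11)$; a contradiction sequence $c_n \to +\infty$, $x_n \to x_*$ with $\mathscr{L}(x_n, \lambda_*, c_n) < f(x_*)$; the diagonal penalty sequence $\bar c_n \le c_n$ obtained from $(A4)$, which plays exactly the role of the paper's $c$-dependent neighbourhoods $U_c$; property~3 of Definition~\ref{Def_SeconOrdeExpansion} to force the limiting direction into $C(x_*, \mu_*)$ and to supply the sigma term). The genuine gap is in your first-order step, the claim that $[L(\cdot, \mu_*)]'(x_*, d) = 0$. You divide the expansion by $t_n$ \emph{after} replacing the maximum over $k \in M(x_*)$ by a Danskin--Demyanov convex combination, so the most your limit can yield is $\langle D_x L_0(x_*, \mu_*, \alpha), d \rangle \le 0$ for all $\alpha \in \alpha(x_*, \mu_*)$, hence $= 0$ by \eqref{KKTdef}. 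When $m > 1$ this does \emph{not} imply $[L(\cdot, \mu_*)]'(x_*, d) = 0$: take $f_1(x) = x_1$, $f_2(x) = -x_1$, no constraints, $x_* = 0$, $\mu_* = 0$; then $\alpha(x_*, \mu_*) = \{ (\tfrac12, \tfrac12) \}$ and $D_x L_0(x_*, \mu_*, \alpha) = 0$, yet $[L(\cdot, \mu_*)]'(x_*, e_1) = 1 > 0$. Since the second order sufficient condition \eqref{KKT_2OrderSuffCond} quantifies only over directions satisfying $[L(\cdot, \mu_*)]'(x_*, h) = 0$, a direction $d$ for which you only know $\langle D_x L_0(\alpha), d \rangle = 0$ produces no contradiction --- and this failure occurs precisely in the minimax case $m > 1$ that the theorem is meant to cover.

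Moreover, even if you retain the max form when dividing by $t_n$, the passage to the limit is unjustified where you perform it: you need $\liminf_n t_n \, \omega_{\bar c_n}(d_n) \ge 0$, i.e.\ a lower control on the $\omega$-term, and Definition~\ref{Def_SeconOrdeExpansion} supplies one only through property~3, which can be invoked only \emph{after} the second-order step (discard the nonnegative first-order term using \eqref{KKTdef}, divide by $t_n^2$, and conclude that $\omega_{\bar c_n}(d_n)$ is bounded above), and even then only along a subsequence on which $\omega_{\bar c_n}(d_n)$ converges to its finite upper limit. So your two limiting steps must be carried out in the reverse order, with a subsequence extraction in between. The paper sidesteps both difficulties with a different device worth adopting: it deduces $[L(\cdot, \mu_*)]'(x_*, h_*) \le 0$ from the Hadamard directional differentiability of $\mathscr{L}(\cdot, \lambda_*, c)$ at the \emph{fixed} penalty $c = 1$ (a consequence of $(A11)$), transferring the contradiction inequality from penalty $c_n$ down to $c = 1$ by $(A4)$ and $(A10)$; this argument keeps the max structure intact and never involves $\omega_c$ at all.
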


\begin{proof}
Taking into account the fact that $(x_*, \mu_*)$ is a KKT-pair of the problem $(\mathcal{P})$ and utilizing assumption
$(A2)$, $(A10)$ and $(A11)$ one can easily verify that for all $c > 0$ one has
$\sup_{\lambda \in \Lambda} \mathscr{L}(x_*, \lambda, c) \le   \mathscr{L}(x_*, \lambda_*, c)$.

Applying $(A11)$ and Def.~\ref{Def_SeconOrdeExpansion} one obtains that for any $c > 0$ there exists a neighbourhood
$U_c$ of $x_*$ such that for all $x \in U_c$ one has
\begin{multline} \label{AugmLagr_TaylorExpnInX}
  \Big| \mathscr{L}(x, \lambda_*, c) - \mathscr{L}(x_*, \lambda_*, c) \\
  - \max_{k \in M(x_*)} \Big( \langle \nabla f_k(x_*), x - x_* \rangle + 
  \frac12 \big \langle x - x_*, \nabla^2 f_k(x_*) (x - x_*) \big \rangle \Big) \\
  - \Big\langle \mu_*, D G(x_*) (x - x_*) + \frac{1}{2} D^2 G(x_*)(x - x_*, x - x_*) \Big\rangle \\
  - \frac{1}{2} \omega_c(x - x_*) \Big| < \frac{1}{2c} \| x - x_* \|^2.
\end{multline}
Arguing by reductio ad absurdum, suppose that $(x_*, \lambda_*)$ is not a local saddle point 
of $\mathscr{L}(x, \lambda, c)$. Then for any $n \in \mathbb{N}$ there exists $x_n \in A \cap U_n$ such that 
$\mathscr{L}(x_n, \lambda_*, n) < \mathscr{L}(x_*, \lambda_*, n)$. Then taking into account
\eqref{AugmLagr_TaylorExpnInX} one obtains that 
\begin{multline} \label{AugmLagr_TaylorExpnInX_Seq}
  0 > \mathscr{L}(x_n, \lambda_*, n) - \mathscr{L}(x_*, \lambda_*, n) 
  \ge \max_{k \in M(x_*)} \Big( \langle \nabla f_k(x_*), z_n \rangle + 
  \frac12 \langle z_n, \nabla^2 f_k(x_*) z_n \rangle \Big) \\
  + \left\langle \mu_*, D G(x_*) z_n + \frac{1}{2} D^2 G(x_*)(z_n, z_n) \right\rangle 
  + \frac{1}{2} \omega_n(z_n) - \frac{1}{2n} \| z_n \|^2
\end{multline}
for any $n \in \mathbb{N}$, where $z_n = x_n - x_*$. 

Recall that if $\alpha \in \alpha(x_*, \mu_*)$, then $\alpha \in \mathbb{R}^m_+$, $\alpha_k = 0$ for all 
$k \notin M(x_*)$, and $\sum_{k = 1}^m \alpha_k = 1$. Therefore with the use of \eqref{AugmLagr_TaylorExpnInX_Seq} one
obtains that for any $\alpha \in \alpha(x_*, \mu_*)$ and $n \in \mathbb{N}$ one has
\begin{multline} \label{NonLSP2OrderExpans}
  0 > \mathscr{L}(x_n, \lambda_*, n) - \mathscr{L}(x_*, \lambda_*, n) \ge
  \big\langle D_x L_0(x_*, \mu_*, \alpha), z_n \big\rangle \\
  + \frac12 \big\langle z_n , D^2_{xx} L_0(x_*, \mu_*, \alpha) z_n \big\rangle 
  + \frac{1}{2} \omega_n(z_n) - \frac{1}{2n} \| z_n \|^2.
\end{multline}
Denote $h_n = z_n / \| z_n \|$. Since $X$ is a finite dimensional space, without loss of generality one can suppose
that the sequence $h_n$ converges to a point $h_*$ with $\| h_* \| = 1$. Furthermore, since $x_n \in A$ for all 
$n \in \mathbb{N}$ and $A$ is convex, $h_* \in T_A(x_*)$ and $z_n \in T_A(x_*)$ for all $n \in \mathbb{N}$.

Let us check that $[L(\cdot, \mu_*)]'(x_*, h_*) = 0$. Indeed, from the facts that $(x_*, \mu_*)$ is a KKT-pair and 
$h_* \in T_A(x_*)$ it follows that $[L(\cdot, \mu_*)]'(x_*, h_*) \ge 0$. Suppose 
that $[L(\cdot, \mu_*)]'(x_*, h_*) > 0$. Then by virtue of $(A11)$ for any $c > 0$ one has
$$
  \lim_{n \to \infty} \frac{\mathscr{L}(x_* + \gamma_n h_n, \lambda_*, c) - \mathscr{L}(x_*, \lambda_*, c)}{\gamma_n} =
  [L(\cdot, \mu_*)]'(x_*, h_*) > 0,
$$
where $\gamma_n = \| x_n - x_* \|$. Note that $x_* + \gamma_n h_n = x_n$. Therefore, in particular, there 
exists $n_0 \in \mathbb{N}$ such that $\mathscr{L}(x_n, \lambda_*, 1) > \mathscr{L}(x_*, \lambda_*, 1)$ for all 
$n \ge n_0$. Observe that $\mathscr{L}(x_*, \lambda_*, c) = f(x_*)$ for all $c > 0$ due to $(A10)$ and the fact that
$(x_*, \mu_*)$ is a KKT-pair, while $\mathscr{L}(x_n, \lambda_*, c) \ge \mathscr{L}(x_n, \lambda_*, 1)$ for all 
$c \ge 1$ by $(A4)$. Consequently, $\mathscr{L}(x_n, \lambda_*, n) > \mathscr{L}(x_*, \lambda_*, n)$ for
any $n \ge n_0$, which contradicts the definition of $x_n$. Thus, $[L(\cdot, \mu_*)]'(x_*, h_*) = 0$.

From \eqref{NonLSP2OrderExpans} and the fact that $(x_*, \mu_*)$ is a KKT-pair (see \eqref{KKTdef}) it follows that
$$
  0 > \big\langle z_n, D^2_{xx} L_0(x_*, \mu_*, \alpha) z_n \big\rangle
  + \omega_n(z_n) - \frac{1}{n} \| z_n \|^2.
$$
Dividing this inequality by $\| z_n \|^2$, passing to the limit superior as $n \to \infty$ with the use of
Def.~\ref{Def_SeconOrdeExpansion}, and taking the supremum over all $\alpha \in \alpha(x_*, \mu_*)$ one obtains that
$$
  0 \ge 
  \sup_{\alpha \in \alpha(x_*, \mu_*)} \big\langle h_*, D^2_{xx} L_0(x_*, \mu_*, \alpha) h_* \big\rangle 
  - \sigma(\mu_*, \mathcal{T}(h_*)), \quad h_* \in C(x_*, \mu_*),
$$
which contradicts the fact that the KKT-pair $(x_*, \mu_*)$ satisfies the second order sufficient optimality
condition due to the fact that $[L(\cdot, \mu_*)]'(x_*, h_*) = 0$.	 
\end{proof}

\begin{remark} \label{Rmrk_MangasarianAL}
{(i)~Let us note that $\Phi_0(\lambda) \equiv \lambda$ for most particular augmented Lagrangian functions appearing in
applications. Therefore, usually, there is a direct connection between local saddle points and KKT-pairs corresponding
to locally optimal solutions, i.e. if $(x_*, \lambda_*)$ is a local saddle point, then $x_*$ is a locally optimal
solution, and $(x_*, \lambda_*)$ is a KKT-pair satisfying the second order necessary optimality condition
(Proposition~\ref{Prp_LSPisKKTpair}), and, conversely, if $(x_*, \lambda_*)$ is a KKT-pair satisfying the second order
sufficient optimality condition, then $(x_*, \lambda_*)$ is a local saddle point
(Theorem~\ref{Thrm_LSPvia2OrderSuffCond}).
}

\noindent{(ii)~As it was noted above, existing augmented Lagrangian functions for semi-infinite programming
problems do not admit the second order expansion in $x$ in the sense of Def.~\ref{Def_SeconOrdeExpansion} due to the
absence of the sigma term $\sigma(\lambda_*, \mathcal{T}(h_*))$ in their second order expansions. Therefore, the second
order sufficient optimality condition \eqref{KKT_2OrderSuffCond} cannot be utilized in order to prove the existence of
a local saddle point in the case of semi-infinite programming problems. More important, one can easily provide an
example of a semi-infinite programming problem such that there exists a KKT-pair of this problem satisfying
\eqref{KKT_2OrderSuffCond}, but which is not a local saddle point of augmented Lagrangian functions from
Example~\ref{Example_NonlinearRescale_SemiInfProg} and Example \ref{Example_RockafellarWetsAL_SemiInfProg} with
$\sigma(y)$ defined as in \cite{RuckmannShapiro,BurachikYangZhou2017} (such KKT-pair must not satisfy optimality
conditions without the sigma term).
}

\noindent{(iii)~Theorem~\ref{Thrm_LSPvia2OrderSuffCond} unifies and sharpens many known results on existence of local
saddle points of augmented Lagrangian for mathematical programming problems (see, e.g., 
\cite{ZhaoZhangZhou2010}, Thm.~3.3; \cite{ZhouXiuWang}, Thm.~2.8; \cite{WuLuo2012b}, Thm.~2; 
\cite{LiuTangYang2009}, Thm.~2; \cite{LiuYang}, Thm.~3.2; \cite{SunLi}, Thms.~2.1, 2.3 and 2.4), nonlinear second order
cone programming problems (\cite{ZhouChen2015}, Thm.~2.3) and nonlinear semidefinite programming problems
(\cite{LuoWuLiu2015}, Thm.~4.2). Furthermore, Theorem~\ref{Thrm_LSPvia2OrderSuffCond} also extends the aforementioned
results to the case of minimax cone constrained optimization problem. To the best of author's knowledge,
Theorem~\ref{Thrm_LSPvia2OrderSuffCond} contains first simple sufficient conditions for the existence of local saddle
points in the case of cone constrained \textit{minimax} problems. Let us also note that
Theorem~\ref{Thrm_LSPvia2OrderSuffCond} provides a correct proof of Theorem 4 in \cite{Dolgopolik_AugmLagrMult}.
}
\end{remark}

In the theorem above, we demonstrated that the existence of a local saddle point of the augmented Lagrangian function
$\mathscr{L}(x, \lambda, c)$ can be easily proved via second order sufficient optimality conditions. Let us show
that one can utilize \textit{first} order sufficient optimality conditions for constrained minimax problems
\cite{MalozemovPevnyi,DaugavetMalozemov75,Daugavet,DaugavetMalozemov81,DemyanovMalozemov_Alternance,
DemyanovMalozemov_Collect} in order to obtain a similar result.

Suppose that the functions $f_k$, $k \in M$ and $G$ are differentiable at a point $x_* \in A$, and let 
$(x_*, \lambda_*)$ be a KKT-pair of the problem $(\mathcal{P})$. Then $[L(\cdot, \lambda_*)]'(x_*, h) \ge 0$
for any $h \in T_A(x_*)$. The natural ``no gap'' first order \textit{sufficient} optimality condition for the problem
$(\mathcal{P})$ has
the form
\begin{equation} \label{FirstOrderKKTSuffCond}
  [L(\cdot, \lambda_*)]'(x_*, h) > 0 \quad \forall h \in C(x_*, \lambda_*) \setminus \{ 0 \}
\end{equation}
(cf.~\cite{BonnansShapiro}, Section~3.1.4). The main drawbacks of this sufficient optimality condition consist in the
facts that this condition is difficult to verify, and it rarely holds true for smooth problems. However, sufficient
optimality condition \eqref{FirstOrderKKTSuffCond} often holds for constrained minimax problems, and, furthermore, it
can be reformulated in a more convenient form of the so-called \textit{alternance conditions} that are independent of
the Lagrange multiplier $\lambda_*$.

Let us introduce \textit{alternance optimality conditions} for the problem $(\mathcal{P})$. Let $Z \subset \mathbb{R}^d$
be a set consisting of $d$ linearly independent vectors. Denote 
by $N_A(x_*) = \{ z \in \mathbb{R}^d \mid \langle z, h \rangle \le 0 \: \forall h \in T_A(x_*) \}$ the normal cone to
$A$ at $x_*$. For any linear subspace $Y_0 \subset Y$ denote by 
$Y_0^{\perp} = \{ y^* \in Y^* \mid \langle y^*, y \rangle = 0 \: \forall y \in Y_0 \}$ the annihilator of $Y_0$.
For the sake of correctness, for any linear operator $U \colon \mathbb{R}^d \to Y$ denote by $[U]^*$ the composition of
the natural isomorphism between $(\mathbb{R}^d)^*$ and $\mathbb{R}^d$ and the adjoint operator 
$U^* \colon Y^* \to (\mathbb{R}^d)^*$.

One says that a $p$-\textit{point alternance} exists at $x_*$ with $p \in \{ 1, \ldots, d + 1\}$, if there exist 
$k_0 \in \{ 1, \ldots, p \}$, $i_0 \in \{ k_0 + 1, \ldots, p \}$, vectors
\begin{gather*}
  V_1, \ldots, V_{k_0} \in \Big\{ \nabla f_k(x_*) \Bigm| k \in M(x_*) \Big\}, \\
  V_{k_0 + 1}, \ldots, V_{i_0} \in \big[ DG(x_*) \big]^* \Big( K^* \cap \linhull(G(x_*))^{\perp} \Big), \quad
  V_{i_0 + 1}, \ldots, V_p \in N_A(x_*),
\end{gather*}
and vectors $V_{p + 1}, \ldots, V_{d + 1} \in Z$ such that the $d$th-order determinants $\Delta_s$ of the matrices
composed of the columns $V_1, \ldots, V_{s - 1}, V_{s + 1}, \ldots V_{d + 1}$ satisfy the following conditions
\begin{gather*}
  \Delta_s \ne 0, \quad s \in \{ 1, \ldots, p \}, \quad
  \sign \Delta_s = - \sign \Delta_{s + 1}, \quad s \in \{ 1, \ldots, p - 1 \}, \\
  \Delta_s = 0, \quad s \in \{ p + 1, \ldots d + 1 \}.
\end{gather*}
One can verify that a $p$-point alternance exists at $x_*$ for some $p \in \{ 1, \ldots, d + 1 \}$ iff there exists
$\lambda_* \in K^*$ such that $(x_*, \lambda_*)$ is a KKT-pair of the problem $(\mathcal{P})$. Moreover, the existence
of a \textit{complete} (i.e. $d+1$-point) alternance is a first order sufficient optimality condition for the problem
$(\mathcal{P})$
(see~\cite{MalozemovPevnyi,DaugavetMalozemov75,Daugavet,DaugavetMalozemov81,DemyanovMalozemov_Alternance,
DemyanovMalozemov_Collect} for more details). Note that in the case of complete alternance one has
$$
  \Delta_s \ne 0 \quad s \in \{ 1, \ldots, d + 1 \}, \quad
  \sign \Delta_s = - \sign \Delta_{s + 1} \quad s \in \{ 1, \ldots, d \},
$$
i.e. the determinants $\Delta_s$, $s \in \{1, \ldots, d + 1 \}$ are not equal to zero and have \textit{alternating}
signs, which explains the term \textit{alternance optimality conditions}. Finally, it should be mentioned that usually
alternance optimality conditions can only be applied in the case when the cardinality of $M(x_*)$ is greater than $1$,
i.e. when the objective function $f(x)$ is nonsmooth at $x_*$.

\begin{remark}
Let us point out that there is a close connection between standard and alternance optimality conditions for minimax
optimization problems. In particular, if one considers the unconstrained problem (i.e. $G(x) \equiv 0$, $K = \{ 0 \}$
and $A = \mathbb{R}^d$), then the standard first order necessary optimality condition 
$0 \in \partial f(x_*) = \{ \nabla f_k(x_*) \mid k \in M(x_*) \}$ is equivalent to the existence of a $p$-point
alternance at $x_*$ for some $p \in \{ 1, \ldots, d + 1 \}$. Similarly, the natural first order sufficient optimality
condition $0 \in \interior \partial f(x_*)$, that is equivalent to the first order growth condition at $x_*$ (i.e. there
exists $\gamma > 0$ such that $f(x) \ge f(x_*) + \gamma \| x - x_* \|$ for any $x$ in a neighbourhood of $x_*$), is also
equivalent to the existence of a complete alternance. Let us also note that the existence of complete alternance is a
natural assumption for many particular minimax problems. For more details on alternance optimality conditions see
\cite{MalozemovPevnyi,DaugavetMalozemov75,Daugavet,DaugavetMalozemov81,DemyanovMalozemov_Alternance,
DemyanovMalozemov_Collect}.
\end{remark}

Our aim is to prove that the existence of a complete alternance at a locally optimal solution of the problem
$(\mathcal{P})$ guarantees the existence of a local saddle point of 
the augmented Lagrangian $\mathscr{L}(x, \lambda, c)$.

\begin{theorem} \label{Thrm_LSPviaFirstOrderSuffCond}
Let $x_*$ be a locally optimal solution of the problem $(\mathcal{P})$, the functions $f_k$, $k \in M$ and $G$, be twice
Fr\'echet differentiable at $x_*$, and let a complete alternance exists at $x_*$. Suppose also that assumptions $(A2)$,
$(A4)$, $(A10)$ and $(A11)$ are satisfied, and the function $\Phi(G(x), \lambda, c)$ admits the second order expansion
in $x$ at $(x_*, \lambda)$ for any $\lambda \in K^*$ such that $\langle \lambda, G(x_*) \rangle = 0$. Then there exists 
$\lambda_* \in K^*$ such that $(x_*, \lambda_*)$ is a local saddle point of the augmented Lagrangian 
$\mathscr{L}(x, \lambda, c)$. Furthermore, under the assumptions of the theorem for any KKT-pair $(x_*, \mu_*)$ of
the problem $(\mathcal{P})$, and for all $\lambda_* \in \Phi_0^{-1}(\mu_*)$ the pair $(x_*, \lambda_*)$ is a local
saddle point of $\mathscr{L}(x, \lambda, c)$.
\end{theorem}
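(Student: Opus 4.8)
The plan is to derive the statement from Theorem~\ref{Thrm_LSPvia2OrderSuffCond} by showing that the existence of a complete alternance forces the second order sufficient optimality condition \eqref{KKT_2OrderSuffCond} to hold \emph{vacuously}. First I would reduce the existence claim to the ``furthermore'' part. A complete alternance is in particular a $(d+1)$-point alternance, so by the alternance theory there is a multiplier $\mu_* \in K^*$ with $(x_*, \mu_*)$ a KKT-pair; since $\Phi_0 \colon K^* \to K^*$ is surjective by $(A11)$, the preimage $\Phi_0^{-1}(\mu_*)$ is nonempty, and any $\lambda_*$ furnished by the second part of the theorem proves the first. Thus it suffices to fix an arbitrary KKT-pair $(x_*, \mu_*)$ and an arbitrary $\lambda_* \in \Phi_0^{-1}(\mu_*)$ and to show that $(x_*, \lambda_*)$ is a local saddle point.

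Before applying Theorem~\ref{Thrm_LSPvia2OrderSuffCond} I would check that $\lambda_*$ is admissible for its hypotheses. As $\Phi_0$ maps into $K^*$ and $\Phi_0(\lambda_*) = \mu_*$, we have $\lambda_* \in K^*$; moreover $(A11)$ gives $\langle \Phi_0(\lambda_*), G(x_*) \rangle = 0$ if and only if $\langle \lambda_*, G(x_*) \rangle = 0$, so the complementarity $\langle \mu_*, G(x_*) \rangle = 0$ of the KKT-pair yields $\langle \lambda_*, G(x_*) \rangle = 0$. Consequently the standing assumption that $\Phi(G(x), \lambda, c)$ admits the second order expansion in $x$ at $(x_*, \lambda)$ for every $\lambda \in K^*$ with $\langle \lambda, G(x_*) \rangle = 0$ applies at $(x_*, \lambda_*)$. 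Since local optimality of $x_*$, the twice differentiability of the $f_k$ and $G$, and assumptions $(A2)$, $(A4)$, $(A10)$, $(A11)$ are all assumed, the only outstanding hypothesis of Theorem~\ref{Thrm_LSPvia2OrderSuffCond} is that $(x_*, \mu_*)$ satisfy \eqref{KKT_2OrderSuffCond}.

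The heart of the proof is to show that the complete alternance actually delivers the stronger first order sufficient condition \eqref{FirstOrderKKTSuffCond} for $\mu_*$; once this is established, the set of directions $h \in C(x_*, \mu_*) \setminus \{0\}$ with $[L(\cdot, \mu_*)]'(x_*, h) = 0$ that appears in \eqref{KKT_2OrderSuffCond} is empty, so \eqref{KKT_2OrderSuffCond} holds vacuously. To prove \eqref{FirstOrderKKTSuffCond} I would first note that on the critical cone the multiplier term vanishes, whence $[L(\cdot, \mu_*)]'(x_*, h) = \max_{k \in M(x_*)} \langle \nabla f_k(x_*), h \rangle$ for all $h \in C(x_*, \mu_*)$. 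Next I would invoke the classical reformulation of a complete alternance: the alternating-sign determinantal conditions for $d+1$ vectors drawn from $\{ \nabla f_k(x_*) \mid k \in M(x_*) \}$, from $[DG(x_*)]^*(K^* \cap \linhull(G(x_*))^{\perp})$ and from $N_A(x_*)$ are equivalent to $0$ lying in the interior of the convex set $\operatorname{conv}\{ \nabla f_k(x_*) \mid k \in M(x_*) \} + [DG(x_*)]^*(K^* \cap \linhull(G(x_*))^{\perp}) + N_A(x_*)$, i.e. to its support function being strictly positive in every nonzero direction. Evaluating that support function at $h \in C(x_*, \mu_*) \setminus \{0\}$, the $N_A(x_*)$-summand contributes $0$ because $h \in T_A(x_*)$, and the middle summand contributes $0$ because $DG(x_*) h \in T_K(G(x_*))$ forces $\langle \lambda, DG(x_*) h \rangle \le 0$ for every $\lambda \in K^* \cap \linhull(G(x_*))^{\perp}$; hence the support function equals $\max_{k \in M(x_*)} \langle \nabla f_k(x_*), h \rangle = [L(\cdot, \mu_*)]'(x_*, h)$, which is therefore strictly positive. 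This is precisely \eqref{FirstOrderKKTSuffCond}, and since the computation never used the specific multiplier, it holds for \emph{every} KKT multiplier, as the universal statement requires. Theorem~\ref{Thrm_LSPvia2OrderSuffCond} then yields that $(x_*, \lambda_*)$ is a local saddle point.

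The step I expect to be the main obstacle is the faithful passage from the determinantal complete-alternance condition to the interior/support-function statement, together with the verification that the normal-cone and constraint terms vanish on the critical cone; this is where the alternance theory of \cite{DemyanovMalozemov_Alternance,DemyanovMalozemov_Collect} must be invoked and where the polarity relations between $T_K(G(x_*))$, $K^*$ and $\linhull(G(x_*))^{\perp}$ have to be handled with care.
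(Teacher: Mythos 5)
Your proposal is correct, but it takes a genuinely different route from the paper's own proof. The paper argues directly: it first constructs the multiplier $\mu_*$ from the alternance via Cramer's rule (obtaining $V_1+\sum_{s\ge 2}\beta_s V_s=0$ with $\beta_s>0$), and then, rather than invoking Theorem~\ref{Thrm_LSPvia2OrderSuffCond}, it essentially repeats that theorem's minimizing-sequence argument: assuming $(x_*,\lambda_*)$ is not a local saddle point, it extracts a unit limit direction $h_*$ of $(x_n-x_*)/\|x_n-x_*\|$, uses the second order expansion to show $h_*\in C(x_*,\mu_*)$ and $\max_{k\in M(x_*)}\langle\nabla f_k(x_*),h_*\rangle\le 0$, and finally contradicts this inequality using the full rank of $V=(V_1,\ldots,V_{d+1})$, the positivity of the $\beta_s$, and the polarity relations $\langle V_s,h_*\rangle\le 0$ for $s>k_0$. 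You instead isolate the purely first-order content of that last step: a complete alternance implies the first order sufficient condition \eqref{FirstOrderKKTSuffCond} for every Lagrange multiplier, hence the set of directions quantified over in \eqref{KKT_2OrderSuffCond} is empty, the second order sufficient condition holds vacuously, and Theorem~\ref{Thrm_LSPvia2OrderSuffCond} applies as a black box. This reduction is legitimate: all hypotheses of Theorem~\ref{Thrm_LSPvia2OrderSuffCond} are verified (in particular $\lambda_*\in K^*$ and $\langle\lambda_*,G(x_*)\rangle=0$ follow from $(A11)$ exactly as you argue, so the expansion hypothesis applies at $(x_*,\lambda_*)$), and a universally quantified condition over an empty set is true, so the ``vacuous'' application is sound. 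Your route is shorter, more modular, and makes explicit a conceptual point the paper only mentions in passing, namely that a complete alternance is a first order sufficient optimality condition of no-gap type; the paper's self-contained route avoids resting the theorem on a vacuously satisfied hypothesis and simultaneously re-proves the multiplier construction. The one step you cannot simply cite is the passage from the determinantal alternance conditions to positivity of the support function of $\mathrm{conv}\{\nabla f_k(x_*)\mid k\in M(x_*)\}+[DG(x_*)]^*\bigl(K^*\cap\linhull(G(x_*))^{\perp}\bigr)+N_A(x_*)$ in every nonzero direction: the paper's remark records only the unconstrained equivalence, and in the constrained setting you must prove the forward implication yourself --- which is done by precisely the Cramer's rule, full-rank and sign argument that the paper uses (only this forward implication is needed; your polarity computations showing that the two cone summands contribute zero on the critical cone are correct).
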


\begin{proof}
Let us verify, at first, that there exists $\mu_* \in K^*$ such that $(x_*, \mu_*)$ is a KKT-pair of the problem
$(\mathcal{P})$. Indeed, let $k_0 \in \{ 1, \ldots, d + 1 \}$, $i_0 \in \{ k_0 + 1, \ldots, d + 1 \}$ and vectors 
$V_1, \ldots, V_{d + 1} \in \mathbb{R}^d$ be from the definition of complete alternance. Applying Cramer's rule to the
system $\sum_{s = 2}^{d + 1} \beta_s V_s = - V_1$ one obtains that there exist unique $\beta_s > 0$, 
$s \in \{ 2, \ldots, d + 1 \}$ such that
\begin{equation} \label{AlternanceCond_CramerRule}
  V_1 + \sum_{s = 2}^{d + 1} \beta_s V_s = 0, \quad 
  \beta_s = (-1)^{s - 1} \frac{\Delta_s}{\Delta_1} > 0 \quad s \in \{ 2, \ldots, d + 1 \}.
\end{equation}
Denote $\widehat{\beta} = 1 + \beta_2 + \ldots + \beta_{k_0}$, $\gamma_1 = 1 / \widehat{\beta}$, and 
$\gamma_s = \beta_s / \widehat{\beta} > 0$, $s \in \{ 2, \ldots, d + 1 \}$. 
Define $\alpha = (\alpha_1, \ldots, \alpha_m) \in \mathbb{R}^m_+$ as follows
$$
  \alpha_k = \begin{cases}
    \gamma_s, & \text{if } k \in M(x_*) \text{ and } \exists s \in \{ 1, \ldots, k_0 \} \colon V_s = \nabla f_k(x_*), \\
    0, & \text{otherwise}.
  \end{cases}
$$
Then one can easily see that $\alpha_1 + \ldots + \alpha_m = 1$, $\alpha_k = 0$ if $k \notin M(x_*)$, and
$$
  \sum_{s = 1}^{k_0} \gamma_i V_i = \sum_{k = 1}^m \alpha_k \nabla f_k(x_*).
$$
Denote $W = \sum_{k_0 + 1}^{i_0} \gamma_s V_s$. From the fact that $K^* \cap \linhull(G(x_*))^{\perp}$ is a convex cone
it follows that $W \in [DG(x_*)]^* (K^* \cap \linhull(G(x_*))^{\perp})$. Hence $W = [DG(x_*)]^* \mu_*$ for some 
$\mu_* \in K^*$ such that $\langle \mu_*, G(x_*) \rangle = 0$ (in the case $k_0 = d + 1$ one has $W = 0$ and 
$\mu_* = 0$). Therefore for any $h \in \mathbb{R}^d$ one has
$$
  \big\langle D_x L_0(x_*, \mu_*, \alpha), h \big\rangle =
  \Big\langle \sum_{s = 1}^{i_0} \gamma_i V_i, h \Big\rangle = 
  - \gamma_s \sum_{s = {i_0 + 1}}^{d + 1} \langle V_s, h \rangle.
$$
By definition, $V_s \in N_A(x_*)$ for any $s \in \{ i_0 + 1, \ldots, d + 1 \}$. Consequently, 
$$
  \big\langle D_x L_0(x_*, \mu_*, \alpha), h \big\rangle \ge 0 \quad \forall h \in T_A(x_*)
$$
(note that in the case $k_0 = d + 1$ or $i_0 = d + 1$ one has $D_x L_0(x_*, \mu_*, \alpha) = 0$, and the above
inequality holds trivially). Thus, $(x_*, \mu_*)$ is a KKT-pair of the problem $(\mathcal{P})$ and 
$\alpha \in \alpha(x_*, \mu_*)$.

Let, now, $\mu_* \in K^*$ be such that $(x_*, \mu_*)$ is a KKT-pair of the problem $(\mathcal{P})$, and 
$\lambda_* \in \Phi_0^{-1}(\mu_*)$ be arbitrary. With the use of $(A2)$, $(A10)$ and $(A11)$ one obtains that 
$\sup_{\lambda \in \Lambda} \mathscr{L}(x_*, \lambda, c) \le \mathscr{L}(x_*, \lambda_*, c)$ for all $c > 0$.

Applying $(A11)$ and Def.~\ref{Def_SeconOrdeExpansion} one gets that for any $c > 0$ there exists a neighbourhood $U_c$
of $x_*$ such that for any $x \in U_c$ one has
\begin{multline} \label{AugmLagrTaylorExpans_AC}
  \Big| \mathscr{L}(x, \lambda_*, c) - \mathscr{L}(x_*, \lambda_*, c) \\
  - \max_{k \in M(x_*)} \Big( \langle \nabla f_k(x_*), x - x_* \rangle + 
  \frac12 \big\langle x - x_*, \nabla^2 f_k(x_*) (x - x_*) \big\rangle \Big) \\
  - \big\langle \mu_*, DG(x_*)(x - x_*) \big\rangle 
  - \frac{1}{2} \big\langle \mu_*, D^2 G(x_*)(x - x_*, x - x_*) \big\rangle \\
  - \frac{1}{2} \omega_c(x - x_*) \Big| < \frac{1}{2 c} \| x - x_* \|^2.
\end{multline}
Recall that our aim is to show that $(x_*, \lambda_*)$ is a local saddle point of $\mathscr{L}(x, \lambda, c)$. Arguing
by reductio ad absurdum, suppose that this claim is false. Then for any $n \in \mathbb{N}$ there exists 
$x_n \in A \cap U_n$ such that $\mathscr{L}(x_n, \lambda_*, n) < \mathscr{L}(x_*, \lambda_*, n)$. Taking into account
\eqref{AugmLagrTaylorExpans_AC} one obtains that for any $n \in \mathbb{N}$ the following inequality holds true
\begin{multline} \label{TaylorExpansNonLSP_AC}
  0 > \max_{k \in M(x_*)} \Big( \langle \nabla f_k(x_*), z_n \rangle + 
  \frac12 \big\langle z_n, \nabla^2 f_k(x_*) z_n \big\rangle \Big) + \big\langle \mu_*, DG(x_*) z_n \big\rangle \\
  + \frac{1}{2} \big\langle \mu_*, D^2 G(x_*)(z_n, z_n) \big\rangle
  + \frac{1}{2} \omega_n(z_n) - \frac{1}{2 n} \| z_n \|^2,
\end{multline}
where $z_n = x_n - x_*$. For any $n \in \mathbb{N}$ denote $h_n = z_n / \| z_n \|$. Without loss of generality, one can
suppose that the sequence $\{ h_n \}$ converges to a vector $h_* \in T_A(x_*)$ such that $\| h_* \| = 1$. 

Since $A$ is convex, $z_n \in T_A(x_*)$ for all $n \in \mathbb{N}$. Hence taking into account
\eqref{TaylorExpansNonLSP_AC} and the fact that $(x_*, \mu_*)$ is a KKT-pair one obtains that 
$$
  0 > \langle z_n, D^2_{xx} L_0(x_*, \mu_*, \alpha) z_n \rangle + 
  \omega_n(z_n) - \frac{1}{n} \| z_n \|^2
$$
for any $n \in \mathbb{N}$ and $\alpha \in \alpha(x_*, \mu_*)$. Dividing this inequality by $\| z_n \|^2$ and
passing to the limit superior as $n \to \infty$ with the use of Def.~\ref{Def_SeconOrdeExpansion} one gets that
$h_* \in C(x_*, \mu_*)$, which implies that $\langle \mu_*, D G(x_*) h_* \rangle = 0$ due to the definition
of the cone $C(x_*, \mu_*)$.

Dividing \eqref{TaylorExpansNonLSP_AC} by $\| z_n \|$, passing to the limit superior as $n \to \infty$ with the use of
Def.~\ref{Def_SeconOrdeExpansion}, and taking into account the facts that 
$\omega_n(z_n) / \| z_n \| = \omega_n(z_n / \sqrt{\| z_n \|})$, $z_n / \sqrt{\| z_n \|} \to 0$ as $n \to \infty$, and
$\sigma(\mu_*, \mathcal{T}(0)) = 0$ one obtains
\begin{equation} \label{DegenLimitDirection}
  0 \ge \max_{k \in M(x_*)} \langle \nabla f_k(x_*), h_* \rangle.
\end{equation}
Here we used the equality $\langle \mu_*, D G(x_*) h_* \rangle = 0$.

Introduce the matrix $V = (V_1, \ldots, V_{d + 1})$, and define $\beta = (1, \beta_2, \ldots, \beta_{d + 1})^T$, where
the vectors $V_s$ are from the definition of complete alternance, and $\beta_s$ are from
\eqref{AlternanceCond_CramerRule}. The first equality in \eqref{AlternanceCond_CramerRule} implies that 
\begin{equation} \label{AlternanceCond_Contr}
  \langle V \beta, h_* \rangle = \langle \beta, V^T h_* \rangle = \langle V_1, h_* \rangle +
  \sum_{s = 2}^{d + 1} \beta_s \langle V_s, h_* \rangle = 0.
\end{equation}
As it was shown above, $h_* \in C(x_*, \mu_*)$. Therefore $h_* \in T_A(x_*)$ and $\langle V_s, h_* \rangle \le 0$
for any $s \in \{ i_0 + 1, \ldots, d + 1 \}$. Furthermore, from the facts that $h_* \in C(x_*, \mu_*)$ and $K$ is
convex it follows that $D G(x_*) h_* \in T_K(G(x_*)) = \cl \cone(K - G(x_*))$ and 
$\langle y^*, D G(x_*) h_* \rangle \le 0$ for any $y^* \in K^* \cap \linhull(G(x_*))^{\perp}$, which yields that
$\langle V_s, h_* \rangle \le 0$ for any $s \in \{ k_0 + 1, \ldots, i_0 \}$. Thus, $\langle V_s, h_* \rangle \le 0$ for
all $s \in \{ k_0 + 1, \ldots, d + 1 \}$.

The definition of complete alternance implies that the matrix $V$ has full rank. Consequently, $V^T h_* \ne 0$
due to the fact that $h_* \ne 0$. Hence taking into account \eqref{AlternanceCond_Contr} and the fact that $\beta_s > 0$
for all $s \in \{ 2, \ldots, d + 1 \}$ one obtains that there exists $s_0 \in \{ 1, \ldots, k_0 \}$ such that 
$\langle V_{s_0}, h_* \rangle > 0$. Recall that $V_s \in \{ \nabla f_k(x_*) \mid k \in M(x_*) \}$ for any 
$s \in \{ 1, \ldots, k_0 \}$. Therefore there exists $k \in M(x_*)$ such that $V_{s_0} = \nabla f_k(x_*)$. Hence one has
$\max_{k \in M(x_*)} \langle \nabla f_k(x_*), h_* \rangle > 0$, which contradicts \eqref{DegenLimitDirection}. Thus,
$(x_*, \lambda_*)$ is a local saddle point of $\mathscr{L}(x, \lambda, c)$.	 
\end{proof}

Now, we can easily obtain simple necessary and sufficient conditions for the existence of a global saddle point of the
augmented Lagrangian $\mathscr{L}(x, \lambda, c)$ with the use of the localization principle.

Recall that $\Omega_*$ is the set of globally optimal solutions of the problem $(\mathcal{P})$.
By Proposition~\ref{Prp_LSPisKKTpair} any local saddle point $(x_*, \lambda_*)$ of the augmented Lagrangian function
$\mathscr{L}(x, \lambda, c)$ must be a KKT-pair of the problem $(\mathcal{P})$. Hence taking into account
Corollary~\ref{Crlr_IndependenceGSPofOptSol} one obtains that for the existence of a global saddle point 
of $\mathscr{L}(x, \lambda, c)$ it is \textit{necessary} that there exists $\lambda_* \in K^*$ such that $(x_*,
\lambda_*)$ is a
KKT-pair of $(\mathcal{P})$ for any $x_* \in \Omega_*$. Therefore, if there are at least two globally optimal solutions
of the problem $(\mathcal{P})$ that have disjoint sets of Lagrange multipliers, then there are no global saddle points
of the augmented Lagrangian $\mathscr{L}(x, \lambda, c)$.

Under the additional assumption that the first or the second order sufficient optimality conditions hold at every 
$x_* \in \Omega_*$, one can demonstrate that the existence of $\lambda_* \in K^*$ such that $(x_*, \lambda_*)$ is a
KKT-pair of $(\mathcal{P})$ for any $x_* \in \Omega_*$ is also sufficient for the existence of a global saddle point of 
$\mathscr{L}(x, \lambda, c)$.

\begin{theorem} \label{Th_LocPrincipleSP_MiniMaxProblems}
Let $A$ be closed, $G$ be continuous on $A$, and $\mathscr{L}(\cdot, \lambda, c)$ be l.s.c. on $A$ for any 
$\lambda \in \Lambda$ and $c > 0$. Suppose that assumptions $(A1)$--$(A4)$, $(A6)$, $(A10)$ and $(A11)$ are satisfied.
Let also the following assumptions be valid:
\begin{enumerate}
\item{$f_k$, $k \in M$ and $G$ are twice Fr\'echet differentiable at every point $x_* \in \Omega_*$;
}

\item{there exists $\mu_* \in \Lambda$ such that $(x_*, \mu_*)$ is a KKT-pair of the problem $(\mathcal{P})$ 
for any $x_* \in \Omega_*$;
}

\item{for any $x_* \in \Omega_*$ either a complete alternance exists at $x_*$ or the KKT-pair $(x_*, \mu_*)$
satisfies the second order sufficient optimality condition;
}

\item{the function $\Phi(G(x), \lambda, c)$ admits the second order expansion in $x$ at every point 
$(x_*, \lambda_*)$ such that $x_* \in \Omega_*$ and $\lambda_* \in \Phi_0^{-1}(\mu_*)$.
}
\end{enumerate}
Then for any $\lambda_* \in \Phi_0^{-1}(\mu_*)$ one has $\lambda_* \in \Lambda(\mathcal{P})$ iff there exists
$c_0 > 0$ such that the set $S(\lambda_*, c_0) = \{ x \in A \mid \mathscr{L}(x, \lambda_*, c_0) < f_* \}$ is either
bounded or empty.
\end{theorem}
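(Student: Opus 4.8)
The plan is to reduce the assertion to the localization principle (Theorem~\ref{Thrm_LocalizationPrinciple}) by first establishing that the whole set $\Phi_0^{-1}(\mu_*)$ is contained in $\Lambda_{loc}(\mathcal{P})$; granting this, the claimed equivalence becomes a verbatim instance of the final characterization of $\Lambda(\mathcal{P})$ provided by that theorem. Accordingly, I would fix an arbitrary $\lambda_* \in \Phi_0^{-1}(\mu_*)$ (this set is nonempty, since $\mu_* \in K^*$ and $\Phi_0 \colon K^* \to K^*$ is surjective by $(A11)$) together with an arbitrary globally optimal solution $x_* \in \Omega_*$, and show that the pair $(x_*, \lambda_*)$ is a local saddle point of $\mathscr{L}(x, \lambda, c)$.

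This local analysis splits according to assumption~3. When the KKT-pair $(x_*, \mu_*)$ satisfies the second order sufficient optimality condition, I would apply Theorem~\ref{Thrm_LSPvia2OrderSuffCond}: its hypotheses hold here because $x_*$ is globally (hence locally) optimal, $f_k$ and $G$ are twice Fr\'echet differentiable at $x_*$ by assumption~1, assumptions $(A2)$, $(A4)$, $(A10)$, $(A11)$ are in force, and $\Phi(G(x), \lambda, c)$ admits the second order expansion in $x$ at $(x_*, \lambda_*)$ by assumption~4. When instead a complete alternance exists at $x_*$, I would invoke Theorem~\ref{Thrm_LSPviaFirstOrderSuffCond}, whose hypotheses are met in the same manner. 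Either way the outcome is that $(x_*, \lambda_*)$ is a local saddle point.

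Since $x_*$ ranged over all of $\Omega_*$ while $\lambda_*$ was chosen once and for all, this proves $\lambda_* \in \Lambda_{loc}(x_*)$ for every $x_* \in \Omega_*$, that is, $\lambda_* \in \Lambda_{loc}(\mathcal{P})$, and in particular $\Lambda_{loc}(\mathcal{P}) \neq \emptyset$. The structural hypotheses of the localization principle---$A$ closed, $G$ continuous on $A$, $\mathscr{L}(\cdot, \lambda, c)$ lower semicontinuous on $A$, and $(A1)$--$(A4)$ together with $(A6)$---coincide with those assumed in the present statement, so Theorem~\ref{Thrm_LocalizationPrinciple} is available and its concluding sentence identifies $\Lambda(\mathcal{P})$ with the set of those $\lambda_* \in \Lambda_{loc}(\mathcal{P})$ for which some $c_0 > 0$ renders $S(\lambda_*, c_0)$ bounded or empty. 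Combined with the inclusion $\Phi_0^{-1}(\mu_*) \subseteq \Lambda_{loc}(\mathcal{P})$ just obtained, this delivers the stated equivalence (in both directions) for every $\lambda_* \in \Phi_0^{-1}(\mu_*)$.

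The step I expect to require the most care is aligning the expansion hypothesis available here with the one literally demanded by the two local saddle point theorems. Theorem~\ref{Thrm_LSPviaFirstOrderSuffCond} is phrased so as to require the second order expansion at $(x_*, \lambda)$ for \emph{every} $\lambda \in K^*$ with $\langle \lambda, G(x_*) \rangle = 0$, whereas assumption~4 grants it only on $\Phi_0^{-1}(\mu_*)$. I would circumvent this by appealing to the argument of that theorem rather than to its statement: its proof uses the expansion solely at the single pair $(x_*, \lambda_*)$ under examination (via the Taylor estimate of the type~\eqref{AugmLagrTaylorExpans_AC}), so assumption~4 already suffices. The analogous ``for some $\lambda_* \in \Phi_0^{-1}(\mu_*)$'' clause in Theorem~\ref{Thrm_LSPvia2OrderSuffCond} causes no difficulty, since assumption~4 holds for \emph{every} such $\lambda_*$, in particular for the one we fixed.
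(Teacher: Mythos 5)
Your proof is correct and follows essentially the same route as the paper's own (very terse) argument: establish $\lambda_* \in \Lambda_{loc}(\mathcal{P})$ by applying Theorem~\ref{Thrm_LSPvia2OrderSuffCond} or Theorem~\ref{Thrm_LSPviaFirstOrderSuffCond} at each $x_* \in \Omega_*$ according to which sufficient condition holds, then conclude via the characterization of $\Lambda(\mathcal{P})$ in the localization principle (Theorem~\ref{Thrm_LocalizationPrinciple}). Your closing observation --- that Theorem~\ref{Thrm_LSPviaFirstOrderSuffCond} nominally demands the second order expansion at $(x_*, \lambda)$ for \emph{all} $\lambda \in K^*$ with $\langle \lambda, G(x_*) \rangle = 0$, but its proof only uses it at the single pair under examination, so assumption~4 suffices --- is a legitimate subtlety that the paper's two-sentence proof passes over in silence.
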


\begin{proof}
With the use of Theorems~\ref{Thrm_LSPvia2OrderSuffCond} and \ref{Thrm_LSPviaFirstOrderSuffCond} one obtains that
$\lambda_* \in \Lambda_{loc}(\mathcal{P})$. Then applying the localization principle we arrive at the required result.
 
\end{proof}

In the case when a complete alternance exists at every globally optimal solution of $(\mathcal{P})$ one can obtain 
a stronger result.

\begin{theorem} \label{Th_LocPrincipleSP_MiniMaxProblems_Alternance}
Let $A$ be closed, $G$ be continuous on $A$, $\mathscr{L}(\cdot, \lambda, c)$ be l.s.c. on $A$ for any 
$\lambda \in \Lambda$ and $c > 0$. Suppose that assumptions $(A1)$--$(A4)$, $(A6)$ and $(A8)$--$(A11)$ are satisfied.
Let also the following assumptions be valid:
\begin{enumerate}
\item{the functions $f_k$, $k \in M$, and $G$ are twice Fr\'echet differentiable at every $x_* \in \Omega_*$;
} 

\item{the function $\Phi(G(x), \lambda, c)$ admits the second order expansion in $x$ at every point 
$(x_*, \lambda_*) \in \Omega_* \times K^*$ such that $\langle \lambda_*, G(x_*) \rangle = 0$;
}

\item{a complete alternance exists at every $x_* \in \Omega_*$.
} 
\end{enumerate}
Then a global saddle point of $\mathscr{L}(x, \lambda, c)$ exists if and only if
there exist $\mu_* \in K^*$, $\lambda_* \in \Phi_0^{-1}(\mu_*)$ and $c_0 > 0$ such that for any $x_* \in \Omega_*$ 
the pair $(x_*, \mu_*)$ is a KKT-pair of $(\mathcal{P})$, and the set $S(\lambda_*, c_0)$ is either bounded or empty.
Furthermore, $\Lambda(\mathcal{P})$ consists of all those $\lambda_* \in \Lambda$ which satisfy the above assumptions.
\end{theorem}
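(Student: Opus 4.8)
The plan is to treat this theorem as essentially a packaging of the localization principle (Theorem~\ref{Thrm_LocalizationPrinciple}) together with the two local-saddle-point results already in hand, Proposition~\ref{Prp_LSPisKKTpair} and Theorem~\ref{Thrm_LSPviaFirstOrderSuffCond}. The point is that all the standing hypotheses of the localization principle ($A$ closed, $G$ continuous, $\mathscr{L}(\cdot,\lambda,c)$ l.s.c., and $(A1)$--$(A4)$, $(A6)$) are assumed here, so the entire content of the theorem reduces to identifying $\Lambda_{loc}(\mathcal{P})$ with the set of multipliers satisfying the stated KKT/alternance conditions. I would therefore first prove the characterization of $\Lambda(\mathcal{P})$ as a two-sided inclusion, and then read off the existence statement, since a global saddle point exists if and only if $\Lambda(\mathcal{P})\neq\emptyset$ (by Proposition~\ref{Prp_SaddlePointImpliesOptSol} and Corollary~\ref{Crlr_IndependenceGSPofOptSol}).

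For the direction showing the listed conditions are \emph{sufficient} for $\lambda_*\in\Lambda(\mathcal{P})$, I would fix $\mu_*\in K^*$ with $\lambda_*\in\Phi_0^{-1}(\mu_*)$, assume $(x_*,\mu_*)$ is a KKT-pair at every $x_*\in\Omega_*$, and assume $S(\lambda_*,c_0)$ is bounded or empty for some $c_0$. Since a complete alternance exists at each $x_*\in\Omega_*$ (assumption~3) and $\Phi(G(x),\lambda,c)$ admits the second order expansion at every $(x_*,\lambda_*)\in\Omega_*\times K^*$ with $\langle\lambda_*,G(x_*)\rangle=0$ (assumption~2), Theorem~\ref{Thrm_LSPviaFirstOrderSuffCond} applies at each $x_*$ and gives that $(x_*,\lambda_*)$ is a local saddle point for every $\lambda_*\in\Phi_0^{-1}(\mu_*)$; here I note that $\langle\lambda_*,G(x_*)\rangle=0$ holds because $\langle\mu_*,G(x_*)\rangle=0$ and $(A11)$ makes these two equivalent. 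Hence $\lambda_*\in\Lambda_{loc}(x_*)$ for all $x_*\in\Omega_*$, i.e. $\lambda_*\in\Lambda_{loc}(\mathcal{P})$, and the localization principle upgrades this to $\lambda_*\in\Lambda(\mathcal{P})$ precisely because $S(\lambda_*,c_0)$ is bounded or empty.

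For the reverse (\emph{necessity}) inclusion I would start from $\lambda_*\in\Lambda(\mathcal{P})$. By Corollary~\ref{Crlr_IndependenceGSPofOptSol} the pair $(x_*,\lambda_*)$ is a global, hence local, saddle point at every $x_*\in\Omega_*$, so Proposition~\ref{Prp_LSPisKKTpair} (whose hypotheses $(A1)$--$(A3)$, $(A8)$, $(A9)$, $(A11)$ are all in force) yields $\mu_*:=\Phi_0(\lambda_*)\in K^*$ and shows that $(x_*,\mu_*)$ is a KKT-pair for every such $x_*$. The single-multiplier requirement of the statement is automatic: $\mu_*=\Phi_0(\lambda_*)$ depends only on $\lambda_*$, not on $x_*$. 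The boundedness-or-emptiness of $S(\lambda_*,c_0)$ for some $c_0$ is then exactly the membership description of $\Lambda(\mathcal{P})$ supplied by the last sentence of Theorem~\ref{Thrm_LocalizationPrinciple}. Combining the two inclusions gives the stated description of $\Lambda(\mathcal{P})$, and the existence claim follows immediately, using that $\Phi_0^{-1}(\mu_*)\neq\emptyset$ by the surjectivity of $\Phi_0$ in $(A11)$.

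The step I expect to require the most care is the necessity direction's use of Proposition~\ref{Prp_LSPisKKTpair}: I must confirm that a local saddle point forces $\mu_*=\Phi_0(\lambda_*)\in K^*$ together with complementary slackness, which is precisely what $(A8)$ and $(A9)$ deliver through Propositions~\ref{Prp_NonnegativeMultiplierAtSP} and \ref{Prp_ComplementarySlackAtSP}. This is exactly why the present theorem, unlike Theorem~\ref{Th_LocPrincipleSP_MiniMaxProblems}, carries the full block $(A8)$--$(A11)$ instead of a bare KKT hypothesis. The remaining work is routine bookkeeping that should nonetheless be made explicit: that $\Lambda_{loc}(\mathcal{P})=\bigcap_{x_*\in\Omega_*}\Lambda_{loc}(x_*)$, so that the ``for every $x_*\in\Omega_*$'' quantifiers transfer correctly between the localization principle and Theorem~\ref{Thrm_LSPviaFirstOrderSuffCond}, and that the second order expansion is always invoked at admissible pairs $(x_*,\lambda_*)$ with $\langle\mu_*,G(x_*)\rangle=0$.
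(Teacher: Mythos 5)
Your proposal is correct and follows essentially the same route as the paper: necessity via Corollary~\ref{Crlr_IndependenceGSPofOptSol} and Proposition~\ref{Prp_LSPisKKTpair} (with $\mu_*=\Phi_0(\lambda_*)$), sufficiency via Theorem~\ref{Thrm_LSPviaFirstOrderSuffCond} to get $\lambda_*\in\Lambda_{loc}(\mathcal{P})$, and then the localization principle (Theorem~\ref{Thrm_LocalizationPrinciple}) to conclude. The only difference is organizational — you establish the description of $\Lambda(\mathcal{P})$ first and read off existence, whereas the paper argues existence directly — and your extra bookkeeping (e.g. that $(A11)$ gives $\langle\lambda_*,G(x_*)\rangle=0$ from $\langle\mu_*,G(x_*)\rangle=0$) is sound.
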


\begin{proof}
Let $(x_*, \lambda_*)$ be a global saddle point of $\mathscr{L}(x, \lambda, c)$, and $\mu_* = \Phi_0(\lambda_*)$. Then
for any $z_* \in \Omega_*$ the pair $(z_*, \mu_*)$ is a KKT-pair of $(\mathcal{P})$ by
Corollary~\ref{Crlr_IndependenceGSPofOptSol} and Proposition~\ref{Prp_LSPisKKTpair}. Moreover, the set
$S(\lambda_*, c_0)$ is empty for any $c_0 \ge c^*(x_*, \lambda_*)$.

Let, now, $\mu_* \in K^*$ and $\lambda_* \in \Phi_0^{-1}(\mu_*)$ be such that for any $x_* \in \Omega_*$ the pair 
$(x_*, \mu_*)$ is a KKT-pair of $(\mathcal{P})$, and there exists $c_0 > 0$ for which the set
$S(\lambda_*, c_0)$ is either bounded or empty. Since a complete alternance exists at every $x_* \in \Omega_*$, 
$\lambda_* \in \Lambda_{loc}(\mathcal{P})$ by Theorem~\ref{Thrm_LSPviaFirstOrderSuffCond}. Then applying 
the localization principle one obtains the desired result.	 
\end{proof}

\begin{remark}
To the best of author's knowledge, Theorems~\ref{Th_LocPrincipleSP_MiniMaxProblems} and
\ref{Th_LocPrincipleSP_MiniMaxProblems_Alternance} are the first results on the existence of global saddle points of
augmented Lagrangian functions for cone constrained minimax problems.
\end{remark}

Let us demonstrate how one can apply the theorems above to semi-infinite and generalized semi-infinite min-max problems.

\begin{example}
Consider the following semi-infinite programming problem
\begin{equation} \label{SemiInfProblem_Example}
  \min f(x) \quad \text{subject to} \quad g(x, t) \le 0 \quad \forall t \in T,
\end{equation}
where $T$ is a compact metric space. Suppose that the functions $f(\cdot)$ and $g(\cdot, t)$, $t \in T$, are twice
continuously differentiable, and the functions $g(x, t)$, $\nabla_x g(x, t)$ and $\nabla^2_{xx} g(x, t)$ are continuous
(jointly in $x$ and $t$).

Let $x_*$ be a globally optimal solution of problem \eqref{SemiInfProblem_Example}, and $(x_*, \lambda_*)$ be a KKT-pair
of this problem. Then, without loss of generality (see, e.g., \cite{BonnansShapiro}, Lemma~5.110), one can suppose that
the support of the measure $\lambda_*$ consists of at most $d$ points, i.e. $\lambda_*$ has the form 
$\lambda_* = \sum_{i = 1}^l \lambda_i \delta(t_i)$ for some $l \le d$, $\lambda_i \ge 0$ and $t_i \in T$, where
$\lambda_i g(x, t_i) = 0$ for all $i \in I := \{ 1, \ldots, l \}$, and $\delta(t_i)$ denotes the Dirac measure of mass
one at the point $t_i$. Denote $\overline{\lambda} = (\lambda_1, \ldots, \lambda_m)$.

Let, finally, $\Phi(y, \lambda, c)$ be defined as in Example~\ref{Example_NonlinearRescale_SemiInfProg}. Then
\begin{equation} \label{SemiInf_DiscrEqualCont}
  \mathscr{L}(x, \lambda_*, c) = f(x) + \frac{1}{c} \sum_{i = 1}^l \lambda_i \phi( c g(x, t_i) ) =
  \mathcal{L}(x, \overline{\lambda}, c) \quad \forall x \in \mathbb{R}^d,
\end{equation}
where $\mathcal{L}(x, \lambda, c)$ is the exponential penalty function (Example~\ref{Example_ExpPenFunc}) for the
discretized problem
\begin{equation} \label{SemiInfDiscretized}
  \min f(x) \quad \text{subject to} \quad g_i(x) = g(x, t_i) \le 0 \quad \forall i \in I.
\end{equation}
With the use of \eqref{SemiInf_DiscrEqualCont} one can easily check that $(x_*, \lambda_*)$ is a global saddle point of
$\mathscr{L}(x, \lambda, c)$ iff $(x_*, \overline{\lambda})$ is a global saddle point of $\mathcal{L}(x, \lambda, c)$.
Therefore applying Proposition~\ref{Prp_SaddlePointImpliesOptSol} one obtains that for the pair $(x_*, \lambda_*)$ to
be a global saddle point of $\mathscr{L}(x, \lambda, c)$ it is necesssary that $x_*$ is a globally optimal solution of
the discretized problem \eqref{SemiInfDiscretized}.

One can apply Theorem~\ref{Th_LocPrincipleSP_MiniMaxProblems} in order to obtain necessary and sufficient
conditions for the existence of a global saddle point of the exponential penalty function $\mathcal{L}(x, \lambda, c)$
for the discretized problem \eqref{SemiInfDiscretized}, which, in turn, can be used as necessary and sufficient
conditions for the existence of a global saddle point of the augmented Lagrangian $\mathscr{L}(x, \lambda, c)$.
However, this approach forces one to use sufficient optimality conditions for the discretized problem
\eqref{SemiInfDiscretized} that are unnatural for semi-infinite programming problems due to the absence of the sigma
term $\sigma(\lambda_*, \mathcal{T}(h))$ (see~\cite{BonnansShapiro}, Section~5.4.3). Let us note that this drawback is
common for all existing results on global saddle points of augmented Lagrangian functions for semi-infinite programming
problems (cf. \cite{RuckmannShapiro,BurachikYangZhou2017}).
\end{example}

\begin{remark} \label{Remark_AugmLagr_SemiInfProblems}
It should be noted that all existing augmented Lagrangian functions for semi-infinite programming problems are
constructed as a straightforward generalization of corresponding augmented Lagrangian functions for mathematical
programming problems. This approach leads to unsatisfactory results, since one has to utilize unnatural optimality
conditions in order to prove the existence of global or local saddle points of augmented Lagrangian functions for
semi-infinite programming problems. Clearly, a completely different approach to the construction of augmented Lagrangian
functions for these problems is needed. The search of such an approach is a challenging topic of future research.
\end{remark}

\begin{example}
Consider the following generalized semi-infinite min-max problem
\begin{equation} \label{GenSemiInf_Prob}
  \min_{x \in \mathbb{R}^d} \max_{z \in Z(x)} f(x, z), \quad
  Z(x) = \big\{ z \in A \bigm| G(x, z) \in K \big\},
\end{equation}
where $A \subset \mathbb{R}^l$ is a nonempty set, while $f \colon \mathbb{R}^{d + l} \to \mathbb{R}$ and 
$G \colon \mathbb{R}^{d + l} \to Y$ are given functions. Denote $f_0(x) = \max_{z \in Z(x)} f(x, z)$, 
$Z^*(x) = \{ z \in Z(x) \mid f_0(x) = f(x, z) \}$, and introduce the following lower level cone constrained
optimization problem
\begin{equation} \label{GenSemiInf_AuxProb}
  \min_{z \in \mathbb{R}^l} (- f_x(z)) \quad \text{subject to} \quad G_x(z) \in K, \quad z \in A,
\end{equation}
where $f_x(z) = f(x, z)$ and $G_x(z) = G(x, z)$. Clearly, the set of globally optimal solutions of this problem coincide
with $Z^*(x)$.

Being inspired by the ideas of \cite{PolakRoyset2005}, define
$$
  h(x, \lambda, c) = \sup_{z \in A} \Big( f(x, z) - \Phi(G(x, z), \lambda, c) \Big)
  \quad \forall x \in \mathbb{R}^d, \: \lambda \in Y^*, \: c > 0,
$$
Let $\mathscr{L}_x(z, \lambda, c)$ be the augmented Lagrangian function for problem \eqref{GenSemiInf_AuxProb}. Then
$h(x, \lambda, c) = - \inf_{z \in A} \mathscr{L}_x (z, \lambda, c)$, i.e. $h(x, \lambda, c)$ is the negative of the
objective function of the augmented dual problem of problem \eqref{GenSemiInf_AuxProb}. Therefore applying
Proposition~\ref{Prp_SaddlePointsVSOptSolDualProb} one obtains that the following result holds true.

\begin{proposition} \label{Prp_GenSemiInf_WeakDuality}
Suppose that assumption $(A2)$ holds true. Then
$$
  h(x, \lambda, c) \ge f_0(x) \quad \forall x \in \mathbb{R}^d, \: \lambda \in \Lambda, \: c > 0.
$$
Suppose, additionally, that assumptions $(A1)$--$(A4)$ hold true and $Z^*(x) \ne \emptyset$. 
Then $h(x, \lambda, c) = f_0(x)$ for some $\lambda \in \Lambda$ and $c > 0$ iff there exists $z \in Z^*(x)$ such that
$(z, \lambda)$ is a global saddle point of $\mathscr{L}_x(z, \lambda, c)$, and $c \ge c^*(z, \lambda)$.
\end{proposition}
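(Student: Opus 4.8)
The plan is to read $h(x, \lambda, c)$ as the negative of the augmented dual objective of the lower level problem \eqref{GenSemiInf_AuxProb}, and then to transcribe the already-proved weak duality property and Proposition~\ref{Prp_SaddlePointsVSOptSolDualProb} through this identification. Concretely, the augmented Lagrangian of \eqref{GenSemiInf_AuxProb} is $\mathscr{L}_x(z, \lambda, c) = - f(x, z) + \Phi(G(x, z), \lambda, c)$, so that, writing $\Theta_x(\lambda, c) = \inf_{z \in A} \mathscr{L}_x(z, \lambda, c)$ for the corresponding dual objective, one has $h(x, \lambda, c) = - \Theta_x(\lambda, c)$. The point to keep in mind throughout is the sign reversal: \eqref{GenSemiInf_AuxProb} \emph{minimizes} $- f_x$, so its optimal value is $- f_0(x)$ and its set of globally optimal solutions is exactly $Z^*(x)$. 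Every statement about $(\mathcal{P})$ that I invoke must therefore be translated with ``feasible value $f(x)$'' replaced by ``$- f_x(z)$'', ``optimal value $f_*$'' by ``$- f_0(x)$'', and ``globally optimal solution'' by ``a point of $Z^*(x)$''.

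For the first assertion I would simply invoke weak duality for \eqref{GenSemiInf_AuxProb}, which holds because $(A2)$ is assumed: $\Theta_x(\lambda, c) \le - f_x(z)$ for every $z \in Z(x)$ and every $(\lambda, c) \in \Lambda \times (0, +\infty)$. Taking the infimum of the right-hand side over the feasible set $Z(x)$ gives $\Theta_x(\lambda, c) \le - f_0(x)$, i.e. $h(x, \lambda, c) \ge f_0(x)$ for all $x$, all $\lambda \in \Lambda$ and all $c > 0$, which is precisely the claimed inequality.

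For the second assertion, the key observation is that, since weak duality already forces $\Theta_x(\lambda, c) \le - f_0(x)$, the equality $h(x, \lambda, c) = f_0(x)$ is equivalent to $\Theta_x(\lambda, c) = - f_0(x)$, and hence---because $- f_0(x)$ is the optimal value of \eqref{GenSemiInf_AuxProb}---to the statement that $(\lambda, c)$ is a globally optimal solution of the augmented dual of \eqref{GenSemiInf_AuxProb} together with the zero duality gap property. At this point I would apply Proposition~\ref{Prp_SaddlePointsVSOptSolDualProb} to \eqref{GenSemiInf_AuxProb}, which is legitimate since $(A1)$--$(A4)$ hold. Its converse direction turns ``dual optimal plus zero gap'' into ``for every globally optimal solution $z$, i.e. every $z \in Z^*(x)$, the pair $(z, \lambda)$ is a global saddle point of $\mathscr{L}_x$ with $c \ge c^*(z, \lambda)$''; here the hypothesis $Z^*(x) \ne \emptyset$ is exactly what guarantees that such a witness $z$ exists. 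Its forward direction recovers the opposite implication: from a global saddle point $(z, \lambda)$ with $z \in Z^*(x)$ one obtains that $(\lambda, c)$ is dual optimal with zero gap for $c > c^*(z, \lambda)$, whence $\Theta_x(\lambda, c) = - f_0(x)$ and $h(x, \lambda, c) = f_0(x)$.

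The proof thus reduces to bookkeeping rather than to any genuinely hard step; the only place that calls for care is the endpoint $c = c^*(z, \lambda)$. The forward part of Proposition~\ref{Prp_SaddlePointsVSOptSolDualProb} delivers the equality only for $c > c^*(z, \lambda)$, so to cover the closed condition ``$c \ge c^*(z, \lambda)$'' I would fall back on the monotonicity of $\Theta_x(\lambda, \cdot)$ in $c$ supplied by $(A4)$, exactly as in the converse of Proposition~\ref{Prp_SaddlePointsVSOptSolDualProb}, together with Remark~\ref{Rmrk_ALValueAtSP}, which pins down the value $\mathscr{L}_x(z, \lambda, c) = - f_x(z) = - f_0(x)$ at a global saddle point. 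Keeping the sign conventions straight, so that the abstract quantities $f_*$, $\Omega_*$ and $c^*$ map respectively to $- f_0(x)$, $Z^*(x)$ and $c^*(z, \lambda)$, is the one thing that could easily go wrong.
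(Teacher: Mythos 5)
Your reduction is exactly the paper's own proof: the paper observes that $h(x,\lambda,c) = -\inf_{z \in A}\mathscr{L}_x(z,\lambda,c)$ is the negative of the dual objective of the lower-level problem \eqref{GenSemiInf_AuxProb} and then simply invokes Proposition~\ref{Prp_SaddlePointsVSOptSolDualProb}; your sign bookkeeping (optimal value $\mapsto -f_0(x)$, optimal solution set $\mapsto Z^*(x)$) and the use of weak duality under $(A2)$ for the first inequality are correct and fill in the details the paper omits.

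One caveat concerns the only place where you go beyond the paper, namely the endpoint $c = c^*(z,\lambda)$: your proposed patch does not work as described. Writing $\Theta_x(\lambda,c) = \inf_{z \in A}\mathscr{L}_x(z,\lambda,c)$, monotonicity from $(A4)$ gives $\Theta_x(\lambda, c^*) \le \Theta_x(\lambda, c) = -f_0(x)$ for $c > c^*$, which is the inequality weak duality already supplies; what is needed for the implication ``saddle point with $c \ge c^*$ $\implies$ $h(x,\lambda,c) = f_0(x)$'' is the reverse bound $\Theta_x(\lambda,c^*) \ge -f_0(x)$, i.e.\ the second saddle-point inequality at $c = c^*$ itself, and Remark~\ref{Rmrk_ALValueAtSP} only pins down the value $\mathscr{L}_x(z,\lambda,c^*)$, not the infimum over $z$. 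Under the bare assumptions $(A1)$--$(A4)$ this direction can in fact fail at $c = c^*$: if $\Phi(y,\lambda,\cdot)$ has an upward jump at $c^*$ at infeasible points, the saddle inequalities can hold precisely for $c > c^*$ while $\Theta_x(\lambda,\cdot)$ jumps at $c^*$, so that $\Theta_x(\lambda,c^*) < -f_0(x)$. The gap closes whenever $\Phi$ is upper semicontinuous in $c$ (true in all of the paper's examples), since then $\Theta_x(\lambda,\cdot)$, being a pointwise infimum of non-decreasing u.s.c.\ functions, is right-continuous at $c^*$. Since the paper's one-sentence proof passes over this endpoint entirely, this is a defect inherited from the statement rather than a deviation from the paper's argument, but the fix requires this continuity observation, not monotonicity.
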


The proposition above motivates us to consider the following auxiliary minimax problem
\begin{equation} \label{GenSemiInf_MinMaxProb}
  \min_{(x, \lambda, c)} h(x, \lambda, c) \quad \text{subject to} \quad \lambda \in \Lambda, \quad c > 0.
\end{equation}
Let $x_*$ be a locally optimal solution of problems \eqref{GenSemiInf_Prob} such that $f_0(x) > f_0(x_*)$ for any
$x \in U \setminus \{ x_* \}$, where $U$ is a neighbourhood of $x_*$. From Proposition~\ref{Prp_GenSemiInf_WeakDuality}
it follows that if $h(x_*, \lambda_*, c_*) = f_0(x_*)$ for some $\lambda_* \in \Lambda$ and $c_* > 0$, then
$(x_*, \lambda_*, c_*)$ is a locally optimal solution of problem \eqref{GenSemiInf_MinMaxProb} such that
$h(x, \lambda, c) > h(x_*, \lambda_*, c_*)$ for any $x \in U \setminus \{ x_* \}$, $\lambda \in \Lambda$ and $c > 0$.
With the use of this result one can easily obtain necessary and sufficient conditions for the sets of all locally (and
hence globally) optimal solutions of problems \eqref{GenSemiInf_Prob} and \eqref{GenSemiInf_MinMaxProb} to coincide.
Here, we only provide such conditions for the case of globally optimal solutions.

We say that problems \eqref{GenSemiInf_Prob} and \eqref{GenSemiInf_MinMaxProb} are \textit{equivalent} if their optimal
values coincide, and $x_*$ is a globally optimal solution of problem \eqref{GenSemiInf_Prob} iff $(x_*, \lambda_*, c_*)$
with some $\lambda_* \in \Lambda$ and $c_* > 0$ is a globally optimal solution of problem
\eqref{GenSemiInf_MinMaxProb}. Applying Proposition \ref{Prp_GenSemiInf_WeakDuality} one can easily obtain the following
result.

\begin{proposition}
Suppose that assumptions $(A1)$--$(A4)$ hold true, and $Z^*(x_*) \ne \emptyset$ for any globally optimal
solution $x_*$ of problem \eqref{GenSemiInf_Prob}. Then problems \eqref{GenSemiInf_Prob} and
\eqref{GenSemiInf_MinMaxProb} are equivalent iff for any globally optimal solution $x_*$ of 
problem \eqref{GenSemiInf_Prob} there exists a global saddle point of 
the augmented Lagrangian $\mathscr{L}_{x_*} (z, \lambda, c)$.
\end{proposition}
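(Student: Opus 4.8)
The plan is to derive the equivalence directly from weak duality and the characterization already established in Proposition~\ref{Prp_GenSemiInf_WeakDuality}. The central observation is that, by that proposition, the existence of a global saddle point of $\mathscr{L}_{x_*}(z, \lambda, c)$ is equivalent to the solvability of the equation $h(x_*, \lambda_*, c_*) = f_0(x_*)$ for some $\lambda_* \in \Lambda$ and $c_* > 0$; and since weak duality gives $h(x, \lambda, c) \ge f_0(x)$ for all admissible triples, this is in turn equivalent to saying that the infimum of $h(x_*, \cdot, \cdot)$ over $\Lambda \times (0, +\infty)$ equals $f_0(x_*)$ and is attained. I would denote by $f_0^* = \inf_x f_0(x)$ and $h^*$ the optimal values of problems \eqref{GenSemiInf_Prob} and \eqref{GenSemiInf_MinMaxProb}, respectively, and record the elementary consequence $h^* \ge f_0^*$ of weak duality, which will be used throughout. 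The hypothesis $Z^*(x_*) \ne \emptyset$ is exactly what licenses the application of Proposition~\ref{Prp_GenSemiInf_WeakDuality} at each globally optimal $x_*$.

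For the forward implication I would assume the two problems are equivalent and fix a globally optimal solution $x_*$ of \eqref{GenSemiInf_Prob}. By the solution-correspondence half of the definition of equivalence there exist $\lambda_* \in \Lambda$ and $c_* > 0$ such that $(x_*, \lambda_*, c_*)$ is globally optimal for \eqref{GenSemiInf_MinMaxProb}, so $h(x_*, \lambda_*, c_*) = h^* = f_0^* = f_0(x_*)$, where the middle equality uses that the optimal values coincide and the last uses global optimality of $x_*$. Feeding the identity $h(x_*, \lambda_*, c_*) = f_0(x_*)$ into Proposition~\ref{Prp_GenSemiInf_WeakDuality} yields the desired global saddle point of $\mathscr{L}_{x_*}$.

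For the converse I would assume a global saddle point of $\mathscr{L}_{x_*}$ exists for every globally optimal $x_*$ and verify the two defining conditions of equivalence in turn. First, picking any such $x_*$ and invoking Proposition~\ref{Prp_GenSemiInf_WeakDuality} produces $\lambda_*, c_*$ with $h(x_*, \lambda_*, c_*) = f_0(x_*) = f_0^*$, whence $h^* \le f_0^*$; combined with weak duality this gives $h^* = f_0^*$, so the optimal values coincide. Second, for the solution correspondence: if $x_*$ is globally optimal for \eqref{GenSemiInf_Prob}, the same construction furnishes $(\lambda_*, c_*)$ with $h(x_*, \lambda_*, c_*) = f_0^* = h^*$, so the triple $(x_*, \lambda_*, c_*)$ attains the optimal value and is globally optimal for \eqref{GenSemiInf_MinMaxProb}; conversely, if $(x_*, \lambda_*, c_*)$ is globally optimal for \eqref{GenSemiInf_MinMaxProb} then $h(x_*, \lambda_*, c_*) = h^* = f_0^*$, and weak duality forces $f_0(x_*) \le f_0^*$, which together with $f_0(x_*) \ge f_0^*$ yields $f_0(x_*) = f_0^*$, so $x_*$ is globally optimal for \eqref{GenSemiInf_Prob}.

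The argument is essentially bookkeeping once Proposition~\ref{Prp_GenSemiInf_WeakDuality} and the weak duality inequality are in hand, so I do not anticipate a genuine obstacle. The only point requiring care is to keep the three quantities $f_0(x_*)$, $f_0^*$ and $h^*$ straight in each chain of equalities and inequalities, and to apply Proposition~\ref{Prp_GenSemiInf_WeakDuality} uniformly over \emph{all} globally optimal $x_*$, which is precisely why the assumption $Z^*(x_*) \ne \emptyset$ must be imposed at every such point rather than at a single one.
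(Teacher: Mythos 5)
Your proposal is correct and follows exactly the route the paper intends: the paper gives no explicit proof, stating only that the result follows by "applying Proposition~\ref{Prp_GenSemiInf_WeakDuality}," and your argument is precisely that application, using the weak duality inequality $h(x,\lambda,c) \ge f_0(x)$ together with the saddle-point characterization of $h(x_*,\lambda_*,c_*) = f_0(x_*)$ to check both halves of the definition of equivalence. The bookkeeping with $f_0(x_*)$, $f_0^*$ and $h^*$ is carried out correctly in both directions.
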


With the use of the proposition above and Theorems~\ref{Th_LocPrincipleSP_MiniMaxProblems} and
\ref{Th_LocPrincipleSP_MiniMaxProblems_Alternance} (or the localization principle) one can obtain simple necessary and
sufficient conditions for the equivalence of problems \eqref{GenSemiInf_Prob} and \eqref{GenSemiInf_MinMaxProb} in terms
of sufficient optimality conditions for lower level problem \eqref{GenSemiInf_AuxProb}. The interested reader can
easily formulate these conditions, which unify and significantly generalize some existing results
(\cite{PolakRoyset2005}, Thrm.~3.1; \cite{WangZhouXu2009}, Thrm.~4.5).
\end{example}

\section{Exact Augmented Lagrangian Functions}
\label{Section_ExactAL}

Being inspired by the ideas of Di Pillo, Grippo and Lucidi
\cite{DiPilloGrippo1979,DiPilloGrippo1980,DiPilloGrippo1982,Lucidi1988,DiPilloLucidi1996,DiPilloLucidi2001,
DiPilloEtAl2002,DiPilloLiuzzi2003} (see also \cite{DuZhangGao2006,DuLiangZhang2006,LuoWuLiu2013,
DiPilloLucidiPalagi1993,DiPilloGrippoLucidi1993,DiPilloLucidiPalagi2000,DiPilloLucidi2005,DiPilloLiuzzi,
DiPilloLiuzzi2011}), in this section, we present a general method for constructing exact augmented Lagrangian
functions, and obtain simple sufficient (and necessary) conditions for the exactness of these functions. In particular,
we demonstrate that one can easily extend the localization principle to the case of exact augmented Lagrangian
functions, thus showing that the study of the exactness of an augmented Lagrangian function can be easily reduced to a
local analysis of sufficient optimality conditions. 

Introduce the penalized augmented Lagrangian function
$$
  \mathscr{L}_e(x, \lambda, c) = f(x) + \Phi(G(x), \lambda, c) + \eta(x, \lambda) =
  \mathscr{L}(x, \lambda, c) + \eta(x, \lambda),
$$
where $\eta \colon X \times Y^* \to [0, + \infty]$ is a given non-negative function (the subscript ``e'' stands for
``exact''). The function $\eta$ must be defined in such a way that it penalizes the violation of the KKT optimality
conditions. However, at first, we suppose that $\eta$ is an arbitrary non-negative function. 

Our aim is to show that under some additional assumptions the penalized augmented Lagrangian 
$\mathscr{L}_e(x, \lambda, c)$ is \textit{exact}, i.e. that all points of global minimum $(x_*, \lambda_*)$ of
$\mathscr{L}_e(x, \lambda, c)$ in $(x, \lambda)$ on the set $A \times \Lambda$ are exactly KKT-pairs of the problem
$(\mathcal{P})$ corresponding to globally optimal solutions of this problem. Note that unlike the case of augmented
Lagrangian functions studied in the previous sections, one must \textit{simultaneously} minimize the penalized augmented
Lagrangian $\mathscr{L}_e(x, \lambda, c)$ \textit{both} in primal variable $x$ and in dual variable $\lambda$ in order
to recover optimal solutions of the original problem.

We start by studying the behaviour of global minimizers of $\mathscr{L}_e(x, \lambda, c)$ in $(x, \lambda)$ as 
the penalty parameter $c$ increases unboundedly. To this end, we need to introduce a stronger version of 
assumption $(A6)$.
\begin{itemize}
\item[$(A6)_s$]{$\forall y \notin K$ $\forall \lambda \in \Lambda$ $\forall c_0 > 0$ $\exists r > 0$ such that
\begin{multline*}
  \lim_{c \to \infty} \inf\Big\{ \Phi(z, \mu, c) - \Phi(z, \mu, c_0) \Bigm| \\ 
  z \in B(y, r), \: \mu \in B(\lambda, r) \cap \Lambda \colon \Phi(z, \mu, c_0) < + \infty \Big\} = + \infty;
\end{multline*}
}
\end{itemize}
\vspace{-4mm}
One can verify that this assumption is satisfied in Example~\ref{Example_RockafellarWetsAL}, provided $\sigma$ has a
valley at zero, and hence it is valid in Examples~\ref{Example_RockWetsAL_SOC}, \ref{Example_RockWetsAL_SemiDefProg} and
\ref{Example_RockafellarWetsAL_SemiInfProg}. This assumption is also valid in 
Examples~\ref{Example_EssentiallyQuadraticAL}--\ref{Example_Mangasarian},
\ref{Example_PenalizedExpPenFunc}, \ref{Example_ModBarrierFunc}, \ref{Example_HeWuMengLagrangian} and
\ref{Example_NonlinearRescalePenalized_SemiDefProg} in the general case, and in
Examples~\ref{Example_NonlinearRescale_SOC}, \ref{Example_NonlinearRescale_SemiDefProg} and
\ref{Example_NonlinearRescale_SemiInfProg} in the case when $\varepsilon_0 < + \infty$. Finally, assumption $(A6)_s$ (as
well as $(A6)$) is never satisfied in Examples~\ref{Example_ExpPenFunc} and \ref{Example_pthPowerAugmLagr}.

The following result extends Lemma~\ref{Lemma_MinimizingSeq} to the case of the penalized augmented Lagrangian
function $\mathscr{L}_e(x, \lambda, c)$.

\begin{lemma} \label{Lemma_EAL_MinimizingSeq}
Let $A$ be closed, $G$ be continuous on $A$, $\mathscr{L}_e(\cdot, \cdot, c)$ be l.s.c. on $A \times \Lambda$ for 
all $c > 0$, and $\mathscr{L}_e(\cdot, \cdot, c_0)$ be bounded from below on $A \times \Lambda$ for some $c_0 > 0$.
Suppose also that assumptions $(A2)$, $(A4)$, $(A6)_s$ and $(A12)$ are satisfied, and there exists 
$(\overline{x}, \overline{\lambda}) \in \Omega_* \times \Lambda$ such 
that $\eta(\overline{x}, \overline{\lambda}) = 0$. Let, finally, a pair $(x_*, \lambda_*)$ be a cluster point of a
sequence $\{ (x_n, \lambda_n) \} \subset A \times \Lambda$ such that
$$
  \mathscr{L}_e(x_n, \lambda_n, c_n) \le \inf_{(x, \lambda) \in A \times \Lambda} \mathscr{L}_e(x, \lambda, c_n) +
  \varepsilon_n \quad \forall n \in \mathbb{N},
$$
where $\{ c_n \} \subset [c_0, + \infty)$ is an increasing unbounded sequence, and the sequence
$\{ \varepsilon_n \} \subset (0, + \infty)$ is such that $\varepsilon_n \to 0$ as $n \to \infty$. Then $x_*$
is a globally optimal solution of $(\mathcal{P})$ and $\eta(x_*, \lambda_*) = 0$.
\end{lemma}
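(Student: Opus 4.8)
The plan is to follow the architecture of the proof of Lemma~\ref{Lemma_MinimizingSeq}, but to run the limiting argument jointly in the primal and dual variables and to extract the extra conclusion $\eta(x_*,\lambda_*)=0$ at the very end. First I would pass to a subsequence along which $(x_n,\lambda_n)\to(x_*,\lambda_*)$; since $A$ is closed and $\Lambda$ is a closed convex cone, the limit satisfies $x_*\in A$ and $\lambda_*\in\Lambda$. The first substantive step is to produce a uniform upper bound on the near-minimal values. Using the reference point $(\overline x,\overline\lambda)\in\Omega_*\times\Lambda$ with $\eta(\overline x,\overline\lambda)=0$, together with feasibility of $\overline x$ and $(A2)$, I obtain $\mathscr{L}_e(\overline x,\overline\lambda,c)=f_*+\Phi(G(\overline x),\overline\lambda,c)\le f_*$ for every $c>0$. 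Combining this with the near-minimality inequality yields $\mathscr{L}_e(x_n,\lambda_n,c_n)\le f_*+\varepsilon_n$. Because $\eta\ge 0$, this bound also forces $f(x_n)<+\infty$ and $\Phi(G(x_n),\lambda_n,c_n)<+\infty$, and by $(A4)$ (monotonicity in $c$, with $c_0\le c_n$) it gives $\Phi(G(x_n),\lambda_n,c_0)<+\infty$ as well, which is exactly the finiteness required to feed the infima appearing in $(A6)_s$.

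The hard part will be establishing feasibility of $x_*$, and this is precisely where the strengthened hypothesis $(A6)_s$ is indispensable: unlike in Lemma~\ref{Lemma_MinimizingSeq}, the multiplier is no longer fixed, so I will need uniformity over a neighbourhood of $\lambda_*$ as well as of $y_*:=G(x_*)$. Arguing by contradiction, I would suppose $y_*\notin K$ and let $r>0$ be the radius supplied by $(A6)_s$ at $(y_*,\lambda_*)$. Continuity of $G$ and the convergences $x_n\to x_*$, $\lambda_n\to\lambda_*$ ensure $G(x_n)\in B(y_*,r)$ and $\lambda_n\in B(\lambda_*,r)\cap\Lambda$ for all large $n$. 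Writing $\mathscr{L}_e(x_n,\lambda_n,c_n)=\mathscr{L}_e(x_n,\lambda_n,c_0)+\bigl[\Phi(G(x_n),\lambda_n,c_n)-\Phi(G(x_n),\lambda_n,c_0)\bigr]$ and bounding the first term below by $\gamma:=\inf_{(x,\lambda)\in A\times\Lambda}\mathscr{L}_e(x,\lambda,c_0)>-\infty$, the bracketed increment is bounded below by the infimum in $(A6)_s$, which tends to $+\infty$. Hence $\mathscr{L}_e(x_n,\lambda_n,c_n)\to+\infty$, contradicting the bound $\le f_*+\varepsilon_n$ from the previous step. Therefore $G(x_*)\in K$.

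To finish, I would fix an arbitrary $c>0$; for $n$ large one has $c_n\ge c$, so $(A4)$ gives $\mathscr{L}_e(x_n,\lambda_n,c)\le\mathscr{L}_e(x_n,\lambda_n,c_n)\le f_*+\varepsilon_n$, and lower semicontinuity of $\mathscr{L}_e(\cdot,\cdot,c)$ yields $\mathscr{L}_e(x_*,\lambda_*,c)\le f_*$ for every $c>0$. Expanding the left-hand side and letting $c\to+\infty$, I invoke $(A12)$ (legitimate since $G(x_*)\in K$ and $\lambda_*\in\Lambda$) to annihilate the term $\Phi(G(x_*),\lambda_*,c)$, obtaining $f(x_*)+\eta(x_*,\lambda_*)\le f_*$. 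Since $x_*$ is feasible we have $f(x_*)\ge f_*$, and since $\eta\ge 0$ the chain $f_*\le f(x_*)\le f(x_*)+\eta(x_*,\lambda_*)\le f_*$ collapses, forcing simultaneously $f(x_*)=f_*$ (so $x_*\in\Omega_*$) and $\eta(x_*,\lambda_*)=0$. The only delicate points are the finiteness bookkeeping that licenses $(A6)_s$ and the joint-neighbourhood uniformity in the feasibility step; everything else is the monotonicity, lower-semicontinuity and limit bookkeeping inherited from Lemma~\ref{Lemma_MinimizingSeq}.
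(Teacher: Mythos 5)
Your proposal is correct and follows essentially the same route as the paper's proof: the same upper bound $\mathscr{L}_e(x_n,\lambda_n,c_n)\le f_*+\varepsilon_n$ via the reference pair and $(A2)$, the same contradiction argument for feasibility using the joint $(y,\lambda)$-neighbourhood in $(A6)_s$ together with the decomposition through $c_0$ and the lower bound $\gamma>-\infty$, and the same finish via $(A4)$, lower semicontinuity and $(A12)$ to collapse $f_*\le f(x_*)\le f(x_*)+\eta(x_*,\lambda_*)\le f_*$. The only (harmless) addition is that you make explicit the $(A4)$-based finiteness of $\Phi(G(x_n),\lambda_n,c_0)$, which the paper leaves implicit in its decomposition step.
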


\begin{proof}
Replacing, if necessary, the sequences $\{ (x_n, \lambda_n) \}$, $\{ c_n \}$ and $\{ \varepsilon_n \}$ with their
subsequences, one can suppose that $(x_*, \lambda_*)$ is a limit point of the sequence $\{ (x_n, \lambda_n) \}$.
Furthermore, note that $(x_*, \lambda_*) \in A \times \Lambda$ by virtue of the fact that $A$ and $\Lambda$ are closed.

Let us verify, at first, that $x_*$ is a feasible point of the problem $(\mathcal{P})$. Indeed, with the use of $(A2)$
one gets that $\mathscr{L}_e(\overline{x}, \overline{\lambda}, c) \le f(\overline{x}) < + \infty$ for all $c > 0$.
Therefore for any $n \in \mathbb{N}$ one has
\begin{equation} \label{EAL_MinimSeq_UpperBound}
  \mathscr{L}_e(x_n, \lambda_n, c_n) \le 
  \inf_{(x, \lambda) \in A \times \Lambda} \mathscr{L}_e(x, \lambda, c_n) + \varepsilon_n \le
  f(\overline{x}) + \varepsilon_n.
\end{equation}
Hence, in particular, $f(x_n) < + \infty$ and $\Phi(G(x_n), \lambda_n, c_n) < + \infty$ for all $n \in \mathbb{N}$.
Furthermore, one has
\begin{equation} \label{EAL_FiniteLimitValue}
  \limsup_{n \to \infty} \mathscr{L}_e(x_n, \lambda_n, c_n) \le f(\overline{x}) < + \infty.
\end{equation}
Arguing by reductio ad absurdum, suppose that $G(x_*) \notin K$. Then by $(A6)_s$ there exists $r > 0$ such that
\begin{multline} \label{ExAugmLagr_AssumpA6strong}
  \lim_{c \to \infty} \inf\Big\{ \Phi(y, y^*, c) - \Phi(y, y^*, c_0) \Bigm| \\
  y \in B(G(x_*), r), \: y^* \in B(\lambda_*, r) \cap \Lambda \colon \Phi(y, y_*, c_0) < + \infty \Big\} = + \infty.
\end{multline}
From the facts that $G$ is continuous on $A$, and $(x_*, \lambda_*)$ is a limit point of 
the sequence $\{ (x_n, \lambda_n) \}$ it follows that there exists $n_0 \in \mathbb{N}$ such that $G(x_n) \in B(G(x_*),
r)$ and $\lambda_n \in B(\lambda_*, r)$ for all $n \ge n_0$. Consequently, for any $n \ge n_0$ one has
\begin{multline*}
  \mathscr{L}_e(x_n, \lambda_n, c_n) = 
  \mathscr{L}_e(x_n, \lambda_n, c_0) + \Phi(G(x_n), \lambda_n, c_n) - \Phi(G(x_n), \lambda_n, c_0) \\
  \ge \gamma + \inf\Big\{ \Phi(y, y^*, c) - \Phi(y, y^*, c_0) \Bigm| \\
  y \in B(G(x_*), r), \: y^* \in B(\lambda_*, r) \cap \Lambda \colon \Phi(y, y_*, c_0) < + \infty \Big\},
\end{multline*}
where $\gamma = \inf\{ \mathscr{L}_e(x, \lambda, c_0) \mid (x, \lambda) \in A \times \Lambda \} > - \infty$.
Consequently, applying \eqref{ExAugmLagr_AssumpA6strong} one obtains that 
$\mathscr{L}_e(x_n, \lambda_n, c_n) \to + \infty$ as $n \to \infty$, which contradicts \eqref{EAL_FiniteLimitValue}.
Thus, $G(x_*) \in K$.

Observe that from $(A4)$ and \eqref{EAL_MinimSeq_UpperBound} it follows that for any $c > 0$ there exists 
$n_0 \in \mathbb{N}$ such that $\mathscr{L}_e(x_n, \lambda_n, c) \le f(\overline{x}) + \varepsilon_n$ for 
all $n \ge n_0$. Passing to the limit inferior as $n \to \infty$, and taking into account the fact that
$\mathscr{L}_e(x, \lambda, c)$ is l.s.c. in $(x, \lambda)$ one obtains that
$\mathscr{L}_e(x_*, \lambda_*, c) \le f(\overline{x})$ for all $c > 0$.
Taking into account the facts that $G(x_*) \in K$ and $\lambda_* \in \Lambda$, and passing to the limit as 
$c \to \infty$ with the use of $(A12)$ one gets that $f(x_*) + \eta(x_*, \lambda_*) \le f(\overline{x})$. Therefore 
$x_* \in \Omega_*$ and  $\eta(x_*, \lambda_*) = 0$ due to the fact that $x_*$ is feasible, $\overline{x} \in \Omega_*$,
and $\eta$ is a non-negative function.	 
\end{proof}

By analogy with the theory of exact penalty functions \cite{Dolgopolik}, let us introduce the definition of locally and
globally exact augmented Lagrangian functions.

\begin{definition}
Let $x_*$ be a locally optimal solution of $(\mathcal{P})$, and $\lambda_* \in \Lambda$ be such 
that $\eta(x_*, \lambda_*) = 0$. The penalized augmented Lagrangian function $\mathscr{L}_e(x, \lambda, c)$ is called
(\textit{locally}) \textit{exact} at $(x_*, \lambda_*)$ (with respect to the function $\eta$) if there exist $c_0 > 0$
and a neighbourhood $U$ of $(x_*, \lambda_*)$ such that 
$$
  \mathscr{L}_e(x, \lambda, c) \ge \mathscr{L}_e(x_*, \lambda_*, c) \quad 
  \forall (x, \lambda) \in U \cap (A \times \Lambda) \quad \forall c \ge c_0.
$$
\end{definition}

Note that if assumption $(A4)$ holds true, then $\mathscr{L}_e(x, \lambda, c)$ is locally exact 
at $(x_*, \lambda_*)$ iff there exists $c_0 > 0$ such that the pair $(x_*, \lambda_*)$ is a point of local minimum of 
$\mathscr{L}_e(\cdot, \cdot, c_0)$ on the set $A \times \Lambda$. 

\begin{definition}
The penalized augmented Lagrangian function $\mathscr{L}_e(x, \lambda, c)$ is called (\textit{globally}) \textit{exact}
(with respect to the function $\eta$) if there exists $c_0 > 0$ such that for any $c \ge c_0$ the function 
$\mathscr{L}_e(\cdot, \cdot, c)$ attains a global minimum on $A \times \Lambda$, and 
$(x_*, \lambda_*) \in \argmin_{(x, \lambda) \in A \times \Lambda} \mathscr{L}_e(x, \lambda, c)$ if and only if $x_*$ is
a globally optimal solution of $(\mathcal{P})$ and $\eta(x_*, \lambda_*) = 0$.
\end{definition}

Thus, if the augmented Lagrangian $\mathscr{L}_e(x, \lambda, c)$ is globally exact, then the problem of minimizing  
$\mathscr{L}_e(x, \lambda, c)$ in $(x, \lambda)$ over the set $A \times \Lambda$ is equivalent to the original problem
$(\mathcal{P})$ for any sufficiently large value of the penalty parameter $c$, since in this case points of global
minimum of $\mathscr{L}_e(x, \lambda, c)$ on $A \times \Lambda$ are exactly those pairs $(x_*, \lambda_*)$ for which 
$x_* \in \Omega_*$ and  $\eta(x_*, \lambda_*) = 0$. In particular, in the case when $\eta(x, \lambda) = 0$ iff 
$(x, \lambda)$ is a KKT-pair, points of global minimum of $\mathscr{L}_e(x, \lambda, c)$ on $A \times \Lambda$ coincide
with KKT-pairs of the problem $(\mathcal{P})$ corresponding to globally optimal solutions of this problem.

Our aim is to show that (under some additional assumptions) the augmented Lagrangian $\mathscr{L}_e(x, \lambda, c)$ is
exact if and only if it is locally exact at every pair $(x_*, \lambda_*)$ such that $x_* \in \Omega_*$ and 
$\eta(x_*, \lambda_*) = 0$. In other words, our aim is to prove the validity of the localization principle for 
the penalized augmented Lagrangian function $\mathscr{L}_e(x, \lambda, c)$. As in the case of the localization principle
for global saddle points, the localization principle for the penalized augmented Lagrangian 
$\mathscr{L}_e(x, \lambda, c)$ allows one to study a local behaviour of $\mathscr{L}_e(x, \lambda, c)$ near globally
optimal solutions of the problem $(\mathcal{P})$ in order to prove the global exactness of this function. 

We need to introduce a stronger version of assumption $(A4)$. We say that a non-decreasing function 
$h \colon (0, + \infty) \to \mathbb{R} \cup \{ + \infty \}$ is strictly increasing at a point $t > 0$ such that 
$h(t) < + \infty$, if $h(\tau) > h(t)$ for any $\tau > t$.
\begin{itemize}
\item[$(A4)_s$]{assumption $(A4)$ holds true, and $\forall y \in Y$ $\forall \lambda \in \Lambda$ $\forall c > 0$ such
that $\Phi(y, \lambda, c) < + \infty$ either the function $\Phi(y, \lambda, \cdot)$ is strictly increasing at $c$ or
$\Phi(y, \lambda, c) = 0$ and $y \in K$.
}
\end{itemize}
Note that this assumption is satisfied in Example~\ref{Example_RockafellarWetsAL}, if the infimum in the definition of
$\Phi(y, \lambda, c)$ is attained for all $y \in Y$, $\lambda \in \Lambda$ and $c > 0$. Hence, in particular,
assumption $(A4)_s$ is satisfied in Examples~\ref{Example_RockWetsAL_SOC} and \ref{Example_RockWetsAL_SemiDefProg}.
This assumption is always valid in Examples~\ref{Example_EssentiallyQuadraticAL}--\ref{Example_Mangasarian} and
\ref{Example_HeWuMengLagrangian}, and it is valid in Examples~\ref{Example_PenalizedExpPenFunc} and
\ref{Example_NonlinearRescalePenalized_SemiDefProg}, provided $\phi$ is strictly convex, and $\xi$ is strictly convex 
on $\mathbb{R}_+$. Finally, observe that assumption $(A4)_s$ (unlike $(A4)$) is never satisfied in
Examples~\ref{Example_ExpPenFunc}, \ref{Example_ModBarrierFunc}, \ref{Example_pthPowerAugmLagr},
\ref{Example_NonlinearRescale_SOC}, \ref{Example_NonlinearRescale_SemiDefProg} and
\ref{Example_NonlinearRescale_SemiInfProg}.

Before we proceed to the localization principle, let us point out that instead of verifying that all pairs 
$(x_*, \lambda_*)$ with $x_* \in \Omega_*$ and $\eta(x_*, \lambda_*) = 0$ are global minimizers of 
$\mathscr{L}_e(x, \lambda, c)$ on $A \times \Lambda$, it is sufficient to check that at least one such pair is a point
of global minimum of $\mathscr{L}_e(x, \lambda, c)$ on $A \times \Lambda$ in order to prove 
that $\mathscr{L}_e(x, \lambda, c)$ is globally exact.

\begin{lemma} \label{Lemma_GlobalExactEquivFormulation}
Let assumption $(A4)_s$ be valid, and suppose that 
\begin{equation} \label{AugmFuncVanishAtKKTpoints}
  \Phi(G(x_*), \lambda_*, c) = 0 \quad \forall c > 0 \quad 
  \forall (x_*, \lambda_*) \in \Omega_* \times \Lambda \colon \eta(x_*, \lambda_*) = 0.
\end{equation}
Then $\mathscr{L}_e(x, \lambda, c)$ is globally exact if and only if there exist 
$(x_0, \lambda_0) \in \Omega_* \times \Lambda$ and $c_0 > 0$ such that $\eta(x_0, \lambda_0) = 0$ and 
the pair $(x_0, \lambda_0)$ is a point of global minimum of $\mathscr{L}_e(\cdot, \cdot, c_0)$ on $A \times \Lambda$.
\end{lemma}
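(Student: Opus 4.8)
The plan is to establish the two implications separately. The forward direction is essentially immediate from the definition of global exactness, while the reverse direction carries all the content and is where assumption $(A4)_s$ and condition \eqref{AugmFuncVanishAtKKTpoints} are used. For the \emph{only if} part, I would simply observe that if $\mathscr{L}_e(x, \lambda, c)$ is globally exact, then by definition there is a threshold $c_0 > 0$ such that for every $c \ge c_0$ the function $\mathscr{L}_e(\cdot, \cdot, c)$ attains its global minimum on $A \times \Lambda$ and the set of global minimizers equals the set of pairs $(x_*, \lambda_*)$ with $x_* \in \Omega_*$ and $\eta(x_*, \lambda_*) = 0$. Since the minimum is attained, this set is nonempty, and any one of its elements $(x_0, \lambda_0)$ furnishes the pair required on the right-hand side (this direction needs neither $(A4)_s$ nor \eqref{AugmFuncVanishAtKKTpoints}).

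For the \emph{if} part, I would first record, using \eqref{AugmFuncVanishAtKKTpoints}, that every pair $(x_*, \lambda_*) \in \Omega_* \times \Lambda$ with $\eta(x_*, \lambda_*) = 0$ satisfies $\mathscr{L}_e(x_*, \lambda_*, c) = f(x_*) = f_*$ for all $c > 0$; in particular the given pair $(x_0, \lambda_0)$ has value identically $f_*$. Because $(x_0, \lambda_0)$ minimizes $\mathscr{L}_e(\cdot, \cdot, c_0)$, this gives $\inf_{A \times \Lambda} \mathscr{L}_e(\cdot, \cdot, c_0) = f_*$, and monotonicity in $c$ (the part $(A4)$ of $(A4)_s$) then yields $\mathscr{L}_e(x, \lambda, c) \ge \mathscr{L}_e(x, \lambda, c_0) \ge f_*$ for all $(x, \lambda)$ and all $c \ge c_0$. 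Combined with the fact that $(x_0, \lambda_0)$ attains $f_*$ at every $c$, this already shows that for each $c \ge c_0$ the global minimum equals $f_*$ and is attained, and that every pair with $x_* \in \Omega_*$, $\eta(x_*, \lambda_*) = 0$ is a global minimizer.

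The crux is the converse inclusion, namely that every global minimizer is such a pair. Here I would fix $c > c_0$ and a global minimizer $(x_1, \lambda_1)$, so $\mathscr{L}_e(x_1, \lambda_1, c) = f_*$; the lower bound $f_*$ together with monotonicity forces $\mathscr{L}_e(x_1, \lambda_1, \cdot)$, and hence $\Phi(G(x_1), \lambda_1, \cdot)$, to be constant and finite on $[c_0, c]$. In particular $\Phi(G(x_1), \lambda_1, \cdot)$ is not strictly increasing at $c_0$, so assumption $(A4)_s$ leaves only the alternative $\Phi(G(x_1), \lambda_1, c_0) = 0$ with $G(x_1) \in K$. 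Feasibility of $x_1$ then gives $f(x_1) \ge f_*$, while $\eta \ge 0$ and $\mathscr{L}_e(x_1, \lambda_1, c_0) = f_*$ force $f(x_1) = f_*$ and $\eta(x_1, \lambda_1) = 0$; thus $x_1 \in \Omega_*$ and $\eta(x_1, \lambda_1) = 0$. Since this characterization holds for every $c > c_0$, taking any threshold strictly above $c_0$ yields global exactness.

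I expect the main obstacle to be precisely this last step: converting the single-parameter minimality at $c_0$ into enough rigidity to invoke $(A4)_s$. The delicate point is that the ``strictly increasing'' alternative in $(A4)_s$ can genuinely occur exactly at the endpoint $c = c_0$, so potentially ``bad'' minimizers are not ruled out at $c_0$ itself; this is exactly why one must pass to $c > c_0$, where the constancy of $\Phi(G(x_1), \lambda_1, \cdot)$ on the nondegenerate interval $[c_0, c]$ excludes the strictly increasing case. Choosing the threshold in the definition of global exactness to be any value exceeding $c_0$ cleanly absorbs this discrepancy.
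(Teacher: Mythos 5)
Your proposal is correct and follows essentially the same route as the paper's proof: establish that the minimum value is $f_*$ for all $c \ge c_0$ with every pair $(x_*,\lambda_*)\in\Omega_*\times\Lambda$, $\eta(x_*,\lambda_*)=0$ attaining it, then take a minimizer at some $c > c_0$, use monotonicity to force $\Phi(G(x_1),\lambda_1,\cdot)$ to be constant on $[c_0,c]$, and invoke $(A4)_s$ to exclude the strictly-increasing alternative, yielding feasibility, $f(x_1)=f_*$ and $\eta(x_1,\lambda_1)=0$. Your explicit remark about why the threshold must be taken strictly above $c_0$ is exactly the point the paper handles by phrasing its conclusion for $c > c_0$.
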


\begin{proof}
The validity of the ``only if'' part of the proposition follows directly from the definition of global exactness.
Therefore, let us prove the ``if'' part of the proposition.

By the definition of $(x_0, \lambda_0)$ and assumption \eqref{AugmFuncVanishAtKKTpoints} one has 
$$
  \min_{(x, \lambda) \in A \times \Lambda} \mathscr{L}_e(x, \lambda, c_0) = 
  \mathscr{L}_e (x_0, \lambda_0, c_0) = f(x_0) = f_*.
$$
Note that $(A4)_s$ implies that the function $\mathscr{L}_e(x, \lambda, c)$ is non-decreasing in $c$, while
\eqref{AugmFuncVanishAtKKTpoints} implies that $\mathscr{L}_e(x_*, \lambda_*, c) = f(x_*) = f_*$ for all 
$c > 0$ and for any $(x_*, \lambda_*) \in \Omega_* \times \Lambda$ such that $\eta(x_*, \lambda_*) = 0$. Therefore any
such pair $(x_*, \lambda_*)$ is a point of global minimum of $\mathscr{L}_e(\cdot, \cdot, c)$ on $A \times \Lambda$
and $\min_{(x, \lambda) \in A \times \Lambda} \mathscr{L}_e(x, \lambda, c) = f_*$ for all $c \ge c_0$.

Let, now, $(x_*, \lambda_*)$ be a point of global minimum of $\mathscr{L}_e(\cdot, \cdot, c_1)$ on $A \times \Lambda$
for some $c_1 > c_0$. Since $\mathscr{L}(x, \lambda, c)$ is non-decreasing in $c$,
$\mathscr{L}_e(x_*, \lambda_*, c) = f_*$ for all $c \in [c_0, c_1]$ (recall that 
$\mathscr{L}(\cdot, \cdot, c) \ge f_*$ for all $c \ge c_0$). Hence with the use of $(A4)_s$ one gets that
$G(x_*) \in K$ and $\Phi(G(x_*), \lambda_*, c) = 0$ for all $c \in [c_0, c_1)$. Therefore 
$\mathscr{L}_e(x_*, \lambda_*, c) = f(x_*) + \eta(x_*, \lambda_*) = f_*$ for any $c \in [c_0, c_1)$, which implies that 
$x_* \in \Omega_*$ and $\eta(x_*, \lambda_*) = 0$ due to the fact that $x_*$ is feasible and $\eta(\cdot)$ is
non-negative.

Thus, for any $c > c_0$ the function $\mathscr{L}_e(\cdot, \cdot, c)$ attains a global minimum on $A \times \Lambda$ (at
the point $(x_0, \lambda_0)$), and 
$(x_*, \lambda_*) \in \argmin_{(x, \lambda) \in A \times \Lambda} \mathscr{L}_e(x, \lambda, c)$ iff
$x_* \in \Omega_*$ and $\eta(x_*, \lambda_*) = 0$. In other words, $\mathscr{L}_e(x, \lambda, c)$ is globally exact.
 
\end{proof}

Now, we can extend the localization principle to the case of the penalized augmented Lagrangian function
$\mathscr{L}_e(x, \lambda, c)$. Let us note that the localization principle holds true only in the finite dimensional
case (cf.~\cite{Dolgopolik}, Examples~3--5 and \cite{Dolgopolik_AugmLagrMult}, Example~4). Therefore, hereinafter, we
must suppose that the space $Y$ is finite dimensional.

\begin{theorem}[Localization principle] \label{Thrm_EAL_LocalizationPrinciple}
Let $Y$ be finite dimensional, $A$ be closed, $G$ be continuous on $A$ and $\mathscr{L}_e(\cdot, \cdot, c)$ be l.s.c. on
$A \times \Lambda$ for all $c > 0$. Suppose also that assumptions $(A2)$, $(A4)_s$, $(A6)_s$ and $(A12)$ are satisfied,
and
\begin{equation} \label{AugmFuncVanishAtKKTpoints_LocPrinciple}
  \Phi(G(x_*), \lambda_*, c) = 0 \quad \forall c > 0 \quad 
  \forall (x_*, \lambda_*) \in \Omega_* \times \Lambda \colon \eta(x_*, \lambda_*) = 0.
\end{equation}
Then $\mathscr{L}_e(x, \lambda, c)$ is globally exact (with respect to the function $\eta$) if and only if
\begin{enumerate}
\item{there exists $(\overline{x}, \overline{\lambda}) \in \Omega_* \times \Lambda$ such 
that $\eta(\overline{x}, \overline{\lambda}) = 0$;
}

\item{$\mathscr{L}_e(x, \lambda, c)$ is locally exact at every $(x_*, \lambda_*) \in \Omega_* \times \Lambda$
such that $\eta(x_*, \lambda_*) = 0$;
} 

\item{there exists $c_0 > 0$ such that the set
$$
  S_e(c_0) := \Big\{ (x, \lambda) \in A \times \Lambda \Bigm| \mathscr{L}_e(x, \lambda, c_0) < f_* \Big\}
$$
is either bounded or empty.
}
\end{enumerate}
\end{theorem}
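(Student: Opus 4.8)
The plan is to follow the architecture of the localization principle for global saddle points (Theorem~\ref{Thrm_LocalizationPrinciple}) almost verbatim, with two systematic substitutions: minimization in $x$ alone is replaced by joint minimization in $(x,\lambda)$ over $A \times \Lambda$, and the role of Lemma~\ref{Lemma_MinimizingSeq} is taken over by its penalized analogue Lemma~\ref{Lemma_EAL_MinimizingSeq}. The concluding step that previously verified the saddle-point inequalities directly will instead be discharged by Lemma~\ref{Lemma_GlobalExactEquivFormulation}, which reduces global exactness to the existence of a single optimal pair that minimizes $\mathscr{L}_e(\cdot,\cdot,c_0)$. The necessity of conditions 1--3 is essentially immediate from the definitions; the substance lies in sufficiency.

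First I would prove necessity. Assuming $\mathscr{L}_e(x,\lambda,c)$ is globally exact, the definition supplies $c_0 > 0$ such that for every $c \ge c_0$ the global minimizers of $\mathscr{L}_e(\cdot,\cdot,c)$ on $A \times \Lambda$ are exactly the pairs $(x_*,\lambda_*)$ with $x_* \in \Omega_*$ and $\eta(x_*,\lambda_*) = 0$; their existence gives condition 1. Each such pair is a global, hence local, minimizer of $\mathscr{L}_e(\cdot,\cdot,c_0)$, so by the remark following the definition of local exactness (which uses $(A4)$, and $(A4)_s$ includes $(A4)$) the function is locally exact there, giving condition 2. Finally, \eqref{AugmFuncVanishAtKKTpoints_LocPrinciple} together with $\eta(x_*,\lambda_*) = 0$ forces $\mathscr{L}_e(x_*,\lambda_*,c) = f(x_*) = f_*$, so the global minimum value equals $f_*$ and $S_e(c) = \emptyset$ for all $c \ge c_0$, which is condition 3.

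For sufficiency I would start from $(A4)_s$, which makes $\mathscr{L}_e$ non-decreasing in $c$ and hence gives the nesting $S_e(c) \subseteq S_e(c_0)$ for $c \ge c_0$, so every such $S_e(c)$ is bounded or empty. I then split into two cases. If $S_e(\overline{c}) = \emptyset$ for some $\overline{c} \ge c_0$, then $\inf_{A \times \Lambda} \mathscr{L}_e(\cdot,\cdot,c) \ge f_*$ for all $c \ge \overline{c}$; combining condition 1 with \eqref{AugmFuncVanishAtKKTpoints_LocPrinciple} shows that $(\overline{x},\overline{\lambda})$ attains the value $f_*$ and is therefore a global minimizer, so Lemma~\ref{Lemma_GlobalExactEquivFormulation} yields global exactness directly. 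The remaining case, $S_e(c) \ne \emptyset$ for every $c \ge c_0$, is where I expect the real work and I would derive a contradiction. Since $\mathscr{L}_e(\cdot,\cdot,c)$ is l.s.c. and is bounded below by $f_*$ outside the bounded set $S_e(c) \subseteq S_e(c_0)$, a minimizing sequence can be confined to the bounded set $S_e(c_0)$ and, $A \times \Lambda$ being closed, converges (along a subsequence) to a minimizer; thus $\mathscr{L}_e(\cdot,\cdot,c)$ attains a global minimum at some $(x(c),\lambda(c)) \in S_e(c_0)$ for every $c \ge c_0$. Picking an increasing unbounded $c_n \to \infty$ and setting $(x_n,\lambda_n) = (x(c_n),\lambda(c_n))$, this sequence stays in the bounded set $S_e(c_0)$ and admits a cluster point $(x_*,\lambda_*)$; applying Lemma~\ref{Lemma_EAL_MinimizingSeq} (these being exact minimizers, any $\varepsilon_n \downarrow 0$ works) I obtain $x_* \in \Omega_*$ and $\eta(x_*,\lambda_*) = 0$.

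To finish, I would invoke condition 2 at this limit point: local exactness at $(x_*,\lambda_*)$ furnishes $\overline{c} > 0$ and a neighbourhood $U$ with $\mathscr{L}_e(x,\lambda,c) \ge \mathscr{L}_e(x_*,\lambda_*,c)$ on $U \cap (A \times \Lambda)$ for $c \ge \overline{c}$, while \eqref{AugmFuncVanishAtKKTpoints_LocPrinciple} and $\eta(x_*,\lambda_*) = 0$ give $\mathscr{L}_e(x_*,\lambda_*,c) = f_*$. Passing to a subsequence with $(x_n,\lambda_n) \to (x_*,\lambda_*)$, for large $n$ one has $(x_n,\lambda_n) \in U$ and $c_n \ge \overline{c}$, whence $\mathscr{L}_e(x_n,\lambda_n,c_n) \ge f_*$, contradicting $(x_n,\lambda_n) \in S_e(c_n)$. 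Hence only the first case survives and the function is globally exact. The main obstacle is exactly this contradiction step: one must guarantee, through Lemma~\ref{Lemma_EAL_MinimizingSeq}, both that the joint minimizers do not escape to infinity and that their cluster point is a genuinely optimal pair with $\eta = 0$, so that local exactness becomes applicable. The extra dependence on $\lambda$ and the consequent need for the strengthened assumption $(A6)_s$ (rather than $(A6)$) inside that lemma is precisely what makes this case heavier than its saddle-point counterpart.
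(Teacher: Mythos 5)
Your proposal is correct and follows essentially the same route as the paper's own proof: necessity read off from the definitions together with \eqref{AugmFuncVanishAtKKTpoints_LocPrinciple}, the empty-sublevel-set case discharged via Lemma~\ref{Lemma_GlobalExactEquivFormulation}, and the nonempty case handled by confining exact minimizers of $\mathscr{L}_e(\cdot,\cdot,c_n)$ to the bounded set $S_e(c_0)$, extracting a cluster point, invoking Lemma~\ref{Lemma_EAL_MinimizingSeq} to identify it as an optimal pair with $\eta = 0$, and contradicting $S_e(c_n) \neq \emptyset$ through local exactness. The only cosmetic difference is that you compare $\mathscr{L}_e(x_n,\lambda_n,c_n)$ directly to $f_*$ in the final step, whereas the paper first concludes that the cluster point is itself a global minimizer; the two formulations are equivalent.
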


\begin{proof}
Let $\mathscr{L}_e(x, \lambda, c)$ be globally exact. Then there exists $c_0 > 0$ such that for all $c \ge c_0$ 
the function $\mathscr{L}_e(\cdot, \cdot, c)$ attains a global minimum on $A \times \Lambda$ and 
$(x_*, \lambda_*) \in \argmin_{(x, \lambda) \in A \times \Lambda} \mathscr{L}_e(x, \lambda, c)$ iff $x_* \in \Omega_*$
and $\eta(x_*, \lambda_*) = 0$. Therefore, in particular, there exists a pair 
$(x_*, \lambda_*) \in \Omega_* \times \Lambda$ such that $\eta(x_*, \lambda_*) = 0$ 
(otherwise, $\mathscr{L}_e(x, \lambda, c)$ would not attain a global minimum), and $\mathscr{L}_e(x, \lambda, c)$ is
locally exact at any such pair $(x_*, \lambda_*)$, since any such $(x_*, \lambda_*)$ is a global
minimizer of $\mathscr{L}_e(\cdot, \cdot, c)$ on $A \times \Lambda$ for all $c \ge c_0$. Furthermore, applying
\eqref{AugmFuncVanishAtKKTpoints_LocPrinciple} one gets that 
$\min_{(x, \lambda) \in A \times \Lambda} \mathscr{L}_e(x, \lambda, c) = f_*$ for all $c \ge c_0$, which implies that
$S_e(c_0) = \emptyset$.

Let us prove the converse statement. If $S_e(c_0) = \emptyset$, then applying
\eqref{AugmFuncVanishAtKKTpoints_LocPrinciple} one obtains that a pair 
$(\overline{x}, \overline{\lambda}) \in \Omega_* \times \Lambda$ such that $\eta(\overline{x}, \overline{\lambda}) = 0$
(that exists by our assumption) is a point of global minimum of $\mathscr{L}_e(\cdot, \cdot, c_0)$ on 
$A \times \Lambda$. Consequently, $\mathscr{L}_e(x, \lambda, c)$ is globally exact by
Lemma~\ref{Lemma_GlobalExactEquivFormulation}.

Note that the function $c \to \mathscr{L}_e(x, \lambda, c)$ is non-decreasing by $(A4)_s$. Therefore it remains to
consider the case when $S_e(c) \ne \emptyset$ for all $c > 0$. Choose an increasing unbounded 
sequence $\{ c_n \} \subset [c_0, + \infty)$. Assumption $(A4)_s$ implies that $S_e(c_n) \subseteq S_e(c_0)$ for
all $n \in \mathbb{N}$. Hence taking into account the facts that $S_e(c_0)$ is bounded, and 
$\mathscr{L}_e(\cdot, \cdot, c)$ is l.s.c., one obtains that for any $n \in \mathbb{N}$ there exists 
$(x_n, \lambda_n) \in \argmin_{(x, \lambda) \in A \times \Lambda} \mathscr{L}_e(x, \lambda, c_n)$, and, moreover, 
$(x_n, \lambda_n) \in S_e(c_n) \subseteq S_e(c_0)$, which implies that $\{ (x_n, \lambda_n) \}$ is a bounded sequence.
Consequently, without loss of generality one can suppose that it converges to some point $(x_*, \lambda_*)$ that belongs
to $A \times \Lambda$, since this set is closed. 

Applying Lemma~\ref{Lemma_EAL_MinimizingSeq} one gets that $x_* \in \Omega_*$ and $\eta(x_*, \lambda_*) = 0$, which
implies that $\mathscr{L}_e(x, \lambda, c)$ is locally exact at $(x_*, \lambda_*)$. Therefore there exist 
$\widehat{c} > 0$ and a neighbourhood $U$ of $(x_*, \lambda_*)$ such that
$$
  \mathscr{L}_e(x, \lambda, c) \ge \mathscr{L}_e(x_*, \lambda_*, c) 
  \quad \forall (x, \lambda) \in U \cap \big( A \times \Lambda \big) \quad \forall c \ge \widehat{c}.
$$
From the facts that $(x_n, \lambda_n) \to (x_*, \lambda_*)$ and $c_n \to + \infty$ as $n \to \infty$ it follows that
there exists $n_0 \in \mathbb{N}$ such that $(x_n, \lambda_n) \in U$ and $c_n \ge \widehat{c}$ for all $n \ge n_0$.
Hence for any such $n \in \mathbb{N}$ one has 
$\mathscr{L}_e(x_n, \lambda_n, c_n) \ge \mathscr{L}_e(x_*, \lambda_*, c_n)$. Consequently, taking into account the
definition of $(x_n, \lambda_n)$ one obtains that $(x_*, \lambda_*)$ is a point of global minimum of 
$\mathscr{L}_e(x, \lambda, c_n)$ on $A \times \Lambda$. Applying
\eqref{AugmFuncVanishAtKKTpoints_LocPrinciple} one gets that 
$\min_{(x, \lambda) \in A \times \Lambda} \mathscr{L}_e(x, \lambda, c_n) = \mathscr{L}_e(x_*, \lambda_*, c_n) = 
f(x_*) = f_*$, which contradicts the assumption that $S_e(c) \ne \emptyset$ for all $c > 0$.	 
\end{proof}

\begin{remark}
{(i)~Recall that for the existence of a global saddle point of the augmented Lagrangian function 
$\mathscr{L}(x, \lambda, c)$ it is necessary that there exists $\lambda_* \in K^*$ such that for any globally optimal
solution $x_*$ of the problem $(\mathcal{P})$ the pair $(x_*, \lambda_*)$ is a KKT-pair of this problem. In particular,
a global saddle point of $\mathscr{L}(x, \lambda, c)$ \textit{cannot} exist if the problem $(\mathcal{P})$ has two
globally optimal solution with disjoint sets of Lagrange multipliers. In contrast, the penalized augmented Lagrangian 
function $\mathscr{L}_e(x, \lambda, c)$ \textit{can} be globally exact even if the problem $(\mathcal{P})$ has two (or
more) globally optimal solutions with disjoint sets of Lagrange multipliers.
}

\noindent{(ii)~Note that the theorem above contains some existing results as simple particular cases
(see, e.g., \cite{LuoWuLiu2013}, Thms.~4.3 and 4.4).
}
\end{remark}

Note that, as in the case of the localization principle for global saddle point of the augmented Lagrangian
$\mathscr{L}(x, \lambda, c)$, one can reformulate the assumption on the boundedness of the set $S_e(c_0)$ from the
theorem above in terms of behaviour of points of global minimum of $\mathscr{L}(\cdot, \cdot, c)$ as $c \to \infty$.
Namely, arguing in a similar way to the proof of Proposition~\ref{Prp_GSP_LP_Nondegeneracy} one can verify that the
following result holds true.

\begin{proposition}
Let $Y$ be finite dimensional, $A$ be closed, $G$ be continuous on $A$, $\mathscr{L}_e(\cdot, \cdot, c)$ be l.s.c.
on $A \times \Lambda$ for all $c > 0$, and $\mathscr{L}_e(x, \lambda, c)$ be locally exact at every point 
$(x_*, \lambda_*) \in \Omega_* \times \Lambda$ such that $\eta(x_*, \lambda_*) = 0$. Suppose also that assumptions
$(A2)$, $(A4)_s$, $(A6)_s$ and $(A12)$ are satisfied, there exists 
$(\overline{x}, \overline{\lambda}) \in \Omega_* \times \Lambda$ such that 
$\eta(\overline{x}, \overline{\lambda}) = 0$, and \eqref{AugmFuncVanishAtKKTpoints_LocPrinciple} holds true.
Then for the existence of $c_0 > 0$ such that $S_e(c_0)$ is either bounded or empty it is necessary and sufficient that
there exist $\tau > 0$ and $R > 0$ such that for any $c \ge \tau$ there exists 
$(x(c), \lambda(c)) \in \argmin_{(x, \lambda) \in A \times \Lambda} \mathscr{L}_e(x, \lambda, c)$ with 
$\| x(c) \|_X + \| \lambda(c) \|_Y \le R$.
\end{proposition}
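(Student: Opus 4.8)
The plan is to follow the proof of Proposition~\ref{Prp_GSP_LP_Nondegeneracy} almost verbatim, replacing the single-variable minimization of $\mathscr{L}(\cdot,\lambda_*,c)$ over $A$ by the joint minimization of $\mathscr{L}_e(\cdot,\cdot,c)$ over $A \times \Lambda$, the monotonicity hypothesis $(A4)$ by its strengthening $(A4)_s$, the minimizing-sequence Lemma~\ref{Lemma_MinimizingSeq} by its exact-penalty counterpart Lemma~\ref{Lemma_EAL_MinimizingSeq}, and the local saddle point property by local exactness. The vanishing condition \eqref{AugmFuncVanishAtKKTpoints_LocPrinciple} will play the role that Remark~\ref{Rmrk_ALValueAtSP} played before, pinning the value of $\mathscr{L}_e$ at the relevant pairs to $f_*$.

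For the \emph{only if} part I would assume that $S_e(c_0)$ is bounded or empty for some $c_0>0$. Since $(A4)_s$ makes $c \mapsto \mathscr{L}_e(x,\lambda,c)$ non-decreasing, one gets $S_e(c) \subseteq S_e(c_0)$ for all $c \ge c_0$, so each such $S_e(c)$ is again bounded or empty. If $S_e(c_1)=\emptyset$ for some $c_1 \ge c_0$, then \eqref{AugmFuncVanishAtKKTpoints_LocPrinciple} shows that the fixed pair $(\overline{x},\overline{\lambda})$ satisfies $\mathscr{L}_e(\overline{x},\overline{\lambda},c)=f_*$ for every $c$, hence it is a global minimizer for all $c \ge c_1$ and the desired bound holds with $\tau=c_1$ and $R=\|\overline{x}\|_X+\|\overline{\lambda}\|_Y$. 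Otherwise $S_e(c)\ne\emptyset$ for all $c\ge c_0$; since $A$ and $\Lambda$ are closed and $Y$ is finite dimensional, $\cl S_e(c_0)$ is a compact subset of $A\times\Lambda$ on which the l.s.c.\ function $\mathscr{L}_e(\cdot,\cdot,c)$ attains its minimum, and monotonicity guarantees that this minimizer is in fact a global minimizer over all of $A\times\Lambda$ lying in $S_e(c)\subseteq S_e(c_0)$. A common bound $R$ on $S_e(c_0)$ then works with $\tau=c_0$.

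For the \emph{if} part I would pick an increasing unbounded sequence $\{c_n\}\subset[\tau,+\infty)$ and set $(x_n,\lambda_n)=(x(c_n),\lambda(c_n))$, a sequence of global minimizers with $\|x_n\|_X+\|\lambda_n\|_Y \le R$; after passing to a subsequence I may assume $(x_n,\lambda_n)\to(x_*,\lambda_*)\in A\times\Lambda$. Taking $\varepsilon_n=1/n$ and noting that the minimal values $\mathscr{L}_e(x_n,\lambda_n,c_n)\le f_*$ are finite, Lemma~\ref{Lemma_EAL_MinimizingSeq} applies and yields $x_*\in\Omega_*$ and $\eta(x_*,\lambda_*)=0$. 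The local exactness hypothesis then furnishes $\widehat{c}>0$ and a neighbourhood $U$ of $(x_*,\lambda_*)$ with $\mathscr{L}_e(x,\lambda,c)\ge\mathscr{L}_e(x_*,\lambda_*,c)$ on $U\cap(A\times\Lambda)$ for $c\ge\widehat{c}$, while \eqref{AugmFuncVanishAtKKTpoints_LocPrinciple} gives $\mathscr{L}_e(x_*,\lambda_*,c)=f_*$ for all $c$. Since $(x_n,\lambda_n)\to(x_*,\lambda_*)$ and $c_n\to+\infty$, for all large $n$ one has $(x_n,\lambda_n)\in U$ and $c_n\ge\widehat{c}$, whence $\min_{A\times\Lambda}\mathscr{L}_e(\cdot,\cdot,c_n)=\mathscr{L}_e(x_n,\lambda_n,c_n)\ge f_*$, which forces $S_e(c_n)=\emptyset$; taking $c_0=c_n$ for any such $n$ completes the argument.

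The only points I would treat carefully rather than as routine are the two junctures where the joint minimization differs from the single-variable one. In the necessity direction the delicate step is justifying that the minimizer produced on the compact set $\cl S_e(c_0)$ is a genuine global minimizer over all of $A\times\Lambda$; this is exactly where the monotonicity in $(A4)_s$ enters, since it guarantees $\mathscr{L}_e(\cdot,\cdot,c)\ge f_*$ off $S_e(c_0)$ for $c\ge c_0$. In the sufficiency direction the main thing to verify is that all hypotheses of Lemma~\ref{Lemma_EAL_MinimizingSeq} are in force — lower semicontinuity on $A\times\Lambda$, boundedness from below for some penalty parameter (which follows because a finite-valued global minimizer exists at $c=\tau$), and the existence of the vanishing pair $(\overline{x},\overline{\lambda})$ — so that the cluster point is indeed optimal with $\eta$ vanishing. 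Everything else is a direct transcription of the proof of Proposition~\ref{Prp_GSP_LP_Nondegeneracy}.
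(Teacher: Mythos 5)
Your proof is correct and is essentially the argument the paper intends: the paper itself only says the result follows "arguing in a similar way to the proof of Proposition~\ref{Prp_GSP_LP_Nondegeneracy}", and your transcription — replacing $(A4)$ by $(A4)_s$, Lemma~\ref{Lemma_MinimizingSeq} by Lemma~\ref{Lemma_EAL_MinimizingSeq}, the local saddle point property by local exactness, and Remark~\ref{Rmrk_ALValueAtSP} by condition \eqref{AugmFuncVanishAtKKTpoints_LocPrinciple} — is exactly that adaptation. The two points you flag (globality of the minimizer found on $\cl S_e(c_0)$, and boundedness from below needed for Lemma~\ref{Lemma_EAL_MinimizingSeq}) are handled correctly.
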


\begin{remark}
Suppose that $\eta(x_*, \lambda_*) = 0$ iff $(x_*, \lambda_*)$ is a KKT-pair of the problem $(\mathcal{P})$, and there
exist $x_* \in \Omega_*$ and $\lambda_* \in K^*$ such that $(x_*, \lambda_*)$ is a KKT-pair of $(\mathcal{P})$. From 
the proposition above it follows that, roughly speaking, the augmented Lagrangian function 
$\mathscr{L}_e(x, \lambda, c)$ is globally exact if and only if it is locally exact at every KKT-pair of
$(\mathcal{P})$ corresponding to a globally optimal solution of this problem, and points of global minimum 
of $\mathscr{L}_e(x, \lambda, c)$ in $(x, \lambda)$ do not escape to infinity as $c \to + \infty$.
\end{remark}

Sometimes, the penalized augmented Lagrangian function $\mathscr{L}_e(x, \lambda, c)$ is locally exact at every
pair $(x_*, \lambda_*) \in \Omega_* \times \Lambda$ such that $\eta(x_*, \lambda_*) = 0$, but the set $S_e(c)$ is
unbounded for all $c > 0$, which implies that $\mathscr{L}_e(x, \lambda, c)$ is not globally exact. In this case it is
natural to ask whether the augmented Lagrangian $\mathscr{L}_e(x, \lambda, c)$ possesses an exactness property that is,
in a sense, intermediate between local and global exactness. Let us show that that the answer to this question is
positive, and in this case the function $\mathscr{L}_e(x, \lambda, c)$ is exact on bounded subsets of 
$A \times \Lambda$. We need the following definition in order to clarify this statement.

\begin{definition} \label{Def_ExactOnBoundedSets}
Let $Q \subseteq A \times \Lambda$ be a nonempty set. The penalized augmented Lagrangian 
function $\mathscr{L}_e(x, \lambda, c)$ is called \textit{exact on the set} $Q$ (with respect to $\eta$) if
there exists $c_0 > 0$ such that $\mathscr{L}_e(x, \lambda, c) \ge f_*$ for all $(x, \lambda) \in Q$ and $c \ge c_0$.
\end{definition}

Let us demonstrate that the above definition is a natural extension of the definition of global exactness.

\begin{proposition} \label{Prp_ExactOnBoundedSets_EquivDef}
Let $Q \subseteq A \times \Lambda$ be a nonempty set such that there exists 
$(\overline{x}, \overline{\lambda}) \in (\Omega_* \times \Lambda) \cap Q$ for which 
$\eta(\overline{x}, \overline{\lambda}) = 0$. Let also assumption $(A4)_s$ hold true, and
\begin{equation} \label{AugmF_VanishAtKKTpoints}
  \Phi(G(x_*), \lambda_*, c) = 0 \quad \forall c > 0 \quad 
  \forall (x_*, \lambda_*) \in \Omega_* \times \Lambda \colon \eta(x_*, \lambda_*) = 0.
\end{equation}
Then $\mathscr{L}_e(x, \lambda, c)$ is exact on $Q$ if and only if there exists $c_0 > 0$ such that for any
$c \ge c_0$ the function $\mathscr{L}_e(\cdot, \cdot, c)$ attains a global minimum on $Q$, and
$(x_*, \lambda_*) \in \argmin_{(x, \lambda) \in Q} \mathscr{L}_e(x, \lambda, c)$ iff 
$x_* \in \Omega_*$ and $\eta(x_*, \lambda_*) = 0$.
\end{proposition}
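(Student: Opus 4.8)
The plan is to mirror the proof of Lemma~\ref{Lemma_GlobalExactEquivFormulation}, simply replacing the global domain $A \times \Lambda$ by the set $Q$ throughout. The key observation is that this equivalence is purely order-theoretic: it rests only on the monotonicity in $c$ furnished by $(A4)_s$ together with condition~\eqref{AugmF_VanishAtKKTpoints}, and needs neither lower semicontinuity nor boundedness, because in each direction a concrete global minimizer of $\mathscr{L}_e(\cdot, \cdot, c)$ on $Q$ is either assumed to exist or is exhibited explicitly.

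First I would record the base value. Since $(\overline{x}, \overline{\lambda}) \in (\Omega_* \times \Lambda) \cap Q$ satisfies $\eta(\overline{x}, \overline{\lambda}) = 0$, condition~\eqref{AugmF_VanishAtKKTpoints} yields $\Phi(G(\overline{x}), \overline{\lambda}, c) = 0$, so that $\mathscr{L}_e(\overline{x}, \overline{\lambda}, c) = f(\overline{x}) = f_*$ for all $c > 0$; the same computation gives $\mathscr{L}_e(x_*, \lambda_*, c) = f_*$ for every $(x_*, \lambda_*) \in (\Omega_* \times \Lambda) \cap Q$ with $\eta(x_*, \lambda_*) = 0$. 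For the ``if'' direction I would assume the attainment-and-characterization condition holds for all $c \ge c_0$. Then $(\overline{x}, \overline{\lambda})$ lies in the prescribed argmin, so $\min_{(x, \lambda) \in Q} \mathscr{L}_e(x, \lambda, c) = f_*$ for every $c \ge c_0$, which is precisely the inequality $\mathscr{L}_e(x, \lambda, c) \ge f_*$ on $Q$ demanded by Definition~\ref{Def_ExactOnBoundedSets}.

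The substance lies in the ``only if'' direction. Assuming exactness on $Q$, fix $c_0 > 0$ with $\mathscr{L}_e(x, \lambda, c) \ge f_*$ on $Q$ for $c \ge c_0$. By the base-value computation $(\overline{x}, \overline{\lambda})$ attains the value $f_*$, hence $\mathscr{L}_e(\cdot, \cdot, c)$ attains its global minimum $f_*$ on $Q$ for every $c \ge c_0$, and every pair of $(\Omega_* \times \Lambda) \cap Q$ with vanishing $\eta$ is a minimizer. It remains to prove the reverse inclusion: that an arbitrary minimizer has this form. Fixing $c > c_0$ and $(x_*, \lambda_*) \in \argmin_{(x, \lambda) \in Q} \mathscr{L}_e(x, \lambda, c)$, I would combine monotonicity (from $(A4)$, contained in $(A4)_s$) with exactness to squeeze $\mathscr{L}_e(x_*, \lambda_*, c') = f_*$ for every $c' \in [c_0, c]$; consequently $\Phi(G(x_*), \lambda_*, \cdot) = f_* - f(x_*) - \eta(x_*, \lambda_*)$ is finite and constant on $[c_0, c)$.

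The crux, and the step I expect to be the main obstacle, is to extract from this constancy, through the strict-increase clause of $(A4)_s$, that $\Phi(G(x_*), \lambda_*, c') = 0$ and $G(x_*) \in K$. Indeed, constancy on $[c_0, c)$ means $\Phi(G(x_*), \lambda_*, \cdot)$ is strictly increasing at no point of $[c_0, c)$, so $(A4)_s$ forces the remaining alternative, $\Phi(G(x_*), \lambda_*, c') = 0$ with $G(x_*) \in K$. Then $\mathscr{L}_e(x_*, \lambda_*, c') = f(x_*) + \eta(x_*, \lambda_*) = f_*$; since $G(x_*) \in K$ makes $x_*$ a feasible point (with $x_* \in A$ because $(x_*, \lambda_*) \in Q \subseteq A \times \Lambda$), we have $f(x_*) \ge f_*$, and together with $\eta \ge 0$ this forces $f(x_*) = f_*$ and $\eta(x_*, \lambda_*) = 0$, i.e. $x_* \in \Omega_*$ and $\eta(x_*, \lambda_*) = 0$. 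Since this argument applies to every $c > c_0$, taking the threshold in the statement to be any number strictly larger than $c_0$ completes the proof.
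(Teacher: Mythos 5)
Your proposal is correct and follows essentially the same route as the paper's own proof: the ``if'' direction is read off from \eqref{AugmF_VanishAtKKTpoints}, and the ``only if'' direction uses the monotonicity-plus-exactness squeeze to make $\mathscr{L}_e(x_*,\lambda_*,\cdot)$ constant at the value $f_*$ on $[c_0,c]$, then invokes the strict-increase alternative in $(A4)_s$ to force $\Phi(G(x_*),\lambda_*,\cdot)=0$ and feasibility, whence $f(x_*)=f_*$ and $\eta(x_*,\lambda_*)=0$. The paper likewise establishes the characterization only for penalty parameters strictly above $c_0$, exactly as you do.
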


\begin{proof}
Let $\mathscr{L}_e(x, \lambda, c)$ be exact on $Q$, and $c_0 > 0$ be from Def.~\ref{Def_ExactOnBoundedSets}. Then by
\eqref{AugmF_VanishAtKKTpoints} one has that for any $c \ge c_0$ the function $\mathscr{L}_e(\cdot, \cdot, c)$ attains
the global minimum on $Q$ at $(\overline{x}, \overline{\lambda})$, and 
$\min_{(x, \lambda) \in Q} \mathscr{L}_e(x, \lambda, c) = f_*$.

Let, now, $(x_*, \lambda_*)$ be a point of global minimum of $\mathscr{L}_e(x, \lambda, c_1)$ on $Q$ for some 
$c_1 > c_0$. Then $\mathscr{L}_e(x_*, \lambda_*, c_1) = f_*$. By $(A4)_s$ the function 
$\mathscr{L}(x_*, \lambda_*, \cdot)$ is non-decreasing, which implies that $\mathscr{L}(x_*, \lambda_*, c) = f_*$ for
all $c \in [c_0, c_1]$. Therefore with the use of $(A4)_s$ one obtains that $x_*$ is feasible, and 
$\Phi(G(x_*), \lambda_*, c_0) = 0$. Hence $\mathscr{L}_e(x_*, \lambda_*, c_0) = f(x_*) + \eta(x_*, \lambda_*) = f_*$,
which yields that $x_* \in \Omega_*$ and $\eta(x_*, \lambda_*) = 0$. Thus, for any $c > c_0$ one has 
$(x_*, \lambda_*) \in \argmin_{(x, \lambda) \in Q} \mathscr{L}_e(x, \lambda, c)$ iff $x_* \in \Omega_*$ and 
$\eta(x_*, \lambda_*) = 0$.

It remains to note that the validity of the converse statement follows directly from \eqref{AugmF_VanishAtKKTpoints}.
 
\end{proof}

Now, we can obtain simple necessary and sufficient conditions for the augmented Lagrangian 
$\mathscr{L}_e(x, \lambda, c)$ to be exact on every bounded subset of $A \times \Lambda$.

\begin{theorem}[Localization Principle] \label{Th_ExactnessOnBoundedSets}
Let $Y$ be finite dimensional, $A$ be closed, $G$ be continuous on $A$ and $\mathscr{L}_e(\cdot, \cdot, c)$ be l.s.c. on
$A \times \Lambda$ for all $c > 0$. Suppose also that assumptions $(A4)_s$, $(A6)_s$ and $(A12)$ are satisfied,
and
\begin{equation} \label{AugmFuncVanishAtKKTpoints_LP}
  \Phi(G(x_*), \lambda_*, c) = 0 \quad \forall c > 0 \quad 
  \forall (x_*, \lambda_*) \in \Omega_* \times \Lambda \colon \eta(x_*, \lambda_*) = 0.
\end{equation}
Then $\mathscr{L}_e(x, \lambda, c)$ is exact on any bounded subset of $A \times \Lambda$ if and only if
$\mathscr{L}_e(x, \lambda, c)$ is locally exact at every pair $(x_*, \lambda_*) \in \Omega_* \times \Lambda$ such that
$\eta(x_*, \lambda_*) = 0$.
\end{theorem}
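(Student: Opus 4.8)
The plan is to prove the two implications separately, treating the ``only if'' direction as an almost immediate consequence of the definitions and reserving the real work for the ``if'' direction, whose argument will follow the scheme of the localization principle (Theorem~\ref{Thrm_EAL_LocalizationPrinciple}) and the auxiliary Lemma~\ref{Lemma_EAL_MinimizingSeq}, but carried out relative to a fixed bounded set rather than the whole space.

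For the ``only if'' part I would fix a pair $(x_*, \lambda_*) \in \Omega_* \times \Lambda$ with $\eta(x_*, \lambda_*) = 0$ and take $Q$ to be a closed ball about $(x_*, \lambda_*)$ intersected with $A \times \Lambda$; this is a bounded subset of $A \times \Lambda$ containing $(x_*, \lambda_*)$. Exactness on $Q$ supplies $c_0 > 0$ with $\mathscr{L}_e(x, \lambda, c) \ge f_*$ on $Q$ for all $c \ge c_0$. Since $(x_*, \lambda_*) \in \Omega_* \times \Lambda$ and $\eta(x_*, \lambda_*) = 0$, assumption \eqref{AugmFuncVanishAtKKTpoints_LP} gives $\Phi(G(x_*), \lambda_*, c) = 0$, whence $\mathscr{L}_e(x_*, \lambda_*, c) = f(x_*) = f_*$ for every $c > 0$. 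Thus $\mathscr{L}_e(x, \lambda, c) \ge \mathscr{L}_e(x_*, \lambda_*, c)$ throughout $Q$ for $c \ge c_0$, which is precisely local exactness at $(x_*, \lambda_*)$.

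The substance is the ``if'' direction, which I would argue by contradiction. Assuming $\mathscr{L}_e$ is \emph{not} exact on some bounded set $Q$, I obtain sequences $(x_n, \lambda_n) \in Q$ and $c_n \to +\infty$ with $\mathscr{L}_e(x_n, \lambda_n, c_n) < f_*$. Because $X$ and $Y$ are finite dimensional and $A$, $\Lambda$ are closed, the bounded sequence $\{(x_n,\lambda_n)\}$ has a subsequence converging to some $(x_*, \lambda_*) \in A \times \Lambda$. I would then reproduce, in localized form, the reasoning of Lemma~\ref{Lemma_EAL_MinimizingSeq}: first establish $G(x_*) \in K$ by contradiction using $(A6)_s$; second deduce $x_* \in \Omega_*$ and $\eta(x_*, \lambda_*) = 0$; and finally invoke local exactness at $(x_*, \lambda_*)$ to close the argument. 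For the second step I would fix $c > 0$, use monotonicity in $c$ from $(A4)$ to get $\mathscr{L}_e(x_n, \lambda_n, c) \le \mathscr{L}_e(x_n, \lambda_n, c_n) < f_*$ for large $n$, pass to the lower limit via lower semicontinuity to obtain $\mathscr{L}_e(x_*, \lambda_*, c) \le f_*$ for all $c$, and then let $c \to +\infty$ using $(A12)$ and the already-proven feasibility of $x_*$ to conclude $f(x_*) + \eta(x_*, \lambda_*) \le f_*$; nonnegativity of $\eta$ and optimality of $f_*$ force $x_* \in \Omega_*$ and $\eta(x_*, \lambda_*) = 0$. For the final step, local exactness together with \eqref{AugmFuncVanishAtKKTpoints_LP} yields $\mathscr{L}_e(x, \lambda, c) \ge f_*$ near $(x_*, \lambda_*)$ for large $c$, so $\mathscr{L}_e(x_n, \lambda_n, c_n) \ge f_*$ once $n$ is large enough, contradicting the choice of the sequence.

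The main obstacle, and the point at which this proof departs from Lemma~\ref{Lemma_EAL_MinimizingSeq}, is the feasibility step: the sequence $\{(x_n, \lambda_n)\}$ is only assumed to lie in $Q$ and to satisfy $\mathscr{L}_e(x_n, \lambda_n, c_n) < f_*$, so it is \emph{not} a global minimizing sequence, and $\mathscr{L}_e(\cdot, \cdot, c_0)$ is not assumed bounded below on all of $A \times \Lambda$; hence Lemma~\ref{Lemma_EAL_MinimizingSeq} cannot be applied verbatim. I would circumvent this by exploiting finite dimensionality: once the subsequence converges, it eventually lies in a compact ball $\overline{B}$ about $(x_*, \lambda_*)$ intersected with $A \times \Lambda$, and lower semicontinuity of $\mathscr{L}_e(\cdot, \cdot, c_0)$ on this compact set supplies a finite lower bound $\gamma$. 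Writing, for large $n$, $\mathscr{L}_e(x_n, \lambda_n, c_n) = \mathscr{L}_e(x_n, \lambda_n, c_0) + \Phi(G(x_n), \lambda_n, c_n) - \Phi(G(x_n), \lambda_n, c_0)$ and bounding the first term below by $\gamma$ and the remaining difference below by the infimum appearing in $(A6)_s$, I let $c_n \to +\infty$; if $G(x_*) \notin K$ this forces $\mathscr{L}_e(x_n, \lambda_n, c_n) \to +\infty$, contradicting $\mathscr{L}_e(x_n, \lambda_n, c_n) < f_*$ and thereby establishing feasibility.
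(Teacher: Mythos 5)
Your proof is correct, and while it rests on the same localization ingredients as the paper's (sequences with $c_n \to +\infty$, a Lemma~\ref{Lemma_EAL_MinimizingSeq}-style feasibility argument via $(A6)_s$, assumption $(A12)$ to control the limit, and local exactness to close), its execution is genuinely different in the ``if'' direction. The paper argues directly: it replaces $Q$ by its closure so that $Q$ is compact, extracts \emph{global minimizers} $(x_n,\lambda_n)$ of $\mathscr{L}_e(\cdot,\cdot,c_n)$ on $Q$ (l.s.c.\ plus compactness), and then runs a case analysis --- if the minimal values are unbounded, exactness on $Q$ follows at once; otherwise the limit point is feasible, and the cases $\eta(x_*,\lambda_*)>0$, $\eta(x_*,\lambda_*)=0$ with $f(x_*)>f_*$, and $x_*\in\Omega_*$ are disposed of separately, the last by local exactness. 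You instead argue by contradiction with an arbitrary \emph{violating} sequence $\mathscr{L}_e(x_n,\lambda_n,c_n)<f_*$; this removes the need to attain minima (so compactness of $Q$ itself is never used, only boundedness plus closedness of $A\times\Lambda$) and collapses the paper's case analysis into a single chain, since the inequality $\mathscr{L}_e(x_*,\lambda_*,c)\le f_*$ combined with feasibility, $(A12)$ and nonnegativity of $\eta$ forces $x_*\in\Omega_*$ and $\eta(x_*,\lambda_*)=0$ in one step. A further merit of your write-up is that you explicitly repair the point the paper glosses over with ``arguing in the same way as in the proof of Lemma~\ref{Lemma_EAL_MinimizingSeq}'': that lemma assumes $\mathscr{L}_e(\cdot,\cdot,c_0)$ is bounded below on all of $A\times\Lambda$, a hypothesis absent from this theorem, and your compact-ball lower bound $\gamma$ near the limit point is exactly the needed substitute (the paper's minimizer-based variant implicitly uses the same localization of the lower bound to the compact set $Q$). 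One detail worth spelling out if you write this up in full: the decomposition $\mathscr{L}_e(x_n,\lambda_n,c_n)=\mathscr{L}_e(x_n,\lambda_n,c_0)+\Phi(G(x_n),\lambda_n,c_n)-\Phi(G(x_n),\lambda_n,c_0)$ requires $\Phi(G(x_n),\lambda_n,c_0)<+\infty$, which follows from $\mathscr{L}_e(x_n,\lambda_n,c_n)<f_*<+\infty$ together with the monotonicity in $(A4)_s$.
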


\begin{proof}
If $\mathscr{L}_e(x, \lambda, c)$ is exact on any bounded subset of $A \times \Lambda$, then it is exact on the
intersection of a bounded neighbourhood of a point $(x_*, \lambda_*) \in \Omega_* \times \Lambda$ such that 
$\eta(x_*, \lambda_*) = 0$ and the set $A \times \Lambda$, which with the use of
Proposition~\ref{Prp_ExactOnBoundedSets_EquivDef} implies that $\mathscr{L}_e(x, \lambda, c)$ is locally exact at every
such point.

Let us prove the converse statement. Let $Q \subset A \times \Lambda$ be a bounded set. Replacing, if necessary, the
set $Q$ with its closure one can suppose that $Q$ is closed and, thus, compact (note that 
if $\mathscr{L}_e(x, \lambda, c)$ is exact on the closure of $Q$, then it is exact on $Q$). Choose an increasing
unbounded sequence $\{ c_n \} \subset (0, + \infty)$. Since $\mathscr{L}_e(\cdot, \cdot, c)$ is l.s.c., and $Q$ is
compact, for any $n \in \mathbb{N}$ the function $\mathscr{L}_e(\cdot, \cdot, c_n)$ attains a global minimum on $Q$
at a point $(x_n, \lambda_n)$. Applying the compactness of $Q$ again one can suppose that the sequence 
$\{ (x_n, \lambda_n) \}$ converges to a point $(x_*, \lambda_*) \in Q$.

From $(A4)_s$ it follows that the sequence $\{ \mathscr{L}_e(x_n, \lambda_n, c_n) \}$ is non-decreasing. If this
sequence is unbounded, then there exists $n \in \mathbb{N}$ such that $\mathscr{L}_e(x_n, \lambda_n, c_n) \ge f_*$,
which implies that $\mathscr{L}_e(x, \lambda, c)$ is exact on $Q$ by the definition of $(x_n, \lambda_n)$. On the other
hand, if this sequence is bounded, then arguing in the same way as in the proof of Lemma~\ref{Lemma_EAL_MinimizingSeq}
one can check that $x_*$ is a feasible point of $(\mathcal{P})$.

If $\eta(x_*, \lambda_*) > 0$, then applying $(A12)$ one gets that there exists $c_0 > 0$ such that
$\mathscr{L}_e(x_*, \lambda_*, c) \ge f(x_*) + \eta(x_*, \lambda_*) / 2$ for any $c \ge c_0$. Hence taking into
account $(A4)_s$ and the fact that $\mathscr{L}_e(\cdot, \cdot, c)$ is l.s.c., one obtains that 
$\mathscr{L}_e(x_n, \lambda_n, c_n) \ge f(x_*) + \eta(x_*, \lambda_*) / 4 > f_*$ for any sufficiently large 
$n \in \mathbb{N}$, which implies that $\mathscr{L}_e(x, \lambda, c)$ is exact on $Q$ by the definition of 
$(x_n, \lambda_n)$.

If $\eta(x_*, \lambda_*) = 0$, but $f(x_*) > f_*$, then applying $(A12)$ and taking into account the fact that
$\mathscr{L}_e(\cdot, \cdot, c)$ is l.s.c. one can check that
$\mathscr{L}_e(x_n, \lambda_n, c_n) \ge f(x_*) - \varepsilon > f_*$, provided $\varepsilon > 0$ is small enough, and
$n \in \mathbb{N}$ is large enough. Thus, $\mathscr{L}_e(x, \lambda, c)$ is exact on $Q$ in this case as well.

Finally, if $\eta(x_*, \lambda_*) = 0$ and $f(x_*) = f_*$, i.e. $x_* \in \Omega_*$, then applying the fact that 
$\mathscr{L}_e(x, \lambda, c)$ is locally exact at $(x_*, \lambda_*)$ one can easily verify that this function is exact
on $Q$.	 
\end{proof}

\begin{remark}
Note that the theorem above along with Proposition~\ref{Prp_ExactOnBoundedSets_EquivDef} contain some existing results
(\cite{DiPilloGrippo1979}, Thms.~4--7; \cite{DuZhangGao2006}, Thms.~4.1 and 6.1--6.6; \cite{DuLiangZhang2006},
Thms.~7--12) as simple particular cases.
\end{remark}

\section{Applications of the Localization Principle for Exact Augmented Lagrangian Functions}
\label{Section_ApplLocPrinciple_ExactAL}

In this section, we demonstrate that one can easily prove the local exactness of the penalized augmented Lagrangian
function $\mathscr{L}_e(x, \lambda, c)$ with the use of sufficient optimality conditions and/or a proper constraint
qualification. This result along with the localization principle allows one to reduce the study of global exactness of
the penalized augmented Lagrangian function $\mathscr{L}_e(x, \lambda, c)$ to the local analysis of sufficient
optimality conditions and/or constraint qualifications. In the end of the section, we provide several particular
examples of globally exact augmented Lagrangian functions.

\subsection{Local Exactness via Sufficient Optimality Conditions}

For the sake of simplicity, suppose that $X = \mathbb{R}^d$, $A$ is a convex set, and $f$ is differentiable. In order to
prove the local exactness of $\mathscr{L}_e(x, \lambda, c)$ via second order sufficient optimality conditions one needs
to utilize a suitable second order expansion of the function $(x, \lambda) \to \mathscr{L}_e(x, \lambda, c)$ in
a neighbourhood of a given KKT-pair. The following definition describes what we mean by ``a suitable expansion'' (cf.
Def.~\ref{Def_SeconOrdeExpansion}).

\begin{definition} \label{Def_SeconOrdeExpansionBothVar}
Let assumption $(A11)$ be satisfied, and $G$ be twice Fr\'echet differentiable at a feasible point $x_* \in A$. Let also
$\lambda_* \in K^*$ be such that $\langle \lambda_*, G(x_*) \rangle = 0$. One says that the function 
$\Phi(G(x), \lambda, c)$ admits \textit{the second order expansion} in $(x, \lambda)$ at $(x_*, \lambda_*)$, if for any
$c > 0$ there exist a function $\varphi_c \colon X \times Y^* \to \mathbb{R}$ such that for any $h \in X$, 
$\nu \in Y^*$ and $\alpha > 0$ one has $\varphi_c(\alpha h, \alpha \nu) = \alpha^2 \varphi_c(h, \nu)$, and
\begin{multline*}
  \Phi(G(x_* + h), \lambda_* + \nu, c) - \Phi(G(x_*), \lambda_*, c) =
  \langle \mu_*, D G(x_*) h \rangle \\
  + \frac{1}{2} \langle \mu_*, D^2 G(x_*)(h, h) \rangle + \frac{1}{2} \varphi_c(h, \nu) + 
  o( \| h \|^2 + \| \nu \|^2 ),
\end{multline*}
where $o( \| h \|^2 + \| \nu \|^2 ) / ( \| h \|^2 + \| \nu \|^2)  \to 0$ as $(h, \nu) \to (0, 0)$, 
$\mu_* = \Phi_0(\lambda_*)$, and if 
$$
  \limsup_{[h, \nu, c] \to [h_*, \nu_*, + \infty]} \varphi_c(h, \nu)
$$
is finite for some $h_* \in T_A(x_*)$ and $\nu_* \in Y^*$, then $h_* \in C(x_*, \mu_*)$, and the limit is greater than
or equal to $- \sigma(\mu_*, \mathcal{T}(h_*))$.
\end{definition}

\begin{remark} \label{Rmrk_2OrderExpansInXandLambda}
Let us point out when the assumption that the function $\Phi(G(x), \lambda, c)$ admits the second order expansion in
$(x, \lambda)$ at a point $(x_*, \lambda_*)$ is valid (cf.~Remark~\ref{Rmrk_2OrderExpansInX}). 

This assumption is satisfied in Example~\ref{Example_RockafellarWetsAL}, provided $Y$ is finite dimensional, 
$\sigma(y) = \| y \|^2 / 2$, $(x_*, \lambda_*)$ is a KKT-pair, and the restriction of 
$\sigma(\lambda_*, T^2_K(G(x_*), \cdot))$ to its effective domain is u.s.c. (see~\cite{ShapiroSun}, 
formulae~(3.7), (3.23) and (3.25), and \cite{Dolgopolik_UnifExactnessII}, Theorem~3.3). In this case one has
$$
  \varphi_c(h, \mu) = \min_{z \in \mathscr{C}(x_*, \lambda_*)}
  \Big( \big\| c (D G(x_*) h - z) + \mu \big\|^2 - \sigma\big(\lambda_*, T_K^2(G(x_*), z) \big) \Big) 
  - \frac{1}{c} \| \mu \|^2,
$$
where $\mathscr{C}(x_*, \lambda_*) = \{ z \in T_K(G(x_*)) \mid \langle \lambda_*, z \rangle = 0 \}$. In particular, the
assumption holds true in Examples~\ref{Example_RockWetsAL_SOC} and \ref{Example_RockWetsAL_SemiDefProg}, if 
$(x_*, \lambda_*)$ is a KKT-pair.

The assumption is satisfied in Example~\ref{Example_EssentiallyQuadraticAL} in the case $\phi(s) = s^2 / 2$, if one
defines
\begin{multline*}
  \varphi_c(h, \mu) = \sum_{i \in I_+(x_*, \lambda_*)} 
  \Big( c \| \nabla g_i(x_*) h \|^2 + \mu_i \langle \nabla g_i(x_*), h \rangle \Big) 
  - \frac{1}{c} \sum_{i \in I \setminus I(x_*)} \mu_i^2 \\
  + \sum_{i \in I_0(x_*, \lambda_*)} 
  \Big( \mu_i \max\left\{ \langle \nabla g_i(x_*), h \rangle, - \frac{\mu_i}{c} \right\} + 
  \frac{c}{2} \max\left\{ \langle \nabla g_i(x_*), h \rangle, - \frac{\mu_i}{c} \right\}^2 \Big).
\end{multline*}
The assumption is also valid in Example~\ref{Example_Mangasarian}, provided $\phi''(t) > 0$ for all 
$t \in \mathbb{R}$. In all other examples, one must modify the function $\Phi(y, \lambda, c)$ in order to ensure that
the function $\Phi(G(x), \cdot, c)$ is smooth enough, and $D_{\lambda} \Phi(G(x_*), \lambda_*, c) = 0$ (this equality
is necessary for $\Phi(G(x), \lambda, c)$ to admit the second order expansion in $(x, \lambda)$; see
Def.~\ref{Def_SeconOrdeExpansionBothVar}).

Namely, suppose for the sake of shortness that there are no equality constraints, and choose a twice continuously
differentiable function $\zeta \colon \mathbb{R} \to \mathbb{R}$ such that $\zeta'(0) = 0$, 
$\zeta(\mathbb{R}_+) = \mathbb{R}_+$, and $\zeta(t) = 0$ iff $t = 0$. Let us replace the function $\Phi(y, \lambda, c)$
with the function $\Phi(y, \zeta(\lambda), c)$, where $\zeta(\lambda) = (\zeta(\lambda_1), \ldots, \zeta(\lambda_l))$ in
the case when $(\mathcal{P})$ is a mathematical programming problem, and $\zeta(\lambda)$ is L\"{o}wner's operator
associated with $\zeta$ in the case when $(\mathcal{P})$ is either a second order cone or semidefinite programming
problem. Then one can verify that the assumption on the function $\Phi(G(x), \zeta(\lambda), c)$ is satisfied in
Example~\ref{Example_CubilAL} with $\zeta(t) = t^2$ iff s.c. condition holds true. The assumption holds true in
Examples~\ref{Example_ExpPenFunc}--\ref{Example_ModBarrierFunc} iff $\phi'(0) \ne 0$, $\phi''(0) > 0$, and s.c.
condition holds true. This assumption is satisfied in Example~\ref{Example_pthPowerAugmLagr} iff $\phi'(b) \ne 0$,
$\phi''(b) > - \phi'(b)^2$, and s.c. condition holds true, and it is always satisfied in
Example~\ref{Example_HeWuMengLagrangian}, provided $\zeta(t) = t^{1 + \varepsilon}$ for some $\varepsilon > 0$.

The assumption is also satisfied in Examples~\ref{Example_NonlinearRescale_SemiDefProg} and
\ref{Example_NonlinearRescalePenalized_SemiDefProg}, if s.c. condition holds true (\cite{Stingl2006}, Thm.~5.1, and
\cite{LuoWuLiu2015}, Prp.~4.2), and $\zeta(t) = t^2$.

Finally, let us note that under some natural assumptions on the function $\zeta$ (that are satisfied, e.g., if
$\zeta(t) = t^2$) the function $\Phi(y, \zeta(\lambda), c)$ has the same properties as the function 
$\Phi(y, \lambda, c)$. In particular, $\Phi(y, \zeta(\lambda), c)$ satisfies the same main assumption of this paper as
$\Phi(y, \lambda, c)$.
\end{remark}

Now, we can prove that under some additional assumptions the penalized augmented Lagrangian function 
$\mathscr{L}_e(x, \lambda, c)$ is locally exact at a KKT-pair $(x_*, \lambda_*)$ of the problem $(\mathcal{P})$,
provided this KKT-pair satisfies the second order sufficient optimality condition, and the function 
$\eta(x_*, \cdot)$ behaves like a positive definite quadratic function in a neighbourhood of $\lambda_*$.

\begin{theorem} \label{Th_LocalExactnessOfAL}
Let $Y$ be finite dimensional, assumptions $(A4)$ and $(A11)$ be satisfied, $x_*$ be a locally optimal solution of the
problem $(\mathcal{P})$, $f$ and $G$ be twice Fr\'echet differentiable at $x_*$, and $(x_*, \mu_*)$ be a KKT-pair of the
problem $(\mathcal{P})$ satisfying the second order sufficient optimality condition. Suppose also that the function 
$\Phi(G(x), \lambda, c)$ admits the second order expansion in $(x, \lambda)$ at $(x_*, \lambda_*)$ for some 
$\lambda_* \in \Phi_0^{-1}(\mu_*)$, the function $\eta$ is twice Fr\'echet differentiable at $(x_*, \lambda_*)$,
$D^2_{\lambda \lambda} \eta(x_*, \lambda_*)$ is positive definite, and $\eta(x_*, \lambda_*) = 0$. Then the penalized
augmented Lagrangian function $\mathscr{L}_e(x, \lambda, c)$ is locally exact at $(x_*, \lambda_*)$.
\end{theorem}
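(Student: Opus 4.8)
The plan is to adapt the argument of Theorem~\ref{Thrm_LSPvia2OrderSuffCond} to the \emph{joint} variable $(x,\lambda)$, minimizing $\mathscr{L}_e(\cdot,\cdot,c)$ on $A\times\Lambda$ instead of minimizing $\mathscr{L}(\cdot,\lambda_*,c)$ in $x$ alone. First I would record the data at the reference pair. Since $\lambda_*\in\Phi_0^{-1}(\mu_*)\subseteq K^*$ and $(x_*,\mu_*)$ is a KKT-pair, assumption $(A11)$ gives $\langle\lambda_*,G(x_*)\rangle=0$, so $(A10)$ yields $\Phi(G(x_*),\lambda_*,c)=0$ for all $c>0$; combined with $\eta(x_*,\lambda_*)=0$ this gives $\mathscr{L}_e(x_*,\lambda_*,c)=f(x_*)=f_*$ for every $c>0$. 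Because $\eta\ge 0$ attains the value $0$ at $(x_*,\lambda_*)$ and is twice Fr\'echet differentiable there, this point is an unconstrained global minimizer of $\eta$, hence $D\eta(x_*,\lambda_*)=0$ and $D^2\eta(x_*,\lambda_*)$ is positive semidefinite. Finally, by $(A4)$ the map $\mathscr{L}_e(x,\lambda,\cdot)$ is non-decreasing, so (cf.\ the remark following the definition of local exactness) it suffices to prove that $(x_*,\lambda_*)$ is a point of local minimum of $\mathscr{L}_e(\cdot,\cdot,c)$ on $A\times\Lambda$ for all sufficiently large $c$.

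Next I would assemble the joint expansion. Adding the Taylor expansions of $f$ and $\eta$ at $(x_*,\lambda_*)$ to the expansion of $\Phi(G(\cdot),\cdot,c)$ provided by Definition~\ref{Def_SeconOrdeExpansionBothVar}, and using $D\eta(x_*,\lambda_*)=0$ and $\mu_*=\Phi_0(\lambda_*)$, gives for fixed $c$ and $(h,\nu)\to(0,0)$
\begin{multline*}
  \mathscr{L}_e(x_*+h,\lambda_*+\nu,c)-f_* = \langle D_x L(x_*,\mu_*),h\rangle
  + \tfrac12\langle h,D^2_{xx}L(x_*,\mu_*)h\rangle \\
  + \tfrac12\varphi_c(h,\nu)
  + \tfrac12\big\langle(h,\nu),D^2\eta(x_*,\lambda_*)(h,\nu)\big\rangle
  + o(\|h\|^2+\|\nu\|^2).
\end{multline*}
The key structural point is that there is no term linear in $\nu$ (so $D_\lambda\mathscr{L}_e(x_*,\lambda_*,c)=0$), whence $\mathscr{L}_e(\cdot,\cdot,c)$ is Fr\'echet differentiable at $(x_*,\lambda_*)$ with gradient $(D_x L(x_*,\mu_*),0)$.

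Then I would argue by contradiction. If local exactness fails, for each $n$ there is $(x_n,\lambda_n)\in A\times\Lambda$, arbitrarily close to $(x_*,\lambda_*)$, with $\mathscr{L}_e(x_n,\lambda_n,n)<\mathscr{L}_e(x_*,\lambda_*,n)=f_*$. Put $z_n=x_n-x_*$, $\nu_n=\lambda_n-\lambda_*$, $t_n=\sqrt{\|z_n\|^2+\|\nu_n\|^2}\to 0$ and, using that $X=\mathbb{R}^d$ and $Y$ are finite dimensional, pass to a subsequence so that $(h_n,w_n):=(z_n,\nu_n)/t_n\to(h_*,w_*)$ with $\|(h_*,w_*)\|=1$. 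Convexity of $A$ and of the cone $\Lambda$ gives $z_n\in T_A(x_*)$, hence $h_*\in T_A(x_*)$. I would split on whether $h_*=0$. If $h_*=0$ then $w_*\neq0$; dividing the displayed inequality (at $c=n$) by $t_n^2$, using the positive homogeneity of $\varphi_n$, dropping the nonnegative term $\langle D_x L(x_*,\mu_*),z_n\rangle$, and passing to the limit via Definition~\ref{Def_SeconOrdeExpansionBothVar} (so that the $\varphi_n$-contribution is at least $-\sigma(\mu_*,\mathcal{T}(0))=0$) leaves $\langle w_*,D^2_{\lambda\lambda}\eta(x_*,\lambda_*)w_*\rangle\le 0$, which contradicts the positive definiteness of $D^2_{\lambda\lambda}\eta(x_*,\lambda_*)$. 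This is exactly where that hypothesis is needed.

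In the remaining case $h_*\neq0$ I would first rule out $[L(\cdot,\mu_*)]'(x_*,h_*)>0$ as in Theorem~\ref{Thrm_LSPvia2OrderSuffCond}: since $\mathscr{L}_e(\cdot,\cdot,1)$ is differentiable at $(x_*,\lambda_*)$ with gradient $(D_x L(x_*,\mu_*),0)$, one has $(\mathscr{L}_e(x_n,\lambda_n,1)-f_*)/t_n\to\langle D_x L(x_*,\mu_*),h_*\rangle$, so a positive value would force $\mathscr{L}_e(x_n,\lambda_n,1)>f_*$ and, by $(A4)$, $\mathscr{L}_e(x_n,\lambda_n,n)\ge\mathscr{L}_e(x_n,\lambda_n,1)>f_*$ for large $n$, contradicting the choice of $(x_n,\lambda_n)$; as $\langle D_x L(x_*,\mu_*),h_*\rangle\ge0$ by the KKT condition, we obtain $[L(\cdot,\mu_*)]'(x_*,h_*)=0$. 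Dividing the displayed inequality at $c=n$ by $t_n^2$, dropping $\langle D_x L,z_n\rangle\ge0$, and taking $\limsup$, the boundedness above of $\varphi_n(h_n,w_n)$ (forced by the two convergent quadratic terms) lets Definition~\ref{Def_SeconOrdeExpansionBothVar} apply, giving $h_*\in C(x_*,\mu_*)$ and $\limsup_n\varphi_n(h_n,w_n)\ge-\sigma(\mu_*,\mathcal{T}(h_*))$; together with $D^2\eta(x_*,\lambda_*)\succeq0$ this yields $\langle h_*,D^2_{xx}L(x_*,\mu_*)h_*\rangle-\sigma(\mu_*,\mathcal{T}(h_*))\le0$ for a nonzero $h_*\in C(x_*,\mu_*)$ with $[L]'(x_*,h_*)=0$, contradicting the second order sufficient condition~\eqref{KKT_2OrderSuffCond}. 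I expect the main obstacle to be the correct bookkeeping of the purely homogeneous term $\varphi_c(h,\nu)$ under the joint limit $[h,\nu,c]\to[h_*,w_*,+\infty]$: extracting $h_*\in C(x_*,\mu_*)$ and the lower bound $-\sigma(\mu_*,\mathcal{T}(h_*))$ from Definition~\ref{Def_SeconOrdeExpansionBothVar} along the normalized sequence, and cleanly separating the degenerate direction $h_*=0$ (neutralized by the dual Hessian) from the primal direction $h_*\neq0$ (neutralized by second order sufficiency).
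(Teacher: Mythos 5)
Your proposal is correct and follows essentially the same route as the paper's proof: the same joint second order expansion of $\mathscr{L}_e$ about $(x_*,\lambda_*)$ (using $D\eta(x_*,\lambda_*)=0$, which holds since $(x_*,\lambda_*)$ is an unconstrained minimizer of $\eta$), the same contradiction sequence $(x_n,\lambda_n)\in U_n\cap(A\times\Lambda)$ with normalization and extraction of a limit direction $(h_*,\nu_*)$, and the same dichotomy: $h_*=0$ is excluded by the positive definiteness of $D^2_{\lambda\lambda}\eta(x_*,\lambda_*)$, while $h_*\ne 0$ is excluded by the second order sufficient optimality condition via Definition~\ref{Def_SeconOrdeExpansionBothVar}. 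The one place where you go beyond the paper is the intermediate step establishing $[L(\cdot,\mu_*)]'(x_*,h_*)=0$: the paper's proof passes directly from $h_*\in C(x_*,\mu_*)$ and $0\ge\langle h_*,D^2_{xx}L(x_*,\mu_*)h_*\rangle-\sigma(\mu_*,\mathcal{T}(h_*))$ to a contradiction with \eqref{KKT_2OrderSuffCond}, even though that condition is quantified only over critical directions satisfying $[L(\cdot,\mu_*)]'(x_*,h)=0$; your additional first order argument, imported from the proof of Theorem~\ref{Thrm_LSPvia2OrderSuffCond}, closes exactly this point. A small caveat on that step: your claim that $\mathscr{L}_e(\cdot,\cdot,c)$ is Fr\'echet differentiable at $(x_*,\lambda_*)$ with gradient $(D_xL(x_*,\mu_*),0)$ needs the homogeneous term $\varphi_c$ to be bounded near the unit sphere (so that $t_n\varphi_c(h_n,w_n)\to 0$), which Definition~\ref{Def_SeconOrdeExpansionBothVar} does not literally guarantee, although it holds in all the paper's examples; monotonicity in $c$ from $(A4)$ gives you the upper bound $\varphi_1(h_n,w_n)\le\varphi_n(h_n,w_n)$, so only the lower bound requires this mild extra regularity.
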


\begin{proof}
Denote $\xi = (x, \lambda)$, $\xi_* = (x_*, \lambda_*)$ and $\overline{\xi} = (x_*, \mu_*)$. Under the assumptions
of the theorem, for any $c > 0$ there exists a neighbourhood $U_c$ of $\xi_*$ such that for 
all $\xi = (x, \lambda) \in U_c$ one has
\begin{multline} \label{EAL_TaylorExpansion}
  \Big| \mathscr{L}_e(\xi, c) - \mathscr{L}_e(\xi_*, c) - 
  \big\langle D_x L(\overline{\xi}), x - x_* \big\rangle - 
  \frac{1}{2} \big\langle x - x_*, D^2_{xx} L(\overline{\xi}) (x - x_*) \big\rangle \\
  - \frac{1}{2} \varphi_c(x - x_*, \lambda - \lambda_*) 
  - \frac{1}{2} D^2 \eta(\xi_*) (\xi - \xi_*, \xi - \xi_*) \Big| \\
  < \frac{1}{2 c} \Big( \| x - x_* \|^2 + \| \lambda - \lambda_* \|^2 \Big).
\end{multline}
Here we used the equalities $D_x \eta(\xi_*) = 0$ and $D_{\lambda} \eta(\xi_*) = 0$ that hold true
due to the facts that $\eta(x, \lambda)$ is nonnegative, and $\eta(\xi_*) = 0$.

Arguing by reductio ad absurdum, suppose that $\mathscr{L}_e(x, \lambda, c)$ is not locally exact at $(x_*, \lambda_*)$.
Then for any $n \in \mathbb{N}$ there exists $\xi_n = (x_n, \lambda_n) \in U_n \cap (A \times \Lambda)$ such
that $\mathscr{L}_e(\xi_n, n) < \mathscr{L}_e(\xi_*, n)$. Applying \eqref{EAL_TaylorExpansion} one gets that
\begin{multline} \label{EAL_TaylorExp_2}
  0 > \langle D_x L(\overline{\xi}), x_n - x_* \rangle +
  \frac{1}{2} \big\langle x_n - x_*, D^2_{xx} L(\overline{\xi}) (x_n - x_*) \big\rangle
  + \frac{1}{2} \varphi_n(x_n - x_*, \lambda_n - \lambda_*) \\
  + \frac{1}{2} D^2 \eta(\xi_*) (\xi_n - \xi_*, \xi_n - \xi_*) 
  - \frac{1}{2 n} \Big( \| x_n - x_* \|^2 + \| \lambda_n - \lambda_* \|^2 \Big)
\end{multline}
for any $n \in \mathbb{N}$. Denote $\alpha_n = \| x_n - x_* \| + \| \lambda_n - \lambda_* \|$, 
$h_n = (x_n - x_*) / \alpha_n$ and $\nu_n = (\lambda_n - \lambda_*) / \alpha_n$. Replacing, if necessary, the sequence
$\{ (x_n, \lambda_n) \}$ by its subsequence, one can suppose that the sequence $\{ (h_n, \nu_n) \}$ converges to a point
$(h_*, \nu_*)$ such that $\| h_* \| + \| \nu_* \| = 1$ (recall that both $X$ and $Y$ are finite dimensional). Note that
$h_* \in T_A(x_*)$.

Suppose, at first, that $h_* = 0$. In other words, suppose that $(x_n - x_*) / \alpha_n$ converges to zero as 
$n \to \infty$. Observe that
\begin{equation} \label{EAL_KKTCond_Impl}
  x_n - x_* \in T_A(x_*), \quad \langle D_x L(\overline{\xi}), x_n - x_* \rangle \ge 0 
  \quad \forall n \in \mathbb{N}
\end{equation}
due to the facts that $x_n \in A$, $A$ is convex, and $(x_*, \mu_*)$ is a KKT-pair. Hence dividing
\eqref{EAL_TaylorExp_2} by $\alpha_n^2$, and passing to the limit as $n \to \infty$ one obtains 
that $D^2_{\lambda \lambda} \eta(x_*, \lambda_*)(\nu_*, \nu_*) \le 0$ by virtue of
Def.~\ref{Def_SeconOrdeExpansionBothVar}, which contradicts our assumption that
$D^2_{\lambda \lambda} \eta(x_*, \lambda_*)$ is positive definite. Thus, $h_* \ne 0$.

Recall that $\eta$ is a nonnegative function and $\eta(\xi_*) = 0$, i.e. $\xi_*$ is a point of global minimum of the
function $\eta$. Therefore $D^2 \eta(\xi_*) (\xi_n - \xi_*, \xi_n - \xi_*) \ge 0$ for all $n \in \mathbb{N}$. Applying
this estimate and \eqref{EAL_KKTCond_Impl} in \eqref{EAL_TaylorExp_2}, and dividing by $\alpha_n^2$ one gets that
\begin{equation} \label{EAL_TaylorExp_Final}
  0 > \langle h_n, D^2_{xx} L(\overline{\xi}) h_n \rangle
  + \varphi_n(h_n, \mu_n) - \frac{1}{n} 
\end{equation}
for any $n \in \mathbb{N}$. Here we used the fact that $\alpha^2 \varphi_c(h, \mu) = \varphi_c( \alpha h, \alpha \mu)$
for all $\alpha \ge 0$ by Def.~\ref{Def_SeconOrdeExpansionBothVar}.

Passing to the limit superior in \eqref{EAL_TaylorExp_Final} with the use of Def.~\ref{Def_SeconOrdeExpansionBothVar}
one obtains that $h_* \in C(x_*, \lambda_*)$ and 
$0 \ge \langle h_*, D^2_{xx} L(\overline{\xi}) h_* \rangle - \sigma(\mu_*, \mathcal{T}(h_*))$,
which contradicts the assumption that the KKT-pair $(x_*, \mu_*)$ satisfies the second order sufficient optimality
condition.	 
\end{proof}

\begin{remark}
Note that the theorem above contains some existing results (e.g. \cite{DiPilloLucidi2001}, Theorem~6.3;
\cite{LuoWuLiu2013}, Theorem~4.1) as simple particular cases. Furthermore, it is easy to verify that under the
assumptions of the theorem there exist $\gamma > 0$ and $c_0 > 0$ such that 
$\mathscr{L}_e(x, \lambda, c) \ge \mathscr{L}_e(x_*, \lambda_*, c) 
+ \gamma (\| x - x_* \|^2 + \| \lambda - \lambda_* \|^2)$ for all $c \ge c_0$ and $(x, \lambda) \in A \times \Lambda$
that are sufficiently close to $(x_*, \lambda_*)$.
\end{remark}

Let us now provide several particular examples of globally exact augmented Lagrangian functions, and demonstrate how one
can utilize the theorem above and the localization principle in order to prove the global exactness of these augmented
Lagrangians.

\subsection{Mathematical Programming}

Let the problem $(\mathcal{P})$ be the mathematical programming problem of the form
\begin{equation} \label{MathProg_EAL}
  \min f(x) \quad \text{subject to} 
  \quad g_i(x) \le 0, \quad i \in I, \quad g_j(x) = 0, \quad j \in J,
\end{equation}
where $g_i \colon X \to \mathbb{R}$, $I = \{ 1, \ldots, l \}$ and $J = \{ l + 1, \ldots, l + s \}$.
Suppose that the functions $f$ and $g_i$, $i \in I \cup J$ are twice continuously differentiable. Define
$$
  \eta_1(x, \lambda) = 
  \sum_{i = 1}^l \Big( \big\langle D_x L(x, \lambda), \nabla g_i(x) \big\rangle + g_i(x)^2 \lambda_i \Big)^2 +
  \sum_{j = l + 1}^{l + s} \big\langle D_x L(x, \lambda), \nabla g_j(x) \big\rangle^2
$$
(see~\cite{DiPilloLucidi2001}). Let $x_*$ be a locally optimal solution of problem \eqref{MathProg_EAL}, and let LICQ
holds at $x_*$. Then one can easily verify that $\eta_1(x_*, \lambda_*) = 0$ for some $\lambda_* \in \mathbb{R}^{l + s}$
iff $(x_*, \lambda_*)$ is a KKT-pair of problem \eqref{MathProg_EAL} (note that such $\lambda_*$ is unique).
Furthermore, the matrix $D^2_{\lambda \lambda} \eta_1(x_*, \lambda_*)$ is positive definite. Thus, one can apply
Theorem~\ref{Th_LocalExactnessOfAL} in order to prove the local exactness of the penalized augmented Lagrangian
function $\mathscr{L}_e(x, \lambda, c)$ for problem \eqref{MathProg_EAL}. 

In order to ensure that the function $\mathscr{L}_e(\cdot, \cdot, c)$ is level-bounded and, thus, globally exact, one
must add barrier terms into the definition of this function (see~\cite{DiPilloLucidi2001}, and
Remark~\ref{Remark_BarrierTerms} above). Choose $\alpha > 0$ and $\varkappa > 2$, and define
$$
  p(x, \lambda) = \frac{a(x)}{1 + \sum_{i = 1}^l \lambda_i^2}, \quad 
  q(x, \lambda) = \frac{b(x)}{1 + \sum_{j = l + 1}^{l + s} \lambda_j^2},
$$
where
$$
  a(x) = \alpha - \sum_{i = 1}^l \max\{ 0, g_i(x) \}^{\varkappa}, \quad 
  b(x) = \alpha - \sum_{j = l + 1}^{l + s} g_j(x)^2.
$$
Denote $\Omega_{\alpha} = \{ x \in \mathbb{R}^d \mid a(x) > 0, \: b(x) > 0 \}$. Then one can introduce the following
penalized augmented Lagrangian function for problem \eqref{MathProg_EAL} (see
Example~\ref{Example_EssentiallyQuadraticAL}). Namely, define
\begin{multline} \label{EAL_HPR_AugmLagr}
  \mathscr{L}_e(x, \lambda, c) = f(x) \\
  + \sum_{i = 1}^l \Big( \lambda_i \max\Big\{ g_i(x), - \frac{p(x, \lambda)}{c} \lambda_i \Big\} +
  \frac{c}{2 p(x, \lambda)} \max\Big\{ g_i(x), - \frac{p(x, \lambda)}{c} \lambda_i \Big\}^2 \Big) \\
  + \sum_{j = l + 1}^{l + s} \Big( \lambda_j g_j(x) + \frac{c}{2 q(x, \lambda)} g_j(x)^2 \Big) + \eta_1(x, \lambda),
\end{multline}
if $x \in \Omega_{\alpha}$, and $\mathscr{L}_e(x, \lambda, c) = + \infty$ otherwise. 

One can easily verify that the augmented Lagrangian function introduced above is l.s.c. jointly in $(x, \lambda)$ on
$\mathbb{R}^{d} \times \mathbb{R}^{l + s}$, and continuously differentiable in $(x, \lambda)$ on its effective
domain, i.e. on $\Omega_{\alpha} \times \mathbb{R}^{l + s}$ (cf.~\cite{DiPilloLucidi2001}). One can also check
that the function $\Phi(y, \lambda, c)$ corresponding to penalized augmented Lagrangian function
\eqref{EAL_HPR_AugmLagr} satisfies assumptions $(A2)$, $(A4)_s$, $(A6)_s$, $(A11)$ 
with $\Phi_0(\lambda) \equiv \lambda$, and $(A12)$. Furthermore, the function $\Phi(G(x), \lambda, c)$ admits the
second order expansion in $(x, \lambda)$ at every KKT-pair of problem \eqref{MathProg_EAL}, and
$\Phi(G(x_*), \lambda_*, c) = 0$, if $(x_*, \lambda_*)$ is a KKT-pair. Therefore one can obtain the following result.

\begin{theorem} \label{Thrm_EAL_HPR}
Let $f$ and $g_i$, $i \in I \cup J$, be twice continuously differentiable, LICQ hold true at every globally optimal
solution of problem \eqref{MathProg_EAL}, and for any $x_* \in \Omega_*$ a unique KKT-pair $(x_*, \lambda_*)$ satisfy 
the second order sufficient optimality condition. Then penalized augmented Lagrangian function \eqref{EAL_HPR_AugmLagr}
is globally exact if and only if there exists $c_0 > 0$ such that the set
$$
  S_e(c_0) := 
  \big\{ (x, \lambda) \in \mathbb{R}^d \times \mathbb{R}^{l + s} \bigm| \mathscr{L}_e(x, \lambda, c_0) < f_* \big\}
$$
is either bounded or empty. In particular, if there exists $\gamma > 0$ such that the set
$$
  \Omega(\gamma, \alpha) := \big\{ x \in \mathbb{R}^d \mid f(x) < f_* + \gamma, \: a(x) > 0, \: b(x) > 0 \}
$$
is bounded, then penalized augmented Lagrangian function \eqref{EAL_HPR_AugmLagr}is globally exact in the sense that
its points of global minimum in $(x, \lambda)$ on $\mathbb{R}^d \times \mathbb{R}^{l + s}$ are exactly KKT-pairs of
problem \eqref{MathProg_EAL} corresponding to globally optimal solutions of this problem, provided $c > 0$ is large
enough.
\end{theorem}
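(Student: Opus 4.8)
The plan is to deduce the ``if and only if'' assertion directly from the localization principle for exact augmented Lagrangians (Theorem~\ref{Thrm_EAL_LocalizationPrinciple}), and then to obtain the ``in particular'' statement by proving that boundedness of $\Omega(\gamma,\alpha)$ forces $S_e(c_0)$ to be bounded for all sufficiently large $c_0$. First I would check that the standing hypotheses of Theorem~\ref{Thrm_EAL_LocalizationPrinciple} hold for \eqref{EAL_HPR_AugmLagr}: here $Y=\mathbb{R}^{l+s}$ is finite dimensional, $A=\mathbb{R}^d$ is closed, $G=(g_1,\dots,g_{l+s})$ is continuous, $\mathscr{L}_e(\cdot,\cdot,c)$ is l.s.c., and, as recorded in the text preceding the theorem, the associated $\Phi$ satisfies $(A2)$, $(A4)_s$, $(A6)_s$, $(A11)$ with $\Phi_0(\lambda)\equiv\lambda$, and $(A12)$, while $\Phi(G(x_*),\lambda_*,c)=0$ at every KKT-pair, i.e. \eqref{AugmFuncVanishAtKKTpoints_LP} holds. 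It then remains to see that the two internal conditions of the principle are automatic. Since a globally optimal solution exists and LICQ holds there, the KKT system has a unique solution $\lambda_*$, so $\eta_1(x_*,\lambda_*)=0$ for some $(x_*,\lambda_*)\in\Omega_*\times\Lambda$; this is condition~1. For condition~2 I would apply Theorem~\ref{Th_LocalExactnessOfAL} at each pair $(x_*,\lambda_*)$ with $x_*\in\Omega_*$ and $\eta_1(x_*,\lambda_*)=0$: under LICQ such a $\lambda_*$ is the unique KKT multiplier, which by hypothesis satisfies the second order sufficient condition; moreover $\Phi(G(x),\lambda,c)$ admits the second order expansion in $(x,\lambda)$ there, $\eta_1$ is smooth with $\eta_1(x_*,\lambda_*)=0$, and $D^2_{\lambda\lambda}\eta_1(x_*,\lambda_*)$ is positive definite, so $\mathscr{L}_e$ is locally exact at $(x_*,\lambda_*)$. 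With conditions~1 and~2 in place, Theorem~\ref{Thrm_EAL_LocalizationPrinciple} gives global exactness if and only if there is $c_0>0$ with $S_e(c_0)$ bounded or empty, which is the first assertion.

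For the ``in particular'' part I would first establish the uniform lower bound $\Phi(G(x),\lambda,c)\ge -\alpha/c$ on the effective domain, obtained by completing the square in each summand of \eqref{EAL_HPR_AugmLagr} and using $p(x,\lambda)\sum_{i\le l}\lambda_i^2<a(x)\le\alpha$ and $q(x,\lambda)\sum_{j}\lambda_j^2<b(x)\le\alpha$. Together with $\eta_1\ge 0$ this yields $f(x)-\alpha/c_0\le\mathscr{L}_e(x,\lambda,c_0)$, so on $S_e(c_0)$ one has $f(x)<f_*+\alpha/c_0$ and $x\in\Omega_\alpha$; hence for $c_0\ge\alpha/\gamma$ the $x$-projection of $S_e(c_0)$ lies in the bounded set $\Omega(\gamma,\alpha)$, and its closure $K_0:=\overline{\Omega(\gamma,\alpha)}$ is compact.

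The hard part will be bounding $S_e(c_0)$ in the multiplier $\lambda$, because $\Phi$ stays bounded in $\lambda$ and the squared affine form $\eta_1(x,\lambda)=\|N(x)\lambda+v(x)\|^2$ (with $v_k(x)=\langle\nabla f(x),\nabla g_k(x)\rangle$ and $N(x)$ its $\lambda$-Jacobian) fails to be coercive exactly where $N(x)$ is singular. A direct computation gives $\ker N(x)=\{\lambda\colon \sum_k\lambda_k\nabla g_k(x)=0,\ g_i(x)\lambda_i=0\ \forall i\in I\}$, which is trivial precisely when LICQ holds at $x$; thus coercivity in $\lambda$ can be lost only at LICQ-violating points. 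I would resolve this by a compactness/contradiction argument: given an unbounded sequence $(x_n,\lambda_n)\in S_e(c_0)$ with $x_n\to\bar x\in K_0$ and $\lambda_n/\|\lambda_n\|\to\bar\omega$, $\|\bar\omega\|=1$, the estimate $\eta_1\le\mathscr{L}_e-f-\Phi$ keeps $\|N(x_n)\lambda_n+v(x_n)\|$ bounded, whence (dividing by $\|\lambda_n\|$ and passing to the limit) $N(\bar x)\bar\omega=0$ with $\bar\omega\neq 0$. Moreover $\bar x$ must be feasible, since otherwise some barrier summand of $\Phi(G(x_n),\lambda_n,c_0)$ diverges to $+\infty$ (as $p,q\to 0$ against a violated constraint), contradicting $\mathscr{L}_e<f_*$; and feasibility forces $a(\bar x)=b(\bar x)=\alpha>0$, so $\bar x\in\Omega_\alpha$. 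Hence $\bar x$ is a feasible point at which LICQ fails.

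The key observation closing the argument is that LICQ holds on $\Omega_*$ and is an open condition along feasible points, because under a small feasible perturbation the active set can only shrink, so a linearly independent family of active gradients remains linearly independent. Consequently the compact set $F:=\{x\in K_0\colon x\text{ feasible},\ \mathrm{LICQ}\text{ fails at }x\}$ is disjoint from $\Omega_*$, and since $f-f_*\ge 0$ on $F$ with equality only on $\Omega_*$, we get $\rho:=\min_{x\in F}(f(x)-f_*)>0$ (the case $F=\varnothing$ being vacuous). As $\bar x\in F$, this yields $\mathscr{L}_e(x_n,\lambda_n,c_0)\ge f(x_n)-\alpha/c_0\to f(\bar x)-\alpha/c_0\ge f_*+\rho-\alpha/c_0$, which exceeds $f_*$ once $c_0>\alpha/\rho$, contradicting $(x_n,\lambda_n)\in S_e(c_0)$. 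Taking $c_0$ larger than $\alpha/\gamma$, $\alpha/\rho$, and the local-exactness thresholds of the first part, $S_e(c_0)$ is bounded, and the ``if and only if'' assertion then delivers global exactness, completing the proof.
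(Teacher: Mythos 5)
Your reduction of the ``if and only if'' assertion to the localization principle is correct and coincides with the paper's own argument: under LICQ and the second order sufficient condition, conditions~1 and~2 of Theorem~\ref{Thrm_EAL_LocalizationPrinciple} hold automatically (via Theorem~\ref{Th_LocalExactnessOfAL}), so global exactness is equivalent to condition~3. The genuine gap is in your treatment of the ``in particular'' part, at the step where you claim that the cluster point $\bar x$ of an unbounded sequence $(x_n,\lambda_n)\in S_e(c_0)$ must be feasible ``since otherwise some barrier summand of $\Phi$ diverges to $+\infty$ (as $p,q\to 0$ against a violated constraint).'' With $c_0$ \emph{fixed}, this is not justified: $\|\lambda_n\|\to\infty$ forces only the barrier factor attached to the block of multipliers that actually blows up to vanish, and the violated constraint may lie in the \emph{other} block. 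For instance, if only the equality multipliers blow up, then $q_n\to 0$ but $p_n$ stays bounded away from zero; if in addition $g_j(x_n)\to 0$ for all $j\in J$ while some inequality constraint has $g_i(\bar x)>0$, every summand of $\Phi(G(x_n),\lambda_n,c_0)$ converges to a finite limit and nothing diverges. This matters because your closing contradiction uses feasibility of $\bar x$ essentially: you need $\bar x\in F$ in order to invoke $f(\bar x)\ge f_*+\rho$, whereas an \emph{infeasible} cluster point (which your kernel condition $N(\bar x)\bar\omega=0$ does not exclude) can have $f(\bar x)<f_*$, and then the estimate $\mathscr{L}_e(x_n,\lambda_n,c_0)\ge f(x_n)-\alpha/c_0$ yields no contradiction. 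Since $c_0$ must be chosen before $\bar x$ is known, and the amount of constraint violation at possible infeasible cluster points has no a priori positive lower bound, your argument as written is circular at this point.

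The paper circumvents exactly this difficulty by negating differently: it assumes $S_e(c)$ is unbounded for \emph{every} $c>0$ and extracts a diagonal sequence $(x_n,\lambda_n)\in S_e(c_n)$ with $c_n\to+\infty$; then boundedness of the individual penalty terms $u_n^i$, $w_n^j$ combined with $c_n\to\infty$ forces $\max\{g_i(x_n),0\}\to 0$ and $g_j(x_n)\to 0$, so the cluster point is feasible and in fact globally optimal, whence LICQ holds there and the quadratic growth of $\eta_1$ in $\lambda$ gives the contradiction. Your fixed-$c_0$ route can be repaired, but it needs two ingredients you do not supply: (i) the uniform bounds $p,q\le\alpha$ imply that any constraint violated by at least $\delta$ contributes at least $c_0\delta^2/2\alpha-\delta$ to $\Phi$ once $c_0\ge\alpha/\delta$, \emph{regardless} of which multiplier block blows up (so large-violation cluster points are excluded for large $c_0$); and (ii) a compactness argument on the closed set $F'$ of points in $K_0$ with $\ker N(x)\ne\{0\}$, showing that points of $F'$ with sufficiently small violation satisfy $f\ge f_*+\rho/2$ because they are close to the feasible part of $F'$. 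Without these, the infeasible-cluster-point case remains open, and the proof is incomplete.
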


\begin{proof}
Under the assumptions of the theorem augmented Lagrangian \eqref{EAL_HPR_AugmLagr} is locally exact at every KKT-pair
corresponding to a globally optimal solution of problem \eqref{MathProg_EAL} by Theorem~\ref{Th_LocalExactnessOfAL}.
Then applying the localization principle (Theorem~\ref{Thrm_EAL_LocalizationPrinciple}) one obtains that this augmented
Lagrangian is globally exact iff the set $S_e(c_0)$ is either bounded or empty for some $c_0 > 0$.

Let us verify that if the set $\Omega(\gamma, \alpha)$ is bounded, then the set $S_e(c_0)$ is bounded for some 
$c_0 > 0$. Minimizing the function $q(t) = \lambda_i t + c t^2 / 2 p(x, \lambda)$ one obtains that for 
any $(x, \lambda) \in \Omega_{\alpha} \times \mathbb{R}^{l + s}$ and $c > 0$ the following inequalities hold true:
\begin{multline} \label{EAL_HPR_LowerEstimate}
  \mathscr{L}_e(x, \lambda, c) \ge f(x) - 
  \frac{p(x, \lambda)}{2c} \sum_{i = 1}^l \lambda_i^2
  - \frac{q(x, \lambda)}{2c} \sum_{j = l + 1}^{l + s} \lambda_j^2 + \eta_1(x, \lambda) \\
  \ge f(x) - \frac{\alpha}{c} + \eta_1(x, \lambda).
\end{multline}
Consequently, for any $c > \alpha / \gamma$ and $(x, \lambda) \in S_e(c)$ one has 
$x \in \Omega(\gamma, \alpha)$. Hence applying \eqref{EAL_HPR_LowerEstimate} and the fact that the set
$\Omega(\gamma, \alpha)$ is bounded one gets that there exists $\tau \in \mathbb{R}$ such that 
$\mathscr{L}_e(x, \lambda, c) \ge \tau$ for all 
$(x, \lambda) \in \mathbb{R}^d \times \mathbb{R}^{l + s}$ and $c > \alpha / \gamma$.

Arguing by reductio ad absurdum, suppose that the set $S_e(c)$ is unbounded for any $c > 0$. Choose an increasing
unbounded sequence $\{ c_n \} \subset \mathbb{R}_+$ such that $c_1 > \alpha / \gamma$. Then for any 
$n \in \mathbb{N}$ there exists $(x_n, \lambda_n) \in S_e(c_n)$ such that $\| \lambda_n \| \ge n$. By the choice of
$c_1$ one has that $\{ x_n \} \subset \Omega(\gamma, \alpha)$. Therefore without loss of generality one can suppose
that the sequence $\{ x_n \}$ converges to a point $x_*$. Let us show that $x_*$ is a globally optimal solution of
problem \eqref{MathProg_EAL}.

Denote $w_n^j = q(x_n, \lambda_n) (\lambda_n)_j g_j(x_n) + c_n g_j(x_n)^2 / 2$
\begin{multline*}
  u_n^i = p(x_n, \lambda_n) (\lambda_n)_i \max\Big\{ g_i(x_n), - \frac{p(x_n, \lambda_n)}{c_n} (\lambda_n)_i \Big\} \\
  + \frac{c_n}{2} \max\Big\{ g_i(x_n), - \frac{p(x_n, \lambda_n)}{c_n} (\lambda_n)_i \Big\}^2.
\end{multline*}
Arguing as above, one can easily verify that $\liminf_{n \to \infty} u_n / p(x_n, \lambda_n) \ge 0$ and 
$\liminf_{n \to \infty} w_n / q(x_n, \lambda_n) \ge 0$.

From \eqref{EAL_HPR_LowerEstimate} and the fact that $(x_n, \lambda_n) \in S_e(c_n)$ it follows that
the sequences $\{ f(x_n) \}$ and $\{ \eta_1(x_n, \lambda_n) \}$ are bounded. Moreover, by the definition of 
$\{ (x_n, \lambda_n) \}$ one has $\tau \le \mathscr{L}_e(x_n, \lambda_n, c_n) < f_*$. Therefore the sequence
$\sum_{i = 1}^l u^i_n / p(x_n, \lambda_n) + \sum_{j = l + 1}^{l + s} w^j_n / q(x_n, \lambda_n)$ is bounded, which
implies that $\{ u^i_n / p(x_n, \lambda_n) \}$, $i \in I$, and $\{ w_n^j / q(x_n, \lambda_n) \}$, $j \in J$, are bounded
sequences. By definition $0 < p(x_n, \lambda_n) \le \alpha$ and $0 < q(x_n, \lambda_n) \le \alpha$ for all 
$n \in \mathbb{N}$. Therefore, the sequences $\{ u_n^i \}$ and $\{ w_n^j \}$ are bounded as well. Hence taking into
account the fact that $c_n \to + \infty$ as $n \to \infty$ one can easily check that $\max\{ g_i(x_n), 0 \} \to 0$ and
$g_j(x_n) \to 0$ as $n \to \infty$ for all $i \in I$ and $j \in J$, which implies that $x_*$ is a feasible point.

Indeed, suppose, for instance, that $g_j(x_n)$ does not converge to zero. Then there exist $\varepsilon > 0$ and
a subsequence $\{ x_{n_k} \}$ such that $|g_j(x_{n_k})| \ge \varepsilon$ for all $k \in \mathbb{N}$. Consequently,
$w_{n_k}^j \ge c_{n_k}^2 \varepsilon^2 / 2 - \alpha |g_j(x_{n_k})|$. Passing to the limit as $k \to \infty$ one obtains
that $w_{n_k}^j \to + \infty$ as $k \to \infty$, which is impossible. Thus, $x_*$ is a feasible point of problem
\eqref{MathProg_EAL}.

From \eqref{EAL_HPR_LowerEstimate} and the fact that $(x_n, \lambda_n) \in S_e(c_n)$ it follows that
$f(x_*) \le f_*$, which implies that $x_*$ is a globally optimal solution of problem \eqref{MathProg_EAL}. Consequently,
LICQ holds at this point. 

Observe that the function $\eta(x, \cdot)$ is quadratic. Hence and from the fact that LICQ holds at $x_*$ one obtains
that there exist a neighbourhood $U$ of $x_*$, $\delta_1 > 0$ and $\delta_2, \delta_3 \in \mathbb{R}$ such that
$\eta(x, \lambda) \ge \delta_1 \| \lambda \|^2 + \delta_2 \| \lambda \| + \delta_3$ for all $x \in U$ and 
$\lambda \in \mathbb{R}^{l + s}$. Applying this estimate in \eqref{EAL_HPR_LowerEstimate}, and taking into
account the fact that $\| \lambda_n \| \to + \infty$ as $n \to \infty$ one gets that 
$\mathscr{L}_e(x_n, \lambda_n, c_n) \to + \infty$ as $n \to \infty$, which contradicts the definition of the sequence
$\{ (x_n, \lambda_n) \}$. Thus, $S_e(c)$ is bounded for some $c > 0$.	 
\end{proof}

\begin{remark}
{(i)~Note that the theorem above strengthens all existing results on global exactness of augmented Lagrangian functions
(see, e.g., \cite{DiPilloLucidi2001}, Thm.~4.6; \cite{DiPilloLiuzzi2011}, Prp.~1; 
\cite{LuoWuLiu2013}, Thms.~4.3 and 4.4), since it provides first \textit{necessary and sufficient} conditions for the
global exactness, and is formulated for the optimization problem with \textit{both} equality and inequality
constraints. Furthermore, to the best of author's knowledge the theorem above provides first sufficient conditions for
the \textit{global} exactness of augmented Lagrangian functions for equality constrained optimization problems
(cf.~\cite{DiPilloGrippo1979,DiPilloGrippo1980,DuZhangGao2006,DuLiangZhang2006,DiPilloLiuzzi2011}).
}

\noindent{(ii)~It should be mentioned that one can construct an exact augmented Lagrangian function from Mangasarian's
augmented Lagrangian (Example~\ref{Example_Mangasarian}). However, since $\Phi_0(\lambda) \equiv \phi'(\lambda)$ for
this augmented Lagrangian, one has to consider the penalty term $\eta(x, \lambda) = \eta_1(x, \phi'(\lambda))$, and
impose some rather restrictive assumptions on the function $\phi$ in order to ensure the global exactness of the
corresponding penalized augmented Lagrangian. 
}
\end{remark}

Let us extend the previous theorem to the case of other augmented Lagrangian functions. As it was noted in
Remark~\ref{Rmrk_2OrderExpansInXandLambda}, only augmented Lagrangian functions from
Examples~\ref{Example_EssentiallyQuadraticAL} and \ref{Example_Mangasarian} admit the second order expansion in 
$(x, \lambda)$ without some modifications of the function $\Phi(y, \lambda, c)$. In order to accommodate necessary
modifications of this function one needs to modify the penalty term $\eta(x, \lambda)$ as well. Below, we utilize the
transformation $\lambda \to \zeta(\lambda)$ with $\zeta(t) = t^2$ in order to ensure the desired properties of the
function $\Phi(y, \lambda, c)$. Therefore we define
$$
  \eta_2(x, \lambda) = \sum_{i = 1}^l \big\langle D_x L(x, \zeta(\lambda) ), \nabla g_i(x) \big\rangle^2 + 
  \sum_{i = 1}^l g_i(x)^2 \lambda_i^2.
$$
Hereinafter, $\zeta(\lambda) = (\lambda_1^2, \ldots, \lambda_l^2)$. 

Let $x_*$ be a locally optimal solution of problem \eqref{MathProg_EAL}, and let LICQ holds at $x_*$. Then, as in the
case of $\eta_1(x, \lambda)$, one can easily verify that $\eta_2(x_*, \lambda_*) = 0$ for some 
$\lambda_* \in \mathbb{R}^{l + s}$ iff $(x_*, \zeta(\lambda_*))$ is a KKT-pair of problem \eqref{MathProg_EAL}.
Furthermore, the matrix $D^2_{\lambda \lambda} \eta_1(x_*, \lambda_*)$ is positive definite, provided the pair 
$(x_*, \lambda_*)$ satisfies s.c. condition. Finally, one can check that there exists a neighbourhood $U$ of $x_*$
such that $\inf_{x \in U} \eta_2(x, \lambda) \to + \infty$ as $\| \lambda \| \to + \infty$.

Now, we can define the following penalized augmented Lagrangian functions. Suppose that there are no equality
constraints, and define
\begin{multline} \label{EAL_Cubic}
  \mathscr{L}_e(x, \lambda, c) = f(x) + \eta_2(x, \lambda) \\ 
  + \frac{1}{3 c p(x, \zeta(\lambda))^2} \sum_{i = 1}^l 
  \Big[ \max\Big\{ c g_i(x) + p(x, \zeta(\lambda)) \lambda_i, 0 \Big\}^3 - 
  \big| p(x, \zeta(\lambda)) \lambda_i \big|^3 \Big],
\end{multline}
if $x \in \Omega_{\alpha}$, and $\mathscr{L}_e(x, \lambda, c) = + \infty$ otherwise (see~Example~\ref{Example_CubilAL}).
Similarly, one can define
\begin{equation} \label{EAL_PenalizedExpType}
  \mathscr{L}_e(x, \lambda, c) = f(x) + \frac{p(x, \lambda)}{c} \sum_{i = 1}^l
  \left[ \lambda_i^2 \phi\left( \frac{c g_i(x)}{p(x, \lambda)} \right) + 
  \xi\left( \frac{c g_i(x)}{p(x, \lambda)} \right) \right] + \eta_2(x, \lambda),
\end{equation}
if $x \in \Omega_{\alpha}$, and $\mathscr{L}_e(x, \lambda, c) = + \infty$ otherwise
(see~Example~\ref{Example_PenalizedExpPenFunc}). Here the function 
$\phi \colon \mathbb{R} \to \mathbb{R} \cup \{ + \infty \}$ is non-decreasing, strictly convex, and such that 
$\dom \phi = (- \infty, \varepsilon_0)$ for some $\varepsilon_0 \in (0, + \infty]$, $\phi(t) \to + \infty$ as 
$t \to \varepsilon_0$, $\phi(t) / t \to + \infty$ as $t \to + \infty$ if $\varepsilon_0 = + \infty$, $\phi$ is twice
continuously differentiable on $\dom \phi$, $\phi(0) = 0$, $\phi'(0) = 1$, and $\phi''(0) > 0$, while 
the function $\xi \colon \mathbb{R} \to \mathbb{R}_+$ is twice continuously differentiable, and such that $\xi$ is
strictly convex on $\mathbb{R}_+$, $\xi(t) = 0$ for all $t \le 0$, and $\xi(t) > 0$ for all $t > 0$.

Finally, one can define
\begin{equation} \label{EAL_HeWuMeng}
  \mathscr{L}_e(x, \lambda, c) = f(x) + \frac{1}{c p(x, \lambda)}
  \sum_{i = 1}^l \int_0^{c g_i(x)} \big( \sqrt{t^2 + p(x, \lambda)^2 \lambda_i^4} + t \big) \, dt 
  + \eta_2(x, \lambda),
\end{equation}
if $x \in \Omega_{\alpha}$, and $\mathscr{L}_e(x, \lambda, c) = + \infty$ otherwise
(see~Example~\ref{Example_HeWuMengLagrangian}).

As in the case of augmented Lagrangian \eqref{EAL_HPR_AugmLagr}, one can easily verify that the penalized augmented
Lagrangian functions \eqref{EAL_Cubic}, \eqref{EAL_PenalizedExpType} and \eqref{EAL_HeWuMeng} are l.s.c. jointly in 
$(x, \lambda)$ on $\mathbb{R}^{d} \times \mathbb{R}^{l + s}$, and continuously differentiable in $(x, \lambda)$ on their
effective domains. One can also check that the functions $\Phi(y, \lambda, c)$ corresponding to these augmented
Lagrangians satisfy assumptions $(A2)$, $(A4)_s$, $(A6)_s$, $(A11)$ with $\Phi_0(\lambda) \equiv \zeta(\lambda)$, and
$(A12)$ in the case  $\Lambda = Y^* = \mathbb{R}^{l + s}$. Furthermore, the corresponding functions 
$\Phi(G(x), \lambda, c)$ admit the second order expansion in $(x, \lambda)$ at every point $(x_*, \lambda_*)$ such that
$x_*$ is feasible, and $\lambda_*$ satisfies s.c. condition. Finally, $\Phi(G(x_*), \lambda_*, c) = 0$, if $x_*$ is
feasible, and $\eta(x_*, \lambda_*) = 0$. 

Thus, one can apply the localization principle (Theorem~\ref{Thrm_EAL_LocalizationPrinciple}) and
Theorem~\ref{Th_LocalExactnessOfAL} in order to prove the global exactness of penalized augmented Lagrangian functions
\eqref{EAL_Cubic} and \eqref{EAL_PenalizedExpType}. Augmented Lagrangian \eqref{EAL_HeWuMeng}, as one can verify, is not
bounded from below, which implies that it is not globally exact.

Arguing in the same way as in the proof of Theorem~\ref{Thrm_EAL_HPR} one can verify that the following result holds
true.

\begin{theorem}
Let $f$ and $g_i$, $i \in I$, be twice continuously differentiable, LICQ hold true at every globally optimal
solution of problem \eqref{MathProg_EAL}, and for any $x_* \in \Omega_*$ a unique KKT-pair $(x_*, \lambda_*)$ satisfy
the second order sufficient optimality and the strict complementarity conditions. Then penalized augmented Lagrangian
functions \eqref{EAL_Cubic} and \eqref{EAL_PenalizedExpType} are globally exact (with respect to the function 
$\eta_2(x, \lambda)$) if and only if there exists $c_0 > 0$ such that the corresponding set $S_e(c_0)$ is
either bounded or empty. In particular, if there exists $\gamma > 0$ such that the set $\Omega(\gamma, \alpha)$ is
bounded, then penalized augmented Lagrangian functions \eqref{EAL_Cubic} and \eqref{EAL_PenalizedExpType} (provided
$\phi$ is bounded from below) are globally exact in the sense that for any sufficiently large $c > 0$ a pair 
$(x_*, \lambda_*)$ is a point of global minimum of these functions in $(x, \lambda)$ on 
$\mathbb{R}^d \times \mathbb{R}^l$ iff $x_* \in \Omega_*$ and $(x_*, \zeta(\lambda_*))$ is a KKT-pair of problem
\eqref{MathProg_EAL}.
\end{theorem}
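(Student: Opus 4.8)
The plan is to derive this theorem from the localization principle for penalized augmented Lagrangians (Theorem~\ref{Thrm_EAL_LocalizationPrinciple}) together with the local exactness result (Theorem~\ref{Th_LocalExactnessOfAL}), exactly as in the proof of Theorem~\ref{Thrm_EAL_HPR}; the only genuinely new work is the derivation of suitable lower estimates for the two Lagrangians \eqref{EAL_Cubic} and \eqref{EAL_PenalizedExpType}. First I would check the hypotheses of the localization principle. As recorded in the text preceding the theorem, for both functions the associated $\Phi(y,\lambda,c)$ satisfies $(A2)$, $(A4)_s$, $(A6)_s$, $(A11)$ with $\Phi_0(\lambda)\equiv\zeta(\lambda)$, and $(A12)$ on $\Lambda=Y^*=\mathbb{R}^{l+s}$; moreover $\mathscr{L}_e(\cdot,\cdot,c)$ is l.s.c.\ jointly in $(x,\lambda)$, $A=\mathbb{R}^d$ is closed, $G$ is continuous, and $\Phi(G(x_*),\lambda_*,c)=0$ whenever $x_*$ is feasible and $\eta_2(x_*,\lambda_*)=0$, so that \eqref{AugmFuncVanishAtKKTpoints_LocPrinciple} holds.

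It then remains to establish local exactness at every $(x_*,\lambda_*)$ with $x_*\in\Omega_*$ and $\eta_2(x_*,\lambda_*)=0$, that is, with $(x_*,\zeta(\lambda_*))$ a KKT-pair. Here I would invoke Theorem~\ref{Th_LocalExactnessOfAL} with $\mu_*=\zeta(\lambda_*)$: under LICQ and strict complementarity the function $\eta_2$ is twice differentiable at $(x_*,\lambda_*)$, vanishes there, and has $D^2_{\lambda\lambda}\eta_2(x_*,\lambda_*)$ positive definite, while by Remark~\ref{Rmrk_2OrderExpansInXandLambda} the functions $\Phi(G(x),\lambda,c)$ admit the second order expansion in $(x,\lambda)$ at such points precisely because strict complementarity holds; combined with the second order sufficient optimality condition this yields local exactness. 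Applying Theorem~\ref{Thrm_EAL_LocalizationPrinciple} then gives the ``if and only if'' characterization in terms of $S_e(c_0)$.

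For the ``in particular'' assertion I would follow the contradiction argument of the second half of the proof of Theorem~\ref{Thrm_EAL_HPR}, the first step being a lower estimate in the role of \eqref{EAL_HPR_LowerEstimate}. For \eqref{EAL_PenalizedExpType}, using $\xi\ge 0$, $\phi\ge -m$ and $p(x,\lambda)\sum_i\lambda_i^2\le a(x)\le\alpha$ gives $\mathscr{L}_e(x,\lambda,c)\ge f(x)-m\alpha/c+\eta_2(x,\lambda)$, which is entirely analogous to the HPR case and is exactly where the hypothesis that $\phi$ is bounded below enters. For \eqref{EAL_Cubic}, minimizing each cubic summand over $g_i$ (the minimum being attained when $cg_i+p\lambda_i\le 0$) yields the weaker bound $\mathscr{L}_e(x,\lambda,c)\ge f(x)-\frac{p(x,\zeta(\lambda))}{3c}\sum_i|\lambda_i|^3+\eta_2(x,\lambda)$. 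With these in hand I would argue by reductio ad absurdum: if $S_e(c)$ were unbounded for every $c$, choose $(x_n,\lambda_n)\in S_e(c_n)$ with $\|\lambda_n\|\ge n$ along an increasing unbounded $\{c_n\}$, observe that the $x_n$ remain in the bounded set $\Omega(\gamma,\alpha)$ so that, after passing to a subsequence, $x_n\to x_*$, show as in Theorem~\ref{Thrm_EAL_HPR} that $x_*\in\Omega_*$, and finally use LICQ at $x_*$ together with the quadratic-in-$\lambda$ growth of $\eta_2$ (the property $\inf_{x\in U}\eta_2(x,\lambda)\to+\infty$ as $\|\lambda\|\to+\infty$ noted before the theorem) to conclude $\mathscr{L}_e(x_n,\lambda_n,c_n)\to+\infty$, contradicting $(x_n,\lambda_n)\in S_e(c_n)$.

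The main obstacle I anticipate is the cubic case. Unlike the penalized exponential-type (and the HPR) function, where the penalty deficit is bounded by the uniform constant $m\alpha/c$ (respectively $\alpha/c$), the cubic lower estimate carries a term $\frac{p(x,\zeta(\lambda))}{3c}\sum_i|\lambda_i|^3$ that grows linearly in $\|\lambda\|$ as $\|\lambda\|\to+\infty$, since $p(x,\zeta(\lambda))\sim\alpha/\|\lambda\|^2$. Consequently the reduction that forces $x\in\Omega(\gamma,\alpha)$ and then the feasibility and optimality of the limit $x_*$ does not follow verbatim from the HPR argument, and must be carried out by playing this linear term off against the quadratic growth of $\eta_2$, which is exactly what guarantees that $\mathscr{L}_e$ escapes to $+\infty$ in the $\lambda$-direction. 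Verifying that the quadratic penalty dominates uniformly on a neighbourhood of $x_*$, and that the remaining nonnegative cubic contributions do not spoil the feasibility argument, is the one place where care beyond a routine transcription of the proof of Theorem~\ref{Thrm_EAL_HPR} is needed.
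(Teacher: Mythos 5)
Your overall route --- the localization principle (Theorem~\ref{Thrm_EAL_LocalizationPrinciple}) combined with local exactness via Theorem~\ref{Th_LocalExactnessOfAL} and Remark~\ref{Rmrk_2OrderExpansInXandLambda}, followed by a lower estimate and the contradiction argument of Theorem~\ref{Thrm_EAL_HPR} --- is precisely the paper's proof, which is stated only as ``arguing in the same way as in the proof of Theorem~\ref{Thrm_EAL_HPR}''. Your treatment of \eqref{EAL_PenalizedExpType} is correct, including the observation that $p(x,\lambda)\sum_i\lambda_i^2\le a(x)\le\alpha$ is exactly where boundedness below of $\phi$ enters.

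The ``main obstacle'' you anticipate for \eqref{EAL_Cubic}, however, rests on a computational slip: in \eqref{EAL_Cubic} the barrier factor is $p(x,\zeta(\lambda))$, i.e.\ $p$ evaluated at $\zeta(\lambda)=(\lambda_1^2,\ldots,\lambda_l^2)$, so that
\[
  p(x,\zeta(\lambda)) \;=\; \frac{a(x)}{1+\sum_{i=1}^l\lambda_i^4},
\]
not $a(x)/(1+\sum_{i=1}^l\lambda_i^2)$, and hence $p(x,\zeta(\lambda))\sim\alpha/\|\lambda\|^4$, not $\alpha/\|\lambda\|^2$. Consequently the deficit in your lower bound is uniformly bounded:
\[
  \frac{p(x,\zeta(\lambda))}{3c}\sum_{i=1}^l|\lambda_i|^3
  \;\le\; \frac{\alpha}{3c}\,\frac{\sum_{i=1}^l|\lambda_i|^3}{1+\sum_{i=1}^l\lambda_i^4}
  \;\le\; \frac{\alpha l}{3c},
\]
using $|\lambda_i|^3\le 1+\lambda_i^4$; in fact it tends to zero as $\|\lambda\|\to+\infty$. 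So the cubic case yields a lower estimate of exactly the same form as \eqref{EAL_HPR_LowerEstimate} with $\alpha$ replaced by $\alpha l$, and the argument of Theorem~\ref{Thrm_EAL_HPR} transcribes essentially verbatim (take $c>\alpha l/\gamma$). This is fortunate, because the repair you sketch --- playing a linearly growing deficit off against quadratic growth of $\eta_2$ in $\lambda$ --- would not be available at the stage where you would need it: the growth property $\inf_{x\in U}\eta_2(x,\lambda)\to+\infty$ as $\|\lambda\|\to+\infty$ holds only on a neighbourhood $U$ of a point where LICQ is known, whereas the step confining $x$ to $\Omega(\gamma,\alpha)$ must hold for arbitrary $(x,\lambda)\in S_e(c)$, and $\eta_2(x,\cdot)$ can vanish identically in $\lambda$ at degenerate points (e.g.\ if $g_i(x)=0$ and $\nabla g_i(x)=0$ for all $i$). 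Once the deficit is bounded correctly, no such repair is needed and your proof is complete.
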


As it was noted above, augmented Lagrangian \eqref{EAL_HeWuMeng} is not globally exact. However, with the use of
Theorems~\ref{Th_ExactnessOnBoundedSets} and \ref{Th_LocalExactnessOfAL} one can provide simple sufficient contidions
for the the exactness of this augmented Lagrangian on bounded sets. Namely, the following result holds true.

\begin{theorem}
Let $f$ and $g_i$, $i \in I$, be twice continuously differentiable, LICQ hold true at every globally optimal
solution of problem \eqref{MathProg_EAL}, and for any $x_* \in \Omega_*$ a unique KKT-pair $(x_*, \lambda_*)$ satisfy
the second order sufficient optimality and the strict complementarity conditions. Then penalized augmented Lagrangian
function \eqref{EAL_HeWuMeng} is exact on any bounded subset of $\mathbb{R}^d \times \mathbb{R}^l$ with respect to 
the function $\eta_2$.
\end{theorem}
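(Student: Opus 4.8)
The plan is to obtain the result purely as a consequence of the two localization-type theorems already established, so that essentially no fresh computation is required beyond matching hypotheses. The key structural observation is that, in contrast with the globally exact augmented Lagrangian \eqref{EAL_HPR_AugmLagr} treated in Theorem~\ref{Thrm_EAL_HPR}, the exactness-on-bounded-sets statement carried by Theorem~\ref{Th_ExactnessOnBoundedSets} requires \emph{no} level-boundedness condition on the set $S_e(c_0)$. This is precisely why the conclusion survives for \eqref{EAL_HeWuMeng}: that function fails to be bounded from below and hence cannot be globally exact, yet nothing in the bounded-sets argument depends on boundedness from below.

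First I would check the hypotheses of Theorem~\ref{Th_ExactnessOnBoundedSets}. Here $Y = \mathbb{R}^l$ is finite dimensional, $A = \mathbb{R}^d$ is closed, and $G = (g_1, \dots, g_l)$ is continuous; the function $\mathscr{L}_e$ associated with \eqref{EAL_HeWuMeng} is lower semicontinuous jointly in $(x, \lambda)$, and its convolution $\Phi$ satisfies $(A4)_s$, $(A6)_s$ and $(A12)$ with $\Phi_0(\lambda) \equiv \zeta(\lambda)$, all as recorded in the discussion preceding the theorem. Moreover $\Phi(G(x_*), \lambda_*, c) = 0$ whenever $x_*$ is feasible and $\eta_2(x_*, \lambda_*) = 0$, which is exactly condition \eqref{AugmFuncVanishAtKKTpoints_LP}. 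Theorem~\ref{Th_ExactnessOnBoundedSets} then reduces the claim to showing that $\mathscr{L}_e(x, \lambda, c)$ is locally exact at every pair $(x_*, \lambda_*) \in \Omega_* \times \Lambda$ with $\eta_2(x_*, \lambda_*) = 0$.

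I would then fix such a pair and apply Theorem~\ref{Th_LocalExactnessOfAL}. Because LICQ holds at $x_* \in \Omega_*$, the vanishing $\eta_2(x_*, \lambda_*) = 0$ is equivalent to $(x_*, \zeta(\lambda_*))$ being a KKT-pair, so $\mu_* := \Phi_0(\lambda_*) = \zeta(\lambda_*)$ is the unique Lagrange multiplier at $x_*$ (I write $\mu_*$ for this genuine multiplier to keep it distinct from the augmented-Lagrangian dual variable $\lambda_*$). By hypothesis the KKT-pair $(x_*, \mu_*)$ satisfies the second order sufficient optimality condition together with strict complementarity, and $\lambda_* \in \Phi_0^{-1}(\mu_*)$. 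Assumptions $(A4)$ and $(A11)$ hold, $f$ and $G$ are twice continuously differentiable, $\eta_2$ is smooth, and $\eta_2(x_*, \lambda_*) = 0$; strict complementarity supplies the two remaining ingredients, namely that $\Phi(G(x), \lambda, c)$ admits the second order expansion in $(x, \lambda)$ at $(x_*, \lambda_*)$ (Remark~\ref{Rmrk_2OrderExpansInXandLambda}) and that $D^2_{\lambda\lambda} \eta_2(x_*, \lambda_*)$ is positive definite. Theorem~\ref{Th_LocalExactnessOfAL} therefore yields local exactness at $(x_*, \lambda_*)$.

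The one point demanding care is that Theorem~\ref{Th_ExactnessOnBoundedSets} requires local exactness at \emph{every} admissible pair, and for a fixed $x_*$ there are several: writing $\mu_*$ for the unique multiplier, the admissible $\lambda_*$ are the componentwise sign choices $\lambda_{*,i} = \pm\sqrt{\mu_{*,i}}$. Strict complementarity is what makes this harmless, since it forces $\mu_{*,i} > 0$ on the active index set (so that each sign choice gives $\lambda_{*,i} \neq 0$, which the second order expansion needs) and $\mu_{*,i} = 0$ off it (forcing $\lambda_{*,i} = 0$ uniquely). Hence the verification of the previous paragraph applies verbatim to each admissible $\lambda_*$, and invoking Theorem~\ref{Th_ExactnessOnBoundedSets} completes the proof. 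The main obstacle, such as it is, is bookkeeping: correctly tracking the correspondence $\mu_* = \zeta(\lambda_*)$ between the true multiplier and the dual variable, and confirming that strict complementarity simultaneously underwrites both hypotheses of the local-exactness theorem.
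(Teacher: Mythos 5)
Your proposal is correct and follows essentially the same route as the paper: the result is obtained by verifying the hypotheses of Theorem~\ref{Th_ExactnessOnBoundedSets} for the convolution $\Phi$ underlying \eqref{EAL_HeWuMeng} (l.s.c., $(A4)_s$, $(A6)_s$, $(A12)$, and condition \eqref{AugmFuncVanishAtKKTpoints_LP}) and then invoking Theorem~\ref{Th_LocalExactnessOfAL} --- with LICQ, the second order sufficient optimality condition, and strict complementarity supplying the second order expansion in $(x,\lambda)$ and the positive definiteness of $D^2_{\lambda\lambda}\eta_2(x_*,\lambda_*)$ --- to get local exactness at every admissible pair. Your additional bookkeeping about the several admissible dual variables $\lambda_{*,i} = \pm\sqrt{\mu_{*,i}}$ corresponding to the unique multiplier $\mu_* = \zeta(\lambda_*)$ is a sound clarification of a point the paper leaves implicit.
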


\begin{remark}
{(i)~To the best of author's knowledge, all exact augmented Lagrangian function studied in the literature were
constructed from the Hestenes-Powell-Rockafellar augmented Lagrangian function. Thus, penalized augmented Lagran\-gian
functions \eqref{EAL_Cubic}, \eqref{EAL_PenalizedExpType} and \eqref{EAL_HeWuMeng} as well as the theorems on exactness
of these functions are completely new. Furthermore, even augmented Lagrangian \eqref{EAL_HPR_AugmLagr} has not been
considered for the case of general equality and inequality constraints before.
}

\noindent{(ii)~Note that in order to guarantee that there is a unique point of global minimum of augmented Lagrangians
\eqref{EAL_Cubic} and \eqref{EAL_PenalizedExpType} in $(x, \lambda)$ corresponding to a globally optimal solution of
problem~\eqref{MathProg_EAL} one can add the penalty term 
$\omega(\lambda) = \sum_{i = 1}^l \max\{ 0, - \lambda_i \}^{\varkappa}$ to these functions. Note also that one can
incorporate equality constraints into augmented Lagrangians \eqref{EAL_Cubic}, \eqref{EAL_PenalizedExpType} and
\eqref{EAL_HeWuMeng} in the same way as in \eqref{EAL_HPR_AugmLagr}.
}

\noindent{(iii)~It should be mentioned that globally exact augmented Lagrangian functions cannot be constructed from
the exponential penalty function (Example~\ref{Example_ExpPenFunc}), the modified barrier function
(Example~\ref{Example_ModBarrierFunc}) and the p-th power augmented Lagrangian (Example~\ref{Example_pthPowerAugmLagr})
without some nontrivial transformations of these functions due to the facts that for these augmented Lagrangians one has
$\mathscr{L}(x, 0, c) \equiv f(x)$, and it is very difficult (if at all possible) to construct a continuously
differentiable function $\eta(x, \lambda)$ that satisfies the main assumption of this article, and such that 
$f(x) + \eta(x, 0) \ge f_*$ for all $x \in \mathbb{R}^d$.
}
\end{remark}

\subsection{Nonlinear Second Order Cone Programming}

Let the problem $(\mathcal{P})$ be the nonlinear second order cone programming problem of the form
\begin{equation} \label{SecondOrderConeProg_EAL}
  \min f(x) \quad \text{subject to} 
  \quad g_i(x) \in Q_{l_i + 1}, \quad i \in I, \quad h(x) = 0,
\end{equation}
where the functions $f \colon X \to \mathbb{R}$, $g_i \colon X \to \mathbb{R}^{l_i + 1}$, $I = \{ 1, \ldots, r \}$, and 
$h \colon X \to \mathbb{R}^s$ are twice continuously differentiable, and $Q_{l_i + 1}$ is the second order (Lorentz)
cone of dimension $l_i + 1$. 

Let $x_*$ be a locally optimal solution of problem \eqref{SecondOrderConeProg_EAL}. Recall that the point $x_*$ is
called \textit{nondegenerate} (see \cite{BonnansShapiro}, Def.~4.70), if
$$
  \begin{bmatrix}
    J g_1(x_*) \\
    \vdots \\
    J g_r(x_*) \\
    J h(x_*)
  \end{bmatrix} \mathbb{R}^d +
  \begin{bmatrix}
    \lineal T_{Q_{l_1 + 1}} \big( g_1(x_*) \big) \\
    \vdots \\
    \lineal T_{Q_{l_r + 1}} \big( g_r(x_*) \big) \\
    \{ 0 \}
  \end{bmatrix} =
  \begin{bmatrix}
    \mathbb{R}^{l_1 + 1} \\
    \vdots \\
    \mathbb{R}^{l_r + 1} \\
    \mathbb{R}^s
  \end{bmatrix}.
$$
where $J g_i(x)$ is the Jacobian of $g_i(x)$, and ``lin'' stands for the lineality subspace of a convex cone, i.e. the
largest linear space contained in this cone. Let us note that the nondegeneracy condition can be expressed as a
``linear independence-type'' condition (see~\cite{FukudaSilva}, Lemma~3.1, and \cite{BonnansRamirez}, Proposition~19).
Furthermore, by \cite{BonnansShapiro}, Proposition~4.75, the nondegeneracy condition guarantees that there exists a
\textit{unique} Lagrange multiplier at $x_*$.

Being inspired by the ideas of \cite{FukudaSilva}, for any $x \in X$ and 
$\lambda = (\lambda_1, \ldots, \lambda_r, \mu) \in Y^* := 
\mathbb{R}^{l_1 + 1} \times \ldots \times \mathbb{R}^{l_r + 1} \times \mathbb{R}^s$ define
$$
  \eta(x, \lambda) = \| D_x L(x, \lambda) \|^2 + 
  \sum_{i = 1}^r \Big( \langle \lambda_i, g_i(x) \rangle^2 + 
  \| (\lambda_i)_0 \overline{g}_i(x) + (g_i)_0(x) \overline{\lambda}_i \|^2 \Big),
$$
where $\lambda_i = ((\lambda_i)_0, \overline{\lambda}_i) \in \mathbb{R} \times \mathbb{R}^{l_i}$, and the same notation
is used for $g_i(x)$. Suppose that $x_*$ is a nondegenerate locally optimal solution of problem
\eqref{SecondOrderConeProg_EAL}. Then arguing in the same way as in the proof of Proposition~3.3 in \cite{FukudaSilva}
one can verify that $\eta(x_*, \lambda_*) = 0$ for some $\lambda_*$ iff $(x_*, \lambda_*)$ is a KKT-pair, and such
$\lambda_*$ is unique. Furthermore, the matrix $D^2_{\lambda \lambda} \eta(x_*, \lambda_*)$ is positive definite.
Therefore one can utilize Theorem~\ref{Th_LocalExactnessOfAL} in order to prove the local exactness of the penalized
augmented Lagrangian function $\mathscr{L}_e(x, \lambda, c)$ for problem \eqref{SecondOrderConeProg_EAL}. 

As in the case of the mathematical programming problem, one must add barrier terms into the definition of 
$\mathscr{L}_e(x, \lambda, c)$ in order to ensure that it is level-bounded and globally exact. Choose $\alpha > 0$ and
$\varkappa > 2$, and for any $\lambda = (\lambda_1, \ldots, \lambda_r, \mu) \in Y^*$ define
\begin{equation} \label{BarrierTerms_SOC}
  p(x, \lambda) = \frac{a(x)}{1 + \sum_{i = 1}^r \| \lambda_i \|^2}, \quad
  q(x, \lambda) = \frac{b(x)}{1 + \| \mu \|^2},
\end{equation}
where
$$
  a(x) = \alpha - \sum_{i = 1}^r \dist^{\varkappa}\big( g_i(x), Q_{l_i + 1} \big), \quad
  b(x) = \alpha - \| h(x) \|^2.
$$
Denote $\Omega_{\alpha} = \{ x \in \mathbb{R}^d \mid a(x) > 0, \: b(x) > 0 \}$. Then one can introduce a penalized
augmented Lagrangian function for problem \eqref{SecondOrderConeProg_EAL} as follows (see
Example~\ref{Example_RockWetsAL_SOC}, and Remark~\ref{Rmrk_2OrderExpansInXandLambda}). For any
$\lambda = (\lambda_1, \ldots, \lambda_r, \mu) \in Y^*$ define
\begin{multline} \label{EAL_RockWetsAL_SOC}
  \mathscr{L}_e(x, \lambda, c) = f(x) \\
  + \frac{c}{2 p(x, \lambda)} \sum_{i = 1}^r 
  \Big[ \dist^2\Big( g_i(x) + \frac{p(x, \lambda)}{c} \lambda_i, Q_{l_i + 1} \Big) - 
  \frac{p(x, \lambda)^2}{c^2} \| \lambda_i \|^2 \Big] \\
  + \langle \mu, h(x) \rangle + \frac{c}{2 q(x, \lambda)} \| h(x) \|^2 + \eta(x, \lambda).
\end{multline}
if $x \in \Omega_{\alpha}$, and $\mathscr{L}_e(x, \lambda, c) = + \infty$, otherwise.

Observe that the function $\mathscr{L}_e(x, \lambda, c)$ is l.s.c. jointly in $(x, \lambda)$ on 
$\mathbb{R}^d \times Y^*$, and continuously differentiable in $(x, \lambda)$ on its effective domain $\Omega_{\alpha}
\times Y^*$ by \cite{BonnansShapiro}, Theorem~4.13. One can also check that the function $\Phi(y, \lambda, c)$
corresponding to penalized augmented Lagrangian function \eqref{EAL_RockWetsAL_SOC} satisfies assumptions $(A2)$,
$(A4)_s$, $(A6)_s$, $(A11)$ with $\Phi_0(\lambda) \equiv \lambda$, and $(A12)$. Furthermore, 
$\Phi(G(x_*), \lambda_*, c) = 0$, if $(x_*, \lambda_*)$ is a KKT-pair, and arguing in the same way as in
(\cite{ShapiroSun}, pp.~487--488) one can check that $\Phi(G(x), \lambda, c)$ admits the second order expansion in 
$(x, \lambda)$ at every KKT-pair of problem~\eqref{SecondOrderConeProg_EAL} 
(see also \cite{Dolgopolik_UnifExactnessII}). Therefore one can obtain the following result.

\begin{theorem} \label{Thrm_EAL_SeconOrderCone}
Let the functions $f$, $g_i$, $i \in I$, and $h$ be twice continuously differentiable. Suppose also that every globally
optimal solution of problem \eqref{SecondOrderConeProg_EAL} is nondegenerate, and for any $x_* \in \Omega_*$ a unique
KKT-pair $(x_*, \lambda_*)$ satisfies the second order sufficient optimality condition. Then penalized augmented
Lagrangian function \eqref{EAL_RockWetsAL_SOC} is globally exact if and only if there exists $c_0 > 0$ such that the set
$S_e(c_0) := \{ (x, \lambda) \in \mathbb{R}^d \times Y^* \mid| \mathscr{L}_e(x, \lambda, c_0) < f_* \}$
is either bounded or empty. In particular, if the set
$\Omega(\gamma, \alpha) = \{ x \in \mathbb{R}^d \mid f(x) < f_* + \gamma, \: a(x) > 0, \: b(x) > 0 \}$
is bounded for some $\gamma > 0$, then penalized augmented Lagrangian function \eqref{EAL_RockWetsAL_SOC} is globally
exact.
\end{theorem}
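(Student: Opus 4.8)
The plan is to follow the two-step scheme used in the proof of Theorem~\ref{Thrm_EAL_HPR}, transcribing each step into the second order cone setting. First I would establish local exactness of \eqref{EAL_RockWetsAL_SOC} at every pair $(x_*, \lambda_*) \in \Omega_* \times Y^*$ with $\eta(x_*, \lambda_*) = 0$ by invoking Theorem~\ref{Th_LocalExactnessOfAL}. All of its hypotheses were verified in the discussion preceding the statement: any such pair is exactly the (unique) KKT-pair at the nondegenerate solution $x_*$, it satisfies the second order sufficient optimality condition by assumption, the function $\Phi(G(x), \lambda, c)$ admits the second order expansion in $(x, \lambda)$ at every KKT-pair, and $\eta$ is twice Fr\'echet differentiable at $(x_*, \lambda_*)$ with $D^2_{\lambda \lambda} \eta(x_*, \lambda_*)$ positive definite and $\eta(x_*, \lambda_*) = 0$ (here $\Phi_0(\lambda) \equiv \lambda$, so $\mu_* = \lambda_*$). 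Once local exactness is in hand, I would apply the localization principle (Theorem~\ref{Thrm_EAL_LocalizationPrinciple}); its remaining hypotheses — $Y$ finite dimensional, $A = \mathbb{R}^d$ closed, $G$ continuous, $\mathscr{L}_e(\cdot, \cdot, c)$ l.s.c., assumptions $(A2)$, $(A4)_s$, $(A6)_s$, $(A12)$, and $\Phi(G(x_*), \lambda_*, c) = 0$ at KKT-pairs — were all recorded for this $\Phi$. Since nondegeneracy supplies, for every $x_* \in \Omega_*$, a Lagrange multiplier and hence a pair with $\eta = 0$, conditions (1) and (2) of the localization principle hold automatically, so global exactness becomes equivalent to the existence of $c_0 > 0$ with $S_e(c_0)$ bounded or empty. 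This yields the stated equivalence.

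For the sufficiency of the boundedness of $\Omega(\gamma, \alpha)$, I would first derive the lower estimate
\[
  \mathscr{L}_e(x, \lambda, c) \ge f(x) - \frac{\alpha}{c} + \eta(x, \lambda)
  \qquad \forall (x, \lambda) \in \Omega_{\alpha} \times Y^*, \ c > 0,
\]
the exact analogue of \eqref{EAL_HPR_LowerEstimate}. This follows by discarding the nonnegative squared-distance terms, completing the square in the equality block $\langle \mu, h(x) \rangle + \tfrac{c}{2q} \| h(x) \|^2 \ge - \tfrac{q}{2c} \| \mu \|^2$, and using the elementary bounds $p(x, \lambda) \sum_i \| \lambda_i \|^2 \le a(x) \le \alpha$ and $q(x, \lambda) \| \mu \|^2 \le b(x) \le \alpha$, which are immediate from \eqref{BarrierTerms_SOC}. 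Consequently, for any $c > \alpha / \gamma$ every $(x, \lambda) \in S_e(c)$ satisfies $f(x) < f_* + \gamma$ together with $a(x) > 0$ and $b(x) > 0$, i.e. $x \in \Omega(\gamma, \alpha)$; hence the $x$-components of $S_e(c)$ lie in the bounded set $\Omega(\gamma, \alpha)$.

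It then remains to rule out unboundedness in the multiplier variable. Arguing by contradiction, I would take an increasing unbounded sequence $\{ c_n \}$ with $c_1 > \alpha / \gamma$ and points $(x_n, \lambda_n) \in S_e(c_n)$ with $\| \lambda_n \| \ge n$; by the previous step $\{ x_n \} \subset \Omega(\gamma, \alpha)$, so one may assume $x_n \to x_*$. The crucial sub-step is to show that $x_*$ is feasible. Because $\tfrac{p}{c_n} \| \lambda_i \| \le \alpha / c_n \to 0$, the shift in each distance term is negligible, so by the $1$-Lipschitz property of $\dist(\cdot, Q_{l_i + 1})$ the blow-up of $\tfrac{c_n}{2p} \dist^2( g_i(x_n) + \tfrac{p}{c_n} \lambda_i, Q_{l_i + 1} )$ would force $\dist(g_i(x_n), Q_{l_i + 1}) \to 0$; likewise the coercive equality term forces $h(x_n) \to 0$. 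Since all remaining terms are bounded below by $-\alpha / (2 c_n)$ and $\eta \ge 0$, any failure of these limits would drive $\mathscr{L}_e(x_n, \lambda_n, c_n) \to + \infty$, contradicting $(x_n, \lambda_n) \in S_e(c_n)$. Feasibility of $x_*$ together with the lower estimate and $f(x_n) \to f(x_*) \le f_*$ gives $x_* \in \Omega_*$, whence $x_*$ is nondegenerate. Finally, since $\eta(x, \cdot)$ is a quadratic function of $\lambda$ whose Hessian $D^2_{\lambda \lambda} \eta(x_*, \cdot)$ is positive definite, continuity of its coefficients in $x$ yields a neighbourhood $U$ of $x_*$ and constants $\delta_1 > 0$, $\delta_2, \delta_3 \in \mathbb{R}$ with $\eta(x, \lambda) \ge \delta_1 \| \lambda \|^2 + \delta_2 \| \lambda \| + \delta_3$ for all $x \in U$; inserting this into the lower estimate and using $\| \lambda_n \| \to + \infty$ forces $\mathscr{L}_e(x_n, \lambda_n, c_n) \to + \infty$, the final contradiction. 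Thus $S_e(c)$ is bounded for some $c$, and global exactness follows from the equivalence already proved.

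The hard part will be the feasibility sub-step. In the polyhedral situation of Theorem~\ref{Thrm_EAL_HPR} one controls the separate quantities $\max\{ g_i(x), 0 \}$ and $g_j(x)$ directly, whereas here I must work through the Moreau-type squared-distance expression and transfer the blow-up of the \emph{shifted} distance $\dist(g_i(x_n) + \tfrac{p}{c_n} \lambda_i, Q_{l_i + 1})$ back to $\dist(g_i(x_n), Q_{l_i + 1})$; this is precisely where the bound $\tfrac{p}{c_n} \| \lambda_i \| \to 0$ and the nonexpansiveness of the distance to $Q_{l_i + 1}$ are indispensable. Everything else is a routine adaptation of the mathematical programming argument, with Proposition~\ref{Prp_ExactOnBoundedSets_EquivDef} and the nondegeneracy-based characterization of the set $\{ \eta = 0 \}$ playing the role they played in the scalar case.
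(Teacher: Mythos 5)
Your proposal is correct and follows essentially the same route as the paper's proof: local exactness via Theorem~\ref{Th_LocalExactnessOfAL}, the localization principle (Theorem~\ref{Thrm_EAL_LocalizationPrinciple}) for the stated equivalence, the lower estimate $\mathscr{L}_e(x,\lambda,c) \ge f(x) - \alpha/c + \eta(x,\lambda)$, and the contradiction argument combining feasibility of the limit point $x_*$ with the quadratic growth of $\eta(x,\cdot)$ near a nondegenerate globally optimal solution. The only cosmetic difference is in the feasibility sub-step, where the paper tracks boundedness of the scaled quantities $u_n^i$ and $w_n$ while you argue directly by blow-up using the nonexpansiveness of $\dist(\cdot, Q_{l_i+1})$ and the bound $p(x,\lambda)\|\lambda_i\|/c_n \le \alpha/c_n \to 0$; the two arguments are interchangeable.
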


\begin{proof}
Under the assumptions of the theorem augmented Lagrangian \eqref{EAL_RockWetsAL_SOC} is locally exact at every KKT-pair
corresponding to a globally optimal solution of problem \eqref{SecondOrderConeProg_EAL} by
Theorem~\ref{Th_LocalExactnessOfAL}. Then applying the localization principle
(Theorem~\ref{Thrm_EAL_LocalizationPrinciple}) one obtains that this augmented Lagrangians is globally exact iff the set
$S_e(c_0)$ is either bounded or empty for some $c_0 > 0$.

Suppose, now, that the set $\Omega(\gamma, \alpha)$ is bounded for some $\gamma > 0$. Let us check that in this case
the set $S_e(c)$ is bounded for sufficiently large $c > 0$. From \eqref{BarrierTerms_SOC} and
\eqref{EAL_RockWetsAL_SOC} it follows that for any $(x, \lambda) \in \Omega_{\alpha} \times Y^*$ and $c > 0$ one has
\begin{equation} \label{EAL_RW_SOC_LowerEstimate}
  \mathscr{L}_e(x, \lambda, c) \ge f(x) - \frac{\alpha}{c} + \eta(x, \lambda).
\end{equation}
Hence taking into account the fact that the function $\eta(x, \lambda)$ is nonnegative one obtains that for 
any $c > \alpha / \gamma$ and $(x, \lambda) \in S_e(c)$ one has $x \in \Omega(\gamma, \alpha)$. Consequently, there
exists $\tau > - \infty$ such that $\mathscr{L}_e(x, \lambda, c) \ge \tau$ for any $(x, \lambda) \in S_e(c)$ and 
$c > \alpha / \gamma$ due to the boundedness of the set $\Omega(\gamma, \alpha)$.

Arguing by reductio ad absurdum, suppose that the set $S_e(c)$ is unbounded for any $c > 0$. Choose an increasing
unbounded sequence $\{ c_n \} \subset \mathbb{R}_+$ such that $c_1 > \alpha / \gamma$. By our assumption for any
$n \in \mathbb{N}$ there exists $(x_n, \lambda_n) \in S_e(c_n)$ such that $\| \lambda_n \| \ge n$. Note that by the
choice of $c_1$ one has $x_n \in \Omega(\gamma, \alpha)$ for all $n \in \mathbb{N}$, which implies that without loss of
generality one can suppose that the sequence $\{ x_n \}$ converges to a point $x_*$. 

Let us check that $x_*$ is a globally optimal solution of problem \eqref{SecondOrderConeProg_EAL}. Indeed, denote
\begin{gather*}
  u_n^i = \frac{c_n}{2} 
  \Big[ \dist^2\Big( g_i(x_n) + \frac{p(x_n, \lambda_n)}{c_n} (\lambda_n)_i, Q_{l_i + 1} \Big) - 
  \frac{p(x_n, \lambda_n)^2}{c_n^2} \| (\lambda_n)_i \|^2 \Big], \\
  w_n = q(x_n, \lambda_n) \langle \mu_n, h(x_n) \rangle + \frac{c_n}{2} \| h(x_n) \|^2,
\end{gather*}
where $\lambda_n = ( (\lambda_n)_1, \ldots, (\lambda_n)_r, \mu_n )$, and $i \in I$. Note that 
$u_n^i / p(x_n, \lambda_n) \ge - \alpha / 2 c_n$ and $w_n / q(x_n, \lambda_n) \ge - \alpha / 2 c_n$ for all $i \in I$
and $n \in \mathbb{N}$. In other words, the sequences $\{ u_n^i / p(x_n, \lambda_n) \}$, $i \in I$, and 
$\{ w_n / q(x_n, \lambda_n) \}$ are bounded below.

Observe that from the fact that $(x_n, \lambda_n) \in S_e(c_n)$ and $c_n \ge c_1 > \alpha / \gamma$ due to our choice
it follows that $\tau \le \mathscr{L}_e(x_n, \lambda_n, c_n) < f_*$ for all $n \in \mathbb{N}$. Hence applying
\eqref{EAL_RW_SOC_LowerEstimate} and the fact that $\eta(x, \lambda)$ is nonnegative one obtains that
the sequences $\{ f(x_n) \}$ and $\{ \eta(x_n, \lambda_n) \}$ are bounded. Therefore the sequence
$\{ \sum_{i = 1}^r u_n^i / p(x_n, \lambda_n) + w_n / q(x_n, \lambda_n) \}$ is bounded as well, which implies that the
sequences $\{ u_n^i / p(x_n, \lambda_n) \}$, $i \in I$, and $\{ w_n / q(x_n, \lambda_n) \}$ are bounded due to the fact
that they are bounded from below. Since by definition one has $0 < p(x_n, \lambda_n) \le \alpha$ and 
$0 < q(x_n, \lambda_n) \le \alpha$ for all $n \in \mathbb{N}$, the sequences $\{ u_n^i \}$, $i \in I$, and 
$\{ w_n \}$ are bounded as well. Consequently, applying the fact that $c_n \to + \infty$ as $n \to \infty$ one can
easily check that $\dist( g_i(x_n), Q_{l_i + 1}) \to 0$ and $h(x_n) \to 0$ as $n \to \infty$, which yields that $x_*$ is
a feasible point of problem \eqref{SecondOrderConeProg_EAL}. 

From \eqref{EAL_RW_SOC_LowerEstimate} and the fact that $(x_n, \lambda_n) \in S_e(c_n)$ it follows that 
$f(x_n) - \alpha / c_n < f_*$. Passing to the limit as $n \to \infty$ and taking into account the fact that $x_*$ is
feasible one obtains that $x_*$ is a globally optimal solution of problem \eqref{SecondOrderConeProg_EAL}.

Note that the function $\eta(x_*, \cdot)$ is quadratic. Furthermore, arguing in the same way as in the proof of
Proposition~3.3 in \cite{FukudaSilva} one can check that the Hessian of $\eta(x_*, \cdot)$ at $\lambda_*$ is positive
definite, which implies that there exist a neighbourhood $U$ of $x_*$, $\delta_1 > 0$ and 
$\delta_2, \delta_3 \in \mathbb{R}$ such that 
$\eta(x, \lambda) \ge \delta_1 \| \lambda \|^2 + \delta_2 \| \lambda \| + \delta_3$ for all $x \in U$ and 
$\lambda \in Y^*$. Therefore $\eta(x_n, \lambda_n) \to + \infty$ as $n \to \infty$, since $\{ x_n \}$ converges to $x_*$
and $\| \lambda \| \ge n$. Consequently, with the use of \eqref{EAL_RW_SOC_LowerEstimate} one obtains that
$\mathscr{L}_e(x_n, \lambda_n, c_n) \to + \infty$ as $n \to \infty$, which contradicts the fact that 
$(x_n, \lambda_n) \in S_e(c_n)$. Thus, the set $S_e(c)$ is bounded for some $c > 0$.	 
\end{proof}

\begin{remark}
To the best of author's knowledge, penalized augmented Lagrangian functions for nonlinear second-order cone programming
problems have never been studied before. Thus, augmented Lagrangian function \eqref{EAL_RockWetsAL_SOC} is the first
globally exact augmented Lagrangian function for problem \eqref{SecondOrderConeProg_EAL}.
\end{remark}

\subsection{Nonlinear Semidefinite Programming}

Let the problem $(\mathcal{P})$ be the nonlinear semidefinite programming problem of the form
\begin{equation} \label{SemiDefProg_EAL}
  \min f(x) \quad \text{subject to} 
  \quad G_0(x) \preceq 0, \quad h(x) = 0,
\end{equation}
where the functions $f \colon X \to \mathbb{R}$, $G_0 \colon X \to \mathbb{S}^l$ and $h \colon X \to \mathbb{R}^s$ are
twice continuously differentiable.

Let $x_*$ be a locally optimal solution of problem \eqref{SemiDefProg_EAL}. Recall that the point $x_*$ is called
\textit{nondegenerate} (see \cite{BonnansShapiro}, Def.~4.70), if
$$
  \begin{bmatrix}
    D G_0(x_*) \\
    J h(x_*)
  \end{bmatrix} \mathbb{R}^d +
  \begin{bmatrix}
    \lineal T_{\mathbb{S}^l_-} \big( G_0(x_*) \big) \\
    \{ 0 \}
  \end{bmatrix} =
  \begin{bmatrix}
    \mathbb{S}^l \\
    \mathbb{R}^s
  \end{bmatrix}.
$$
As in the case of second order cone programming problems, the above nondegeneracy condition can be rewritten as a
``linear independence-type'' condition. Namely, let $\rank G_0(x_*) = r$. Then the point $x_*$ is nondegenerate iff the
$d$-dimensional vectors
\begin{equation} \label{Nondegeneracy_SemiDef}
  v_{ij} = 
  \left( e_i^T \frac{\partial G_0(x_*)}{\partial x_1} e_j, \ldots, 
  e_i^T \frac{\partial G_0(x_*)}{\partial x_d} e_j \right)^T,
  \quad	\nabla h_k(x_*)
\end{equation}
are linearly independent, where $1 \le i \le j \le l - r$, $e_1, \ldots e_{l - r}$ is a basis of the null space of the
matrix $G_0(x_*)$, $1 \le k \le s$, and $h(x) = (h_1(x), \ldots, h_s(x))$ (see \cite{BonnansShapiro}, Proposition~5.71).

For any $\lambda = (\lambda_0, \mu) \in Y^* = \mathbb{S}^l \times \mathbb{R}^s$ define
$$
  \eta(x, \lambda) = \big\| D_x L(x, \lambda) \big\|^2 + \trace(\lambda_0^2 G_0(x)^2).
$$
Let us demonstrate that the nondegeneracy condition ensures that the function $\eta(x, \lambda)$ has desired
properties. 

\begin{lemma} \label{Lemma_SemiDef_PenTerm_Nondegen}
Let a locally optimal solution $x_*$ of problem \eqref{SemiDefProg_EAL} be nondegenerate. Then there exists a unique
Lagrange multiplier $\lambda_*$ at $x_*$, and $\eta(x_*, \lambda) = 0$ if and only if $\lambda = \lambda_*$.
Furthermore, the matrix $D^2_{\lambda \lambda} \eta(x_*, \lambda_*)$ is positive definite.
\end{lemma}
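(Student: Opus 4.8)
The plan is to exploit that $\eta(x_*, \lambda)$ is a sum of two manifestly nonnegative terms, so that $\eta(x_*, \lambda) = 0$ is equivalent to the simultaneous vanishing of $\| D_x L(x_*, \lambda) \|^2$ and $\trace(\lambda_0^2 G_0(x_*)^2)$. First I would record the elementary linear-algebra facts that for symmetric $A, B \succeq 0$ one has $\trace(AB) = 0$ iff $AB = 0$, and that $\ker(M^2) = \ker M$, $\operatorname{range}(M^2) = \operatorname{range} M$ for symmetric $M$. Applying these with $A = \lambda_0^2$ and $B = G_0(x_*)^2$ shows that $\trace(\lambda_0^2 G_0(x_*)^2) = 0$ is equivalent to $\operatorname{range} G_0(x_*) \subseteq \ker \lambda_0$, i.e. to $\lambda_0 G_0(x_*) = 0$. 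Thus $\eta(x_*, \lambda) = 0$ holds iff $D_x L(x_*, \lambda) = 0$ and $\lambda_0 G_0(x_*) = 0$, and the task reduces to showing that this pair of conditions admits $\lambda_*$ as its unique solution.

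For the characterization I would fix the basis $E = (e_1, \dots, e_{l-r})$ of $\ker G_0(x_*)$ from \eqref{Nondegeneracy_SemiDef}. The condition $\lambda_0 G_0(x_*) = 0$ says precisely that $\lambda_0$ is supported on $\ker G_0(x_*)$, i.e. $\lambda_0 = E \Theta E^T$ for some $\Theta \in \mathbb{S}^{l-r}$. Substituting this into the stationarity equation and using $\trace(E\Theta E^T\, \partial G_0(x_*)/\partial x_k) = \sum_{i,j} \theta_{ij}\, e_i^T (\partial G_0(x_*)/\partial x_k) e_j$, the equation $D_x L(x_*, \lambda) = 0$ becomes $-\nabla f(x_*) = \sum_{1 \le i \le j \le l-r} c_{ij} v_{ij} + \sum_{k=1}^s \mu_k \nabla h_k(x_*)$, where $c_{ii} = \theta_{ii}$, $c_{ij} = 2\theta_{ij}$ for $i < j$, and the $v_{ij}$ are the nondegeneracy vectors. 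The nondegeneracy of $x_*$ is exactly the linear independence of $\{ v_{ij} \} \cup \{ \nabla h_k(x_*) \}$ (\cite{BonnansShapiro}, Proposition~5.71), so the coefficients $c_{ij}$ and $\mu_k$ — hence $\Theta$ (hence $\lambda_0$) and $\mu$ — are uniquely determined, provided a solution exists. Existence is supplied by the unique Lagrange multiplier $\lambda_* = ((\lambda_*)_0, \mu_*)$ at the nondegenerate point $x_*$ (\cite{BonnansShapiro}): it satisfies $D_x L(x_*, \lambda_*) = 0$, and complementary slackness $\langle (\lambda_*)_0, G_0(x_*) \rangle = 0$ together with $(\lambda_*)_0 \succeq 0 \succeq G_0(x_*)$ forces $(\lambda_*)_0 G_0(x_*) = 0$, so $\eta(x_*, \lambda_*) = 0$. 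Uniqueness then gives $\eta(x_*, \lambda) = 0 \iff \lambda = \lambda_*$.

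Finally, for positive definiteness I would differentiate $\eta(x_*, \cdot)$ twice in $\lambda = (\lambda_0, \mu)$. Since $D_x L(x_*, \lambda) = \nabla f(x_*) + \mathcal{A}\lambda$ is affine in $\lambda$ (with $\mathcal{A}$ the linear map collecting the $\trace(\lambda_0\, \partial G_0/\partial x_k)$ and $\sum_k \mu_k \nabla h_k(x_*)$ terms) and $\trace(\lambda_0^2 G_0(x_*)^2)$ is quadratic in $\lambda_0$, a direct computation gives, for every direction $\Delta = (\Delta_0, \Delta\mu)$,
\[
  D^2_{\lambda\lambda}\eta(x_*, \lambda_*)[\Delta, \Delta] = 2\| \mathcal{A}\Delta \|^2 + 2\trace(\Delta_0^2 G_0(x_*)^2) \ge 0 .
\]
If this vanishes, the second summand forces $\Delta_0 G_0(x_*) = 0$, i.e. $\Delta_0 = E\Xi E^T$, while the first forces $\mathcal{A}\Delta = 0$; rewriting $\mathcal{A}\Delta$ once more as $\sum_{i \le j} c_{ij} v_{ij} + \sum_k \Delta\mu_k \nabla h_k(x_*)$ and invoking the same linear independence yields all $c_{ij} = 0$ and $\Delta\mu = 0$, whence $\Xi = 0$ and $\Delta = 0$. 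The main obstacle I anticipate is not any single estimate but the careful conversion of the geometric nondegeneracy condition into the linear-independence statement for $\{ v_{ij} \} \cup \{ \nabla h_k(x_*) \}$ and the matching of the parametrization $\lambda_0 = E\Theta E^T$ with the vectors $v_{ij}$; this is exactly the semidefinite analogue of the second-order-cone computation in \cite{FukudaSilva} (Proposition~3.3), and reusing that argument should streamline the bookkeeping.
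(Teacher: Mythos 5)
Your proof is correct and follows essentially the same route as the paper's: both arguments rest on parametrizing $\lambda_0 = E \Theta E^T$ over the null space of $G_0(x_*)$, splitting the relevant quantity into the two nonnegative pieces $\| \mathcal{A}\Delta \|^2$ and $\trace(\Delta_0^2 G_0(x_*)^2)$, and invoking nondegeneracy as linear independence of the vectors $v_{ij}$ and $\nabla h_k(x_*)$. The only organizational difference is that you prove the unique-zero claim directly (and explicitly check $\eta(x_*, \lambda_*) = 0$ via complementary slackness, a step the paper leaves implicit), whereas the paper observes that $\eta(x_*, \cdot)$ is a nonnegative quadratic, so positive definiteness of its constant Hessian alone already forces a unique zero --- which makes your separate uniqueness argument a harmless redundancy rather than a different method.
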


\begin{proof}
The existence of a unique Lagrange multiplier $\lambda_*$ at $x_*$ follows directly from \cite{BonnansShapiro},
Proposition~4.75. Furthermore, note that the function $\eta(x_*, \cdot)$ is quadratic. Therefore it remains to check
that the matrix $D^2_{\lambda \lambda} \eta(x_*, \lambda_*)$ is positive definite.

Suppose that $\lambda = (\lambda_0, \mu) \in Y^*$ is such that 
$D^2_{\lambda \lambda} \eta(x_*, \lambda_*)(\lambda, \lambda) = 0$. Then $\trace(\lambda_0^2 G_0(x_*)^2) = 0$. Let
$G_0(x_*) = E \diag(\rho_1(x_*), \ldots, \rho_l(x_*)) E^T$ be a spectral decomposition of $G_0(x_*)$ such that the
eigenvalues $\rho_i(x_*)$ are listed in the decreasing order. Then 
$G_0(x_*)^2 = E \diag(\rho_1(x_*)^2, \ldots, \rho_l(x_*)^2) E^T$. Applying the fact that the trace operator is
invariant under cyclic permutations one obtains that
\begin{equation*}
\begin{split}
  \trace\Big( \lambda_0^2 G_0(x_*)^2 \Big) 
  &= \trace\Big( E^T \lambda_0^2 E \diag(\rho_1(x_*)^2, \ldots, \rho_l(x_*)^2) \Big) \\
  &= \sum_{i = 1}^l \rho_i(x_*)^2 e_i^T \lambda_0^2 e_i = 0,
\end{split}
\end{equation*}
where $e_i$ is an eigenvector of $G_0(x_*)$ corresponding to the eigenvalue $\rho_i(x_*)$. Let $\rank G_0(x_*) = r$.
Then the above equalities imply that $\lambda_0 e_i = 0$ for any $i \in \{ l - r + 1, \ldots, l \}$. Therefore there
exists a $(l - r) \times (l - r)$ symmetric matrix $\Gamma$ such that 
$$
  E^T \lambda_0 E = \begin{pmatrix}
    \Gamma & 0 \\
    0 & 0
  \end{pmatrix}, \quad
  \lambda_0 = E \begin{pmatrix}
    \Gamma & 0 \\
    0 & 0
  \end{pmatrix} E^T
  = E_0 \Gamma E_0^T,
$$
where $E_0$ is a $l \times (l - r)$ matrix whose columns are a basis of the null space of the matrix $G_0(x_*)$ 
(note that in the case $r = l$ one has $\lambda_0 = 0$).

Since $D^2_{\lambda \lambda} \eta(x_*, \lambda_*)(\lambda, \lambda) = 0$, for the function
$\omega(\cdot) = \| D_x L(x_*, \cdot) \|^2$ one has $D^2 \omega(\lambda_*) (\lambda, \lambda) = 0$  or,
equivalently,
\begin{multline} \label{PenaltyTerm_SemiDef_Hessian}
  \sum_{i = 1}^d \trace\left( \lambda_0 \frac{\partial G_0(x_*)}{\partial x_i} \right) \cdot
  \trace\left( \lambda_0 \frac{\partial G_0(x_*)}{\partial x_i} \right) + 2
  \sum_{j = 1}^s \sum_{i = 1}^d \trace\left( \lambda_0 \frac{\partial G_0(x_*)}{\partial x_i} \right) \cdot
  \mu_j \frac{\partial h_j(x_*)}{\partial x_i} \\
  + \sum_{j, k = 1}^s \sum_{i = 1}^d \mu_j \mu_k \frac{\partial h_j(x_*)}{\partial x_i} 
  \frac{\partial h_k(x_*)}{\partial x_i} = 0.
\end{multline}
Applying the equality $\lambda_0 = E_0 \Gamma E_0^T$ one obtains that
$$
  \trace\left( \lambda_0 \frac{\partial G_0(x_*)}{\partial x_i} \right) = 
  \trace\left( \Gamma E_0^T \frac{\partial G_0(x_*)}{\partial x_i} E_0 \right) = 
  \sum_{j, k = 1}^{l - r} \Gamma_{jk} e_j^T \frac{\partial G_0(x_*)}{\partial x_i} e_k.
$$
Hence and from \eqref{PenaltyTerm_SemiDef_Hessian} one gets that
\begin{multline*}
  \sum_{i, j = 1}^{l - r} \sum_{p, q = 1}^{l - r} \Gamma_{ij} \Gamma_{pq} \langle v_{ij}, v_{pq} \rangle +
  2 \sum_{i, j = 1}^{l - r} \sum_{k = 1}^s \Gamma_{ij} \mu_k \langle v_{ij}, \nabla h_k(x_*) \rangle \\
  + \sum_{i, j = 1}^s \mu_i \mu_j \langle \nabla h_i(x_*), \nabla h_j(x_*) \rangle = 0,
\end{multline*}
where $v_{jk}$ are defined as in \eqref{Nondegeneracy_SemiDef}. Denote
$z = \sum_{i, j = 1}^{l - r} \Gamma_{ij} v_{ij} + \sum_{k = 1}^s \mu_k \nabla h_k(x_*)$. The equality above implies that
$\| z \|^2 = 0$, i.e. $z = 0$ or, equivalently,
$$
  \sum_{i = 1}^{l - r} \Gamma_{ii} v_{ii} + \sum_{1 \le i < j \le l - r} 2 \Gamma_{ij} v_{ij} + 
  \sum_{k = 1}^s \mu_k \nabla h_k(x_*) = 0.
$$
Here we used the facts that the matrix $\Gamma$ is symmetric and $v_{ij} = v_{ji}$. With the use of the nondegeneracy
condition one obtains that $\Gamma = 0$ and $\mu = 0$, i.e. $\lambda = 0$, which implies that
the matrix $D^2_{\lambda \lambda} \eta(x_*, \lambda_*)$ is positive definite.	 
\end{proof}

Now, we can introduce the penalized augmented Lagrangian function for problem \eqref{SemiDefProg_EAL}. Choose 
$\alpha > 0$ and $\varkappa > 1$, and for any $\lambda = (\lambda_0, \mu) \in Y^*$ define
$$
  p(x, \lambda) = \frac{a(x)}{1 + \trace(\lambda_0^2)}, \quad
  q(x, \lambda) = \frac{b(x)}{1 + \| \mu \|^2},
$$
where
$$
  a(x) = \alpha - \trace\big( [ G_0(x) ]_+^2 \big)^{\varkappa}, \quad
  b(x) = \alpha - \| h(x) \|^2.
$$
Denote $\Omega_{\alpha} = \{ x \in \mathbb{R}^d \mid a(x) > 0, \: b(x) > 0 \}$. Finally, for any 
$\lambda = (\lambda_0, \mu) \in Y^*$ define
\begin{multline} \label{EAL_RockWetsAL_SemiDef}
  \mathscr{L}_e(x, \lambda, c) = f(x) 
  + \frac{1}{2c p(x, \lambda)} \Big( \trace\big( [c G_0(x) + p(x, \lambda) \lambda_0]_+^2 \big) - 
  p(x, \lambda)^2 \trace(\lambda_0^2) \Big) \\
  + \langle \mu, h(x) \rangle + \frac{c}{2 q(x, \lambda)} \| h(x) \|^2 + \eta(x, \lambda),
\end{multline}
if $x \in \Omega_{\alpha}$, and $\mathscr{L}_e(x, \lambda, c) = + \infty$, otherwise 
(see Example~\ref{Example_RockWetsAL_SemiDefProg}). Note that the function $\mathscr{L}_e(x, \lambda, c)$ is l.s.c.
jointly in $(x, \lambda)$ on $\mathbb{R}^d \times Y^*$, and continuously differentiable in $(x, \lambda)$ on its
effective domain $\Omega_{\alpha} \times Y^*$ by \cite{BonnansShapiro}, Theorem~4.13. One can also check that
the function $\Phi(y, \lambda, c)$ corresponding to penalized augmented Lagrangian function
\eqref{EAL_RockWetsAL_SemiDef} satisfies assumptions $(A2)$, $(A4)_s$, $(A6)_s$, $(A11)$ with 
$\Phi_0(\lambda) \equiv \lambda$, and $(A12)$. Furthermore, $\Phi(G(x_*), \lambda_*, c) = 0$, if $(x_*, \lambda_*)$ is a
KKT-pair, and arguing in the same way as in (\cite{ShapiroSun}, pp.~487--488) one can check that $\Phi(G(x), \lambda,
c)$ admits the second order expansion in $(x, \lambda)$ at every KKT-pair of problem~\eqref{SemiDefProg_EAL}.

Applying Lemma~\ref{Lemma_SemiDef_PenTerm_Nondegen}, Theorem~\ref{Th_LocalExactnessOfAL} and the localization principle,
and arguing in the same way as in the proofs of Theorems~\ref{Thrm_EAL_HPR} and \ref{Thrm_EAL_SeconOrderCone} one can
easily verify that the following result holds true (see~\cite{Dolgopolik_UnifExactnessII} for the detailed proof of
this result).

\begin{theorem}
Let the functions $f$, $G_0$, and $h$ be twice continuously differentiable. Suppose also that every globally optimal
solution of problem \eqref{SemiDefProg_EAL} is nondegenerate, and for any $x_* \in \Omega_*$ a unique KKT-pair 
$(x_*, \lambda_*)$ satisfies the second order sufficient optimality condition. Then penalized augmented
Lagrangian function \eqref{EAL_RockWetsAL_SemiDef} is globally exact if and only if there exists $c_0 > 0$ such that 
the set $\{ (x, \lambda) \in \mathbb{R}^d \times Y^* \mid \mathscr{L}_e(x, \lambda, c_0) < f_* \}$ is either bounded or
empty. In particular, if the set $\{ x \in \mathbb{R}^d \mid f(x) < f_* + \gamma, \: a(x) > 0, \: b(x) > 0 \}$ is
bounded for some $\gamma > 0$, then penalized augmented Lagrangian function \eqref{EAL_RockWetsAL_SemiDef} is globally
exact in the sense that for any sufficiently large $c > 0$ its points of global minimum in $(x, \lambda)$ are exactly
KKT-pairs of problem \eqref{SemiDefProg_EAL} corresponding to globally optimal solutions of this problem.
\end{theorem}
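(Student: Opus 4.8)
The plan is to follow the same two-stage template used in the proofs of Theorems~\ref{Thrm_EAL_HPR} and \ref{Thrm_EAL_SeconOrderCone}: first establish local exactness at every KKT-pair corresponding to a globally optimal solution, and then invoke the localization principle to convert local exactness into a boundedness condition on $S_e(c_0)$. For the first stage I would combine Lemma~\ref{Lemma_SemiDef_PenTerm_Nondegen} --- which guarantees, at a nondegenerate $x_* \in \Omega_*$, a unique multiplier $\lambda_*$, the equivalence $\eta(x_*,\lambda)=0 \Leftrightarrow \lambda=\lambda_*$, and positive definiteness of $D^2_{\lambda\lambda}\eta(x_*,\lambda_*)$ --- with the second order sufficient optimality condition and the fact (verified above) that $\Phi(G(x),\lambda,c)$ admits the second order expansion in $(x,\lambda)$ at each such KKT-pair. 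These are precisely the hypotheses of Theorem~\ref{Th_LocalExactnessOfAL}, so $\mathscr{L}_e$ is locally exact at every $(x_*,\lambda_*)$ with $x_*\in\Omega_*$ and $\eta(x_*,\lambda_*)=0$. Since assumptions $(A2)$, $(A4)_s$, $(A6)_s$, $(A12)$ and the vanishing condition \eqref{AugmFuncVanishAtKKTpoints_LocPrinciple} have already been checked for \eqref{EAL_RockWetsAL_SemiDef}, and $Y=\mathbb{S}^l\times\mathbb{R}^s$ is finite dimensional with $A=\mathbb{R}^d$ closed, the localization principle (Theorem~\ref{Thrm_EAL_LocalizationPrinciple}) yields the stated equivalence.

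For the ``in particular'' statement I would first derive the lower estimate
\[
  \mathscr{L}_e(x,\lambda,c) \ge f(x) - \frac{\alpha}{c} + \eta(x,\lambda)
  \qquad \forall (x,\lambda)\in\Omega_\alpha\times Y^*, \ c>0.
\]
This follows by bounding each penalty block from below: $\trace([cG_0(x)+p\lambda_0]_+^2)\ge 0$ gives a contribution $\ge -p\,\trace(\lambda_0^2)/(2c)$, and minimizing the equality block over its argument gives $\ge -q\|\mu\|^2/(2c)$; the definitions of $p(x,\lambda)$ and $q(x,\lambda)$ then force $p\,\trace(\lambda_0^2)\le a(x)\le\alpha$ and $q\|\mu\|^2\le b(x)\le\alpha$. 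Consequently, for $c>\alpha/\gamma$ every $(x,\lambda)\in S_e(c)$ has $f(x)<f_*+\gamma$ together with $a(x)>0$ and $b(x)>0$, i.e. $x\in\Omega(\gamma,\alpha)$; boundedness of $\Omega(\gamma,\alpha)$ and continuity of $f$ then bound $\mathscr{L}_e$ from below on $S_e(c)$.

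The remaining work is a contradiction argument mirroring Theorem~\ref{Thrm_EAL_SeconOrderCone}. Assuming $S_e(c)$ is unbounded for every $c$, I would pick an increasing unbounded sequence $c_n>\alpha/\gamma$ and $(x_n,\lambda_n)\in S_e(c_n)$ with $\|\lambda_n\|\ge n$; since $x_n\in\Omega(\gamma,\alpha)$ one may assume $x_n\to x_*$. Writing $u_n$ and $w_n$ for the scaled semidefinite and equality penalty blocks, the lower estimate together with boundedness of $\{f(x_n)\}$ and nonnegativity of $\eta$ shows $u_n/p_n+w_n/q_n$ is bounded above, while each summand is bounded below; as $0<p_n,q_n\le\alpha$, the quantities $u_n$ and $w_n$ themselves are bounded. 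I expect the one genuinely SDP-specific step to be the main obstacle, namely deducing feasibility of $x_*$ from boundedness of $u_n$. Here I would use that the projection $[\cdot]_+$ onto $\mathbb{S}^l_+$ is nonexpansive and positively homogeneous, so $[c_nG_0(x_n)]_+=c_n[G_0(x_n)]_+$ and, since $p_n\lambda_{0,n}$ is bounded in the Frobenius norm, $\|c_n[G_0(x_n)]_+\|_F=O(\sqrt{c_n})$; hence $\trace([G_0(x_n)]_+^2)\to0$, giving $G_0(x_*)\preceq0$, while the $w_n$-bound forces $h(x_n)\to0$. Thus $x_*\in\Omega_*$ and is nondegenerate, so by Lemma~\ref{Lemma_SemiDef_PenTerm_Nondegen} the quadratic $\eta(x_*,\cdot)$ is coercive; by continuity $\eta(x,\lambda)\ge\delta_1\|\lambda\|^2+\delta_2\|\lambda\|+\delta_3$ near $x_*$ with $\delta_1>0$, and since $\|\lambda_n\|\to\infty$ the lower estimate gives $\mathscr{L}_e(x_n,\lambda_n,c_n)\to+\infty$, contradicting $(x_n,\lambda_n)\in S_e(c_n)$. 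Therefore $S_e(c)$ is bounded for some $c$, and global exactness follows from the equivalence already established.
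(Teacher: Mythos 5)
Your proposal is correct and follows essentially the same route as the paper, which itself only sketches the proof by combining Lemma~\ref{Lemma_SemiDef_PenTerm_Nondegen}, Theorem~\ref{Th_LocalExactnessOfAL} and the localization principle (Theorem~\ref{Thrm_EAL_LocalizationPrinciple}) and then "arguing in the same way" as in Theorems~\ref{Thrm_EAL_HPR} and \ref{Thrm_EAL_SeconOrderCone}. Your one SDP-specific step --- deducing $G_0(x_*) \preceq 0$ from boundedness of the scaled penalty block via positive homogeneity and nonexpansiveness of $[\cdot]_+$ (equivalently, $\| [A]_+ \|_F = \dist(A, \mathbb{S}^l_-)$) --- is exactly the semidefinite analogue of the distance-function argument the paper uses in the second order cone case, so your write-up faithfully fills in the details the paper omits.
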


\begin{remark}
{(i)~One can easily construct a globally exact penalized augmented Lagrangian function for problem
\eqref{SemiDefProg_EAL} from augmented Lagrangian from Example~\ref{Example_NonlinearRescalePenalized_SemiDefProg}.
Namely, for any $\lambda = (\lambda_0, \mu) \in Y^*$ set
$$
  \eta(x, \lambda) = \big\| D_x L(x, \lambda_0^2, \mu) \big\|^2 + \trace(\lambda_0^2 G_0(x)^2),
$$
and define
\begin{multline} \label{EAL_PenalizedExpType_SemiDef}
  \mathscr{L}_e(x, \lambda, c) = f(x) + 
  \langle \mu, h(x) \rangle + \frac{c}{2 q(x, \lambda)} \| h(x) \|^2 + \eta(x, \lambda) \\
  + \frac{p(x, \lambda)}{c} \left\langle \lambda_0^2, \Psi\left( \frac{c}{p(x, \lambda)} G_0(x) \right) \right\rangle
  + \frac{p(x, \lambda)}{c} \trace\left[ \Xi\left( \frac{c}{p(x, \lambda)} G_0(x) \right) \right],
\end{multline}
if $x \notin \Omega_{\alpha}$, and $\mathscr{L}_e(x, \lambda, c) =  +\infty$ otherwise
(see~Remark~\ref{Rmrk_2OrderExpansInXandLambda}), where $\Psi$ and $\Xi$ are the same as in 
Example~\ref{Example_NonlinearRescalePenalized_SemiDefProg}. Then penalized augmented Lagrangian 
\eqref{EAL_PenalizedExpType_SemiDef} is globally exact provided every globally optimal solution of problem
\eqref{SemiDefProg_EAL} is nondegenerate, satisfies the second order sufficient optimality and strict complementarity
conditions, the set $\{ x \in \mathbb{R}^d \mid f(x) < f_* + \gamma, \: a(x) > 0, \: b(x) > 0 \}$ is bounded, and the
function $\psi(t)$ is bounded from below. We do not present the proof of this result here, and leave it to the
interested reader.
}

\noindent{(ii)~Let us note that penalized augmented Lagrangian functions \eqref{EAL_RockWetsAL_SemiDef} and
\eqref{EAL_PenalizedExpType_SemiDef} for problem \eqref{SemiDefProg_EAL} are completely new.
A different exact augmented Lagrangian function for nonlinear semidefinite programming problems was recently introduced
in \cite{FukudaLourenco}. It should be pointed out that our augmented Lagrangian function is defined via the
problem data directly, while the augmented Lagrangian function from \cite{FukudaLourenco} depends on a solution of a
certain system of linear equations, which makes the computation of the value of this augmented Lagrangian function and
its derivatives more expensive. Furthermore, in order to correctly define the augmented Lagrangian function from
\cite{FukudaLourenco} one must suppose that \textit{every} feasible point of problem \eqref{SemiDefProg_EAL} is
nondegenerate, which might be a too restrictive assumption for many applications. In contrast, we assume that only
globally optimal solutions of problem \eqref{SemiDefProg_EAL} are nondegerate.
}
\end{remark}

\subsection{A Different Approach to Global Exactness}

As the examples above show, one must suppose that \textit{both} sufficient optimality conditions and a constraint
qualification hold true at globally optimal solutions of the problem $(\mathcal{P})$ in order to guarantee the global
exactness of penalized augmented Lagrangian functions with the use of the localization principle and
Theorem~\ref{Th_LocalExactnessOfAL}. 

Being inspired by the ideas of \cite{DiPilloLucidi2001} and the localization principle, we present different necessary
and sufficient conditions for the global exactness of $\mathscr{L}_e(x, \lambda, c)$ that are based on the use of
constraint qualifications \textit{only}. However, let us point out that this conditions are applicable only in the case 
when $\mathscr{L}(x, \lambda, c)$ is the Hestenes-Powell-Rockafellar augmented Lagrangian function.

\begin{theorem} \label{Th_GlobalExactness_AlternativeApproach}
Let $Y$ be finite dimensional, $A = X$, $\Lambda = Y^*$, $G$ be continuous, and $\mathscr{L}_e(\cdot, \cdot, c)$ be
l.s.c. for all $c > 0$. Suppose also that assumptions $(A2)$, $(A4)_s$, $(A6)_s$ and $(A12)$ are satisfied, and
\begin{equation} \label{AugmFuncVanishAtKKTpoints_HPR_AL}
  \Phi(G(x_*), \lambda_*, c) = 0 \quad \forall c > 0 \quad 
  \forall (x_*, \lambda_*) \in \Omega_* \times Y^* \colon \eta(x_*, \lambda_*) = 0.
\end{equation}
Suppose, finally, that for any $(x_*, \lambda_*) \in \Omega_* \times Y^*$ such that $\eta(x_*, \lambda_*) = 0$ there
exist a neighbourhood $U$ of $(x_*, \lambda_*)$ and $\overline{c} > 0$ such that the function $\mathscr{L}_e(\cdot,
\cdot, c)$ is G\^ateaux differentiable in $U$ for all $c \ge \overline{c}$, and
\begin{equation} \label{DiPilloGrippo_Condition}
  \Big( (x, \lambda) \in U \wedge D_{(x, \lambda)} \mathscr{L}_e(x, \lambda, c) = 0 \Big) \implies 
  \Big( x \in \Omega \wedge \Phi(G(x), \lambda, c) = 0 \Big).
\end{equation}
Then $\mathscr{L}_e(x, \lambda, c)$ is globally exact (with respect to the function $\eta$) if and only if there exists
$(x_0, \lambda_0) \in \Omega_* \times Y^*$ such that $\eta(x_0, \lambda_0) = 0$, and there exists $c_0 > 0$ such that
the set $S_e(c_0)$ is either bounded or empty.
\end{theorem}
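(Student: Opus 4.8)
The plan is to reduce everything to the localization principle for exact augmented Lagrangians (Theorem~\ref{Thrm_EAL_LocalizationPrinciple}), whose hypotheses $(A2)$, $(A4)_s$, $(A6)_s$, $(A12)$ and the vanishing condition are all in force here (with $A = X$, $\Lambda = Y^*$ and $Y$ finite dimensional). The only difference between the present statement and Theorem~\ref{Thrm_EAL_LocalizationPrinciple} is that the local exactness hypothesis is replaced by the stationarity condition \eqref{DiPilloGrippo_Condition}; so the real work is to show that \eqref{DiPilloGrippo_Condition} can be used in place of local exactness inside the argument of the localization principle.

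The ``only if'' direction is immediate. If $\mathscr{L}_e(x,\lambda,c)$ is globally exact, then for some $c_0>0$ the function $\mathscr{L}_e(\cdot,\cdot,c_0)$ attains its global minimum on $X\times Y^*$ precisely at the pairs $(x_*,\lambda_*)$ with $x_*\in\Omega_*$ and $\eta(x_*,\lambda_*)=0$; such a pair therefore exists, and by \eqref{AugmFuncVanishAtKKTpoints_HPR_AL} the minimum value equals $f_*$, whence $S_e(c_0)=\emptyset$.

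For the ``if'' direction I would mimic the proof of Theorem~\ref{Thrm_EAL_LocalizationPrinciple}. By $(A4)_s$ one has $S_e(c)\subseteq S_e(c_0)$ for $c\ge c_0$. If $S_e(\overline c)=\emptyset$ for some $\overline c\ge c_0$, then the given pair $(x_0,\lambda_0)$ is a global minimizer of $\mathscr{L}_e(\cdot,\cdot,\overline c)$ with value $f_*$, and Lemma~\ref{Lemma_GlobalExactEquivFormulation} yields global exactness. So it remains to rule out the case $S_e(c)\neq\emptyset$ for every $c\ge c_0$. Here I would pick an increasing unbounded sequence $\{c_n\}\subset[c_0,+\infty)$; since $S_e(c_0)$ is bounded and $\mathscr{L}_e(\cdot,\cdot,c_n)$ is l.s.c., the strict sublevel set $\{\mathscr{L}_e(\cdot,\cdot,c_n)<f_*\}=S_e(c_n)\subseteq S_e(c_0)$ is nonempty with compact closure, so $\mathscr{L}_e(\cdot,\cdot,c_n)$ attains a global minimum over all of $X\times Y^*$ at some $(x_n,\lambda_n)\in S_e(c_0)$ (points outside $S_e(c_0)$ have value $\ge f_*$ by monotonicity). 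Passing to a convergent subsequence $(x_n,\lambda_n)\to(x_*,\lambda_*)$, an argument identical to the proof of Lemma~\ref{Lemma_EAL_MinimizingSeq}, but with the lower bound $\gamma$ taken over the compact set $\overline{S_e(c_0)}$, shows via $(A6)_s$, $(A2)$, $(A4)_s$ and $(A12)$ that $x_*\in\Omega_*$ and $\eta(x_*,\lambda_*)=0$.

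At this point the decisive step is that, because $A=X$ and $\Lambda=Y^*$, each $(x_n,\lambda_n)$ is an \emph{unconstrained} global minimizer, hence automatically a stationary point. Since $(x_*,\lambda_*)\in\Omega_*\times Y^*$ with $\eta(x_*,\lambda_*)=0$, the hypothesis supplies a neighbourhood $U$ and $\overline c>0$ on which $\mathscr{L}_e(\cdot,\cdot,c)$ is G\^ateaux differentiable and \eqref{DiPilloGrippo_Condition} holds; for $n$ large one has $(x_n,\lambda_n)\in U$ and $c_n\ge\overline c$, so $D_{(x,\lambda)}\mathscr{L}_e(x_n,\lambda_n,c_n)=0$. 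Then \eqref{DiPilloGrippo_Condition} forces $x_n\in\Omega$ and $\Phi(G(x_n),\lambda_n,c_n)=0$, whence $\mathscr{L}_e(x_n,\lambda_n,c_n)=f(x_n)+\eta(x_n,\lambda_n)\ge f_*$, contradicting $(x_n,\lambda_n)\in S_e(c_n)$. This contradiction eliminates the case $S_e(c)\neq\emptyset$ for all $c$, completing the reduction to Lemma~\ref{Lemma_GlobalExactEquivFormulation}. I expect the main obstacle to be purely technical: guaranteeing that the unconstrained global minima are genuinely \emph{attained} (so that exact stationarity is available for \eqref{DiPilloGrippo_Condition}, which approximate minimizers would not provide), and adapting the feasibility part of Lemma~\ref{Lemma_EAL_MinimizingSeq} to a local lower bound on $\overline{S_e(c_0)}$ rather than a global one on $X\times Y^*$.
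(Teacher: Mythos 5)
Your proposal is correct and follows essentially the same route as the paper's proof: the same case split (empty sublevel set handled via Lemma~\ref{Lemma_GlobalExactEquivFormulation}, nonempty case handled by extracting a convergent sequence of global minimizers, applying Lemma~\ref{Lemma_EAL_MinimizingSeq} to identify the limit, and then using unconstrained first-order stationarity together with \eqref{DiPilloGrippo_Condition} to reach a contradiction). The technical points you flag — attainment of minima via compactness of $\overline{S_e(c_0)}$ and the lower bound needed in Lemma~\ref{Lemma_EAL_MinimizingSeq} — are handled implicitly in the paper by the same boundedness-plus-lower-semicontinuity reasoning, so they are details of presentation rather than gaps.
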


\begin{proof}
The ``only if'' part of the theorem is proved in the same way as the ``only if'' part of
Theorem~\ref{Thrm_EAL_LocalizationPrinciple}. Therefore, let us prove the ``if'' part.

Choose an increasing unbounded sequence $\{ c_n \} \subset [c_0, + \infty)$. Assumption $(A4)_s$ implies that 
$S_e(c_n) \subseteq S_e(c_0)$ for all $n \in \mathbb{N}$. If $S_e(c_n) = \emptyset$ for some $n \in \mathbb{N}$, then
with the use of \eqref{AugmFuncVanishAtKKTpoints_HPR_AL} one can easily verify that $\mathscr{L}_e(x, \lambda, c)$ is
globally exact. Therefore, let us suppose that $S_e(c_n) \ne \emptyset$ for all $n \in \mathbb{N}$.

Taking into account the facts that $S_e(c_0)$ is either bounded or empty, $S_e(c_n) \subseteq S_e(c_0)$, and
$\mathscr{L}_e(\cdot, \cdot, c)$ is l.s.c. one obtains that for any $n \in \mathbb{N}$ 
the function $\mathscr{L}_e(\cdot, \cdot, c_n)$ attains a global minimum at a point $(x_n, \lambda_n)$, and 
the sequence $\{ (x_n, \lambda_n) \}$ is bounded. Since both $X$ and $Y$ are finite dimensional, without loss of
generality one can suppose that $\{ (x_n, \lambda_n) \}$ converges to a point $(x_*, \lambda_*)$. By
Lemma~\ref{Lemma_EAL_MinimizingSeq} one has $x_* \in \Omega_*$ and $\eta(x_*, \lambda_*) = 0$. Therefore there exist a
neighbourhood $U$ of $(x_*, \lambda_*)$ and $\overline{c} > 0$ such that \eqref{DiPilloGrippo_Condition} holds true.

From the facts that $(x_*, \lambda_*)$ is a limit point of $\{ (x_n, \lambda_n) \}$, and $\{ c_n \}$ is an increasing
unbounded sequence it follows that there exists $n \in \mathbb{N}$ such that $(x_n, \lambda_n) \in U$ and 
$c_n \ge \overline{c}$. Applying the first order necessary optimality condition one obtains that 
$D_{(x, \lambda)} \mathscr{L}_e(x_n, \lambda_n, c_n) = 0$, which with the use of
\eqref{DiPilloGrippo_Condition} implies that $x_n$ is a feasible point of $(\mathcal{P})$, and 
$\mathscr{L}_e(x_n, \lambda_n, c_n) = f(x_n) + \eta(x_n, \lambda_n) \ge f(x_n) \ge f_*$. Thus, $S_e(c_n) = \emptyset$,
which contradicts our assumption.
\end{proof}

Note that condition \eqref{DiPilloGrippo_Condition} is satisfied for augmented Lagrangian \eqref{EAL_HPR_AugmLagr} (in
the case when there are no equality constraints) by \cite{DiPilloLucidi2001}, Proposition~4.3, provided LICQ holds at
every globally optimal solution. Therefore with the use of the theorem above one can obtain the following result that
improves Theorem~5.4 from \cite{DiPilloLucidi2001}, since we do note assume that LICQ holds true at every feasible point
of problem \eqref{MathProg_EAL}, and obtain \textit{necessary and sufficient} conditions for the global exactness of
augmented Lagrangian \eqref{EAL_HPR_AugmLagr} for problem \eqref{MathProg_EAL}.

\begin{theorem} 
Suppose that $J = \emptyset$. Let $f$ and $g_i$, $i \in I$, be twice continuously differentiable, and
let LICQ hold true at every globally optimal solution of problem \eqref{MathProg_EAL}. Then penalized augmented
Lagrangian function \eqref{EAL_HPR_AugmLagr} is globally exact if and only if there exists $c_0 > 0$ such that the set
$S_e(c_0)$ is either bounded or empty. In particular, if there exists $\gamma > 0$ such that 
the set $\Omega(\gamma, \alpha)$ is bounded, then augmented Lagrangian \eqref{EAL_HPR_AugmLagr} is globally exact.
\end{theorem}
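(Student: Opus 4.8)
The plan is to derive this theorem as a corollary of the alternative-approach localization principle, Theorem~\ref{Th_GlobalExactness_AlternativeApproach}, applied with $A = X = \mathbb{R}^d$, $\Lambda = Y^* = \mathbb{R}^l$, the penalized augmented Lagrangian \eqref{EAL_HPR_AugmLagr}, and the penalty term $\eta = \eta_1$. Since that theorem requires only a constraint qualification at globally optimal solutions (through condition \eqref{DiPilloGrippo_Condition}) rather than second order sufficient optimality conditions, it is the natural tool here, where only LICQ is assumed. First I would collect the structural hypotheses, all of which were already recorded for augmented Lagrangian \eqref{EAL_HPR_AugmLagr} in the discussion preceding Theorem~\ref{Thrm_EAL_HPR}: the space $Y = \mathbb{R}^l$ is finite dimensional, $G$ is continuous, $\mathscr{L}_e(\cdot, \cdot, c)$ is lower semicontinuous, and the associated function $\Phi$ satisfies assumptions $(A2)$, $(A4)_s$, $(A6)_s$ and $(A12)$.

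Next I would verify the two remaining hypotheses of Theorem~\ref{Th_GlobalExactness_AlternativeApproach}. For condition \eqref{AugmFuncVanishAtKKTpoints_HPR_AL}, recall that when $x_* \in \Omega_*$ the validity of LICQ at $x_*$ guarantees that $\eta_1(x_*, \lambda_*) = 0$ holds precisely when $(x_*, \lambda_*)$ is a KKT-pair, and for such pairs one has $\Phi(G(x_*), \lambda_*, c) = 0$ for all $c > 0$; this also furnishes, by taking any $x_0 \in \Omega_*$ together with its unique Lagrange multiplier $\lambda_0$, a pair $(x_0, \lambda_0) \in \Omega_* \times Y^*$ with $\eta_1(x_0, \lambda_0) = 0$. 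For the key implication \eqref{DiPilloGrippo_Condition} I would invoke \cite{DiPilloLucidi2001}, Proposition~4.3: since $J = \emptyset$ and LICQ holds at every globally optimal solution, in a neighbourhood of each KKT-pair $(x_*, \lambda_*)$ with $x_* \in \Omega_*$ every stationary point $(x, \lambda)$ of $\mathscr{L}_e(\cdot, \cdot, c)$ has $x$ feasible and $\Phi(G(x), \lambda, c) = 0$, which is exactly \eqref{DiPilloGrippo_Condition}. With all hypotheses in place, Theorem~\ref{Th_GlobalExactness_AlternativeApproach} yields that $\mathscr{L}_e(x, \lambda, c)$ is globally exact if and only if $S_e(c_0)$ is bounded or empty for some $c_0 > 0$, the auxiliary requirement on the existence of a KKT-pair being automatically met.

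Finally, for the ``in particular'' assertion I would show that boundedness of $\Omega(\gamma, \alpha)$ forces $S_e(c)$ to be bounded for large $c$, reusing the estimate and argument from the proof of Theorem~\ref{Thrm_EAL_HPR}. The lower bound \eqref{EAL_HPR_LowerEstimate} gives $\mathscr{L}_e(x, \lambda, c) \ge f(x) - \alpha / c + \eta_1(x, \lambda)$, so for $c > \alpha / \gamma$ any $(x, \lambda) \in S_e(c)$ satisfies $x \in \Omega(\gamma, \alpha)$, whence the primal component stays bounded. Boundedness of the dual component follows by contradiction: an unbounded sequence $(x_n, \lambda_n) \in S_e(c_n)$ with $\| \lambda_n \| \to \infty$ has $\{ x_n \}$ confined to the bounded set $\Omega(\gamma, \alpha)$, so up to a subsequence $x_n \to x_* \in \Omega_*$, and LICQ at $x_*$ makes the quadratic function $\eta_1(x, \cdot)$ coercive uniformly for $x$ near $x_*$, forcing $\mathscr{L}_e(x_n, \lambda_n, c_n) \to + \infty$ and contradicting $(x_n, \lambda_n) \in S_e(c_n)$.

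The main obstacle is the verification of the stationarity-to-feasibility implication \eqref{DiPilloGrippo_Condition}, which is precisely where LICQ does the work of replacing the second order conditions of Theorem~\ref{Thrm_EAL_HPR}; once this is imported from \cite{DiPilloLucidi2001}, the remainder is assembly of facts already established for \eqref{EAL_HPR_AugmLagr} together with the level-boundedness argument borrowed from the proof of Theorem~\ref{Thrm_EAL_HPR}.
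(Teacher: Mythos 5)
Your proposal is correct and follows essentially the same route as the paper: the paper obtains this theorem directly from Theorem~\ref{Th_GlobalExactness_AlternativeApproach}, verifying condition \eqref{DiPilloGrippo_Condition} for augmented Lagrangian \eqref{EAL_HPR_AugmLagr} via Proposition~4.3 of \cite{DiPilloLucidi2001} under LICQ (with the KKT-pair existence requirement automatic), and handles the ``in particular'' claim by the same level-boundedness argument used in the proof of Theorem~\ref{Thrm_EAL_HPR}. No substantive differences.
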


\begin{remark}
{(i)~It is natural to assume that Proposition~4.3 from \cite{DiPilloLucidi2001} (and, thus, the theorem above) can be
extended to the case of other penalized augmented Lagrangian functions (in particular, to the case of augmented
Lagrangians \eqref{EAL_RockWetsAL_SOC} and \eqref{EAL_RockWetsAL_SemiDef}). However, we do not discuss the possibility
of such an extension here, and pose it as an interesting open problem.
}

\noindent{(ii)~Let us note that with the use of Theorem~\ref{Th_GlobalExactness_AlternativeApproach} one can easily
obtain \textit{necessary and sufficient} conditions for the global exactness of the augmented Lagrangian function for
nonlinear semidefinite programming problems from \cite{FukudaLourenco}, which strengthen Theorem~4.8 from this paper.
}
\end{remark}

\section{Conclusions}

In this article we developed a general theory of augmented Lagrangian functions for cone constrained optimization
problems. Let us briefly discuss some conclusions that can be drawn from this theory. 

Note that there are two main classes of augmented Lagrangian functions for cone constrained optimization problem. 
The first class consists of the well-known Hestenes-Powell-Rockafellar augmented Lagrangian for mathematical
programming problems and its various modifications and extensions to the case of other cone constrained optimization
problem (Examples~\ref{Example_RockafellarWetsAL}--\ref{Example_Mangasarian}, \ref{Example_RockWetsAL_SOC},
\ref{Example_RockWetsAL_SemiDefProg} and \ref{Example_RockafellarWetsAL_SemiInfProg}). The second class of augmented
Lagrangians consists of the exponential penalty functions and its numerous modifications
(Examples~\ref{Example_ExpPenFunc}--\ref{Example_pthPowerAugmLagr}, \ref{Example_NonlinearRescale_SOC},
\ref{Example_NonlinearRescale_SemiDefProg}, \ref{Example_NonlinearRescalePenalized_SemiDefProg} and
\ref{Example_NonlinearRescale_SemiInfProg}). It is natural to refer to the augmented Lagrangians from this class as
\textit{nonlinear rescaling augmented Lagrangians}, since, in essence, all these augmented Lagrangians are constructed
as the standard Lagrangian function for the problem with rescaled constraints (see~\cite{Polyak2002}). Finally,
there is also He-Wu-Meng's augmented Lagrangian (Example~\ref{Example_HeWuMengLagrangian}) that does not fall into
either of those classes.

Nonlinear rescaling augmented Lagrangians are smoother than Hestenes-Power-Rockafellar-type augmented Lagrangians.
However, in order to guarantee the existence of global saddle points (or local/global exactness) of nonlinear rescaling
augmented Lagrangians one must impose the strict complementarity condition, which is not necessary in the case of the 
Hestenes-Power-Rockafellar-type augmented Lagrangians and He-Wu-Meng's augmented Lagrangian 
(see Remarks \ref{Rmrk_2OrderExpansInX} and \ref{Rmrk_2OrderExpansInXandLambda}) with the only exclusion being the cubic
augmented Lagrangian (Example~\ref{Example_CubilAL}). Therefore, it seems that in order to avoid the strict
complementarity condition one must consider augmented Lagrangians that are not twice continuously differentiable.
Furthermore, as it was noted above (see~Remark~\ref{Remark_AugmLagr_SemiInfProblems}), all existing augmented
Lagrangian functions are not suitable for handling semi-infinite programming problems, and a new approach to the
construction of augmented Lagrangians for these problem is needed.

From the theoretical point of view, the augmented Lagrangian functions that are least suitable for the study of the
existence of global saddle points (and global exactness) are those augmented Lagrangians that do not contain penalty
terms (namely, the exponential penalty function and the p-th power augmented Lagrangian;
see~Example~\ref{Example_SeparationCondition} and Remark~\ref{Remark_SeparationCondition}). However, numerical methods
based on these augmented Lagrangians sometimes work well for convex (even infinite dimensional, see \cite{Pedregal})
problems.

Finally, let us note that we did not discuss augmented Lagrangian methods for cone constrained optimization problems as
well as numerical methods based on the use of globally exact augmented Lagrangian functions for these problems. It
seems that under some additional assumptions one might extend the existing augmented Lagrangian methods to the case of
the augmented Lagrangian $\mathscr{L}(x, \lambda, c)$ for problem $(\mathcal{P})$, thus developing a general theory of
augmented Lagrangian methods for cone constrained optimization problems. We leave the development of this theory as an
open problem for future research.

\bibliographystyle{abbrv}  
\bibliography{AugmLagr_bibl}

\end{document}